\theoremstyle{remark}
\theoremstyle{plain} 
\newcommand{\joinR}{\hspace{-.1em}}
\newcommand{\RomanI}{I}
\newcommand{\RomanII}{\mbox{\RomanI\joinR\RomanI}}
\newcommand{\RomanIII}{\mbox{\RomanI\joinR\RomanII}}
\newcommand{\RomanIV}{\mbox{\RomanI\joinR\RomanV}}
\newcommand{\RomanV}{V}
\newcommand{\RomanVI}{\mbox{\RomanV\joinR\RomanI}}
\newcommand{\RomanVII}{\mbox{\RomanV\joinR\RomanII}}
\newcommand{\RomanVIII}{\mbox{\RomanV\joinR\RomanIII}}
\newcommand{\RomanIX}{\mbox{\RomanI\joinR\RomanX}}
\newcommand{\RomanX}{X}
\newcommand{\RomanXI}{\mbox{\RomanX\joinR\RomanI}}
\newcommand{\RomanXII}{\mbox{\RomanX\joinR\RomanII}}
\DeclareMathSymbol{:}{\mathord}{operators}{"3A}
\begin{document}

\tableofcontents

\section{Introduction}

When solutions to a system of partial differential equations (PDEs) lack sufficient regularity, a common remedy is to multiply by a sufficiently smooth function, integrate by parts to rid of any derivative on the solution, and only ask that its integral formulation is well-defined; this is the standard definition of a weak solution. However, if the PDEs are non-linear, then the lack of regularity creates difficulty in understanding any product of the solution with itself because there is no universal agreement on the definition of a product of distributions. Some physically meaningful models which have found rich applications in the real world were forced by a term that is white in both space and time, so-called space-time white noise (STWN). We refer to e.g., \cite{KPZ86} for the Kardar-Parisi-Zhang (KPZ) equation \eqref{4}, \cite{YO86} for the Navier-Stokes equations (NSE) \eqref{1} and Burgers' equation forced by STWN, as well as \cite{ACHS81, GP75, HS92, SH77} concerning the Boussinesq system forced by STWN. While considering the mild solution formulation typically solved the issue in case the noise is white only in time, the STWN leads to a lack of spatial regularity of the solution, and the construction of a solution has created a significant obstacle because the non-linear term seemed to be ill-defined in the classical sense. Let us describe recent developments that ultimately led to the two novel approaches of the theory of regularity structures by Hairer \cite{H14} and the theory of paracontrolled distributions by Gubinelli et al. \cite{GIP15} (see also \cite{GP17}). 

Following the notations of Young \cite[pg. 258]{Y36}, let us denote by $V_{p}(f)$ the $p$-variation of $f$ and write $f \in W_{p}$ if $V_{p} (f) < \infty$. Furthermore, we denote by $C^{\alpha}$ the space of H$\ddot{\mathrm{o}}$lder continuous functions with exponent $\alpha \geq 0$ (e.g.,  \cite[Definition 1.49]{BCD11}). Young \cite[pg. 265]{Y36} proved an important theorem in which if $f \in W_{p}, g \in W_{q}$ where $p, q > 0, \frac{1}{p} + \frac{1}{q} > 1$, and they have no common discontinuities, then their Lebesgue-Stieltjes integral $\int g(x) df(x)$ still exists. In order to understand its implication, let us introduce the NSE. Let us denote by $u: \hspace{1mm} \mathbb{T}^{N} \times \mathbb{R}_{+} \mapsto \mathbb{R}^{N}$ and $\pi: \hspace{1mm} \mathbb{T}^{N} \times \mathbb{R}_{+} \mapsto \mathbb{R}$ the $N$-dimensional ($N$-d) velocity vector field and the pressure scalar field, respectively. Additionally, by denoting by $\nu \geq 0$ the viscous diffusivity and $\partial_{t} \triangleq \frac{\partial}{\partial t}, x = (x_{1}, \hdots, x_{N})$, $\partial_{x_{j}} \triangleq \frac{\partial}{\partial x_{j}}$ and $\partial_{x_{j}}^{k} \triangleq \frac{ \partial^{k}}{(\partial x_{j})^{k}}$ for $j \in \{1,\hdots, N\}$, the NSE can be written as 
\begin{equation}\label{1}  
\partial_{t}u + (u\cdot\nabla) u + \nabla \pi - \nu \Delta u = \xi^{u}, \hspace{3mm} \nabla\cdot u = 0, 
\end{equation} 
with initial data $u^{\text{in}}(x) \triangleq u(x,0)$, where $\xi^{u}$ is the Gaussian field that is white in both time and space; i.e. $\mathbb{E} [ \xi^{u}(x,t)\xi^{u}(y,s)] = \delta(x-y)\delta(t-s)$. We will also need the definition of the H$\ddot{\mathrm{o}}$lder space with negative exponent; for this purpose, let us recall the basic background of Besov spaces (\cite{GIP15} and also \cite{I99} on how the Littlewood-Paley theory on $\mathbb{R}^{3}$ may be transferred to $\mathbb{T}^{3}$). Let us use the notation of $A \lesssim_{a,b} B$ in case there exists a non-negative constant $C = C(a,b)$ that depends on $a,b$ such that $A \leq C B$; similarly let us write $A \approx_{a,b} B$ in case $A = CB$. Moreover, unless elaborated in detail, we denote $\sum_{k \in \mathbb{Z}^{3}}$ by $\sum_{k}$. First we recall the Fourier transform 
\begin{equation*}
\hat{f}(k) \triangleq \mathcal{F}_{\mathbb{T}^{3}}(f)(k) \triangleq \frac{1}{(2\pi)^{\frac{3}{2}}} \int_{\mathbb{T}^{3}} f(x) e^{-i x \cdot k} dx 
\end{equation*} 
with its inverse denoted by $\mathcal{F}_{\mathbb{T}^{3}}^{-1}$, let $\mathcal{D}$ be the set of all smooth functions with compact support on $\mathbb{T}^{3}$, $\mathcal{D}'$ its dual. We let $\chi, \rho \in \mathcal{D}$ be non-negative, radial such that the support of $\chi$ is contained in a ball while that of $\rho$ in an annulus and satisfy 
\begin{align*} 
&\chi(\xi) + \sum_{j \geq 0} \rho(2^{j} \xi) = 1 \hspace{1mm} \forall \hspace{1mm} \xi, \hspace{1mm} \text{supp} (\chi) \cap \text{supp} (\rho(2^{-j} \cdot)) = \emptyset \hspace{1mm} \forall \hspace{1mm} j \geq 1,\\
&\text{supp} (\rho(2^{-i} \cdot)) \cap \text{supp} (\rho(2^{-j} \cdot)) = \emptyset \text{ for } \lvert i - j \rvert > 1.
\end{align*}
We see that $\chi(\cdot) = \rho(2^{-1} \cdot)$ and define Littlewood-Paley operator as $\Delta_{j} f \triangleq \mathcal{F}_{\mathbb{T}^{3}}^{-1}(\rho_{j} \mathcal{F}_{\mathbb{T}^{3}} (f))$ where $\rho_{j} \triangleq \rho(2^{-j}\cdot)$. We also write $S_{j} f \triangleq \sum_{i \leq j-1} \Delta_{j} f$. Now for $\alpha \in \mathbb{R}$ and $p, q \in [1,\infty]$, we may define the inhomogeneous Besov space 
\begin{equation*}
B_{p,q}^{\alpha} (\mathbb{T}^{3}) \triangleq \{ f \in \mathcal{D}' (\mathbb{T}^{3}): \hspace{1mm} \lVert f \rVert_{B_{p,q}^{\alpha} (\mathbb{T}^{3})} \triangleq  \lVert 2^{j\alpha}\lVert \Delta_{j} f \rVert_{L^{p}(\mathbb{T}^{3})} \rVert_{l^{q}(\{j \geq -1\})}  < \infty \}. 
\end{equation*}  
The H$\ddot{\mathrm{o}}$lder-Besov space $\mathcal{C}^{\alpha}(\mathbb{T}^{3})$ is the special case when $p = q = \infty$; i.e. 
$\mathcal{C}^{\alpha}(\mathbb{T}^{3}) = B_{\infty, \infty}^{\alpha}(\mathbb{T}^{3})$. For $\alpha \in (0,\infty) \setminus \mathbb{N}$, $\mathcal{C}^{\alpha}(\mathbb{T}^{3}) = C^{\alpha}(\mathbb{T}^{3})$ (\cite[pg. 99]{BCD11}). We point out that 
\begin{equation}\label{2}
\lVert \cdot \rVert_{\mathcal{C}^{\beta}} \lesssim \lVert \cdot \rVert_{L^{\infty}} \lesssim \lVert \cdot \rVert_{\mathcal{C}^{\alpha}} \hspace{1mm} \text{ if } \hspace{1mm} \beta \leq 0 \leq \alpha \text{ and } \lVert S_{j} \cdot \rVert_{L^{\infty}} \lesssim 2^{-j\alpha} \lVert \cdot \rVert_{\mathcal{C}^{\alpha}} \hspace{1mm} \forall \hspace{1mm} \alpha < 0. 
\end{equation} 

Now for simplicity let us consider the 1-d analogue of $(u\cdot\nabla) u$ in the NSE (\ref{1}), specifically $u \partial_{x}u$ corresponding to the non-linear term of the Burgers' equation which was studied by Da Prato et al. \cite{DDT94}. Following the discussion of \cite[pg. 1548]{H11}, assuming that its solution $u \in \mathcal{C}^{\alpha}$ for $\alpha > \frac{1}{2}$, we may multiply this non-linear term by a smooth periodic function $\psi$ and understand it as 
\begin{equation}\label{3} 
\int_{\mathbb{T}} \psi(x) u(x) du(x)
\end{equation} 
which is well-defined as a Young's integral because $\psi u \in \mathcal{C}^{\alpha}$ for $\alpha > \frac{1}{2}$ and $f \in C^{\frac{1}{p}}$ for $p \in (0, \infty)$ implies $f \in W_{p}$ in general. Unfortunately, the assumption of $u\in \mathcal{C}^{\alpha}$ for $\alpha > \frac{1}{2}$ turns out to be a wishful thinking. In fact, in the general case when the spatial dimension is $N$, considering that the space-time dimension is $N + 1$ so that the scaling $\mathcal{S} \in \mathbb{N}^{N}$ is $\mathcal{S} = (\mathcal{S}_{1}, \hdots, \mathcal{S}_{N+1}) = (2, 1, \hdots, 1)$ with the first entry informally representing the dimension of time due to $\partial_{t}$ and $\Delta$, we actually know that $\xi \in \mathcal{C}^{\alpha}(\mathbb{T}^{N})$ for $\alpha < - \frac{\lvert \mathcal{S} \rvert}{2}$ where $\lvert \mathcal{S} \rvert = N + 2$ by \cite[Lemma 10.2]{H14} (see also \cite[Lemma 3.20]{H14} and \cite{BK17}). This leads to $u \in \mathcal{C}^{\alpha}(\mathbb{T}^{N})$ for $\alpha < 2 - \frac{N+2}{2}$ due to regularization from the diffusion (see \cite[pg. 417, 481]{H14}). Therefore, the Young's integral (\ref{3}) is ill-defined even in case $N = 1$. 

Although one may turn to the theory of stochastic integrals such as the It$\hat{\mathrm{o}}$'s integrals at this point, its limitations have also been noticed over decades (e.g.,  \cite[pg. 6]{GIP15}, \cite[pg. 1548]{H11}).  In order to complement the theory of It$\hat{\mathrm{o}}$'s integrals, Lyons developed a theory of rough path (\cite{L98, LQ02}).  Subsequently, Gubinelli \cite{G04} extended the Lyon's rough path theory; we refer to \cite{FV10a, FV10b, GT10, H11, HW13} for further study and applications of rough path theory.  As one of the most prominent examples of a result inspired from the rough path theory, let us briefly discuss recent developments of the KPZ equation (\ref{4}). The KPZ equation, an interface model of flame propagation, was derived in \cite[Equation (1)]{KPZ86} as 
\begin{equation}\label{4}
\partial_{t}h = \partial_{x}^{2} h + \lambda (\partial_{x} h)^{2} + \xi^{h}
\end{equation} 
where $h(x,t)$ represents the interface height, $\lambda > 0$ is the coupling strength, $x \in \mathbb{S}^{1}$ and $\xi^{h}$ is the STWN. Following \cite[pg. 562]{H13}, let us consider a multiplicative stochastic heat equation $dZ = \partial_{x}^{2} Z dt + \lambda Z dW$ where $\partial_{t}W = \xi^{h}$. We denote by $Z^{\epsilon}$ the solution to the same equation with $W$ replaced by a mollified noise $W^{\epsilon}$, which is obtained from multiplying the $k$-th Fourier component of $W$ by $f(k\epsilon)$ for a smooth cut-off function $f$ with compact support such that $f(0) = 1$. Then It$\hat{\mathrm{o}}$'s formula shows that $h^{\epsilon} (x,t) \triangleq \frac{1}{\lambda} \ln Z^{\epsilon}(x,t)$ (see \cite{M91} on the positivity of $Z^{\epsilon}$) solves 
\begin{equation}\label{5} 
\partial_{t} h^{\epsilon} = \partial_{x}^{2} h^{\epsilon} + \lambda (\partial_{x} h^{\epsilon})^{2} - \frac{\lambda}{2} \sum_{k \in \mathbb{Z}} f^{2}(k \epsilon) + \xi^{h,\epsilon}
\end{equation} 
where $\sum_{k \in \mathbb{Z}} f^{2}(k\epsilon) \approx \frac{1}{\epsilon} \int_{\mathbb{R}} f^{2}(x) dx \to \infty$. This simple computation displays the necessity to rely on techniques from quantum field theory  (e.g.,  \cite[Section 4]{MR99}) such as renormalization, which amounts to strategically subtracting off a large constant from a regularized equation, and replacing a standard product by Wick product (e.g.,  \cite[pg. 23]{J97}). These techniques actually have long history of its utility in stochastic quantization. In particular, Da Prato and Debussche \cite{DD03} proved the existence of a unique strong solution to the 2-d stochastic quantization equation for almost all initial data with respect to the invariant measure using such techniques (see also \cite{BG97, DDT07}). Without delving into further details, we mention that Hairer \cite{H13} in particular discovered two additional logarithmically divergent constants beside the $\frac{1}{\epsilon}$ in (\ref{5}) and successfully introduced a completely new concept of a solution to the KPZ equation (\ref{4}) using rough path theory (see also \cite{HQ18}).  

Let us now discuss this direction of research in the case of the NSE (\ref{1}). To the best of the author's knowledge, Flandoli and Gozzi \cite{FG98} were the first to consider the 2-d NSE in $\mathbb{T}^{2}$ with the forcing that is not regular; they proved in \cite[Theorem 4.3]{FG98} that the Kolmogorov equation associated to the NSE with covariance operator that is an identity has a weak solution. However, due to the spatial roughness of the noise, the authors in \cite{FG98} were not able to make the connection to the original equation. Subsequently, Da Prato and Debussche  \cite{DD02} overcame this difficulty using techniques of renormalization and Wick products. 

At this point let us introduce the magnetohydrodynamics (MHD) system of main concern because the failure to apply the proofs of \cite{DD02, FG98}, which we will explain shortly, clearly displays the complexity of the MHD system in contrast to the NSE. We denote the magnetic $N$-d vector field by $b: \hspace{1mm} \mathbb{T}^{N} \times \mathbb{R}_{+} \mapsto \mathbb{R}^{N}$ and the magnetic diffusivity by $\eta \geq 0$, where $N \in \{2,3,4\}$. Then the MHD system reads as 
\begin{subequations}
\begin{align}
&\partial_{t}u + (u\cdot\nabla) u + \nabla \pi = \nu \Delta u + (b\cdot\nabla) b + \xi^{u}, \hspace{3mm} \nabla\cdot u = 0, \label{6a}\\
& \partial_{t}b+ (u\cdot\nabla) b = \eta \Delta b + (b\cdot\nabla) u + \xi^{b}, \hspace{14mm} \nabla\cdot b=  0, \label{6b}
\end{align}
\end{subequations} 
for which we write the solution as $y \triangleq (y_{1}, \hdots, y_{6}) \triangleq (u,b) \triangleq (u_{1}, u_{2}, u_{3}, b_{1}, b_{2}, b_{3})$, with initial data $y^{\text{in}}(x) \triangleq (u^{\text{in}}, b^{\text{in}})(x) = (u,b)(x,0)$, and $\xi \triangleq (\xi^{u}, \xi^{b})$ where $\xi^{u} \triangleq (\xi_{1}^{u}, \xi_{2}^{u}, \xi_{3}^{u}) = (\xi_{1}, \xi_{2}, \xi_{3})$ and $\xi^{b} \triangleq (\xi_{1}^{b}, \xi_{2}^{b}, \xi_{3}^{b}) = (\xi_{4}, \xi_{5}, \xi_{6})$, is a Gaussian field which is white in both space and time. For simplicity of computation, let us assume that $\nu = \eta = 1$ as well as that $\int_{\mathbb{T}^{3}} \xi^{u} dx = \int_{\mathbb{T}^{3}} \xi^{b} dx = 0$ which  in turn allows us to assume that $(u,b)$ are also mean zero; this may be justified via a standard scaling argument of the solution to the MHD system. Such MHD system forced by STWN has been studied by physicists for decades; e.g., Camargo and Tasso \cite{CT92} applied the renormalization group theory to the MHD system forced by STWN and determined the effective viscosity and magnetic resistivity without solving the system.

\begin{remark}\label{Remark 1.1}
As a STWN, the correlation of $\xi^{u}$ and that of $\xi^{b}$ are both products of a delta function in $x$ with another delta function in $t$. In the literature on Boussinesq system such as \cite[Equation (3)]{GP75}, the authors make an assumption corresponding to the MHD system that the correlation of $\xi^{u}$ and $\xi^{b}$ vanish; i.e. $\mathbb{E} [\xi_{i}^{u} \xi_{j}^{b}] = 0$ for all $i, j \in \{1,2,3\}$.  Considering that there is no physical reason why $\xi^{u}$ and $\xi^{b}$ should have any independence, in this manuscript we shall assume that the correlation of $\xi^{u}$ and $\xi^{b}$ is also a product of a delta function in $x$ with another delta function in $t$ (see (\ref{116}) which is a corollary of this assumption). Our computations are thus more general. Indeed, it is easy to recover the case $\mathbb{E}[\xi_{i}^{u} \xi_{j}^{b}] = 0$  for all $i, j \in \{1,2,3\}$ because many terms within our proof vanish due to the mixed non-linear terms such as  $(u\cdot\nabla)b$ and $(b\cdot\nabla)u$. This is an interesting difference from the case of the NSE; the computations of the mixed non-linear terms can be actually much simpler than the case of the NSE under the assumption of the zero correlation among $\xi^{u}$ and $\xi^{b}$. 
\end{remark} 

It is well-known that if we take the $L^{2}(\mathbb{T}^{N})$-inner products of (\ref{1}) with $u$, then the non-linear term, as well as the pressure term, both vanish by divergence-free property; e.g.,  $\int_{\mathbb{T}^{3}} (u\cdot\nabla) u \cdot u dx = \frac{1}{2} \int_{\mathbb{T}^{3}} (u\cdot\nabla) \lvert u \rvert^{2} dx = 0$. An analogous attempt of taking $L^{2}$-inner products on (\ref{6a}) with $u$ fails because 
\begin{equation}\label{7}
\int_{\mathbb{T}^{3}} (b\cdot\nabla) b \cdot u dx \neq 0 
\end{equation} 
in general. Yet, if we take $L^{2}(\mathbb{T}^{N})$-inner products on (\ref{6b}) with $b$ simultaneously and add the two resulting equations, then all the non-linear terms and the pressure term in (\ref{6a})-(\ref{6b}) do vanish because $\int_{\mathbb{T}^{3}} (u\cdot\nabla) b \cdot b dx = \frac{1}{2} \int_{\mathbb{T}^{3}} (u\cdot\nabla) \lvert b \rvert^{2} dx = 0$ and 
\begin{equation}\label{8}
\int_{\mathbb{T}^{3}} (b\cdot\nabla) b \cdot u + (b\cdot\nabla) u \cdot b dx = 0. 
\end{equation} 
Even though there exist some extensions of techniques on the NSE to the MHD system such as this, attempts to modify the proofs of \cite{DD02, FG98} on the 2-d NSE to the 2-d MHD system face a non-trivial difficulty. In both works of \cite{DD02, FG98}, the authors relied on the following key identity:
\begin{equation}\label{9}
\int_{\mathbb{T}^{2}} (u\cdot\nabla) u \cdot \Delta u dx = 0.
\end{equation} 
In fact, one of the reasons why the authors admitted that extending to other boundary conditions beside $\mathbb{T}^{2}$ is not easy (e.g.,  \cite[pg. 312]{FG98}) is exactly this identity (\ref{9}). The identity (\ref{9}) was used in \cite[pg. 328]{FG98} and \cite[pg. 190]{DD02}, and it actually fails in the case of the MHD system because $\int_{\mathbb{T}^{3}} [(u\cdot\nabla) u - (b\cdot\nabla) b ] \cdot \Delta u dx \neq 0$ and even if we add similarly to (\ref{8}), 
\begin{equation}\label{10}
\int_{\mathbb{T}^{3}} [(u\cdot\nabla) u - (b\cdot\nabla) b ] \cdot \Delta u + [(u\cdot\nabla) b - (b\cdot\nabla) u] \cdot \Delta b dx \neq 0 
\end{equation} 
in general. In fact, the identity (\ref{9}), which is equivalent to $ \int_{\mathbb{T}^{2}} (u\cdot\nabla) (\nabla \times u) \cdot (\nabla \times u) dx = 0$, has also been used crucially in various other works on the NSE (e.g.,  \cite{HM06}), many of which have not been extended to the MHD system with (\ref{10}) being one of the sources of the technical issues. As we will elaborate in Remark \ref{Remark 3.5}, interestingly we will need to renormalize certain term togethers very similarly to \eqref{8}. 

Zhu and Zhu \cite{ZZ15} gave a very nice discussion of how the proof within \cite{DD02} cannot be extended to the 3-d NSE and thus most certainly has no chance of being extended to the 3-d MHD system; let us recollect it here. Da Prato and Debussche \cite{DD02} considered (\ref{1}) in $\mathbb{T}^{2}$, $z$ to be the solution to the Stokes equation forced by the fixed STWN $\xi^{u}$ and the equation solved by $v \triangleq u - z, q \triangleq \pi - p$, specifically
\begin{align*}
& \partial_{t} z = \Delta z - \nabla p + \xi^{u}, \hspace{3mm} \nabla\cdot z = 0, \\
& \partial_{t} v = \Delta v - \nabla q - \frac{1}{2} \text{div} [(v + z) \otimes (v + z)], \hspace{3mm} \nabla\cdot v = 0. 
\end{align*}
Similarly to the discussion of the Burgers' equation in (\ref{3}), due to \cite[Lemma 10.2]{H14} (see also \cite[Lemma 3.20]{H14}) the solution $z$ is very rough, and only in $\mathcal{C}^{\alpha}(\mathbb{T}^{N})$ for $\alpha < 1 - \frac{N}{2}$. Thus, if $N = 2$, then $z \in \mathcal{C}^{\alpha}(\mathbb{T}^{2})$ for $\alpha < 0$ and considering $\text{div} (z \otimes z) \in \mathcal{C}^{\alpha}(\mathbb{T}^{2})$ for $\alpha < -1$, the diffusion leads to $v \in \mathcal{C}^{\alpha}(\mathbb{T}^{2})$ for $\alpha < 1$. This implies that according to Bony's estimates (see Lemma \ref{Lemma 1.1} (4)) the product $v \otimes v$ and even $v \otimes z$ can be well-defined, leaving only $z \otimes z$ for which one can turn to Wick products. However, in the case $N = 3$ same computations show that not only $z\otimes z$ but even $z \otimes v$ is ill-defined. 

Two novel approaches have been developed to bring about a resolution to such an issue, specifically the theory of regularity structures due to Hairer \cite{H14} and that of paracontrolled distributions due to Gubinelli et al. \cite{GIP15}. The work of Hairer \cite{H14} allows one to construct a regularity structure endowed with a whole set of calculus operations such as multiplication, integration and differentiation, so that one can recover a fixed point theory, and finally rely on the reconstruction theorem to conclude the existence and uniqueness of a solution to the original problem (see \cite{CFG17, HM18, HM18a} for further discussions). On the other hand, the theory of paracontrolled distributions relies heavily on the Bony's decomposition (e.g.,  \cite[pg. 86]{BCD11}) beside the rough path theory, which we now describe briefly. The purpose of the Bony's decomposition is to split $fg$ in parts where the frequency of $f$ and $g$ are low and high, specifically 
\begin{equation*}
fg = \sum_{i,j \geq -1} \Delta_{i} f \Delta_{j} g = \pi_{<}(f,g) + \pi_{>} (f,g) + \pi_{0}(f,g) \text { where }
\end{equation*} 
\begin{equation*}
\pi_{<}(f,g) = \sum_{j \geq -1} S_{j} f \Delta_{j} g, \pi_{>}(f,g) = \sum_{j \geq -1} \Delta_{j} f S_{j} g, \pi_{0}(f,g) =  \sum_{j, l \geq -1: \lvert l-j \rvert \leq 1} \Delta_{j} f \Delta_{l} g.
\end{equation*}
The terms $\pi_{<}(f,g)$ and $\pi_{>}(f,g)$ are called paraproducts while $\pi_{0}(f,g)$ the remainder. The key observation by Bony was that $\pi_{<}(f,g)$ and similarly $\pi_{>}(f,g)$ are well-defined distributions such that the mapping $(f,g) \mapsto \pi_{<}(f,g)$ is a bounded bi-linear operator from $\mathcal{C}^{\alpha}(\mathbb{T}^{N}) \times \mathcal{C}^{\beta}(\mathbb{T}^{N})$ to $\mathcal{C}^{\beta}(\mathbb{T}^{N})$ if $\alpha  > 0, \beta \in \mathbb{R}$. Heuristically, $\pi_{<}(f,g)$ behaves at large frequencies similarly to $g$, and $f$ provides only a modulation of $g$ at large scales. We will rely heavily on the following lemma:
\begin{lemma}\label{Lemma 1.1}
\rm{(\cite[Lemma 2.1]{GIP15}, \cite[Lemma 2.1]{GP17}, \cite[Proposition 2.3]{CC18})} Let $\alpha, \beta \in \mathbb{R}$. Then 
\begin{enumerate}
\item $\lVert \pi_{<}(f,g) \rVert_{\mathcal{C}^{\beta}} \lesssim \lVert f \rVert_{L^{\infty}} \lVert g \rVert_{\mathcal{C}^{\beta}} \text{ for } f \in L^{\infty}(\mathbb{T}^{N}), g \in \mathcal{C}^{\beta}(\mathbb{T}^{N}),$
\item $\lVert \pi_{>}(f,g) \rVert_{\mathcal{C}^{\alpha + \beta}} \lesssim \lVert f \rVert_{\mathcal{C}^{\alpha}} \lVert g \rVert_{\mathcal{C}^{\beta}} \text{ for } \beta < 0, f \in \mathcal{C}^{\alpha}(\mathbb{T}^{3}), g \in \mathcal{C}^{\beta}(\mathbb{T}^{3})$, 
\item $\lVert \pi_{0}(f,g) \rVert_{\mathcal{C}^{\alpha + \beta}} \lesssim \lVert f \rVert_{\mathcal{C}^{\alpha}} \lVert g \rVert_{\mathcal{C}^{\beta}} \text{ for } \alpha + \beta > 0, f \in \mathcal{C}^{\alpha}(\mathbb{T}^{3}),g \in \mathcal{C}^{\beta}(\mathbb{T}^{3})$. 
\item $fg$ is well-defined for $f \in \mathcal{C}^{\alpha}(\mathbb{T}^{3}), g \in \mathcal{C}^{\beta}(\mathbb{T}^{3})$ if $\alpha + \beta > 0$ and $\lVert f g\rVert_{\mathcal{C}^{\text{min} \{\alpha, \beta, \alpha + \beta\}}} \lesssim \lVert f \rVert_{\mathcal{C}^{\alpha}} \lVert g \rVert_{\mathcal{C}^{\beta}}$. 
\end{enumerate} 
\end{lemma}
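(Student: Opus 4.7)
The plan is to follow the standard Littlewood-Paley paraproduct analysis as in \cite{GIP15, CC18}. The driving structural fact is the Fourier support of the dyadic blocks of products: for the paraproducts $S_{j-1}f\,\Delta_{j}g$ and $\Delta_{j}f\,S_{j-1}g$, the Fourier transform is supported in an annulus of scale $2^{j}$ (low frequency times an annular block), whereas each resonant piece $\Delta_{j}f\,\Delta_{l}g$ with $\lvert j-l\rvert \leq 1$ has Fourier transform supported only in a ball of radius $\sim 2^{j}$. Hence $\Delta_{i}\pi_{<}(f,g)$ and $\Delta_{i}\pi_{>}(f,g)$ pick up only the $O(1)$-many indices with $\lvert i-j\rvert \leq N_{0}$ for a fixed $N_{0}$, while $\Delta_{i}\pi_{0}(f,g)$ picks up all $j \gtrsim i - N_{0}$.

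For (1), that localization reduces $\Delta_{i}\pi_{<}(f,g)$ to a sum of finitely many terms of the form $\Delta_{i}(S_{j-1}f\,\Delta_{j}g)$ with $j \approx i$, each bounded in $L^{\infty}$ by $\lVert f\rVert_{L^{\infty}}\lVert \Delta_{j}g\rVert_{L^{\infty}} \lesssim 2^{-j\beta}\lVert f\rVert_{L^{\infty}}\lVert g\rVert_{\mathcal{C}^{\beta}}$, giving the required bound after weighting by $2^{i\beta}$. Item (2) is the same argument with the roles of the low- and high-frequency factors exchanged, except that $\lVert S_{j-1}g\rVert_{L^{\infty}}$ must be controlled by $2^{-j\beta}\lVert g\rVert_{\mathcal{C}^{\beta}}$, which is exactly the low-frequency bound in (\ref{2}) and requires the standing assumption $\beta < 0$; combined with $\lVert \Delta_{j}f\rVert_{L^{\infty}} \lesssim 2^{-j\alpha}\lVert f\rVert_{\mathcal{C}^{\alpha}}$, this produces the regularity exponent $\alpha+\beta$.

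For (3), the ball-support property instead yields $\lVert \Delta_{i}\pi_{0}(f,g)\rVert_{L^{\infty}} \lesssim \sum_{j \geq i-N_{0}} 2^{-j(\alpha+\beta)}\lVert f\rVert_{\mathcal{C}^{\alpha}}\lVert g\rVert_{\mathcal{C}^{\beta}}$, and the geometric series converges precisely when $\alpha+\beta > 0$, summing to $2^{-i(\alpha+\beta)}$. Item (4) then follows immediately by writing $fg = \pi_{<}(f,g) + \pi_{>}(f,g) + \pi_{0}(f,g)$, applying (1)--(3), and invoking the embedding $\mathcal{C}^{\gamma} \hookrightarrow \mathcal{C}^{\min\{\alpha,\beta\}}$ for $\gamma \geq \min\{\alpha,\beta\}$, noting that the three produced exponents $\beta$, $\alpha+\beta$, and $\alpha+\beta$ each dominate $\min\{\alpha,\beta\}$ under the hypothesis $\alpha+\beta > 0$. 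The only real bookkeeping is tracking the constant $N_{0}$ that encodes the Fourier-support overlap, and since the result is classical with essentially verbatim proofs available in \cite{GIP15, CC18}, there is no substantive obstacle here beyond careful case-checking of the spectral supports.
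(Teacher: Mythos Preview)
Your proposal is correct and follows the standard Littlewood--Paley argument exactly as in the cited references \cite{GIP15, CC18}. Note that the paper itself does not supply a proof of this lemma---it is stated as a citation of known results---so there is no in-paper proof to compare against; your sketch reproduces the classical argument those references give.
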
  
By our discussion, only difficulty in defining the product $fg$ boils down to $\pi_{0}(f,g)$, and for this purpose, Gubinelli et al. in \cite{GIP15} relied on a paracontrolled ansatz (see (\ref{29}) and (\ref{32})) and a commutator lemma (see Lemma \ref{Lemma 2.4}). 

Beside the work of Zhu and Zhu in \cite{ZZ15}, we wish to mention the work of Catellier and Chouk \cite{CC18}, by which our work was inspired. The purpose of this manuscript is to prove the local existence of a unique solution to the MHD system forced by the STWN (\ref{6a})-(\ref{6b}); i.e., 
\begin{subequations} 
\begin{align}
&\partial_{t} u_{i} - \Delta u_{i} = \sum_{i_{1} = 1}^{3} \mathcal{P}_{ii_{1}} \xi_{i_{1}}^{u} - \frac{1}{2} \sum_{i_{1}, j = 1}^{3} \mathcal{P}_{ii_{1}} \partial_{x_{j}}(u_{i} u_{j}) + \frac{1}{2} \sum_{i_{1}, j=1}^{3} \mathcal{P}_{ii_{1}} \partial_{x_{j}}(b_{i}b_{j}), \label{11a}\\
&\partial_{t} b_{i} - \Delta b_{i} = \sum_{i_{1} =1}^{3} \mathcal{P}_{ii_{1}} \xi_{i_{1}}^{b} - \frac{1}{2} \sum_{i_{1}, j=1}^{3} \mathcal{P}_{ii_{1}} \partial_{x_{j}}(b_{i} u_{j}) + \frac{1}{2} \sum_{i_{1}, j=1}^{3} \mathcal{P}_{ii_{1}} \partial_{x_{j}} (u_{i} b_{j}), \label{11b} \\
& u(x, 0) = \mathcal{P} u^{\text{in}}(\cdot), \hspace{3mm} b(x, 0) = \mathcal{P} b^{\text{in}}(\cdot),\label{11c} 
\end{align}
\end{subequations}
for $i \in \{1,2,3\}$, where $\widehat{\mathcal{P}_{lm}}(k) = \delta(l-m) - \frac{k_{l}k_{m}}{\lvert k \rvert^{2}}$ so that $\mathcal{P}$ represents the Leray projection onto the space of divergence-free vector fields. For brevity we define $L \triangleq \partial_{t} - \Delta$. 

\begin{theorem}\label{Theorem 1.2}
Let $\delta_{0} \in (0, \frac{1}{2})$ and then $z \in (\frac{1}{2}, \frac{1}{2} +\delta_{0})$, as well as $y^{\text{in}} = (u^{\text{in}}, b^{\text{in}}) \in \mathcal{C}^{-z}(\mathbb{T}^{3})$. Suppose that $\xi^{\epsilon} = \sum_{k} f(\epsilon k) \hat{\xi}(k) e_{k}$ for $\epsilon > 0$ and $f$ is a smooth radial cut-off function with compact support such that $f(0) = 1$, and $y^{\epsilon} = (u^{\epsilon}, b^{\epsilon})$ is the maximal unique solution to 
\begin{align}\label{261}
Ly^{\epsilon}_{i}= 
\begin{pmatrix}
\sum_{i_{1} =1}^{3} \mathcal{P}_{ii_{1}} \xi_{i_{1}}^{u,\epsilon} - \frac{1}{2} \sum_{i_{1}, j=1}^{3} \mathcal{P}_{ii_{1}}  \partial_{x_{j}}  (u^{\epsilon}_{i} u^{\epsilon}_{j}) + \frac{1}{2} \sum_{i_{1}, j=1}^{3} \mathcal{P}_{ii_{1}}  \partial_{x_{j}} (b^{\epsilon}_{i}b^{\epsilon}_{j})\\
\sum_{i_{1} =1}^{3} \mathcal{P}_{ii_{1}} \xi_{i_{1}}^{b, \epsilon} - \frac{1}{2} \sum_{i_{1}, j=1}^{3} \mathcal{P}_{ii_{1}}  \partial_{x_{j}}  (b^{\epsilon}_{i} u^{\epsilon}_{j}) + \frac{1}{2} \sum_{i_{1}, j=1}^{3} \mathcal{P}_{ii_{1}}  \partial_{x_{j}}  (u^{\epsilon}_{i} b^{\epsilon}_{j}),
\end{pmatrix}
\end{align} 
such that $u^{F,\epsilon}, b^{F,\epsilon}$, which is constructed identically to \eqref{16}-\eqref{21} except that \eqref{16} has $\xi^{\epsilon} = (\xi^{u,\epsilon}, \xi^{b,\epsilon})$ rather than $\xi = (\xi^{u}, \xi^{b})$, belong to $C([0,T^{\epsilon}); \mathcal{C}^{\frac{1}{2} - \delta_{0}})$. Then there exists $y \in C([0, \tau); \mathcal{C}^{-z})^{2}$ and $\{\tau_{L}\}_{L}$, specifically defined in \eqref{260}, such that $\tau_{L}$ increases to the explosion time $\tau$ of $y = (u,b)$ that satisfies 
\begin{equation}\label{267} 
\sup_{t \in [0, \tau_{L}]} \lVert y^{\epsilon} - y \rVert_{\mathcal{C}^{-z}} \to 0 \text{ as } \epsilon \to 0 \text{ in probability}.
\end{equation}  
\end{theorem}  

\begin{remark}\label{Remark 1.3}
We emphasize two new novelty of this work in comparison to the approaches of \cite{CC18, ZZ15}. First, let us acknowledge that a nonlinearly coupled systems of equations forced by STWN have been studied before, e.g., a multi-component KPZ equation 
\begin{equation*}
\partial_{t}h_{i} = \partial_{x}^{2} h_{i} + S_{jki} \partial_{x}h_{j} \partial_{x}h_{k} + \xi_{i} 
\end{equation*}
in \cite[Equation (5.12)]{HM18a} where each $\xi_{i}$ is an independent STWN on $\mathbb{R} \times \mathbb{T}$ and $S_{jki} \in \mathbb{R}$. We point out that the equations of $h_{i}$ is essentially identical while those of $u$ and $b$ in \eqref{6a}-\eqref{6b} differ significantly, leading to the need to carefully take advantage of its structure as follows.
\begin{itemize}
\item We need to define correct paracontrolled ansatz; see \eqref{29} and \eqref{32} for velocity and magnetic fields, respectively. The correct choices \eqref{29} and \eqref{32} display clearly the complexity of the MHD system due to the four mixed non-linear terms (see Remark \ref{Remark 3.2}).  
\item Certain renormalizations must be ``coupled'' together. This major issue is elaborated in detail in Remark \ref{Remark 3.5}. Interestingly, the nature of this problem is same as those of \eqref{7}-\eqref{8}. 
\end{itemize}
Other differences from \cite{ZZ15} are mentioned in Remark \ref{new remark}. 

Let us also emphasize that there are many results on the NSE which have not been extended to the MHD system despite much effort by many mathematicians. As already mentioned, the work of Hairer and Mattingly \cite{HM06} on the ergodicity of the 2-d NSE seems difficult to be extended to the 2-d MHD system. In the deterministic case, there exist also abundance of results for which an extension from the case of the NSE to the MHD system is a challenging open problem. For example, although Yudovich \cite{Y63} over 55 years ago proved the global regularity of the solution to the 2-d NSE with zero viscous diffusion, which is the Euler equations, its extension to the 2-d MHD system with zero viscous diffusion remains open despite extensive interest from many mathematicians (e.g.,  \cite{CWY14, FMMNZ14, JZ14a, Y18a}). 
\end{remark} 

\begin{remark}\label{Remark 1.5}
We point out an interesting open problem of extending our result to the Hall-MHD system:
\begin{subequations}
\begin{align}
&\partial_{t}u + (u\cdot\nabla) u + \nabla \pi = \Delta u + (b\cdot\nabla) b + \xi^{u}, \hspace{25mm} \nabla\cdot u = 0, \label{12a}\\
& \partial_{t}b+ (u\cdot\nabla) b =  \Delta b + (b\cdot\nabla) u - \epsilon \nabla \times ((\nabla \times b) \times b)+ \xi^{b}, \hspace{3mm} \nabla\cdot b=  0, \label{12b}
\end{align}
\end{subequations} 
where $\epsilon \geq 0$ is the Hall parameter. We note that the case $\epsilon = 0$ reduces (\ref{12a})-(\ref{12b}) to the MHD system (\ref{6a})-(\ref{6b}). Since this system was introduced by Lighthill \cite{L60} over 75 years ago, it has found rich applications in astrophysics, geophysics and plasma physics; we refer to \cite{ADFL11, CDL14} for its study in the deterministic case and \cite{Y17a, YM18} in the stochastic case. By definition from \cite[Assumption 8.3]{H14}, the $N$-d Hall-MHD system is not locally subcritical for any $N \geq 2$. We believe that extending Theorem \ref{Theorem 1.2} to the Hall-MHD system, which is quasi-linear, is a  mathematically challenging and physically meaningful open problem. 
\end{remark}
 
\begin{remark}\label{Remark 1.6}
All the previous work on the MHD and related systems forced by random force have been devoted to the case the noise is white in only time and not space (e.g.,  \cite{BD07, S10a, SS99, Y18b}). Theorem \ref{Theorem 1.2} sheds light on the MHD system forced by STWN that has been studied in the physics literature (e.g., \cite{CT92}), and it has become clearer how to establish similar results for other systems such as the Boussinesq system for which its study with STWN has also been suggested by physicists for decades (\cite{ACHS81, GP75, HS92, SH77}). Moreover, it will be interesting to study a system of PDEs forced partially by STWN, e.g., the Boussinesq system with only the equation of the temperature forced by noise that is white only in time in \cite{FGRT15}.
\end{remark} 

\begin{remark}\label{Remark 1.7}
This work was initially completed in 2019. Subsequently in 2021, strong Feller property of the 3D MHD system forced by STWN was proven \cite{Y21a} via the approach of \cite{HM18a} using the theory of regularity structures (see also \cite{Y21b}). In comparison to the theory of regularity structures, the theory of paracontrolled distributions offers simpler approach that has led to results which do not seem accessible yet via the theory of regularity structures. One example is \cite{GKO18, GKP22} in which the authors successfully employed the theory of paracontrolled distributions to the stochastic nonlinear wave equations forced by STWN that falls outside the scope of the theory of regularity structures. Second important example is the very recent application of convex integration to the 3-d NSE forced by STWN \cite{HZZ22}. Let us briefly elaborate on this topic considering its relevance to our current work. The convex integration is a new revolutionary technique in deterministic hydrodynamic PDEs that led to, among many other breakthroughs, non-uniqueness of the Euler equations in any dimension \cite{DS09}, resolution of Onsager's conjecture \cite{I18}, and non-uniqueness of weak solutions to the 3-d NSE \cite{BV19a}. The impact of convex integration has reached the stochastic community as well and very recently, Hofmanov$\acute{a}$, Zhu, and Zhu \cite{HZZ19} proved non-uniqueness in law of the 3-d NSE forced by either additive or linear multiplicative noise that is white only in time (see also \cite{BFH20, CFF19}); subsequently, the author in \cite{Y21c} extended this result to the 3-d MHD system forced by either additive or linear multiplicative noise, although its diffusion $-\Delta u, -\Delta b$ had to be replaced by $(-\Delta)^{m_{1}} u, (-\Delta)^{m_{2}} b$ for any $m_{1}, m_{2} \in (0,1)$ due to technical reasons. Remarkably, Hofmanov$\acute{a}$, Zhu, and Zhu \cite{HZZ22} extended \cite{HZZ19} to the case of STWN, and here, they crucially relied on the approach of paracontrolled distributions rather than the theory of regularity structures. Proving non-uniqueness of singular stochastic PDEs forced by STWN via probabilistic convex integration rather than well-posedness is a completely new approach that has great potential, especially for singular PDEs that are not locally subcritical and fall outside the scope of the theory of regularity structures or the paracontrolled distributions, e.g., the stochastic Yang-Mills equation in dimension beyond three (see \cite{CCHS22a, CCHS22b}). 
\end{remark}

\section{Proof of Theorem \ref{Theorem 1.2}: Fixed Point Procedure} 
Hereafter, we denote $\mathcal{C}^{\alpha}(\mathbb{T}^{3})$ by simply $\mathcal{C}^{\alpha}$. We consider $\{\xi^{\epsilon} \}_{\epsilon > 0}$, a family of smooth approximations of $\xi = (\xi^{u}, \xi^{b})$, to be specified subsequently, and study the MHD system corresponding to $\xi^{\epsilon}$; we should formally denote its solution as $y^{\epsilon} \triangleq (u^{\epsilon}, b^{\epsilon})$ but for brevity omit it until (\ref{111}) when it is clear. We recall that $L \triangleq \partial_{t} - \Delta$ and study the following system: 
\begin{subequations}\label{15} 
\begin{align}
Lu_{i} =& \sum_{i_{1} = 1}^{3} \mathcal{P}_{i i_{1}} \xi_{i_{1}}^{u}
 - \frac{1}{2} \sum_{i_{1}, j=1}^{3} \mathcal{P}_{i i_{1}} \partial_{x_{j}} (u_{i_{1}} u_{j})  + \frac{1}{2} \sum_{i_{1}, j=1}^{3} \mathcal{P}_{i i_{1}} \partial_{x_{j}} ( b_{i_{1}} b_{j}),\\
L b_{i} =& \sum_{i_{1} =1}^{3} \mathcal{P}_{ii_{1}} \xi_{i_{1}}^{b}
 - \frac{1}{2} \sum_{i_{1}, j=1}^{3} \mathcal{P}_{ii_{1}} \partial_{x_{j}}(b_{i_{1}} u_{j}) + \frac{1}{2} \sum_{i_{1}, j=1}^{3} \mathcal{P}_{ i i_{1}} \partial_{x_{j}} (u_{i_{1}} b_{j}),\\
y(\cdot, 0) =& \mathcal{P}(u^{\text{in}},b^{\text{in}})(\cdot) \in \mathcal{C}^{-z}, 
\end{align} 
\end{subequations}
where $\xi \triangleq (\xi^{u}, \xi^{b})$ are periodic, independent STWN. 
\subsection{Paracontrolled Ansatz}
Let us approximate (\ref{11a})-(\ref{11b}) as follows. We start with the linear equations forced by  noise first: 
\begin{equation}\label{16}
Lu^{\scalebox{0.6}{

}} \rVert_{C([0,T]; \mathcal{C}^{\frac{1}{2} - \delta})} \lesssim C_{\xi} T^{\frac{\delta}{4}}. 
\end{equation} 
Next, from (\ref{19})-(\ref{21}), we may compute 
\begin{equation}\label{40}
\sup_{t \in [0, T]} t^{\frac{ \frac{1}{2} - \delta_{0} + z}{2}} \lVert (u^{F}_{i}, b^{F}_{i})(t) \rVert_{\mathcal{C}^{\frac{1}{2} - \delta_{0}}}\lesssim I_{T}^{1} + I_{T}^{2} 
\end{equation} 
for 
\begin{subequations}
\begin{align}
I_{T}^{1} \triangleq& \sup_{t \in [0,T]} t^{\frac{ \frac{1}{2} - \delta_{0} + z}{2}} \lVert P_{t} (\mathcal{P} y_{i}^{\text{in}} - (u^{\scalebox{0.6}{\begin{tikzpicture}
\draw[black, thick] (0,0.5) -- (0,0);
\filldraw[red] (0,0.5) circle (2pt); 
\end{tikzpicture}
}}_{i}, b^{
\scalebox{0.6}{\begin{tikzpicture}
\draw[black, thick] (0,0.5) -- (0,0);
\filldraw[blue] (0,0.5) circle (2pt); 
\end{tikzpicture}
}}_{i})(0)) \rVert_{\mathcal{C}^{\frac{1}{2} - \delta_{0}}}, \\
I_{T}^{2} \triangleq& \sup_{t \in [0,T]} t^{\frac{ \frac{1}{2} - \delta_{0} + z}{2}} \sum_{i_{1}, j =1}^{3} \int_{0}^{t} \lVert P_{t-s} (u^{\scalebox{0.6}{\begin{tikzpicture}
\draw[black, thick] (0,0.5) -- (0,0);
\filldraw[red] (0,0.5) circle (2pt); 
\end{tikzpicture}
}}_{i_{1}}  \diamond (u^{
\scalebox{0.16}{\begin{tikzpicture}
\draw[black, thick] (-0.7,0.9) -- (0,0);
\draw[black, thick] (0.7,0.9) -- (0,0);
\draw[black, thick] (0.7,0) -- (0,-0.9);
\draw[snake=zigzag](0,0) -- (0,-0.9);
\draw[snake=zigzag](0,-0.9) -- (0,-1.8);
\filldraw[pink] (-0.7,0.9) circle (7pt); 
\filldraw[pink] (0.7,0.9) circle (7pt); 
\filldraw[pink] (0.7,0) circle (7pt); 
\end{tikzpicture}
}}_{j} + u^{F}_{j}) + (u^{
\scalebox{0.16}{\begin{tikzpicture}
\draw[black, thick] (-0.7,0.9) -- (0,0);
\draw[black, thick] (0.7,0.9) -- (0,0);
\draw[black, thick] (0.7,0) -- (0,-0.9);
\draw[snake=zigzag](0,0) -- (0,-0.9);
\draw[snake=zigzag](0,-0.9) -- (0,-1.8);
\filldraw[pink] (-0.7,0.9) circle (7pt); 
\filldraw[pink] (0.7,0.9) circle (7pt); 
\filldraw[pink] (0.7,0) circle (7pt); 
\end{tikzpicture}
}}_{i_{1}} + u^{F}_{i_{1}}) \diamond u^{\scalebox{0.6}{\begin{tikzpicture}
\draw[black, thick] (0,0.5) -- (0,0);
\filldraw[red] (0,0.5) circle (2pt); 
\end{tikzpicture}
}}_{j}    \\
& \hspace{2mm} + u^{
\scalebox{0.18}{\begin{tikzpicture}
\draw[black, thick] (-0.7,0.9) -- (0,0);
\draw[black, thick] (0.7,0.9) -- (0,0);
\draw[snake=zigzag](0,0) -- (0,-0.9);
\filldraw[green] (-0.7,0.9) circle (6pt); 
\filldraw[green] (0.7,0.9) circle (6pt); 
\end{tikzpicture}
}}_{i_{1}} \diamond u^{
\scalebox{0.18}{\begin{tikzpicture}
\draw[black, thick] (-0.7,0.9) -- (0,0);
\draw[black, thick] (0.7,0.9) -- (0,0);
\draw[snake=zigzag](0,0) -- (0,-0.9);
\filldraw[green] (-0.7,0.9) circle (6pt); 
\filldraw[green] (0.7,0.9) circle (6pt); 
\end{tikzpicture}
}}_{j} + u^{
\scalebox{0.18}{\begin{tikzpicture}
\draw[black, thick] (-0.7,0.9) -- (0,0);
\draw[black, thick] (0.7,0.9) -- (0,0);
\draw[snake=zigzag](0,0) -- (0,-0.9);
\filldraw[green] (-0.7,0.9) circle (6pt); 
\filldraw[green] (0.7,0.9) circle (6pt); 
\end{tikzpicture}
}}_{i_{1}}(u^{
\scalebox{0.16}{\begin{tikzpicture}
\draw[black, thick] (-0.7,0.9) -- (0,0);
\draw[black, thick] (0.7,0.9) -- (0,0);
\draw[black, thick] (0.7,0) -- (0,-0.9);
\draw[snake=zigzag](0,0) -- (0,-0.9);
\draw[snake=zigzag](0,-0.9) -- (0,-1.8);
\filldraw[pink] (-0.7,0.9) circle (7pt); 
\filldraw[pink] (0.7,0.9) circle (7pt); 
\filldraw[pink] (0.7,0) circle (7pt); 
\end{tikzpicture}
}}_{j} + u^{F}_{j}) + u^{
\scalebox{0.18}{\begin{tikzpicture}
\draw[black, thick] (-0.7,0.9) -- (0,0);
\draw[black, thick] (0.7,0.9) -- (0,0);
\draw[snake=zigzag](0,0) -- (0,-0.9);
\filldraw[green] (-0.7,0.9) circle (6pt); 
\filldraw[green] (0.7,0.9) circle (6pt); 
\end{tikzpicture}
}}_{j}(u^{
\scalebox{0.16}{\begin{tikzpicture}
\draw[black, thick] (-0.7,0.9) -- (0,0);
\draw[black, thick] (0.7,0.9) -- (0,0);
\draw[black, thick] (0.7,0) -- (0,-0.9);
\draw[snake=zigzag](0,0) -- (0,-0.9);
\draw[snake=zigzag](0,-0.9) -- (0,-1.8);
\filldraw[pink] (-0.7,0.9) circle (7pt); 
\filldraw[pink] (0.7,0.9) circle (7pt); 
\filldraw[pink] (0.7,0) circle (7pt); 
\end{tikzpicture}
}}_{i_{1}} + u^{F}_{i_{1}})\nonumber\\
& \hspace{2mm} + (u^{
\scalebox{0.16}{\begin{tikzpicture}
\draw[black, thick] (-0.7,0.9) -- (0,0);
\draw[black, thick] (0.7,0.9) -- (0,0);
\draw[black, thick] (0.7,0) -- (0,-0.9);
\draw[snake=zigzag](0,0) -- (0,-0.9);
\draw[snake=zigzag](0,-0.9) -- (0,-1.8);
\filldraw[pink] (-0.7,0.9) circle (7pt); 
\filldraw[pink] (0.7,0.9) circle (7pt); 
\filldraw[pink] (0.7,0) circle (7pt); 
\end{tikzpicture}
}}_{i_{1}} + u^{F}_{i_{1}})(u^{
\scalebox{0.16}{\begin{tikzpicture}
\draw[black, thick] (-0.7,0.9) -- (0,0);
\draw[black, thick] (0.7,0.9) -- (0,0);
\draw[black, thick] (0.7,0) -- (0,-0.9);
\draw[snake=zigzag](0,0) -- (0,-0.9);
\draw[snake=zigzag](0,-0.9) -- (0,-1.8);
\filldraw[pink] (-0.7,0.9) circle (7pt); 
\filldraw[pink] (0.7,0.9) circle (7pt); 
\filldraw[pink] (0.7,0) circle (7pt); 
\end{tikzpicture}
}}_{j} + u^{F}_{j}) - b^{
\scalebox{0.6}{\begin{tikzpicture}
\draw[black, thick] (0,0.5) -- (0,0);
\filldraw[blue] (0,0.5) circle (2pt); 
\end{tikzpicture}
}}_{i_{1}}\diamond (b^{
\scalebox{0.16}{\begin{tikzpicture}
\draw[black, thick] (-0.7,0.9) -- (0,0);
\draw[black, thick] (0.7,0.9) -- (0,0);
\draw[black, thick] (0.7,0) -- (0,-0.9);
\draw[snake=zigzag](0,0) -- (0,-0.9);
\draw[snake=zigzag](0,-0.9) -- (0,-1.8);
\filldraw[gray] (-0.7,0.9) circle (7pt); 
\filldraw[gray] (0.7,0.9) circle (7pt); 
\filldraw[gray] (0.7,0) circle (7pt); 
\end{tikzpicture}
}}_{j} + b^{F}_{j}) - (b^{
\scalebox{0.16}{\begin{tikzpicture}
\draw[black, thick] (-0.7,0.9) -- (0,0);
\draw[black, thick] (0.7,0.9) -- (0,0);
\draw[black, thick] (0.7,0) -- (0,-0.9);
\draw[snake=zigzag](0,0) -- (0,-0.9);
\draw[snake=zigzag](0,-0.9) -- (0,-1.8);
\filldraw[gray] (-0.7,0.9) circle (7pt); 
\filldraw[gray] (0.7,0.9) circle (7pt); 
\filldraw[gray] (0.7,0) circle (7pt); 
\end{tikzpicture}
}}_{i_{1}} + b^{F}_{i_{1}}) \diamond b^{
\scalebox{0.6}{\begin{tikzpicture}
\draw[black, thick] (0,0.5) -- (0,0);
\filldraw[blue] (0,0.5) circle (2pt); 
\end{tikzpicture}
}}_{j}\nonumber\\
& \hspace{2mm} - b^{
\scalebox{0.18}{\begin{tikzpicture}
\draw[black, thick] (-0.7,0.9) -- (0,0);
\draw[black, thick] (0.7,0.9) -- (0,0);
\draw[snake=zigzag](0,0) -- (0,-0.9);
\filldraw[violet] (-0.7,0.9) circle (6pt); 
\filldraw[violet] (0.7,0.9) circle (6pt); 
\end{tikzpicture}
}}_{i_{1}} \diamond b^{
\scalebox{0.18}{\begin{tikzpicture}
\draw[black, thick] (-0.7,0.9) -- (0,0);
\draw[black, thick] (0.7,0.9) -- (0,0);
\draw[snake=zigzag](0,0) -- (0,-0.9);
\filldraw[violet] (-0.7,0.9) circle (6pt); 
\filldraw[violet] (0.7,0.9) circle (6pt); 
\end{tikzpicture}
}}_{j} - b^{
\scalebox{0.18}{\begin{tikzpicture}
\draw[black, thick] (-0.7,0.9) -- (0,0);
\draw[black, thick] (0.7,0.9) -- (0,0);
\draw[snake=zigzag](0,0) -- (0,-0.9);
\filldraw[violet] (-0.7,0.9) circle (6pt); 
\filldraw[violet] (0.7,0.9) circle (6pt); 
\end{tikzpicture}
}}_{i_{1}} (b^{
\scalebox{0.16}{\begin{tikzpicture}
\draw[black, thick] (-0.7,0.9) -- (0,0);
\draw[black, thick] (0.7,0.9) -- (0,0);
\draw[black, thick] (0.7,0) -- (0,-0.9);
\draw[snake=zigzag](0,0) -- (0,-0.9);
\draw[snake=zigzag](0,-0.9) -- (0,-1.8);
\filldraw[gray] (-0.7,0.9) circle (7pt); 
\filldraw[gray] (0.7,0.9) circle (7pt); 
\filldraw[gray] (0.7,0) circle (7pt); 
\end{tikzpicture}
}}_{j} + b^{F}_{j}) - b^{
\scalebox{0.18}{\begin{tikzpicture}
\draw[black, thick] (-0.7,0.9) -- (0,0);
\draw[black, thick] (0.7,0.9) -- (0,0);
\draw[snake=zigzag](0,0) -- (0,-0.9);
\filldraw[violet] (-0.7,0.9) circle (6pt); 
\filldraw[violet] (0.7,0.9) circle (6pt); 
\end{tikzpicture}
}}_{j} (b^{
\scalebox{0.16}{\begin{tikzpicture}
\draw[black, thick] (-0.7,0.9) -- (0,0);
\draw[black, thick] (0.7,0.9) -- (0,0);
\draw[black, thick] (0.7,0) -- (0,-0.9);
\draw[snake=zigzag](0,0) -- (0,-0.9);
\draw[snake=zigzag](0,-0.9) -- (0,-1.8);
\filldraw[gray] (-0.7,0.9) circle (7pt); 
\filldraw[gray] (0.7,0.9) circle (7pt); 
\filldraw[gray] (0.7,0) circle (7pt); 
\end{tikzpicture}
}}_{i_{1}} + b^{F}_{i_{1}}) - (b^{
\scalebox{0.16}{\begin{tikzpicture}
\draw[black, thick] (-0.7,0.9) -- (0,0);
\draw[black, thick] (0.7,0.9) -- (0,0);
\draw[black, thick] (0.7,0) -- (0,-0.9);
\draw[snake=zigzag](0,0) -- (0,-0.9);
\draw[snake=zigzag](0,-0.9) -- (0,-1.8);
\filldraw[gray] (-0.7,0.9) circle (7pt); 
\filldraw[gray] (0.7,0.9) circle (7pt); 
\filldraw[gray] (0.7,0) circle (7pt); 
\end{tikzpicture}
}}_{i_{1}} + b^{F}_{i_{1}})(b^{
\scalebox{0.16}{\begin{tikzpicture}
\draw[black, thick] (-0.7,0.9) -- (0,0);
\draw[black, thick] (0.7,0.9) -- (0,0);
\draw[black, thick] (0.7,0) -- (0,-0.9);
\draw[snake=zigzag](0,0) -- (0,-0.9);
\draw[snake=zigzag](0,-0.9) -- (0,-1.8);
\filldraw[gray] (-0.7,0.9) circle (7pt); 
\filldraw[gray] (0.7,0.9) circle (7pt); 
\filldraw[gray] (0.7,0) circle (7pt); 
\end{tikzpicture}
}}_{j} + b^{F}_{j}), \nonumber\\
& \hspace{2mm} b^{
\scalebox{0.6}{\begin{tikzpicture}
\draw[black, thick] (0,0.5) -- (0,0);
\filldraw[blue] (0,0.5) circle (2pt); 
\end{tikzpicture}
}}_{i_{1}} \diamond (u^{
\scalebox{0.16}{\begin{tikzpicture}
\draw[black, thick] (-0.7,0.9) -- (0,0);
\draw[black, thick] (0.7,0.9) -- (0,0);
\draw[black, thick] (0.7,0) -- (0,-0.9);
\draw[snake=zigzag](0,0) -- (0,-0.9);
\draw[snake=zigzag](0,-0.9) -- (0,-1.8);
\filldraw[pink] (-0.7,0.9) circle (7pt); 
\filldraw[pink] (0.7,0.9) circle (7pt); 
\filldraw[pink] (0.7,0) circle (7pt); 
\end{tikzpicture}
}}_{j} + u^{F}_{j}) +(b^{
\scalebox{0.16}{\begin{tikzpicture}
\draw[black, thick] (-0.7,0.9) -- (0,0);
\draw[black, thick] (0.7,0.9) -- (0,0);
\draw[black, thick] (0.7,0) -- (0,-0.9);
\draw[snake=zigzag](0,0) -- (0,-0.9);
\draw[snake=zigzag](0,-0.9) -- (0,-1.8);
\filldraw[gray] (-0.7,0.9) circle (7pt); 
\filldraw[gray] (0.7,0.9) circle (7pt); 
\filldraw[gray] (0.7,0) circle (7pt); 
\end{tikzpicture}
}}_{i_{1}}+  b^{F}_{i_{1}}) \diamond u^{\scalebox{0.6}{\begin{tikzpicture}
\draw[black, thick] (0,0.5) -- (0,0);
\filldraw[red] (0,0.5) circle (2pt); 
\end{tikzpicture}
}}_{j}  + b^{
\scalebox{0.18}{\begin{tikzpicture}
\draw[black, thick] (-0.7,0.9) -- (0,0);
\draw[black, thick] (0.7,0.9) -- (0,0);
\draw[snake=zigzag](0,0) -- (0,-0.9);
\filldraw[violet] (-0.7,0.9) circle (6pt); 
\filldraw[violet] (0.7,0.9) circle (6pt); 
\end{tikzpicture}
}}_{i_{1}} \diamond u^{
\scalebox{0.18}{\begin{tikzpicture}
\draw[black, thick] (-0.7,0.9) -- (0,0);
\draw[black, thick] (0.7,0.9) -- (0,0);
\draw[snake=zigzag](0,0) -- (0,-0.9);
\filldraw[green] (-0.7,0.9) circle (6pt); 
\filldraw[green] (0.7,0.9) circle (6pt); 
\end{tikzpicture}
}}_{j}\nonumber \\
& \hspace{2mm} + b^{
\scalebox{0.18}{\begin{tikzpicture}
\draw[black, thick] (-0.7,0.9) -- (0,0);
\draw[black, thick] (0.7,0.9) -- (0,0);
\draw[snake=zigzag](0,0) -- (0,-0.9);
\filldraw[violet] (-0.7,0.9) circle (6pt); 
\filldraw[violet] (0.7,0.9) circle (6pt); 
\end{tikzpicture}
}}_{i_{1}} (u^{
\scalebox{0.16}{\begin{tikzpicture}
\draw[black, thick] (-0.7,0.9) -- (0,0);
\draw[black, thick] (0.7,0.9) -- (0,0);
\draw[black, thick] (0.7,0) -- (0,-0.9);
\draw[snake=zigzag](0,0) -- (0,-0.9);
\draw[snake=zigzag](0,-0.9) -- (0,-1.8);
\filldraw[pink] (-0.7,0.9) circle (7pt); 
\filldraw[pink] (0.7,0.9) circle (7pt); 
\filldraw[pink] (0.7,0) circle (7pt); 
\end{tikzpicture}
}}_{j} + u^{F}_{j}) + u^{
\scalebox{0.18}{\begin{tikzpicture}
\draw[black, thick] (-0.7,0.9) -- (0,0);
\draw[black, thick] (0.7,0.9) -- (0,0);
\draw[snake=zigzag](0,0) -- (0,-0.9);
\filldraw[green] (-0.7,0.9) circle (6pt); 
\filldraw[green] (0.7,0.9) circle (6pt); 
\end{tikzpicture}
}}_{j}(b^{
\scalebox{0.16}{\begin{tikzpicture}
\draw[black, thick] (-0.7,0.9) -- (0,0);
\draw[black, thick] (0.7,0.9) -- (0,0);
\draw[black, thick] (0.7,0) -- (0,-0.9);
\draw[snake=zigzag](0,0) -- (0,-0.9);
\draw[snake=zigzag](0,-0.9) -- (0,-1.8);
\filldraw[gray] (-0.7,0.9) circle (7pt); 
\filldraw[gray] (0.7,0.9) circle (7pt); 
\filldraw[gray] (0.7,0) circle (7pt); 
\end{tikzpicture}
}}_{i_{1}} + b^{F}_{i_{1}}) + (b^{
\scalebox{0.16}{\begin{tikzpicture}
\draw[black, thick] (-0.7,0.9) -- (0,0);
\draw[black, thick] (0.7,0.9) -- (0,0);
\draw[black, thick] (0.7,0) -- (0,-0.9);
\draw[snake=zigzag](0,0) -- (0,-0.9);
\draw[snake=zigzag](0,-0.9) -- (0,-1.8);
\filldraw[gray] (-0.7,0.9) circle (7pt); 
\filldraw[gray] (0.7,0.9) circle (7pt); 
\filldraw[gray] (0.7,0) circle (7pt); 
\end{tikzpicture}
}}_{i_{1}} + b^{F}_{i_{1}}) (u^{
\scalebox{0.16}{\begin{tikzpicture}
\draw[black, thick] (-0.7,0.9) -- (0,0);
\draw[black, thick] (0.7,0.9) -- (0,0);
\draw[black, thick] (0.7,0) -- (0,-0.9);
\draw[snake=zigzag](0,0) -- (0,-0.9);
\draw[snake=zigzag](0,-0.9) -- (0,-1.8);
\filldraw[pink] (-0.7,0.9) circle (7pt); 
\filldraw[pink] (0.7,0.9) circle (7pt); 
\filldraw[pink] (0.7,0) circle (7pt); 
\end{tikzpicture}
}}_{j} + u^{F}_{j}) \nonumber\\
& \hspace{2mm} - u^{\scalebox{0.6}{\begin{tikzpicture}
\draw[black, thick] (0,0.5) -- (0,0);
\filldraw[red] (0,0.5) circle (2pt); 
\end{tikzpicture}
}}_{i_{1}}  \diamond (b^{
\scalebox{0.16}{\begin{tikzpicture}
\draw[black, thick] (-0.7,0.9) -- (0,0);
\draw[black, thick] (0.7,0.9) -- (0,0);
\draw[black, thick] (0.7,0) -- (0,-0.9);
\draw[snake=zigzag](0,0) -- (0,-0.9);
\draw[snake=zigzag](0,-0.9) -- (0,-1.8);
\filldraw[gray] (-0.7,0.9) circle (7pt); 
\filldraw[gray] (0.7,0.9) circle (7pt); 
\filldraw[gray] (0.7,0) circle (7pt); 
\end{tikzpicture}
}}_{j} + b^{F}_{j}) - (u^{
\scalebox{0.16}{\begin{tikzpicture}
\draw[black, thick] (-0.7,0.9) -- (0,0);
\draw[black, thick] (0.7,0.9) -- (0,0);
\draw[black, thick] (0.7,0) -- (0,-0.9);
\draw[snake=zigzag](0,0) -- (0,-0.9);
\draw[snake=zigzag](0,-0.9) -- (0,-1.8);
\filldraw[pink] (-0.7,0.9) circle (7pt); 
\filldraw[pink] (0.7,0.9) circle (7pt); 
\filldraw[pink] (0.7,0) circle (7pt); 
\end{tikzpicture}
}}_{i_{1}} + u^{F}_{i_{1}}) \diamond b^{
\scalebox{0.6}{\begin{tikzpicture}
\draw[black, thick] (0,0.5) -- (0,0);
\filldraw[blue] (0,0.5) circle (2pt); 
\end{tikzpicture}
}}_{j} - u^{
\scalebox{0.18}{\begin{tikzpicture}
\draw[black, thick] (-0.7,0.9) -- (0,0);
\draw[black, thick] (0.7,0.9) -- (0,0);
\draw[snake=zigzag](0,0) -- (0,-0.9);
\filldraw[green] (-0.7,0.9) circle (6pt); 
\filldraw[green] (0.7,0.9) circle (6pt); 
\end{tikzpicture}
}}_{i_{1}} \diamond b^{
\scalebox{0.18}{\begin{tikzpicture}
\draw[black, thick] (-0.7,0.9) -- (0,0);
\draw[black, thick] (0.7,0.9) -- (0,0);
\draw[snake=zigzag](0,0) -- (0,-0.9);
\filldraw[violet] (-0.7,0.9) circle (6pt); 
\filldraw[violet] (0.7,0.9) circle (6pt); 
\end{tikzpicture}
}}_{j} \nonumber\\
& \hspace{2mm} - u^{
\scalebox{0.18}{\begin{tikzpicture}
\draw[black, thick] (-0.7,0.9) -- (0,0);
\draw[black, thick] (0.7,0.9) -- (0,0);
\draw[snake=zigzag](0,0) -- (0,-0.9);
\filldraw[green] (-0.7,0.9) circle (6pt); 
\filldraw[green] (0.7,0.9) circle (6pt); 
\end{tikzpicture}
}}_{i_{1}} (b^{
\scalebox{0.16}{\begin{tikzpicture}
\draw[black, thick] (-0.7,0.9) -- (0,0);
\draw[black, thick] (0.7,0.9) -- (0,0);
\draw[black, thick] (0.7,0) -- (0,-0.9);
\draw[snake=zigzag](0,0) -- (0,-0.9);
\draw[snake=zigzag](0,-0.9) -- (0,-1.8);
\filldraw[gray] (-0.7,0.9) circle (7pt); 
\filldraw[gray] (0.7,0.9) circle (7pt); 
\filldraw[gray] (0.7,0) circle (7pt); 
\end{tikzpicture}
}}_{j} + b^{F}_{j}) - b^{
\scalebox{0.18}{\begin{tikzpicture}
\draw[black, thick] (-0.7,0.9) -- (0,0);
\draw[black, thick] (0.7,0.9) -- (0,0);
\draw[snake=zigzag](0,0) -- (0,-0.9);
\filldraw[violet] (-0.7,0.9) circle (6pt); 
\filldraw[violet] (0.7,0.9) circle (6pt); 
\end{tikzpicture}
}}_{j} (u^{
\scalebox{0.16}{\begin{tikzpicture}
\draw[black, thick] (-0.7,0.9) -- (0,0);
\draw[black, thick] (0.7,0.9) -- (0,0);
\draw[black, thick] (0.7,0) -- (0,-0.9);
\draw[snake=zigzag](0,0) -- (0,-0.9);
\draw[snake=zigzag](0,-0.9) -- (0,-1.8);
\filldraw[pink] (-0.7,0.9) circle (7pt); 
\filldraw[pink] (0.7,0.9) circle (7pt); 
\filldraw[pink] (0.7,0) circle (7pt); 
\end{tikzpicture}
}}_{i_{1}} + u^{F}_{i_{1}}) -(u^{
\scalebox{0.16}{\begin{tikzpicture}
\draw[black, thick] (-0.7,0.9) -- (0,0);
\draw[black, thick] (0.7,0.9) -- (0,0);
\draw[black, thick] (0.7,0) -- (0,-0.9);
\draw[snake=zigzag](0,0) -- (0,-0.9);
\draw[snake=zigzag](0,-0.9) -- (0,-1.8);
\filldraw[pink] (-0.7,0.9) circle (7pt); 
\filldraw[pink] (0.7,0.9) circle (7pt); 
\filldraw[pink] (0.7,0) circle (7pt); 
\end{tikzpicture}
}}_{i_{1}} + u^{F}_{i_{1}}) (b^{
\scalebox{0.16}{\begin{tikzpicture}
\draw[black, thick] (-0.7,0.9) -- (0,0);
\draw[black, thick] (0.7,0.9) -- (0,0);
\draw[black, thick] (0.7,0) -- (0,-0.9);
\draw[snake=zigzag](0,0) -- (0,-0.9);
\draw[snake=zigzag](0,-0.9) -- (0,-1.8);
\filldraw[gray] (-0.7,0.9) circle (7pt); 
\filldraw[gray] (0.7,0.9) circle (7pt); 
\filldraw[gray] (0.7,0) circle (7pt); 
\end{tikzpicture}
}}_{j} + b^{F}_{j}) )\rVert_{\mathcal{C}^{\frac{3}{2} - \delta_{0}}} ds\nonumber
\end{align}
\end{subequations} 
by Lemma \ref{Lemma 2.7} where it is immediate that we may estimate for $\epsilon \in (0,1)$ fixed, 
\begin{align*} 
I_{T}^{1}\lesssim& \sup_{t \in [0, T]} t^{ \frac{ \frac{1}{2} - \delta_{0} - z}{2}}   t^{- \frac{ (\frac{1}{2} - \delta_{0} + z)}{2}} ( \lVert \mathcal{P} y_{i}^{\text{in}} \rVert_{\mathcal{C}^{-z}} + \lVert (u^{\scalebox{0.6}{\begin{tikzpicture}
\draw[black, thick] (0,0.5) -- (0,0);
\filldraw[red] (0,0.5) circle (2pt); 
\end{tikzpicture}
}}_{i}, b^{
\scalebox{0.6}{\begin{tikzpicture}
\draw[black, thick] (0,0.5) -- (0,0);
\filldraw[blue] (0,0.5) circle (2pt); 
\end{tikzpicture}
}}_{i})(0) \rVert_{\mathcal{C}^{-z}} ) \lesssim 1 
\end{align*}
due to Lemma \ref{Lemma 2.6}, (\ref{35}) and Remark \ref{Remark 3.1}. Thus, we now focus on $I_{T}^{2}$. First we may estimate also for $\epsilon \in (0,1)$ fixed, 
\begin{align}\label{41}
& \sup_{t \in [0, T]} t^{\frac{ \frac{1}{2} - \delta_{0} + z}{2}} \int_{0}^{t} \lVert P_{t-s} (b^{
\scalebox{0.18}{\begin{tikzpicture}
\draw[black, thick] (-0.7,0.9) -- (0,0);
\draw[black, thick] (0.7,0.9) -- (0,0);
\draw[snake=zigzag](0,0) -- (0,-0.9);
\filldraw[violet] (-0.7,0.9) circle (6pt); 
\filldraw[violet] (0.7,0.9) circle (6pt); 
\end{tikzpicture}
}}_{i_{1}} \diamond u^{
\scalebox{0.18}{\begin{tikzpicture}
\draw[black, thick] (-0.7,0.9) -- (0,0);
\draw[black, thick] (0.7,0.9) -- (0,0);
\draw[snake=zigzag](0,0) -- (0,-0.9);
\filldraw[green] (-0.7,0.9) circle (6pt); 
\filldraw[green] (0.7,0.9) circle (6pt); 
\end{tikzpicture}
}}_{j})  \rVert_{\mathcal{C}^{\frac{3}{2} - \delta_{0}}} ds\nonumber \\
\lesssim& \sup_{t \in [0, T]} t^{\frac{ \frac{1}{2} - \delta_{0} + z}{2}} \int_{0}^{t}(t-s)^{ - \frac{ (\frac{3}{2} - \delta_{0} + \delta)}{2}} \lVert b^{
\scalebox{0.18}{\begin{tikzpicture}
\draw[black, thick] (-0.7,0.9) -- (0,0);
\draw[black, thick] (0.7,0.9) -- (0,0);
\draw[snake=zigzag](0,0) -- (0,-0.9);
\filldraw[violet] (-0.7,0.9) circle (6pt); 
\filldraw[violet] (0.7,0.9) circle (6pt); 
\end{tikzpicture}
}}_{i_{1}} \diamond u^{
\scalebox{0.18}{\begin{tikzpicture}
\draw[black, thick] (-0.7,0.9) -- (0,0);
\draw[black, thick] (0.7,0.9) -- (0,0);
\draw[snake=zigzag](0,0) -- (0,-0.9);
\filldraw[green] (-0.7,0.9) circle (6pt); 
\filldraw[green] (0.7,0.9) circle (6pt); 
\end{tikzpicture}
}}_{j} \rVert_{\mathcal{C}^{-\delta}} ds \lesssim 1
\end{align} 
by Lemma \ref{Lemma 2.6}, (\ref{35}) and (\ref{36d}). Second, e.g.,  we may also estimate 
\begin{align}
&\sup_{t \in [0, T]} t^{\frac{ \frac{1}{2} - \delta_{0} + z}{2}} \int_{0}^{t} \lVert P_{t-s} (u^{F}_{i_{1}} b^{F}_{j}) \rVert_{\mathcal{C}^{\frac{3}{2} - \delta_{0}}} ds \nonumber \\
\lesssim& \sup_{t \in [0,T]} t^{\frac{ \frac{1}{2} - \delta_{0} + z}{2}} \int_{0}^{t} (t-s)^{- \frac{1}{2}} \lVert u^{F}_{i_{1}} \rVert_{\mathcal{C}^{\frac{1}{2} - \delta_{0}}} \lVert b^{F}_{j} \rVert_{\mathcal{C}^{\frac{1}{2} - \delta_{0}}} ds \nonumber\\
\lesssim& (\sup_{t \in [0, T]} t^{ \frac{\frac{1}{2} - \delta_{0} + z}{2}} \lVert y^{F}(t) \rVert_{\mathcal{C}^{\frac{1}{2} - \delta_{0}}} )^{2} T^{\frac{ \frac{1}{2} + \delta_{0} -z}{2}} \lesssim 1 \label{42}
\end{align}
by Lemma \ref{Lemma 2.6} and Lemma \ref{Lemma 1.1} (4). Similar computations on other terms in $I_{T}^{2}$ of (\ref{40}) show that for all $\epsilon \in (0,1)$ fixed, there exists a maximal existence time $T_{\epsilon } > 0$ and $(u^{F}, b^{F}) \in C([0, T_{\epsilon}); \mathcal{C}^{ \frac{1}{2} - \delta_{0}})$ such that $(u^{F}, b^{F})$ satisfies (\ref{19})-(\ref{21}) and 
\begin{equation}\label{43}
\sup_{t \in [0, T_{\epsilon})} t^{ \frac{ \frac{1}{2} - \delta_{0} + z}{2}} \lVert y^{F}(t) \rVert_{\mathcal{C}^{\frac{1}{2} - \delta_{0}}} = + \infty. 
\end{equation} 
Now we set 
\begin{equation}\label{44} 
\frac{\delta}{2} < \beta < z + 2 \delta - \frac{1}{2}< \frac{1}{2} - 2 \delta
\end{equation} 
and realize that in the computation of (\ref{42}), we could have instead estimated
\begin{align}\label{45} 
& t^{ \frac{ \frac{1}{2} + \beta + z}{2}} \int_{0}^{t} \lVert P_{t-s} (u^{F}_{i_{1}} b^{F}_{j}) \rVert_{\mathcal{C}^{\frac{3}{2} + \beta}} ds 
\lesssim t^{\frac{ \frac{1}{2} + \beta + z}{2}} \int_{0}^{t}(t-s)^{ - ( \frac{1+\beta + \delta_{0}}{2})} \lVert u^{F}_{i_{1}} \rVert_{\mathcal{C}^{\frac{1}{2} - \delta_{0}}} \lVert b^{F}_{j} \rVert_{\mathcal{C}^{\frac{1}{2} - \delta_{0}}} ds \nonumber\\
& \hspace{50mm} \lesssim t^{ \frac{ \frac{1}{2} + \delta_{0} -z}{2}} \left( \sup_{s \in [0,t]} s^{\frac{ \frac{1}{2} - \delta_{0} + z}{2}} \lVert y^{F}(s) \rVert_{\mathcal{C}^{\frac{1}{2} - \delta_{0}}} \right)^{2} 
\end{align} 
by Lemma \ref{Lemma 2.6}, (\ref{44}), (\ref{35}) and Lemma \ref{Lemma 1.1} (4). Thus, similar computations on other terms in $I_{T}^{1}$ and $I_{T}^{2}$ of (\ref{40}) lead to 
\begin{align}\label{46}
t^{ \frac{ \frac{1}{2} + \beta + z}{2}} \lVert y^{F}(t) \rVert_{\mathcal{C}^{\frac{1}{2} + \beta}} \lesssim& C (\epsilon,  \lVert y^{\text{in}} \rVert_{\mathcal{C}^{-z}}, y^{\scalebox{0.6}{\begin{tikzpicture}
\draw[black, thick] (0,0.5) -- (0,0);
\filldraw[black] (0,0.5) circle (2pt); 
\end{tikzpicture}
}}, y^{
\scalebox{0.18}{\begin{tikzpicture}
\draw[black, thick] (-0.7,0.9) -- (0,0);
\draw[black, thick] (0.7,0.9) -- (0,0);
\draw[snake=zigzag](0,0) -- (0,-0.9);
\filldraw[black] (-0.7,0.9) circle (6pt); 
\filldraw[black] (0.7,0.9) circle (6pt); 
\end{tikzpicture}
}}, y^{
\scalebox{0.16}{\begin{tikzpicture}
\draw[black, thick] (-0.7,0.9) -- (0,0);
\draw[black, thick] (0.7,0.9) -- (0,0);
\draw[black, thick] (0.7,0) -- (0,-0.9);
\draw[snake=zigzag](0,0) -- (0,-0.9);
\draw[snake=zigzag](0,-0.9) -- (0,-1.8);
\filldraw[black] (-0.7,0.9) circle (7pt); 
\filldraw[black] (0.7,0.9) circle (7pt); 
\filldraw[black] (0.7,0) circle (7pt); 
\end{tikzpicture}
}}) \nonumber\\
& \hspace{5mm} + t^{ \frac{ \frac{1}{2} + \delta_{0} - z}{2}} \left( \sup_{s \in [0,t]} s^{ \frac{ \frac{1}{2} - \delta_{0} + z}{2}} \lVert y^{F}(s) \rVert_{\mathcal{C}^{\frac{1}{2} - \delta_{0}}} \right)^{2} 
\end{align} 
for all $t \in (0, T_{\epsilon})$. This shows that $(u^{\sharp}_{i}, b^{\sharp}_{i} ) (t) \in \mathcal{C}^{ \frac{1}{2} + \beta}$ for all $t \in (0, T_{\epsilon})$ due to (\ref{43}). This leads us to the next estimate of 
\begin{align}\label{47}
& \lVert u^{F}_{i} \rVert_{\mathcal{C}^{\frac{1}{2} - \delta}} + \lVert b^{F}_{i} \rVert_{\mathcal{C}^{\frac{1}{2} - \delta}} \nonumber \\
\lesssim& \sum_{i_{1}, j_{1}=1}^{3} \lVert \mathcal{P}_{i i_{1}}  \partial_{x_{j_{1}}}  [ \pi_{<} (u^{
\scalebox{0.16}{\begin{tikzpicture}
\draw[black, thick] (-0.7,0.9) -- (0,0);
\draw[black, thick] (0.7,0.9) -- (0,0);
\draw[black, thick] (0.7,0) -- (0,-0.9);
\draw[snake=zigzag](0,0) -- (0,-0.9);
\draw[snake=zigzag](0,-0.9) -- (0,-1.8);
\filldraw[pink] (-0.7,0.9) circle (7pt); 
\filldraw[pink] (0.7,0.9) circle (7pt); 
\filldraw[pink] (0.7,0) circle (7pt); 
\end{tikzpicture}
}}_{i_{1}} + u^{F}_{i_{1}}, K_{j_{1}}^{u}) + \pi_{<} ( u^{
\scalebox{0.16}{\begin{tikzpicture}
\draw[black, thick] (-0.7,0.9) -- (0,0);
\draw[black, thick] (0.7,0.9) -- (0,0);
\draw[black, thick] (0.7,0) -- (0,-0.9);
\draw[snake=zigzag](0,0) -- (0,-0.9);
\draw[snake=zigzag](0,-0.9) -- (0,-1.8);
\filldraw[pink] (-0.7,0.9) circle (7pt); 
\filldraw[pink] (0.7,0.9) circle (7pt); 
\filldraw[pink] (0.7,0) circle (7pt); 
\end{tikzpicture}
}}_{j_{1}} + u^{F}_{j_{1}}, K_{i_{1}}^{u})] \rVert_{\mathcal{C}^{\frac{1}{2} - \delta}}\nonumber \\
& + \lVert \mathcal{P}_{ ii_{1}}  \partial_{x_{j_{1}}}  [ \pi_{<} ( b^{
\scalebox{0.16}{\begin{tikzpicture}
\draw[black, thick] (-0.7,0.9) -- (0,0);
\draw[black, thick] (0.7,0.9) -- (0,0);
\draw[black, thick] (0.7,0) -- (0,-0.9);
\draw[snake=zigzag](0,0) -- (0,-0.9);
\draw[snake=zigzag](0,-0.9) -- (0,-1.8);
\filldraw[gray] (-0.7,0.9) circle (7pt); 
\filldraw[gray] (0.7,0.9) circle (7pt); 
\filldraw[gray] (0.7,0) circle (7pt); 
\end{tikzpicture}
}}_{i_{1}} + b^{F}_{i_{1}}, K_{j_{1}}^{b}) + \pi_{<} ( b^{
\scalebox{0.16}{\begin{tikzpicture}
\draw[black, thick] (-0.7,0.9) -- (0,0);
\draw[black, thick] (0.7,0.9) -- (0,0);
\draw[black, thick] (0.7,0) -- (0,-0.9);
\draw[snake=zigzag](0,0) -- (0,-0.9);
\draw[snake=zigzag](0,-0.9) -- (0,-1.8);
\filldraw[gray] (-0.7,0.9) circle (7pt); 
\filldraw[gray] (0.7,0.9) circle (7pt); 
\filldraw[gray] (0.7,0) circle (7pt); 
\end{tikzpicture}
}}_{j_{1}} + b^{F}_{j_{1}}, K_{i_{1}}^{b}) ] \rVert_{\mathcal{C}^{\frac{1}{2} - \delta}} + \lVert u^{\sharp}_{i} \rVert_{\mathcal{C}^{\frac{1}{2} - \delta}} \nonumber\\
&+ \lVert  \mathcal{P}_{ ii_{1}}  \partial_{x_{j_{1}}}  [- \pi_{<} ( u^{
\scalebox{0.16}{\begin{tikzpicture}
\draw[black, thick] (-0.7,0.9) -- (0,0);
\draw[black, thick] (0.7,0.9) -- (0,0);
\draw[black, thick] (0.7,0) -- (0,-0.9);
\draw[snake=zigzag](0,0) -- (0,-0.9);
\draw[snake=zigzag](0,-0.9) -- (0,-1.8);
\filldraw[pink] (-0.7,0.9) circle (7pt); 
\filldraw[pink] (0.7,0.9) circle (7pt); 
\filldraw[pink] (0.7,0) circle (7pt); 
\end{tikzpicture}
}}_{i_{1}} + u^{F}_{i_{1}}, K_{j_{1}}^{b}) + \pi_{<} ( u^{
\scalebox{0.16}{\begin{tikzpicture}
\draw[black, thick] (-0.7,0.9) -- (0,0);
\draw[black, thick] (0.7,0.9) -- (0,0);
\draw[black, thick] (0.7,0) -- (0,-0.9);
\draw[snake=zigzag](0,0) -- (0,-0.9);
\draw[snake=zigzag](0,-0.9) -- (0,-1.8);
\filldraw[pink] (-0.7,0.9) circle (7pt); 
\filldraw[pink] (0.7,0.9) circle (7pt); 
\filldraw[pink] (0.7,0) circle (7pt); 
\end{tikzpicture}
}}_{j_{1}} + u^{F}_{j_{1}}, K_{i_{1}}^{b})] \rVert_{\mathcal{C}^{\frac{1}{2} - \delta}} \nonumber\\
&+ \lVert \mathcal{P}_{ii_{1}}  \partial_{x_{j_{1}}}  [ \pi_{<} (b^{
\scalebox{0.16}{\begin{tikzpicture}
\draw[black, thick] (-0.7,0.9) -- (0,0);
\draw[black, thick] (0.7,0.9) -- (0,0);
\draw[black, thick] (0.7,0) -- (0,-0.9);
\draw[snake=zigzag](0,0) -- (0,-0.9);
\draw[snake=zigzag](0,-0.9) -- (0,-1.8);
\filldraw[gray] (-0.7,0.9) circle (7pt); 
\filldraw[gray] (0.7,0.9) circle (7pt); 
\filldraw[gray] (0.7,0) circle (7pt); 
\end{tikzpicture}
}}_{i_{1}} + b^{F}_{i_{1}}, K_{j_{1}}^{u}) - \pi_{<} ( b^{
\scalebox{0.16}{\begin{tikzpicture}
\draw[black, thick] (-0.7,0.9) -- (0,0);
\draw[black, thick] (0.7,0.9) -- (0,0);
\draw[black, thick] (0.7,0) -- (0,-0.9);
\draw[snake=zigzag](0,0) -- (0,-0.9);
\draw[snake=zigzag](0,-0.9) -- (0,-1.8);
\filldraw[gray] (-0.7,0.9) circle (7pt); 
\filldraw[gray] (0.7,0.9) circle (7pt); 
\filldraw[gray] (0.7,0) circle (7pt); 
\end{tikzpicture}
}}_{j_{1}} + b^{F}_{j_{1}}, K_{i_{1}}^{u})] \rVert_{\mathcal{C}^{\frac{1}{2}  -\delta}}  + \lVert b^{\sharp}_{i} \rVert_{\mathcal{C}^{\frac{1}{2} - \delta}}
\end{align} 
by the paracontrolled ansatz (\ref{29}) and (\ref{32}). First, we may estimate 
\begin{subequations}\label{48} 
\begin{align}
&  \lVert \mathcal{P}_{ii_{1}}  \partial_{x_{j_{1}}}  [ \pi_{<}(u^{
\scalebox{0.16}{\begin{tikzpicture}
\draw[black, thick] (-0.7,0.9) -- (0,0);
\draw[black, thick] (0.7,0.9) -- (0,0);
\draw[black, thick] (0.7,0) -- (0,-0.9);
\draw[snake=zigzag](0,0) -- (0,-0.9);
\draw[snake=zigzag](0,-0.9) -- (0,-1.8);
\filldraw[pink] (-0.7,0.9) circle (7pt); 
\filldraw[pink] (0.7,0.9) circle (7pt); 
\filldraw[pink] (0.7,0) circle (7pt); 
\end{tikzpicture}
}}_{i_{1}} + u^{F}_{i_{1}}, K_{j_{1}}^{u}) + \pi_{<} ( u^{
\scalebox{0.16}{\begin{tikzpicture}
\draw[black, thick] (-0.7,0.9) -- (0,0);
\draw[black, thick] (0.7,0.9) -- (0,0);
\draw[black, thick] (0.7,0) -- (0,-0.9);
\draw[snake=zigzag](0,0) -- (0,-0.9);
\draw[snake=zigzag](0,-0.9) -- (0,-1.8);
\filldraw[pink] (-0.7,0.9) circle (7pt); 
\filldraw[pink] (0.7,0.9) circle (7pt); 
\filldraw[pink] (0.7,0) circle (7pt); 
\end{tikzpicture}
}}_{j_{1}} + u^{F}_{j_{1}}, K_{i_{1}}^{u})] \rVert_{\mathcal{C}^{\frac{1}{2}  -\delta}} \nonumber \\
\lesssim&  \lVert u^{ 
\scalebox{0.16}{\begin{tikzpicture}
\draw[black, thick] (-0.7,0.9) -- (0,0);
\draw[black, thick] (0.7,0.9) -- (0,0);
\draw[black, thick] (0.7,0) -- (0,-0.9);
\draw[snake=zigzag](0,0) -- (0,-0.9);
\draw[snake=zigzag](0,-0.9) -- (0,-1.8);
\filldraw[pink] (-0.7,0.9) circle (7pt); 
\filldraw[pink] (0.7,0.9) circle (7pt); 
\filldraw[pink] (0.7,0) circle (7pt); 
\end{tikzpicture}
}}_{i_{1}} + u^{F}_{i_{1}} \rVert_{\mathcal{C}^{\frac{1}{2} - \delta_{0}}} \lVert K_{j_{1}}^{u} \rVert_{\mathcal{C}^{\frac{3}{2} - \delta}} + \lVert u^{
\scalebox{0.16}{\begin{tikzpicture}
\draw[black, thick] (-0.7,0.9) -- (0,0);
\draw[black, thick] (0.7,0.9) -- (0,0);
\draw[black, thick] (0.7,0) -- (0,-0.9);
\draw[snake=zigzag](0,0) -- (0,-0.9);
\draw[snake=zigzag](0,-0.9) -- (0,-1.8);
\filldraw[pink] (-0.7,0.9) circle (7pt); 
\filldraw[pink] (0.7,0.9) circle (7pt); 
\filldraw[pink] (0.7,0) circle (7pt); 
\end{tikzpicture}
}}_{j_{1}} + u^{F}_{j_{1}} \rVert_{\mathcal{C}^{\frac{1}{2} - \delta_{0}}}\lVert K_{i_{1}}^{u} \rVert_{\mathcal{C}^{\frac{3}{2} - \delta}}, \\
& \lVert \mathcal{P}_{i i_{1}}  \partial_{x_{j_{1}}}  [ \pi_{<} ( b^{
\scalebox{0.16}{\begin{tikzpicture}
\draw[black, thick] (-0.7,0.9) -- (0,0);
\draw[black, thick] (0.7,0.9) -- (0,0);
\draw[black, thick] (0.7,0) -- (0,-0.9);
\draw[snake=zigzag](0,0) -- (0,-0.9);
\draw[snake=zigzag](0,-0.9) -- (0,-1.8);
\filldraw[gray] (-0.7,0.9) circle (7pt); 
\filldraw[gray] (0.7,0.9) circle (7pt); 
\filldraw[gray] (0.7,0) circle (7pt); 
\end{tikzpicture}
}}_{i_{1}} + b^{F}_{i_{1}}, K_{j_{1}}^{b}) + \pi_{<} ( b^{
\scalebox{0.16}{\begin{tikzpicture}
\draw[black, thick] (-0.7,0.9) -- (0,0);
\draw[black, thick] (0.7,0.9) -- (0,0);
\draw[black, thick] (0.7,0) -- (0,-0.9);
\draw[snake=zigzag](0,0) -- (0,-0.9);
\draw[snake=zigzag](0,-0.9) -- (0,-1.8);
\filldraw[gray] (-0.7,0.9) circle (7pt); 
\filldraw[gray] (0.7,0.9) circle (7pt); 
\filldraw[gray] (0.7,0) circle (7pt); 
\end{tikzpicture}
}}_{j_{1}} + b^{F}_{j_{1}}, K_{i_{1}}^{b}) ] \rVert_{\mathcal{C}^{\frac{1}{2}  -\delta}} \nonumber \\
\lesssim& \lVert b^{
\scalebox{0.16}{\begin{tikzpicture}
\draw[black, thick] (-0.7,0.9) -- (0,0);
\draw[black, thick] (0.7,0.9) -- (0,0);
\draw[black, thick] (0.7,0) -- (0,-0.9);
\draw[snake=zigzag](0,0) -- (0,-0.9);
\draw[snake=zigzag](0,-0.9) -- (0,-1.8);
\filldraw[gray] (-0.7,0.9) circle (7pt); 
\filldraw[gray] (0.7,0.9) circle (7pt); 
\filldraw[gray] (0.7,0) circle (7pt); 
\end{tikzpicture}
}}_{i_{1}} + b^{F}_{i_{1}} \rVert_{\mathcal{C}^{\frac{1}{2} - \delta_{0}}} \lVert K_{j_{1}}^{b} \rVert_{\mathcal{C}^{\frac{3}{2} - \delta}} + \lVert b^{
\scalebox{0.16}{\begin{tikzpicture}
\draw[black, thick] (-0.7,0.9) -- (0,0);
\draw[black, thick] (0.7,0.9) -- (0,0);
\draw[black, thick] (0.7,0) -- (0,-0.9);
\draw[snake=zigzag](0,0) -- (0,-0.9);
\draw[snake=zigzag](0,-0.9) -- (0,-1.8);
\filldraw[gray] (-0.7,0.9) circle (7pt); 
\filldraw[gray] (0.7,0.9) circle (7pt); 
\filldraw[gray] (0.7,0) circle (7pt); 
\end{tikzpicture}
}}_{j_{1}} + b^{F}_{j_{1}} \rVert_{\mathcal{C}^{\frac{1}{2} - \delta_{0}}}\lVert K_{i_{1}}^{b} \rVert_{\mathcal{C}^{\frac{3}{2} - \delta}}, 
\end{align}
\end{subequations} 
by Lemma \ref{Lemma 2.7}, Lemma \ref{Lemma 1.1} (1), and (\ref{2}). Similar estimates may be deduced for 
\begin{align*}
&\lVert  \mathcal{P}_{ ii_{1}}  \partial_{x_{j_{1}}}  [- \pi_{<} ( u^{
\scalebox{0.16}{\begin{tikzpicture}
\draw[black, thick] (-0.7,0.9) -- (0,0);
\draw[black, thick] (0.7,0.9) -- (0,0);
\draw[black, thick] (0.7,0) -- (0,-0.9);
\draw[snake=zigzag](0,0) -- (0,-0.9);
\draw[snake=zigzag](0,-0.9) -- (0,-1.8);
\filldraw[pink] (-0.7,0.9) circle (7pt); 
\filldraw[pink] (0.7,0.9) circle (7pt); 
\filldraw[pink] (0.7,0) circle (7pt); 
\end{tikzpicture}
}}_{i_{1}} + u^{F}_{i_{1}}, K_{j_{1}}^{b}) + \pi_{<} ( u^{
\scalebox{0.16}{\begin{tikzpicture}
\draw[black, thick] (-0.7,0.9) -- (0,0);
\draw[black, thick] (0.7,0.9) -- (0,0);
\draw[black, thick] (0.7,0) -- (0,-0.9);
\draw[snake=zigzag](0,0) -- (0,-0.9);
\draw[snake=zigzag](0,-0.9) -- (0,-1.8);
\filldraw[pink] (-0.7,0.9) circle (7pt); 
\filldraw[pink] (0.7,0.9) circle (7pt); 
\filldraw[pink] (0.7,0) circle (7pt); 
\end{tikzpicture}
}}_{j_{1}} + u^{F}_{j_{1}}, K_{i_{1}}^{b})] \rVert_{\mathcal{C}^{\frac{1}{2} - \delta}},\\
&\lVert \mathcal{P}_{ii_{1}}  \partial_{x_{j_{1}}}  [ \pi_{<} (b^{
\scalebox{0.16}{\begin{tikzpicture}
\draw[black, thick] (-0.7,0.9) -- (0,0);
\draw[black, thick] (0.7,0.9) -- (0,0);
\draw[black, thick] (0.7,0) -- (0,-0.9);
\draw[snake=zigzag](0,0) -- (0,-0.9);
\draw[snake=zigzag](0,-0.9) -- (0,-1.8);
\filldraw[gray] (-0.7,0.9) circle (7pt); 
\filldraw[gray] (0.7,0.9) circle (7pt); 
\filldraw[gray] (0.7,0) circle (7pt); 
\end{tikzpicture}
}}_{i_{1}} + b^{F}_{i_{1}}, K_{j_{1}}^{u}) - \pi_{<} ( b^{
\scalebox{0.16}{\begin{tikzpicture}
\draw[black, thick] (-0.7,0.9) -- (0,0);
\draw[black, thick] (0.7,0.9) -- (0,0);
\draw[black, thick] (0.7,0) -- (0,-0.9);
\draw[snake=zigzag](0,0) -- (0,-0.9);
\draw[snake=zigzag](0,-0.9) -- (0,-1.8);
\filldraw[gray] (-0.7,0.9) circle (7pt); 
\filldraw[gray] (0.7,0.9) circle (7pt); 
\filldraw[gray] (0.7,0) circle (7pt); 
\end{tikzpicture}
}}_{j_{1}} + b^{F}_{j_{1}}, K_{i_{1}}^{u})] \rVert_{\mathcal{C}^{\frac{1}{2}  -\delta}}. 
\end{align*}
Moreover, we have $\mathcal{C}^{\frac{1}{2} + \beta} \hookrightarrow \mathcal{C}^{\frac{1}{2} - \delta}$ by (\ref{44}). Therefore, we obtain 
\begin{align}\label{49} 
& \lVert u^{F}_{i} \rVert_{\mathcal{C}^{\frac{1}{2} - \delta}} + \lVert b^{F}_{i} \rVert_{\mathcal{C}^{\frac{1}{2} - \delta}} \lesssim \lVert (u^{\sharp}_{i}, b^{\sharp}_{i} ) \rVert_{\mathcal{C}^{\frac{1}{2} + \beta}} \\
& \hspace{5mm} \sum_{i_{1}, j_{1}=1}^{3} \lVert (u^{
\scalebox{0.16}{

}}_{j}  \rVert_{\mathcal{C}^{-\frac{1}{2} - \frac{\delta}{2}}} \lesssim C_{\xi}^{3} + ( \lVert u^{F} \rVert_{\mathcal{C}^{\frac{1}{2} - \delta_{0}}} + \lVert b^{F} \rVert_{\mathcal{C}^{\frac{1}{2} - \delta_{0}}}) C_{\xi}^{2} 
\end{align} 
by Lemma \ref{Lemma 1.1} (3) as $\frac{1}{2} - \delta_{0} - \frac{3\delta}{2} > 0$ due to (\ref{59}), Lemma \ref{Lemma 2.7}, Lemma \ref{Lemma 1.1} (2), (\ref{35}), (\ref{2}), (\ref{33}), (\ref{37}) and (\ref{39}). Second, within (\ref{66}) we may estimate 
\begin{equation}\label{68} 
\lVert \pi_{0} ( u^{\sharp}_{i}, u^{\scalebox{0.6}{

}}_{j}+ u^{F}_{j} ) \rVert_{\mathcal{C}^{\frac{1}{2} - \delta_{0}}} \nonumber\\
\lesssim& C_{\xi}^{3} + 1 + (1+ C_{\xi}^{2}) \lVert y^{F} \rVert_{\mathcal{C}^{\frac{1}{2} - \delta_{0}}}
\end{align} 
due to Lemma \ref{Lemma 2.7}, that $-\frac{1}{2} - \frac{\delta}{2} \leq - \frac{\delta}{2} - \delta_{0}$, Lemma \ref{Lemma 1.1} (1), Lemma \ref{Lemma 1.1} (2), (\ref{37}), (\ref{35}) and (\ref{39}). Third, within (\ref{78})-(\ref{79}) we may estimate 
\begin{align}\label{82}
& \lVert \mathcal{P}_{ii_{1}}  \partial_{x_{j}}  [ \pi_{0, \diamond} (u^{
\scalebox{0.16}{\begin{tikzpicture}
\draw[black, thick] (-0.7,0.9) -- (0,0);
\draw[black, thick] (0.7,0.9) -- (0,0);
\draw[black, thick] (0.7,0) -- (0,-0.9);
\draw[snake=zigzag](0,0) -- (0,-0.9);
\draw[snake=zigzag](0,-0.9) -- (0,-1.8);
\filldraw[pink] (-0.7,0.9) circle (7pt); 
\filldraw[pink] (0.7,0.9) circle (7pt); 
\filldraw[pink] (0.7,0) circle (7pt); 
\end{tikzpicture}
}}_{j}, u^{\scalebox{0.6}{\begin{tikzpicture}
\draw[black, thick] (0,0.5) -- (0,0);
\filldraw[red] (0,0.5) circle (2pt); 
\end{tikzpicture}
}}_{i_{1}} ) + \pi_{0, \diamond} (u^{F}_{j}, u^{\scalebox{0.6}{\begin{tikzpicture}
\draw[black, thick] (0,0.5) -- (0,0);
\filldraw[red] (0,0.5) circle (2pt); 
\end{tikzpicture}
}}_{i_{1}} ) + \pi_{0, \diamond}(u^{
\scalebox{0.16}{\begin{tikzpicture}
\draw[black, thick] (-0.7,0.9) -- (0,0);
\draw[black, thick] (0.7,0.9) -- (0,0);
\draw[black, thick] (0.7,0) -- (0,-0.9);
\draw[snake=zigzag](0,0) -- (0,-0.9);
\draw[snake=zigzag](0,-0.9) -- (0,-1.8);
\filldraw[pink] (-0.7,0.9) circle (7pt); 
\filldraw[pink] (0.7,0.9) circle (7pt); 
\filldraw[pink] (0.7,0) circle (7pt); 
\end{tikzpicture}
}}_{i_{1}}, u^{\scalebox{0.6}{\begin{tikzpicture}
\draw[black, thick] (0,0.5) -- (0,0);
\filldraw[red] (0,0.5) circle (2pt); 
\end{tikzpicture}
}}_{j} ) + \pi_{0, \diamond}(u^{F}_{i_{1}}, u^{\scalebox{0.6}{\begin{tikzpicture}
\draw[black, thick] (0,0.5) -- (0,0);
\filldraw[red] (0,0.5) circle (2pt); 
\end{tikzpicture}
}}_{j} ) \nonumber\\
&\hspace{5mm} - \pi_{0, \diamond} (b^{
\scalebox{0.16}{\begin{tikzpicture}
\draw[black, thick] (-0.7,0.9) -- (0,0);
\draw[black, thick] (0.7,0.9) -- (0,0);
\draw[black, thick] (0.7,0) -- (0,-0.9);
\draw[snake=zigzag](0,0) -- (0,-0.9);
\draw[snake=zigzag](0,-0.9) -- (0,-1.8);
\filldraw[gray] (-0.7,0.9) circle (7pt); 
\filldraw[gray] (0.7,0.9) circle (7pt); 
\filldraw[gray] (0.7,0) circle (7pt); 
\end{tikzpicture}
}}_{j}, b^{
\scalebox{0.6}{\begin{tikzpicture}
\draw[black, thick] (0,0.5) -- (0,0);
\filldraw[blue] (0,0.5) circle (2pt); 
\end{tikzpicture}
}}_{i_{1}}) - \pi_{0, \diamond} (b^{F}_{j}, b^{
\scalebox{0.6}{\begin{tikzpicture}
\draw[black, thick] (0,0.5) -- (0,0);
\filldraw[blue] (0,0.5) circle (2pt); 
\end{tikzpicture}
}}_{i_{1}}) - \pi_{0, \diamond}(b^{
\scalebox{0.16}{\begin{tikzpicture}
\draw[black, thick] (-0.7,0.9) -- (0,0);
\draw[black, thick] (0.7,0.9) -- (0,0);
\draw[black, thick] (0.7,0) -- (0,-0.9);
\draw[snake=zigzag](0,0) -- (0,-0.9);
\draw[snake=zigzag](0,-0.9) -- (0,-1.8);
\filldraw[gray] (-0.7,0.9) circle (7pt); 
\filldraw[gray] (0.7,0.9) circle (7pt); 
\filldraw[gray] (0.7,0) circle (7pt); 
\end{tikzpicture}
}}_{i_{1}}, b^{
\scalebox{0.6}{\begin{tikzpicture}
\draw[black, thick] (0,0.5) -- (0,0);
\filldraw[blue] (0,0.5) circle (2pt); 
\end{tikzpicture}
}}_{j}) - \pi_{0, \diamond} (b^{F}_{i_{1}}, b^{
\scalebox{0.6}{\begin{tikzpicture}
\draw[black, thick] (0,0.5) -- (0,0);
\filldraw[blue] (0,0.5) circle (2pt); 
\end{tikzpicture}
}}_{j}) ] \rVert_{\mathcal{C}^{-\frac{3}{2} - \frac{\delta}{2}}}\nonumber\\
& + \lVert \mathcal{P}_{ii_{1}}  \partial_{x_{j}}  [ \pi_{0, \diamond} (u^{
\scalebox{0.16}{\begin{tikzpicture}
\draw[black, thick] (-0.7,0.9) -- (0,0);
\draw[black, thick] (0.7,0.9) -- (0,0);
\draw[black, thick] (0.7,0) -- (0,-0.9);
\draw[snake=zigzag](0,0) -- (0,-0.9);
\draw[snake=zigzag](0,-0.9) -- (0,-1.8);
\filldraw[pink] (-0.7,0.9) circle (7pt); 
\filldraw[pink] (0.7,0.9) circle (7pt); 
\filldraw[pink] (0.7,0) circle (7pt); 
\end{tikzpicture}
}}_{j}, b^{
\scalebox{0.6}{\begin{tikzpicture}
\draw[black, thick] (0,0.5) -- (0,0);
\filldraw[blue] (0,0.5) circle (2pt); 
\end{tikzpicture}
}}_{i_{1}}) + \pi_{0, \diamond} (u^{F}_{j}, b^{
\scalebox{0.6}{\begin{tikzpicture}
\draw[black, thick] (0,0.5) -- (0,0);
\filldraw[blue] (0,0.5) circle (2pt); 
\end{tikzpicture}
}}_{i_{1}}) + \pi_{0, \diamond}(b^{
\scalebox{0.16}{\begin{tikzpicture}
\draw[black, thick] (-0.7,0.9) -- (0,0);
\draw[black, thick] (0.7,0.9) -- (0,0);
\draw[black, thick] (0.7,0) -- (0,-0.9);
\draw[snake=zigzag](0,0) -- (0,-0.9);
\draw[snake=zigzag](0,-0.9) -- (0,-1.8);
\filldraw[gray] (-0.7,0.9) circle (7pt); 
\filldraw[gray] (0.7,0.9) circle (7pt); 
\filldraw[gray] (0.7,0) circle (7pt); 
\end{tikzpicture}
}}_{i_{1}}, u^{\scalebox{0.6}{\begin{tikzpicture}
\draw[black, thick] (0,0.5) -- (0,0);
\filldraw[red] (0,0.5) circle (2pt); 
\end{tikzpicture}
}}_{j} ) + \pi_{0, \diamond}(b^{F}_{i_{1}}, u^{\scalebox{0.6}{\begin{tikzpicture}
\draw[black, thick] (0,0.5) -- (0,0);
\filldraw[red] (0,0.5) circle (2pt); 
\end{tikzpicture}
}}_{j} ) \nonumber\\
&\hspace{5mm} - \pi_{0, \diamond} (b^{
\scalebox{0.16}{\begin{tikzpicture}
\draw[black, thick] (-0.7,0.9) -- (0,0);
\draw[black, thick] (0.7,0.9) -- (0,0);
\draw[black, thick] (0.7,0) -- (0,-0.9);
\draw[snake=zigzag](0,0) -- (0,-0.9);
\draw[snake=zigzag](0,-0.9) -- (0,-1.8);
\filldraw[gray] (-0.7,0.9) circle (7pt); 
\filldraw[gray] (0.7,0.9) circle (7pt); 
\filldraw[gray] (0.7,0) circle (7pt); 
\end{tikzpicture}
}}_{j}, u^{\scalebox{0.6}{\begin{tikzpicture}
\draw[black, thick] (0,0.5) -- (0,0);
\filldraw[red] (0,0.5) circle (2pt); 
\end{tikzpicture}
}}_{i_{1}} ) - \pi_{0, \diamond} (b^{F}_{j}, u^{\scalebox{0.6}{\begin{tikzpicture}
\draw[black, thick] (0,0.5) -- (0,0);
\filldraw[red] (0,0.5) circle (2pt); 
\end{tikzpicture}
}}_{i_{1}} ) - \pi_{0, \diamond}(u^{
\scalebox{0.16}{\begin{tikzpicture}
\draw[black, thick] (-0.7,0.9) -- (0,0);
\draw[black, thick] (0.7,0.9) -- (0,0);
\draw[black, thick] (0.7,0) -- (0,-0.9);
\draw[snake=zigzag](0,0) -- (0,-0.9);
\draw[snake=zigzag](0,-0.9) -- (0,-1.8);
\filldraw[pink] (-0.7,0.9) circle (7pt); 
\filldraw[pink] (0.7,0.9) circle (7pt); 
\filldraw[pink] (0.7,0) circle (7pt); 
\end{tikzpicture}
}}_{i_{1}}, b^{
\scalebox{0.6}{\begin{tikzpicture}
\draw[black, thick] (0,0.5) -- (0,0);
\filldraw[blue] (0,0.5) circle (2pt); 
\end{tikzpicture}
}}_{j}) - \pi_{0, \diamond} (u^{F}_{i_{1}}, b^{
\scalebox{0.6}{\begin{tikzpicture}
\draw[black, thick] (0,0.5) -- (0,0);
\filldraw[blue] (0,0.5) circle (2pt); 
\end{tikzpicture}
}}_{j}) ] \rVert_{\mathcal{C}^{-\frac{3}{2} - \frac{\delta}{2}}}\nonumber\\
\lesssim& C_{\xi}^{3} + 1 + (1+ C_{\xi}^{2}) \lVert y^{F} \rVert_{\mathcal{C}^{\frac{1}{2} - \delta_{0}}} + C_{\xi}\lVert (u^{\sharp}, b^{\sharp}) \rVert_{\mathcal{C}^{\frac{1}{2} + \beta}} 
\end{align} 
by Lemma \ref{Lemma 2.7}, that $- \frac{1}{2}  - \frac{\delta}{2} \leq- \delta$, (\ref{35}), (\ref{77}), (\ref{37}) and (\ref{71}). Fourth, within (\ref{78})-(\ref{79}) we may estimate 
\begin{align}\label{83}
& \lVert \mathcal{P}_{ii_{1}}  \partial_{x_{j}}  (u^{
\scalebox{0.18}{

}}_{i} + b^{F}_{i}) \rVert_{\mathcal{C}^{-\frac{3}{2} - \frac{\delta}{2}}}) \lVert (K_{j}^{u}, K_{j}^{b}) \rVert_{\mathcal{C}^{\frac{3}{2} - \delta}}\nonumber\\
\lesssim& [C_{\xi}^{3} + 1+ (1+ C_{\xi}^{2}) \lVert y^{F} \rVert_{\mathcal{C}^{\frac{1}{2} - \delta_{0}}} + C_{\xi} \lVert (u^{\sharp}, b^{\sharp}) \rVert_{\mathcal{C}^{\frac{1}{2} + \beta}} + \lVert y^{F} \rVert_{\mathcal{C}^{\delta}}^{2}] \lVert (K_{j}^{u}, K_{j}^{b}) \rVert_{\mathcal{C}^{\frac{3}{2} - \delta}}  
\end{align} 
by Lemma \ref{Lemma 1.1} (2) and (\ref{85}). Next, we estimate 
\begin{align}\label{87}
& \lVert ( \pi_{<} ( \nabla (u^{
\scalebox{0.16}{

}}_{j_{1}}+ b^{F}_{j_{1}}) \rVert_{\mathcal{C}^{\frac{1}{2} - \delta_{0}}} \lVert (K^{u}, K^{b}) \rVert_{\mathcal{C}^{\frac{3}{2} - \delta}})\nonumber\\
&+ [ C_{\xi}^{3} + 1 + (1+ C_{\xi}^{2}) \lVert y^{F} \rVert_{\mathcal{C}^{\frac{1}{2} - \delta_{0}}} + C_{\xi} \lVert (u^{\sharp}, b^{\sharp}) \rVert_{\mathcal{C}^{\frac{1}{2} + \beta}} + \lVert y^{F} \rVert_{\mathcal{C}^{\delta}}^{2}]  \times \lVert (K^{u}, K^{b}) \rVert_{\mathcal{C}^{\frac{3}{2} - \delta}}\nonumber\\
\lesssim& (1+ C_{\xi}^{4}) [ 1+ \lVert (u^{\sharp}, b^{\sharp}) \rVert_{\mathcal{C}^{\frac{1}{2} + \beta}} + \lVert y^{F} \rVert_{\mathcal{C}^{\frac{1}{2} - \delta_{0}}} + \lVert y^{F} \rVert_{\mathcal{C}^{\delta}}^{2}] 
\end{align} 
by Lemma \ref{Lemma 2.7}, (\ref{2}), (\ref{87}), (\ref{86}), (\ref{33}), (\ref{37}) and (\ref{39}). Similarly we bound 
\begin{align}\label{95}
& \lVert \mathcal{P}_{ii_{1}} \partial_{x_{j}}  [ (\pi_{>} ( u^{
\scalebox{0.16}{\begin{tikzpicture}
\draw[black, thick] (-0.7,0.9) -- (0,0);
\draw[black, thick] (0.7,0.9) -- (0,0);
\draw[black, thick] (0.7,0) -- (0,-0.9);
\draw[snake=zigzag](0,0) -- (0,-0.9);
\draw[snake=zigzag](0,-0.9) -- (0,-1.8);
\filldraw[pink] (-0.7,0.9) circle (7pt); 
\filldraw[pink] (0.7,0.9) circle (7pt); 
\filldraw[pink] (0.7,0) circle (7pt); 
\end{tikzpicture}
}}_{j} + u^{F}_{j}, b^{
\scalebox{0.6}{\begin{tikzpicture}
\draw[black, thick] (0,0.5) -- (0,0);
\filldraw[blue] (0,0.5) circle (2pt); 
\end{tikzpicture}
}}_{i_{1}}), \pi_{>}(b^{
\scalebox{0.16}{\begin{tikzpicture}
\draw[black, thick] (-0.7,0.9) -- (0,0);
\draw[black, thick] (0.7,0.9) -- (0,0);
\draw[black, thick] (0.7,0) -- (0,-0.9);
\draw[snake=zigzag](0,0) -- (0,-0.9);
\draw[snake=zigzag](0,-0.9) -- (0,-1.8);
\filldraw[gray] (-0.7,0.9) circle (7pt); 
\filldraw[gray] (0.7,0.9) circle (7pt); 
\filldraw[gray] (0.7,0) circle (7pt); 
\end{tikzpicture}
}}_{i_{1}} + b^{F}_{i_{1}}, u^{\scalebox{0.6}{\begin{tikzpicture}
\draw[black, thick] (0,0.5) -- (0,0);
\filldraw[red] (0,0.5) circle (2pt); 
\end{tikzpicture}
}}_{j} ), \pi_{>}(b^{
\scalebox{0.16}{\begin{tikzpicture}
\draw[black, thick] (-0.7,0.9) -- (0,0);
\draw[black, thick] (0.7,0.9) -- (0,0);
\draw[black, thick] (0.7,0) -- (0,-0.9);
\draw[snake=zigzag](0,0) -- (0,-0.9);
\draw[snake=zigzag](0,-0.9) -- (0,-1.8);
\filldraw[gray] (-0.7,0.9) circle (7pt); 
\filldraw[gray] (0.7,0.9) circle (7pt); 
\filldraw[gray] (0.7,0) circle (7pt); 
\end{tikzpicture}
}}_{j} + b^{F}_{j}, u^{\scalebox{0.6}{\begin{tikzpicture}
\draw[black, thick] (0,0.5) -- (0,0);
\filldraw[red] (0,0.5) circle (2pt); 
\end{tikzpicture}
}}_{i_{1}} ), \nonumber\\
& \hspace{5mm} \pi_{>}(u^{
\scalebox{0.16}{\begin{tikzpicture}
\draw[black, thick] (-0.7,0.9) -- (0,0);
\draw[black, thick] (0.7,0.9) -- (0,0);
\draw[black, thick] (0.7,0) -- (0,-0.9);
\draw[snake=zigzag](0,0) -- (0,-0.9);
\draw[snake=zigzag](0,-0.9) -- (0,-1.8);
\filldraw[pink] (-0.7,0.9) circle (7pt); 
\filldraw[pink] (0.7,0.9) circle (7pt); 
\filldraw[pink] (0.7,0) circle (7pt); 
\end{tikzpicture}
}}_{i_{1}} + u^{F}_{i_{1}}, b^{
\scalebox{0.6}{\begin{tikzpicture}
\draw[black, thick] (0,0.5) -- (0,0);
\filldraw[blue] (0,0.5) circle (2pt); 
\end{tikzpicture}
}}_{j}),  \pi_{<}(L(u^{
\scalebox{0.16}{\begin{tikzpicture}
\draw[black, thick] (-0.7,0.9) -- (0,0);
\draw[black, thick] (0.7,0.9) -- (0,0);
\draw[black, thick] (0.7,0) -- (0,-0.9);
\draw[snake=zigzag](0,0) -- (0,-0.9);
\draw[snake=zigzag](0,-0.9) -- (0,-1.8);
\filldraw[pink] (-0.7,0.9) circle (7pt); 
\filldraw[pink] (0.7,0.9) circle (7pt); 
\filldraw[pink] (0.7,0) circle (7pt); 
\end{tikzpicture}
}}_{i_{1}} + u^{F}_{i_{1}}), K_{j}^{b}), \pi_{<}(L(u^{
\scalebox{0.16}{\begin{tikzpicture}
\draw[black, thick] (-0.7,0.9) -- (0,0);
\draw[black, thick] (0.7,0.9) -- (0,0);
\draw[black, thick] (0.7,0) -- (0,-0.9);
\draw[snake=zigzag](0,0) -- (0,-0.9);
\draw[snake=zigzag](0,-0.9) -- (0,-1.8);
\filldraw[pink] (-0.7,0.9) circle (7pt); 
\filldraw[pink] (0.7,0.9) circle (7pt); 
\filldraw[pink] (0.7,0) circle (7pt); 
\end{tikzpicture}
}}_{j} + u^{F}_{j}), K_{i_{1}}^{b}), \nonumber\\
& \hspace{5mm} \pi_{<} (L(b^{
\scalebox{0.16}{\begin{tikzpicture}
\draw[black, thick] (-0.7,0.9) -- (0,0);
\draw[black, thick] (0.7,0.9) -- (0,0);
\draw[black, thick] (0.7,0) -- (0,-0.9);
\draw[snake=zigzag](0,0) -- (0,-0.9);
\draw[snake=zigzag](0,-0.9) -- (0,-1.8);
\filldraw[gray] (-0.7,0.9) circle (7pt); 
\filldraw[gray] (0.7,0.9) circle (7pt); 
\filldraw[gray] (0.7,0) circle (7pt); 
\end{tikzpicture}
}}_{i_{1}} + b^{F}_{i_{1}}), K_{j}^{u}), \pi_{<} (L(b^{
\scalebox{0.16}{\begin{tikzpicture}
\draw[black, thick] (-0.7,0.9) -- (0,0);
\draw[black, thick] (0.7,0.9) -- (0,0);
\draw[black, thick] (0.7,0) -- (0,-0.9);
\draw[snake=zigzag](0,0) -- (0,-0.9);
\draw[snake=zigzag](0,-0.9) -- (0,-1.8);
\filldraw[gray] (-0.7,0.9) circle (7pt); 
\filldraw[gray] (0.7,0.9) circle (7pt); 
\filldraw[gray] (0.7,0) circle (7pt); 
\end{tikzpicture}
}}_{j} + b^{F}_{j}), K_{i_{1}}^{u}), \nonumber\\
& \hspace{5mm} \pi_{<}(\nabla (u^{
\scalebox{0.16}{\begin{tikzpicture}
\draw[black, thick] (-0.7,0.9) -- (0,0);
\draw[black, thick] (0.7,0.9) -- (0,0);
\draw[black, thick] (0.7,0) -- (0,-0.9);
\draw[snake=zigzag](0,0) -- (0,-0.9);
\draw[snake=zigzag](0,-0.9) -- (0,-1.8);
\filldraw[pink] (-0.7,0.9) circle (7pt); 
\filldraw[pink] (0.7,0.9) circle (7pt); 
\filldraw[pink] (0.7,0) circle (7pt); 
\end{tikzpicture}
}}_{i_{1}} + u^{F}_{i_{1}}), \nabla K_{j}^{b}), \pi_{<} (\nabla (u^{
\scalebox{0.16}{\begin{tikzpicture}
\draw[black, thick] (-0.7,0.9) -- (0,0);
\draw[black, thick] (0.7,0.9) -- (0,0);
\draw[black, thick] (0.7,0) -- (0,-0.9);
\draw[snake=zigzag](0,0) -- (0,-0.9);
\draw[snake=zigzag](0,-0.9) -- (0,-1.8);
\filldraw[pink] (-0.7,0.9) circle (7pt); 
\filldraw[pink] (0.7,0.9) circle (7pt); 
\filldraw[pink] (0.7,0) circle (7pt); 
\end{tikzpicture}
}}_{j} + u^{F}_{j}), \nabla K_{i_{1}}^{b}),\nonumber\\
& \hspace{5mm} \pi_{<}(\nabla (b^{
\scalebox{0.16}{\begin{tikzpicture}
\draw[black, thick] (-0.7,0.9) -- (0,0);
\draw[black, thick] (0.7,0.9) -- (0,0);
\draw[black, thick] (0.7,0) -- (0,-0.9);
\draw[snake=zigzag](0,0) -- (0,-0.9);
\draw[snake=zigzag](0,-0.9) -- (0,-1.8);
\filldraw[gray] (-0.7,0.9) circle (7pt); 
\filldraw[gray] (0.7,0.9) circle (7pt); 
\filldraw[gray] (0.7,0) circle (7pt); 
\end{tikzpicture}
}}_{i_{1}} + b^{F}_{i_{1}}), \nabla K_{j}^{u}), \pi_{<}(\nabla (b^{
\scalebox{0.16}{\begin{tikzpicture}
\draw[black, thick] (-0.7,0.9) -- (0,0);
\draw[black, thick] (0.7,0.9) -- (0,0);
\draw[black, thick] (0.7,0) -- (0,-0.9);
\draw[snake=zigzag](0,0) -- (0,-0.9);
\draw[snake=zigzag](0,-0.9) -- (0,-1.8);
\filldraw[gray] (-0.7,0.9) circle (7pt); 
\filldraw[gray] (0.7,0.9) circle (7pt); 
\filldraw[gray] (0.7,0) circle (7pt); 
\end{tikzpicture}
}}_{j} + b^{F}_{j}), \nabla K_{i_{1}}^{u}))] \rVert_{\mathcal{C}^{-1 - 2\delta}} \nonumber\\
\lesssim& (1+ C_{\xi}^{4})[ 1+ \lVert (u^{\sharp}, b^{\sharp}) \rVert_{\mathcal{C}^{\frac{1}{2} + \beta}} + \lVert y^{F}\rVert_{\mathcal{C}^{\frac{1}{2} - \delta_{0}}} + \lVert y^{F}\rVert_{\mathcal{C}^{\delta}}^{2}]. 
\end{align}  
Fourth, we bound within (\ref{88})-(\ref{89}) 
\begin{align}\label{96}
& \lVert \mathcal{P}_{ii_{1}}  \partial_{x_{j}} (\pi_{0,\diamond} (u^{
\scalebox{0.16}{\begin{tikzpicture}
\draw[black, thick] (-0.7,0.9) -- (0,0);
\draw[black, thick] (0.7,0.9) -- (0,0);
\draw[black, thick] (0.7,0) -- (0,-0.9);
\draw[snake=zigzag](0,0) -- (0,-0.9);
\draw[snake=zigzag](0,-0.9) -- (0,-1.8);
\filldraw[pink] (-0.7,0.9) circle (7pt); 
\filldraw[pink] (0.7,0.9) circle (7pt); 
\filldraw[pink] (0.7,0) circle (7pt); 
\end{tikzpicture}
}}_{j}, u^{\scalebox{0.6}{\begin{tikzpicture}
\draw[black, thick] (0,0.5) -- (0,0);
\filldraw[red] (0,0.5) circle (2pt); 
\end{tikzpicture}
}}_{i_{1}} ), \pi_{0,\diamond}(u^{F}_{j}, u^{\scalebox{0.6}{\begin{tikzpicture}
\draw[black, thick] (0,0.5) -- (0,0);
\filldraw[red] (0,0.5) circle (2pt); 
\end{tikzpicture}
}}_{i_{1}} ), \pi_{0,\diamond} (u^{
\scalebox{0.16}{\begin{tikzpicture}
\draw[black, thick] (-0.7,0.9) -- (0,0);
\draw[black, thick] (0.7,0.9) -- (0,0);
\draw[black, thick] (0.7,0) -- (0,-0.9);
\draw[snake=zigzag](0,0) -- (0,-0.9);
\draw[snake=zigzag](0,-0.9) -- (0,-1.8);
\filldraw[pink] (-0.7,0.9) circle (7pt); 
\filldraw[pink] (0.7,0.9) circle (7pt); 
\filldraw[pink] (0.7,0) circle (7pt); 
\end{tikzpicture}
}}_{i_{1}}, u^{\scalebox{0.6}{\begin{tikzpicture}
\draw[black, thick] (0,0.5) -- (0,0);
\filldraw[red] (0,0.5) circle (2pt); 
\end{tikzpicture}
}}_{j} ), \pi_{0, \diamond}(u^{F}_{i_{1}}, u^{\scalebox{0.6}{\begin{tikzpicture}
\draw[black, thick] (0,0.5) -- (0,0);
\filldraw[red] (0,0.5) circle (2pt); 
\end{tikzpicture}
}}_{j} ),\nonumber\\
& \hspace{5mm} \pi_{0,\diamond}(b^{
\scalebox{0.16}{\begin{tikzpicture}
\draw[black, thick] (-0.7,0.9) -- (0,0);
\draw[black, thick] (0.7,0.9) -- (0,0);
\draw[black, thick] (0.7,0) -- (0,-0.9);
\draw[snake=zigzag](0,0) -- (0,-0.9);
\draw[snake=zigzag](0,-0.9) -- (0,-1.8);
\filldraw[gray] (-0.7,0.9) circle (7pt); 
\filldraw[gray] (0.7,0.9) circle (7pt); 
\filldraw[gray] (0.7,0) circle (7pt); 
\end{tikzpicture}
}}_{j}, b^{
\scalebox{0.6}{\begin{tikzpicture}
\draw[black, thick] (0,0.5) -- (0,0);
\filldraw[blue] (0,0.5) circle (2pt); 
\end{tikzpicture}
}}_{i_{1}}), \pi_{0,\diamond}(b^{F}_{j}, b^{
\scalebox{0.6}{\begin{tikzpicture}
\draw[black, thick] (0,0.5) -- (0,0);
\filldraw[blue] (0,0.5) circle (2pt); 
\end{tikzpicture}
}}_{i_{1}}), \pi_{0,\diamond}(b^{
\scalebox{0.16}{\begin{tikzpicture}
\draw[black, thick] (-0.7,0.9) -- (0,0);
\draw[black, thick] (0.7,0.9) -- (0,0);
\draw[black, thick] (0.7,0) -- (0,-0.9);
\draw[snake=zigzag](0,0) -- (0,-0.9);
\draw[snake=zigzag](0,-0.9) -- (0,-1.8);
\filldraw[gray] (-0.7,0.9) circle (7pt); 
\filldraw[gray] (0.7,0.9) circle (7pt); 
\filldraw[gray] (0.7,0) circle (7pt); 
\end{tikzpicture}
}}_{i_{1}}, b^{
\scalebox{0.6}{\begin{tikzpicture}
\draw[black, thick] (0,0.5) -- (0,0);
\filldraw[blue] (0,0.5) circle (2pt); 
\end{tikzpicture}
}}_{j}), \pi_{0,\diamond}(b^{F}_{i_{1}}, b^{
\scalebox{0.6}{\begin{tikzpicture}
\draw[black, thick] (0,0.5) -- (0,0);
\filldraw[blue] (0,0.5) circle (2pt); 
\end{tikzpicture}
}}_{j})\nonumber\\
& \hspace{5mm} \pi_{0,\diamond}(u^{
\scalebox{0.16}{\begin{tikzpicture}
\draw[black, thick] (-0.7,0.9) -- (0,0);
\draw[black, thick] (0.7,0.9) -- (0,0);
\draw[black, thick] (0.7,0) -- (0,-0.9);
\draw[snake=zigzag](0,0) -- (0,-0.9);
\draw[snake=zigzag](0,-0.9) -- (0,-1.8);
\filldraw[pink] (-0.7,0.9) circle (7pt); 
\filldraw[pink] (0.7,0.9) circle (7pt); 
\filldraw[pink] (0.7,0) circle (7pt); 
\end{tikzpicture}
}}_{j}, b^{
\scalebox{0.6}{\begin{tikzpicture}
\draw[black, thick] (0,0.5) -- (0,0);
\filldraw[blue] (0,0.5) circle (2pt); 
\end{tikzpicture}
}}_{i_{1}}), \pi_{0,\diamond} (u^{F}_{j}, b^{
\scalebox{0.6}{\begin{tikzpicture}
\draw[black, thick] (0,0.5) -- (0,0);
\filldraw[blue] (0,0.5) circle (2pt); 
\end{tikzpicture}
}}_{i_{1}}), \pi_{0,\diamond}(b^{
\scalebox{0.16}{\begin{tikzpicture}
\draw[black, thick] (-0.7,0.9) -- (0,0);
\draw[black, thick] (0.7,0.9) -- (0,0);
\draw[black, thick] (0.7,0) -- (0,-0.9);
\draw[snake=zigzag](0,0) -- (0,-0.9);
\draw[snake=zigzag](0,-0.9) -- (0,-1.8);
\filldraw[gray] (-0.7,0.9) circle (7pt); 
\filldraw[gray] (0.7,0.9) circle (7pt); 
\filldraw[gray] (0.7,0) circle (7pt); 
\end{tikzpicture}
}}_{i_{1}}, u^{\scalebox{0.6}{\begin{tikzpicture}
\draw[black, thick] (0,0.5) -- (0,0);
\filldraw[red] (0,0.5) circle (2pt); 
\end{tikzpicture}
}}_{j} ), \pi_{0,\diamond}(b^{F}_{i_{1}}, u^{\scalebox{0.6}{\begin{tikzpicture}
\draw[black, thick] (0,0.5) -- (0,0);
\filldraw[red] (0,0.5) circle (2pt); 
\end{tikzpicture}
}}_{j} ), \nonumber\\
& \hspace{5mm} \pi_{0,\diamond} (b^{
\scalebox{0.16}{\begin{tikzpicture}
\draw[black, thick] (-0.7,0.9) -- (0,0);
\draw[black, thick] (0.7,0.9) -- (0,0);
\draw[black, thick] (0.7,0) -- (0,-0.9);
\draw[snake=zigzag](0,0) -- (0,-0.9);
\draw[snake=zigzag](0,-0.9) -- (0,-1.8);
\filldraw[gray] (-0.7,0.9) circle (7pt); 
\filldraw[gray] (0.7,0.9) circle (7pt); 
\filldraw[gray] (0.7,0) circle (7pt); 
\end{tikzpicture}
}}_{j}, u^{\scalebox{0.6}{\begin{tikzpicture}
\draw[black, thick] (0,0.5) -- (0,0);
\filldraw[red] (0,0.5) circle (2pt); 
\end{tikzpicture}
}}_{i_{1}} ), \pi_{0,\diamond} (b^{F}_{j}, u^{\scalebox{0.6}{\begin{tikzpicture}
\draw[black, thick] (0,0.5) -- (0,0);
\filldraw[red] (0,0.5) circle (2pt); 
\end{tikzpicture}
}}_{i_{1}} ), \pi_{0,\diamond}(u^{
\scalebox{0.16}{\begin{tikzpicture}
\draw[black, thick] (-0.7,0.9) -- (0,0);
\draw[black, thick] (0.7,0.9) -- (0,0);
\draw[black, thick] (0.7,0) -- (0,-0.9);
\draw[snake=zigzag](0,0) -- (0,-0.9);
\draw[snake=zigzag](0,-0.9) -- (0,-1.8);
\filldraw[pink] (-0.7,0.9) circle (7pt); 
\filldraw[pink] (0.7,0.9) circle (7pt); 
\filldraw[pink] (0.7,0) circle (7pt); 
\end{tikzpicture}
}}_{i_{1}}, b^{
\scalebox{0.6}{\begin{tikzpicture}
\draw[black, thick] (0,0.5) -- (0,0);
\filldraw[blue] (0,0.5) circle (2pt); 
\end{tikzpicture}
}}_{j}), \pi_{0,\diamond}(u^{F}_{i_{1}}, b^{
\scalebox{0.6}{\begin{tikzpicture}
\draw[black, thick] (0,0.5) -- (0,0);
\filldraw[blue] (0,0.5) circle (2pt); 
\end{tikzpicture}
}}_{j})) \rVert_{\mathcal{C}^{-1-2\delta}} \nonumber\\
\lesssim& \lVert (\pi_{0,\diamond} (u^{
\scalebox{0.16}{\begin{tikzpicture}
\draw[black, thick] (-0.7,0.9) -- (0,0);
\draw[black, thick] (0.7,0.9) -- (0,0);
\draw[black, thick] (0.7,0) -- (0,-0.9);
\draw[snake=zigzag](0,0) -- (0,-0.9);
\draw[snake=zigzag](0,-0.9) -- (0,-1.8);
\filldraw[pink] (-0.7,0.9) circle (7pt); 
\filldraw[pink] (0.7,0.9) circle (7pt); 
\filldraw[pink] (0.7,0) circle (7pt); 
\end{tikzpicture}
}}_{j}, u^{\scalebox{0.6}{\begin{tikzpicture}
\draw[black, thick] (0,0.5) -- (0,0);
\filldraw[red] (0,0.5) circle (2pt); 
\end{tikzpicture}
}}_{i_{1}} ), \pi_{0,\diamond}(u^{
\scalebox{0.16}{\begin{tikzpicture}
\draw[black, thick] (-0.7,0.9) -- (0,0);
\draw[black, thick] (0.7,0.9) -- (0,0);
\draw[black, thick] (0.7,0) -- (0,-0.9);
\draw[snake=zigzag](0,0) -- (0,-0.9);
\draw[snake=zigzag](0,-0.9) -- (0,-1.8);
\filldraw[pink] (-0.7,0.9) circle (7pt); 
\filldraw[pink] (0.7,0.9) circle (7pt); 
\filldraw[pink] (0.7,0) circle (7pt); 
\end{tikzpicture}
}}_{i_{1}}, u^{\scalebox{0.6}{\begin{tikzpicture}
\draw[black, thick] (0,0.5) -- (0,0);
\filldraw[red] (0,0.5) circle (2pt); 
\end{tikzpicture}
}}_{j} ), \pi_{0,\diamond}(b^{
\scalebox{0.16}{\begin{tikzpicture}
\draw[black, thick] (-0.7,0.9) -- (0,0);
\draw[black, thick] (0.7,0.9) -- (0,0);
\draw[black, thick] (0.7,0) -- (0,-0.9);
\draw[snake=zigzag](0,0) -- (0,-0.9);
\draw[snake=zigzag](0,-0.9) -- (0,-1.8);
\filldraw[gray] (-0.7,0.9) circle (7pt); 
\filldraw[gray] (0.7,0.9) circle (7pt); 
\filldraw[gray] (0.7,0) circle (7pt); 
\end{tikzpicture}
}}_{j}, b^{
\scalebox{0.6}{\begin{tikzpicture}
\draw[black, thick] (0,0.5) -- (0,0);
\filldraw[blue] (0,0.5) circle (2pt); 
\end{tikzpicture}
}}_{i_{1}}), \pi_{0,\diamond} (b^{
\scalebox{0.16}{\begin{tikzpicture}
\draw[black, thick] (-0.7,0.9) -- (0,0);
\draw[black, thick] (0.7,0.9) -- (0,0);
\draw[black, thick] (0.7,0) -- (0,-0.9);
\draw[snake=zigzag](0,0) -- (0,-0.9);
\draw[snake=zigzag](0,-0.9) -- (0,-1.8);
\filldraw[gray] (-0.7,0.9) circle (7pt); 
\filldraw[gray] (0.7,0.9) circle (7pt); 
\filldraw[gray] (0.7,0) circle (7pt); 
\end{tikzpicture}
}}_{i_{1}}, b^{
\scalebox{0.6}{\begin{tikzpicture}
\draw[black, thick] (0,0.5) -- (0,0);
\filldraw[blue] (0,0.5) circle (2pt); 
\end{tikzpicture}
}}_{j}), \nonumber\\
& \hspace{5mm} \pi_{0,\diamond}(u^{
\scalebox{0.16}{\begin{tikzpicture}
\draw[black, thick] (-0.7,0.9) -- (0,0);
\draw[black, thick] (0.7,0.9) -- (0,0);
\draw[black, thick] (0.7,0) -- (0,-0.9);
\draw[snake=zigzag](0,0) -- (0,-0.9);
\draw[snake=zigzag](0,-0.9) -- (0,-1.8);
\filldraw[pink] (-0.7,0.9) circle (7pt); 
\filldraw[pink] (0.7,0.9) circle (7pt); 
\filldraw[pink] (0.7,0) circle (7pt); 
\end{tikzpicture}
}}_{j}, b^{
\scalebox{0.6}{\begin{tikzpicture}
\draw[black, thick] (0,0.5) -- (0,0);
\filldraw[blue] (0,0.5) circle (2pt); 
\end{tikzpicture}
}}_{i_{1}}), \pi_{0,\diamond}(b^{
\scalebox{0.16}{\begin{tikzpicture}
\draw[black, thick] (-0.7,0.9) -- (0,0);
\draw[black, thick] (0.7,0.9) -- (0,0);
\draw[black, thick] (0.7,0) -- (0,-0.9);
\draw[snake=zigzag](0,0) -- (0,-0.9);
\draw[snake=zigzag](0,-0.9) -- (0,-1.8);
\filldraw[gray] (-0.7,0.9) circle (7pt); 
\filldraw[gray] (0.7,0.9) circle (7pt); 
\filldraw[gray] (0.7,0) circle (7pt); 
\end{tikzpicture}
}}_{i_{1}}, u^{\scalebox{0.6}{\begin{tikzpicture}
\draw[black, thick] (0,0.5) -- (0,0);
\filldraw[red] (0,0.5) circle (2pt); 
\end{tikzpicture}
}}_{j} ), \pi_{0,\diamond}(b^{
\scalebox{0.16}{\begin{tikzpicture}
\draw[black, thick] (-0.7,0.9) -- (0,0);
\draw[black, thick] (0.7,0.9) -- (0,0);
\draw[black, thick] (0.7,0) -- (0,-0.9);
\draw[snake=zigzag](0,0) -- (0,-0.9);
\draw[snake=zigzag](0,-0.9) -- (0,-1.8);
\filldraw[gray] (-0.7,0.9) circle (7pt); 
\filldraw[gray] (0.7,0.9) circle (7pt); 
\filldraw[gray] (0.7,0) circle (7pt); 
\end{tikzpicture}
}}_{j}, u^{\scalebox{0.6}{\begin{tikzpicture}
\draw[black, thick] (0,0.5) -- (0,0);
\filldraw[red] (0,0.5) circle (2pt); 
\end{tikzpicture}
}}_{i_{1}} ), \pi_{0,\diamond}(u^{
\scalebox{0.16}{\begin{tikzpicture}
\draw[black, thick] (-0.7,0.9) -- (0,0);
\draw[black, thick] (0.7,0.9) -- (0,0);
\draw[black, thick] (0.7,0) -- (0,-0.9);
\draw[snake=zigzag](0,0) -- (0,-0.9);
\draw[snake=zigzag](0,-0.9) -- (0,-1.8);
\filldraw[pink] (-0.7,0.9) circle (7pt); 
\filldraw[pink] (0.7,0.9) circle (7pt); 
\filldraw[pink] (0.7,0) circle (7pt); 
\end{tikzpicture}
}}_{i_{1}}, b^{
\scalebox{0.6}{\begin{tikzpicture}
\draw[black, thick] (0,0.5) -- (0,0);
\filldraw[blue] (0,0.5) circle (2pt); 
\end{tikzpicture}
}}_{j})) \rVert_{\mathcal{C}^{-2\delta}} \nonumber\\
&+ C_{\xi}^{3} + \lVert y^{F} \rVert_{\mathcal{C}^{\frac{1}{2} - \delta_{0}}} (C_{\xi}^{2} + 1) + \lVert (u^{\sharp}, b^{\sharp}) \rVert_{\mathcal{C}^{\frac{1}{2} + \beta}} C_{\xi} + 1 \nonumber\\
\lesssim& C_{\xi}^{3} + \lVert y^{F} \rVert_{\mathcal{C}^{\frac{1}{2} - \delta_{0}}}(C_{\xi}^{2} + 1) + \lVert (u^{\sharp}, b^{\sharp}) \rVert_{\mathcal{C}^{\frac{1}{2} + \beta}} C_{\xi} + 1
\end{align} 
by Lemma \ref{Lemma 2.7}, (\ref{71}), (\ref{77}), (\ref{37}). Therefore, inserting (\ref{90})-(\ref{96}) in (\ref{88}) and (\ref{89}) gives 
\begin{equation}\label{97}
\lVert (\phi_{i}^{\sharp, u}, \phi_{i}^{\sharp, b}) (t) \rVert_{\mathcal{C}^{-1-2\delta}} \lesssim (1+ C_{\xi}^{4}) [ 1+ \lVert (u^{\sharp}, b^{\sharp}) \rVert_{\mathcal{C}^{\frac{1}{2} + \beta}} + \lVert y^{F} \rVert_{\mathcal{C}^{\frac{1}{2} - \delta_{0}}} + \lVert y^{F} \rVert_{\mathcal{C}^{\delta}}^{2}].
\end{equation}  
\subsection{Construction of the solution}
From the paracontrolled ansatz (\ref{29}) and (\ref{32}), for any $t \in [0, \overline{T}], \overline{T} > 0$ depending only on $C_{\xi}$, we can obtain 
\begin{align}\label{98}
& \lVert (u^{F}_{i}, b^{F}_{i})(t) \rVert_{\mathcal{C}^{\frac{1}{2} - \delta_{0}}}\nonumber\\
\lesssim& \sum_{i_{1}, j=1}^{3} \lVert (u^{
\scalebox{0.16}{\begin{tikzpicture}
\draw[black, thick] (-0.7,0.9) -- (0,0);
\draw[black, thick] (0.7,0.9) -- (0,0);
\draw[black, thick] (0.7,0) -- (0,-0.9);
\draw[snake=zigzag](0,0) -- (0,-0.9);
\draw[snake=zigzag](0,-0.9) -- (0,-1.8);
\filldraw[pink] (-0.7,0.9) circle (7pt); 
\filldraw[pink] (0.7,0.9) circle (7pt); 
\filldraw[pink] (0.7,0) circle (7pt); 
\end{tikzpicture}
}}_{i_{1}} + u^{F}_{i_{1}}, b^{
\scalebox{0.16}{\begin{tikzpicture}
\draw[black, thick] (-0.7,0.9) -- (0,0);
\draw[black, thick] (0.7,0.9) -- (0,0);
\draw[black, thick] (0.7,0) -- (0,-0.9);
\draw[snake=zigzag](0,0) -- (0,-0.9);
\draw[snake=zigzag](0,-0.9) -- (0,-1.8);
\filldraw[gray] (-0.7,0.9) circle (7pt); 
\filldraw[gray] (0.7,0.9) circle (7pt); 
\filldraw[gray] (0.7,0) circle (7pt); 
\end{tikzpicture}
}}_{i_{1}} + b^{F}_{i_{1}}) \rVert_{L^{\infty}} \lVert (K_{j}^{u}, K_{j}^{b}) \rVert_{\mathcal{C}^{\frac{3}{2} - \delta}} + \lVert (u^{\sharp}_{i}, b^{\sharp}_{i}) \rVert_{\mathcal{C}^{\frac{1}{2} - \delta_{0}}} \nonumber\\
\leq& C \sum_{i_{1}, j=1}^{3} ( \lVert (u^{
\scalebox{0.16}{\begin{tikzpicture}
\draw[black, thick] (-0.7,0.9) -- (0,0);
\draw[black, thick] (0.7,0.9) -- (0,0);
\draw[black, thick] (0.7,0) -- (0,-0.9);
\draw[snake=zigzag](0,0) -- (0,-0.9);
\draw[snake=zigzag](0,-0.9) -- (0,-1.8);
\filldraw[pink] (-0.7,0.9) circle (7pt); 
\filldraw[pink] (0.7,0.9) circle (7pt); 
\filldraw[pink] (0.7,0) circle (7pt); 
\end{tikzpicture}
}}_{i_{1}}, b^{
\scalebox{0.16}{\begin{tikzpicture}
\draw[black, thick] (-0.7,0.9) -- (0,0);
\draw[black, thick] (0.7,0.9) -- (0,0);
\draw[black, thick] (0.7,0) -- (0,-0.9);
\draw[snake=zigzag](0,0) -- (0,-0.9);
\draw[snake=zigzag](0,-0.9) -- (0,-1.8);
\filldraw[gray] (-0.7,0.9) circle (7pt); 
\filldraw[gray] (0.7,0.9) circle (7pt); 
\filldraw[gray] (0.7,0) circle (7pt); 
\end{tikzpicture}
}}_{i_{1}}) \rVert_{\mathcal{C}^{\frac{1}{2} - \delta}} + \lVert (u^{F}_{i_{1}}, b^{F}_{i_{1}}) \rVert_{\mathcal{C}^{\frac{1}{2} - \delta_{0}}}) t^{\frac{\delta}{4}} C_{\xi} + C \lVert (u^{\sharp}_{i}, b^{\sharp}_{i}) \rVert_{\mathcal{C}^{\frac{1}{2} - \delta_{0}}} 
\end{align} 
for some $C \geq 0$ by Lemma \ref{Lemma 1.1} (1), (\ref{59}), (\ref{33}) and (\ref{37}). Therefore, for $t \in [0, ( \frac{1}{C C_{\xi}})^{\frac{4}{\delta}})$ 
\begin{equation}\label{99}
\sum_{i=1}^{3} \lVert (u^{F}_{i}, b^{F}_{i})(t) \rVert_{\mathcal{C}^{\frac{1}{2} - \delta_{0}}} 
\lesssim C_{\xi}^{2} + \sum_{i=1}^{3} \lVert (u^{\sharp}_{i}, b^{\sharp}_{i}) \rVert_{\mathcal{C}^{\frac{1}{2} - \delta_{0}}} 
\end{equation} 
due to (\ref{39}). Similarly for any $t \in [0, \overline{T}], \overline{T} > 0$ depending only on $C_{\xi}$, 
\begin{align}\label{100}
\lVert (u^{F}_{i}, b^{F}_{i}(t) \rVert_{\mathcal{C}^{\delta}} 
\lesssim& \sum_{i_{1}, j=1}^{3} \lVert (u^{
\scalebox{0.16}{\begin{tikzpicture}
\draw[black, thick] (-0.7,0.9) -- (0,0);
\draw[black, thick] (0.7,0.9) -- (0,0);
\draw[black, thick] (0.7,0) -- (0,-0.9);
\draw[snake=zigzag](0,0) -- (0,-0.9);
\draw[snake=zigzag](0,-0.9) -- (0,-1.8);
\filldraw[pink] (-0.7,0.9) circle (7pt); 
\filldraw[pink] (0.7,0.9) circle (7pt); 
\filldraw[pink] (0.7,0) circle (7pt); 
\end{tikzpicture}
}}_{i_{1}} + u^{F}_{i_{1}}, b^{
\scalebox{0.16}{\begin{tikzpicture}
\draw[black, thick] (-0.7,0.9) -- (0,0);
\draw[black, thick] (0.7,0.9) -- (0,0);
\draw[black, thick] (0.7,0) -- (0,-0.9);
\draw[snake=zigzag](0,0) -- (0,-0.9);
\draw[snake=zigzag](0,-0.9) -- (0,-1.8);
\filldraw[gray] (-0.7,0.9) circle (7pt); 
\filldraw[gray] (0.7,0.9) circle (7pt); 
\filldraw[gray] (0.7,0) circle (7pt); 
\end{tikzpicture}
}}_{i_{1}} + b^{F}_{i_{1}}) \rVert_{\mathcal{C}^{2\delta - \frac{1}{2}}} \lVert (K_{j}^{u}, K_{j}^{b}) \rVert_{\mathcal{C}^{\frac{3}{2} - \delta}} + \lVert (u^{\sharp}_{i}, b^{\sharp}_{i}) \rVert_{\mathcal{C}^{\delta}} \nonumber\\
\leq& C( \lVert y^{
\scalebox{0.16}{\begin{tikzpicture}
\draw[black, thick] (-0.7,0.9) -- (0,0);
\draw[black, thick] (0.7,0.9) -- (0,0);
\draw[black, thick] (0.7,0) -- (0,-0.9);
\draw[snake=zigzag](0,0) -- (0,-0.9);
\draw[snake=zigzag](0,-0.9) -- (0,-1.8);
\filldraw[black] (-0.7,0.9) circle (7pt); 
\filldraw[black] (0.7,0.9) circle (7pt); 
\filldraw[black] (0.7,0) circle (7pt); 
\end{tikzpicture}
}}
 \rVert_{\mathcal{C}^{\frac{1}{2} - \delta}} + \lVert y^{F} \rVert_{\mathcal{C}^{\delta}}) t^{\frac{\delta}{4}} C_{\xi} + C\sum_{i=1}^{3} \lVert (u^{\sharp}_{i}, b^{\sharp}_{i})\rVert_{\mathcal{C}^{\delta}} 
\end{align} 
by (\ref{29}), (\ref{32}), Lemma \ref{Lemma 1.1} (2) (\ref{59}), (\ref{33}) and (\ref{37}). This gives for $t \in [0, (\frac{1}{CC_{\xi}})^{\frac{4}{\delta}})$
\begin{equation}\label{101}
\sum_{i=1}^{3}\lVert (u^{F}_{i}, b^{F}_{i}) (t) \rVert_{\mathcal{C}^{\delta}} \lesssim C_{\xi}^{2} + \sum_{i=1}^{3} \lVert (u^{\sharp}_{i}, b^{\sharp}_{i}) \rVert_{\mathcal{C}^{ \delta}}
\end{equation} 
due to (\ref{39}). Now, due to (\ref{29}), (\ref{32}), (\ref{19}) and (\ref{26}) we see that  
\begin{equation}
u^{\sharp}_{i}(\cdot, 0) = \sum_{i_{1} = 1}^{3} \mathcal{P}_{ii_{1}}u_{i_{1}}^{\text{in}}(\cdot) - 
u^{\scalebox{0.6}{\begin{tikzpicture}
\draw[black, thick] (0,0.5) -- (0,0);
\filldraw[red] (0,0.5) circle (2pt); 
\end{tikzpicture}
}}_{i} (\cdot, 0) \text{ and }  b^{\sharp}_{i}(\cdot, 0) = \sum_{i_{1} =1}^{3} \mathcal{P}_{ii_{1}} b_{i_{1}}^{\text{in}}(\cdot) - 
b^{\scalebox{0.6}{\begin{tikzpicture}
\draw[black, thick] (0,0.5) -- (0,0);
\filldraw[blue] (0,0.5) circle (2pt); 
\end{tikzpicture}
}}_{i}(\cdot, 0)
\end{equation} 
which, together with (\ref{50}) and (\ref{51}), leads to 
\begin{subequations} 
\begin{align}
& u^{\sharp}_{i} (t) = P_{t} ( \sum_{i_{1} =1}^{3} \mathcal{P}_{ii_{1}} u^{\text{in}}_{i_{1}}- u^{\scalebox{0.6}{\begin{tikzpicture}
\draw[black, thick] (0,0.5) -- (0,0);
\filldraw[red] (0,0.5) circle (2pt); 
\end{tikzpicture}
}}_{i} (0) ) + \int_{0}^{t} P_{t-s} \phi_{i}^{\sharp, u}(s) ds, \label{102a}\\
& b^{\sharp}_{i}(t) = P_{t} ( \sum_{i_{1} =1}^{3} \mathcal{P}_{ii_{1}} b_{i_{1}}^{\text{in}} - b^{
\scalebox{0.6}{\begin{tikzpicture}
\draw[black, thick] (0,0.5) -- (0,0);
\filldraw[blue] (0,0.5) circle (2pt); 
\end{tikzpicture}
}}_{i}(0)) + \int_{0}^{t} P_{t-s} \phi_{i}^{\sharp, b}(s) ds. \label{102b}
\end{align}
\end{subequations} 
Then we obtain 
\begin{align}
& t^{\delta + z} \lVert (u^{\sharp}, b^{\sharp} ) (t) \rVert_{\mathcal{C}^{\frac{1}{2} + \beta}} \label{103}\\
\lesssim& t^{\delta + z} \lVert P_{t} ( \mathcal{P} y^{\text{in}} - y^{\scalebox{0.6}{\begin{tikzpicture}
\draw[black, thick] (0,0.5) -- (0,0);
\filldraw[black] (0,0.5) circle (2pt); 
\end{tikzpicture}
}}(0) ) \rVert_{\mathcal{C}^{\frac{1}{2}+ \beta}} + t^{\delta  + z} \int_{0}^{t} \lVert P_{t-s} ( \phi^{\sharp,u}, \phi^{\sharp,b} )(s) \rVert_{\mathcal{C}^{\frac{1}{2} + \beta}} ds \nonumber \\ 
\lesssim& \lVert \mathcal{P} y^{\text{in}} -y^{\scalebox{0.6}{\begin{tikzpicture}
\draw[black, thick] (0,0.5) -- (0,0);
\filldraw[black] (0,0.5) circle (2pt); 
\end{tikzpicture}
}}(0) \rVert_{\mathcal{C}^{-z}} + t^{\delta + z}  \int_{0}^{t} (t-s)^{- \frac{3}{4} - \frac{\beta}{2} - \delta} \lVert (\phi^{\sharp, u}, \phi^{\sharp, b})(s) \rVert_{\mathcal{C}^{-1 - 2 \delta}} ds \nonumber 
\end{align}  
by (\ref{102a}), (\ref{102b}), Lemma \ref{Lemma 2.6} and (\ref{44}). We are also able to estimate 
\begin{align}
&t^{\delta + z} \lVert  (u^{\sharp},  b^{\sharp})(t) \rVert_{\mathcal{C}^{\delta}}^{2} \label{104} \\
\lesssim& t^{\delta + z} [ \lVert P_{t} ( \mathcal{P} y^{\text{in}} - y^{\scalebox{0.6}{\begin{tikzpicture}
\draw[black, thick] (0,0.5) -- (0,0);
\filldraw[black] (0,0.5) circle (2pt); 
\end{tikzpicture}
}}(0)) \rVert_{\mathcal{C}^{\delta}}^{2} + \left( \int_{0}^{t} \lVert P_{t-s} (\phi^{\sharp, u}, \phi^{\sharp, b})(s) \rVert_{\mathcal{C}^{\delta}} ds \right)^{2} ] \nonumber\\
\lesssim& \lVert \mathcal{P} y^{\text{in}} - y^{\scalebox{0.6}{\begin{tikzpicture}
\draw[black, thick] (0,0.5) -- (0,0);
\filldraw[black] (0,0.5) circle (2pt); 
\end{tikzpicture}
}}(0) \rVert_{\mathcal{C}^{-z}}^{2}  + t^{\frac{1}{2} - \frac{3\delta}{2}} \int_{0}^{t} (t-s)^{ - \frac{(3\delta + 1)}{2}} s^{- (\delta + z)} (s^{\delta + z} \lVert (\phi^{\sharp, u}, \phi^{\sharp, b})(s) \rVert_{\mathcal{C}^{-1 - 2 \delta}})^{2} ds \nonumber
\end{align} 
by (\ref{102a}), (\ref{102b}), Lemma \ref{Lemma 2.6}, H$\ddot{\mathrm{o}}$lder's inequality, (\ref{59}) and (\ref{44}). Thus, 
\begin{align}\label{105}
 t^{\delta + z} \lVert (\phi^{\sharp, u}, \phi^{\sharp, b})(t) &\rVert_{\mathcal{C}^{-1 - 2\delta}} 
\lesssim t^{\delta + z} (1+ C_{\xi}^{4})[ 1+ \lVert (u^{\sharp}, b^{\sharp}) (t) \rVert_{\mathcal{C}^{\frac{1}{2} + \beta}} + C_{\xi}^{4} + \lVert (u^{\sharp}, b^{\sharp} )(t) \rVert_{\mathcal{C}^{\delta}}^{2}]\nonumber\\
&\lesssim (1+ C_{\xi}^{8}) + (1+ C_{\xi}^{4})[ \lVert \mathcal{P} y^{\text{in}} - y^{\scalebox{0.6}{\begin{tikzpicture}
\draw[black, thick] (0,0.5) -- (0,0);
\filldraw[black] (0,0.5) circle (2pt); 
\end{tikzpicture}
}}(0) \rVert_{\mathcal{C}^{-z}}^{2} \nonumber\\
&+ t^{\delta + z} \int_{0}^{t} (t-s)^{-\frac{3}{4} - \frac{\beta}{2} - \delta} s^{-(\delta + z)} (s^{\delta + z} \lVert (\phi^{\sharp, u}, \phi^{\sharp, b})(s) \rVert_{\mathcal{C}^{-1 - 2\delta}} ) ds \nonumber\\
&+ t^{\frac{1}{2} - \frac{3\delta}{2}} \int_{0}^{t}(t-s)^{- \frac{(3\delta + 1)}{2}} s^{-(\delta + z)} (s^{\delta + z} \lVert (\phi^{\sharp, u}, \phi^{\sharp, b})(s) \rVert_{\mathcal{C}^{-1 - 2\delta}})^{2} ds]
\end{align} 
by (\ref{97}), (\ref{99}), (\ref{101}), (\ref{103}) and (\ref{104}). By Bihari's inequality and Remark \ref{Remark 3.1}, this implies that for $\delta < \frac{1-z}{4}$, there exists some $T_{0} \in (0, \overline{T}]$ which is independent of $\epsilon \in (0,1)$ such that 
\begin{equation}\label{106}
\sup_{t \in [0, T_{0}]} t^{\delta + z} \lVert (\phi^{\sharp, u}, \phi^{\sharp, b}) (t) \rVert_{\mathcal{C}^{-1 - 2\delta}} \lesssim C(T_{0}, C_{\xi}, \lVert y^{\text{in}} \rVert_{\mathcal{C}^{-z}} , \lVert y^{\scalebox{0.6}{\begin{tikzpicture}
\draw[black, thick] (0,0.5) -- (0,0);
\filldraw[black] (0,0.5) circle (2pt); 
\end{tikzpicture}
}}(0) \rVert_{\mathcal{C}^{-z}}).
\end{equation} 
Thus, if $C_{\xi}^{\epsilon}$ is uniformly bounded over $\epsilon \in (0,1)$, then (\ref{106}) holds for all $\epsilon \in (0,1)$. Next, we estimate 
\begin{align}\label{107}
&t^{ \frac{ \frac{1}{2} - \delta_{0} + z}{2}} \lVert (u^{\sharp}, b^{\sharp})(t) \rVert_{\mathcal{C}^{\frac{1}{2} - \delta_{0}}} \\
\lesssim& t^{ \frac{ \frac{1}{2} - \delta_{0} + z}{2}}( \lVert P_{t}(\mathcal{P}y^{\text{in}} - y^{\scalebox{0.6}{\begin{tikzpicture}
\draw[black, thick] (0,0.5) -- (0,0);
\filldraw[black] (0,0.5) circle (2pt); 
\end{tikzpicture}
}}(0) ) \rVert_{\mathcal{C}^{\frac{1}{2} - \delta_{0}}} + \int_{0}^{t} \lVert P_{t-s} ( \phi^{\sharp, u}, \phi^{\sharp, b} )(s) \rVert_{\mathcal{C}^{\frac{1}{2} - \delta_{0}}} ds ) \nonumber\\
\lesssim&  \lVert \mathcal{P} y^{\text{in}} - y^{\scalebox{0.6}{\begin{tikzpicture}
\draw[black, thick] (0,0.5) -- (0,0);
\filldraw[black] (0,0.5) circle (2pt); 
\end{tikzpicture}
}}(0) \rVert_{\mathcal{C}^{-z}} + t^{\frac{1}{2} - 2\delta - \frac{z}{2}} ( \sup_{s \in [0,t]} s^{\delta + z} \lVert (\phi^{\sharp, u}, \phi^{\sharp, b})(s) \rVert_{\mathcal{C}^{-1 - 2\delta}} )  \nonumber 
\end{align} 
by (\ref{102a}), (\ref{102b}) and Lemma \ref{Lemma 2.6}. Thus, 
\begin{align}\label{108}
\sup_{t \in [0, T_{0}]} t^{\frac{ \frac{1}{2} - \delta_{0} + z}{2}} & \lVert y^{F}(t) \rVert_{\mathcal{C}^{\frac{1}{2} - \delta_{0}}}
\lesssim \sup_{t \in [0,T_{0}]} t^{ \frac{ \frac{1}{2} - \delta_{0} + z}{2}}[ C_{\xi}^{2} + \lVert (u^{\sharp}, b^{\sharp}) \rVert_{\mathcal{C}^{\frac{1}{2} - \delta_{0}}}]\nonumber\\
\lesssim& C_{\xi}^{2} + \sup_{t \in [0,T_{0}]} \lVert \mathcal{P}y^{\text{in}} - y^{\scalebox{0.6}{\begin{tikzpicture}
\draw[black, thick] (0,0.5) -- (0,0);
\filldraw[black] (0,0.5) circle (2pt); 
\end{tikzpicture}
}}(0) \rVert_{\mathcal{C}^{-z}} + t^{\frac{1}{2} - 2\delta - \frac{z}{2}} ( \sup_{s \in [0,t]} s^{\delta + z} \lVert (\phi^{\sharp, u}, \phi^{\sharp, b}) (s) \rVert_{\mathcal{C}^{-1 - 2\delta}}) \nonumber\\
\lesssim& C_{\xi}^{2} + C(T_{0}, C_{\xi}, \lVert y^{\text{in}} \rVert_{\mathcal{C}^{-z}}, \lVert y^{\scalebox{0.6}{\begin{tikzpicture}
\draw[black, thick] (0,0.5) -- (0,0);
\filldraw[black] (0,0.5) circle (2pt); 
\end{tikzpicture}
}}(0) \rVert_{\mathcal{C}^{-z}}) 
\end{align} 
by (\ref{99}), (\ref{107}), (\ref{106}), (\ref{44}) and Remark \ref{Remark 3.1}. By (\ref{43}) and (\ref{108}) we conclude that $T_{\epsilon} \geq T_{0}$. Finally, 
\begin{align}\label{109}
 \lVert y^{F}(t) \rVert_{\mathcal{C}^{-z}} \lesssim& \lVert (\pi_{<} (u^{
\scalebox{0.16}{\begin{tikzpicture}
\draw[black, thick] (-0.7,0.9) -- (0,0);
\draw[black, thick] (0.7,0.9) -- (0,0);
\draw[black, thick] (0.7,0) -- (0,-0.9);
\draw[snake=zigzag](0,0) -- (0,-0.9);
\draw[snake=zigzag](0,-0.9) -- (0,-1.8);
\filldraw[pink] (-0.7,0.9) circle (7pt); 
\filldraw[pink] (0.7,0.9) circle (7pt); 
\filldraw[pink] (0.7,0) circle (7pt); 
\end{tikzpicture}
}} + u^{F}, K^{u}), \pi_{<} (b^{
\scalebox{0.16}{\begin{tikzpicture}
\draw[black, thick] (-0.7,0.9) -- (0,0);
\draw[black, thick] (0.7,0.9) -- (0,0);
\draw[black, thick] (0.7,0) -- (0,-0.9);
\draw[snake=zigzag](0,0) -- (0,-0.9);
\draw[snake=zigzag](0,-0.9) -- (0,-1.8);
\filldraw[gray] (-0.7,0.9) circle (7pt); 
\filldraw[gray] (0.7,0.9) circle (7pt); 
\filldraw[gray] (0.7,0) circle (7pt); 
\end{tikzpicture}
}} + b^{F}, K^{u}) ) \rVert_{\mathcal{C}^{1-z}} \nonumber\\
&+ \lVert (\pi_{<}(u^{
\scalebox{0.16}{\begin{tikzpicture}
\draw[black, thick] (-0.7,0.9) -- (0,0);
\draw[black, thick] (0.7,0.9) -- (0,0);
\draw[black, thick] (0.7,0) -- (0,-0.9);
\draw[snake=zigzag](0,0) -- (0,-0.9);
\draw[snake=zigzag](0,-0.9) -- (0,-1.8);
\filldraw[pink] (-0.7,0.9) circle (7pt); 
\filldraw[pink] (0.7,0.9) circle (7pt); 
\filldraw[pink] (0.7,0) circle (7pt); 
\end{tikzpicture}
}} + u^{F}, K^{b}), \pi_{<}(b^{
\scalebox{0.16}{\begin{tikzpicture}
\draw[black, thick] (-0.7,0.9) -- (0,0);
\draw[black, thick] (0.7,0.9) -- (0,0);
\draw[black, thick] (0.7,0) -- (0,-0.9);
\draw[snake=zigzag](0,0) -- (0,-0.9);
\draw[snake=zigzag](0,-0.9) -- (0,-1.8);
\filldraw[gray] (-0.7,0.9) circle (7pt); 
\filldraw[gray] (0.7,0.9) circle (7pt); 
\filldraw[gray] (0.7,0) circle (7pt); 
\end{tikzpicture}
}} + b^{F}, K^{b})) \rVert_{\mathcal{C}^{1-z}} + \lVert (u^{\sharp}, b^{\sharp}) \rVert_{\mathcal{C}^{-z}} \nonumber\\
\lesssim& \lVert (u^{
\scalebox{0.16}{\begin{tikzpicture}
\draw[black, thick] (-0.7,0.9) -- (0,0);
\draw[black, thick] (0.7,0.9) -- (0,0);
\draw[black, thick] (0.7,0) -- (0,-0.9);
\draw[snake=zigzag](0,0) -- (0,-0.9);
\draw[snake=zigzag](0,-0.9) -- (0,-1.8);
\filldraw[pink] (-0.7,0.9) circle (7pt); 
\filldraw[pink] (0.7,0.9) circle (7pt); 
\filldraw[pink] (0.7,0) circle (7pt); 
\end{tikzpicture}
}} + u^{F}, b^{
\scalebox{0.16}{\begin{tikzpicture}
\draw[black, thick] (-0.7,0.9) -- (0,0);
\draw[black, thick] (0.7,0.9) -- (0,0);
\draw[black, thick] (0.7,0) -- (0,-0.9);
\draw[snake=zigzag](0,0) -- (0,-0.9);
\draw[snake=zigzag](0,-0.9) -- (0,-1.8);
\filldraw[gray] (-0.7,0.9) circle (7pt); 
\filldraw[gray] (0.7,0.9) circle (7pt); 
\filldraw[gray] (0.7,0) circle (7pt); 
\end{tikzpicture}
}} + b^{F})(t) \rVert_{\mathcal{C}^{-z}} \lVert (K^{u}, K^{b}) \rVert_{\mathcal{C}^{\frac{3}{2} - \delta}} + \lVert (u^{\sharp}, b^{\sharp}) \rVert_{\mathcal{C}^{-z}} \nonumber\\
\leq& C [ (t^{\frac{\delta}{4}}C_{\xi} + \lVert y^{F}(t) \rVert_{\mathcal{C}^{-z}} ) t^{\frac{\delta}{4}} C_{\xi} + \lVert (u^{\sharp}, b^{\sharp}) \rVert_{\mathcal{C}^{-z}}]
\end{align} 
for some constant $C \geq 0$ by (\ref{29}), (\ref{32}), Lemma \ref{Lemma 1.1} (2), (\ref{35}), (\ref{39}), (\ref{33}) and (\ref{37}). Thus, for $t \in [0, ( \frac{1}{C C_{\xi}})^{\frac{4}{\delta}})$ we have 
\begin{align}\label{110}
\lVert y^{F}(t) & \rVert_{\mathcal{C}^{-z}} \leq \frac{C}{1- C C_{\xi} t^{\frac{\delta}{4}}} [ C_{\xi}^{2} t^{\frac{\delta}{4}} + \lVert (u^{\sharp}, b^{\sharp})(t) \rVert_{\mathcal{C}^{-z}}]\nonumber\\
\lesssim& C_{\xi}^{2} + \lVert y^{\text{in}}\rVert_{\mathcal{C}^{-z}} + \lVert y^{\scalebox{0.6}{\begin{tikzpicture}
\draw[black, thick] (0,0.5) -- (0,0);
\filldraw[black] (0,0.5) circle (2pt); 
\end{tikzpicture}
}}(0)\rVert_{\mathcal{C}^{-z}} + \sup_{s \in [0,T]} s^{\delta + z} \lVert \phi^{\sharp} (s) \rVert_{\mathcal{C}^{-1 - 2\delta}} \int_{0}^{t}(t-r)^{- \frac{(1+ 2 \delta - z)}{2}} r^{-(\delta +z)} dr \nonumber\\
\lesssim& C(T, C_{\xi}, \lVert y^{\text{in}} \rVert_{\mathcal{C}^{-z}}, \lVert y^{\scalebox{0.6}{\begin{tikzpicture}
\draw[black, thick] (0,0.5) -- (0,0);
\filldraw[black] (0,0.5) circle (2pt); 
\end{tikzpicture}
}}(0)\rVert_{\mathcal{C}^{-z}}) 
\end{align} 
by (\ref{109}), (\ref{102a}), (\ref{102b}), Lemma \ref{Lemma 2.6} and (\ref{106}). Based on (\ref{37}) we now define  
\begin{align}
\mathbb{Z}( \xi^{\epsilon}) \triangleq& ( u^{\scalebox{0.6}{\begin{tikzpicture}
\draw[black, thick] (0,0.5) -- (0,0);
\filldraw[red] (0,0.5) circle (2pt); 
\end{tikzpicture}
}\epsilon}, b^{
\scalebox{0.6}{\begin{tikzpicture}
\draw[black, thick] (0,0.5) -- (0,0);
\filldraw[blue] (0,0.5) circle (2pt); 
\end{tikzpicture}
}\epsilon}, u^{\scalebox{0.6}{\begin{tikzpicture}
\draw[black, thick] (0,0.5) -- (0,0);
\filldraw[red] (0,0.5) circle (2pt); 
\end{tikzpicture}
}\epsilon} \diamond u^{\scalebox{0.6}{\begin{tikzpicture}
\draw[black, thick] (0,0.5) -- (0,0);
\filldraw[red] (0,0.5) circle (2pt); 
\end{tikzpicture}
}\epsilon}, b^{
\scalebox{0.6}{\begin{tikzpicture}
\draw[black, thick] (0,0.5) -- (0,0);
\filldraw[blue] (0,0.5) circle (2pt); 
\end{tikzpicture}
}\epsilon}\diamond b^{
\scalebox{0.6}{\begin{tikzpicture}
\draw[black, thick] (0,0.5) -- (0,0);
\filldraw[blue] (0,0.5) circle (2pt); 
\end{tikzpicture}
}\epsilon}, u^{\scalebox{0.6}{\begin{tikzpicture}
\draw[black, thick] (0,0.5) -- (0,0);
\filldraw[red] (0,0.5) circle (2pt); 
\end{tikzpicture}
}\epsilon} \diamond b^{
\scalebox{0.6}{\begin{tikzpicture}
\draw[black, thick] (0,0.5) -- (0,0);
\filldraw[blue] (0,0.5) circle (2pt); 
\end{tikzpicture}
}\epsilon}, b^{
\scalebox{0.6}{\begin{tikzpicture}
\draw[black, thick] (0,0.5) -- (0,0);
\filldraw[blue] (0,0.5) circle (2pt); 
\end{tikzpicture}
}\epsilon} \diamond u^{\scalebox{0.6}{\begin{tikzpicture}
\draw[black, thick] (0,0.5) -- (0,0);
\filldraw[red] (0,0.5) circle (2pt); 
\end{tikzpicture}
}\epsilon}, \label{111}\\
&  u^{\scalebox{0.6}{\begin{tikzpicture}
\draw[black, thick] (0,0.5) -- (0,0);
\filldraw[red] (0,0.5) circle (2pt); 
\end{tikzpicture}
}\epsilon} \diamond u^{
\scalebox{0.18}{\begin{tikzpicture}
\draw[black, thick] (-0.7,0.9) -- (0,0);
\draw[black, thick] (0.7,0.9) -- (0,0);
\draw[snake=zigzag](0,0) -- (0,-0.9);
\filldraw[green] (-0.7,0.9) circle (6pt); 
\filldraw[green] (0.7,0.9) circle (6pt); 
\end{tikzpicture}
}\epsilon}, b^{
\scalebox{0.6}{\begin{tikzpicture}
\draw[black, thick] (0,0.5) -- (0,0);
\filldraw[blue] (0,0.5) circle (2pt); 
\end{tikzpicture}
}\epsilon} \diamond b^{
\scalebox{0.18}{\begin{tikzpicture}
\draw[black, thick] (-0.7,0.9) -- (0,0);
\draw[black, thick] (0.7,0.9) -- (0,0);
\draw[snake=zigzag](0,0) -- (0,-0.9);
\filldraw[violet] (-0.7,0.9) circle (6pt); 
\filldraw[violet] (0.7,0.9) circle (6pt); 
\end{tikzpicture}
}\epsilon}, b^{
\scalebox{0.6}{\begin{tikzpicture}
\draw[black, thick] (0,0.5) -- (0,0);
\filldraw[blue] (0,0.5) circle (2pt); 
\end{tikzpicture}
}\epsilon} \diamond u^{
\scalebox{0.18}{\begin{tikzpicture}
\draw[black, thick] (-0.7,0.9) -- (0,0);
\draw[black, thick] (0.7,0.9) -- (0,0);
\draw[snake=zigzag](0,0) -- (0,-0.9);
\filldraw[green] (-0.7,0.9) circle (6pt); 
\filldraw[green] (0.7,0.9) circle (6pt); 
\end{tikzpicture}
}\epsilon}, b^{
\scalebox{0.18}{\begin{tikzpicture}
\draw[black, thick] (-0.7,0.9) -- (0,0);
\draw[black, thick] (0.7,0.9) -- (0,0);
\draw[snake=zigzag](0,0) -- (0,-0.9);
\filldraw[violet] (-0.7,0.9) circle (6pt); 
\filldraw[violet] (0.7,0.9) circle (6pt); 
\end{tikzpicture}
}\epsilon} \diamond u^{\scalebox{0.6}{\begin{tikzpicture}
\draw[black, thick] (0,0.5) -- (0,0);
\filldraw[red] (0,0.5) circle (2pt); 
\end{tikzpicture}
}\epsilon},  \nonumber\\
&u^{
\scalebox{0.18}{\begin{tikzpicture}
\draw[black, thick] (-0.7,0.9) -- (0,0);
\draw[black, thick] (0.7,0.9) -- (0,0);
\draw[snake=zigzag](0,0) -- (0,-0.9);
\filldraw[green] (-0.7,0.9) circle (6pt); 
\filldraw[green] (0.7,0.9) circle (6pt); 
\end{tikzpicture}
}\epsilon} \diamond u^{
\scalebox{0.18}{\begin{tikzpicture}
\draw[black, thick] (-0.7,0.9) -- (0,0);
\draw[black, thick] (0.7,0.9) -- (0,0);
\draw[snake=zigzag](0,0) -- (0,-0.9);
\filldraw[green] (-0.7,0.9) circle (6pt); 
\filldraw[green] (0.7,0.9) circle (6pt); 
\end{tikzpicture}
}\epsilon},  b^{
\scalebox{0.18}{\begin{tikzpicture}
\draw[black, thick] (-0.7,0.9) -- (0,0);
\draw[black, thick] (0.7,0.9) -- (0,0);
\draw[snake=zigzag](0,0) -- (0,-0.9);
\filldraw[violet] (-0.7,0.9) circle (6pt); 
\filldraw[violet] (0.7,0.9) circle (6pt); 
\end{tikzpicture}
}\epsilon} \diamond b^{
\scalebox{0.18}{\begin{tikzpicture}
\draw[black, thick] (-0.7,0.9) -- (0,0);
\draw[black, thick] (0.7,0.9) -- (0,0);
\draw[snake=zigzag](0,0) -- (0,-0.9);
\filldraw[violet] (-0.7,0.9) circle (6pt); 
\filldraw[violet] (0.7,0.9) circle (6pt); 
\end{tikzpicture}
}\epsilon}, b^{
\scalebox{0.18}{\begin{tikzpicture}
\draw[black, thick] (-0.7,0.9) -- (0,0);
\draw[black, thick] (0.7,0.9) -- (0,0);
\draw[snake=zigzag](0,0) -- (0,-0.9);
\filldraw[violet] (-0.7,0.9) circle (6pt); 
\filldraw[violet] (0.7,0.9) circle (6pt); 
\end{tikzpicture}
}\epsilon} \diamond u^{
\scalebox{0.18}{\begin{tikzpicture}
\draw[black, thick] (-0.7,0.9) -- (0,0);
\draw[black, thick] (0.7,0.9) -- (0,0);
\draw[snake=zigzag](0,0) -- (0,-0.9);
\filldraw[green] (-0.7,0.9) circle (6pt); 
\filldraw[green] (0.7,0.9) circle (6pt); 
\end{tikzpicture}
}\epsilon}, \nonumber\\
&\pi_{0,\diamond} (u^{
\scalebox{0.16}{\begin{tikzpicture}
\draw[black, thick] (-0.7,0.9) -- (0,0);
\draw[black, thick] (0.7,0.9) -- (0,0);
\draw[black, thick] (0.7,0) -- (0,-0.9);
\draw[snake=zigzag](0,0) -- (0,-0.9);
\draw[snake=zigzag](0,-0.9) -- (0,-1.8);
\filldraw[pink] (-0.7,0.9) circle (7pt); 
\filldraw[pink] (0.7,0.9) circle (7pt); 
\filldraw[pink] (0.7,0) circle (7pt); 
\end{tikzpicture}
}\epsilon}, u^{\scalebox{0.6}{\begin{tikzpicture}
\draw[black, thick] (0,0.5) -- (0,0);
\filldraw[red] (0,0.5) circle (2pt); 
\end{tikzpicture}
}\epsilon}), \pi_{0, \diamond} (b^{
\scalebox{0.16}{\begin{tikzpicture}
\draw[black, thick] (-0.7,0.9) -- (0,0);
\draw[black, thick] (0.7,0.9) -- (0,0);
\draw[black, thick] (0.7,0) -- (0,-0.9);
\draw[snake=zigzag](0,0) -- (0,-0.9);
\draw[snake=zigzag](0,-0.9) -- (0,-1.8);
\filldraw[gray] (-0.7,0.9) circle (7pt); 
\filldraw[gray] (0.7,0.9) circle (7pt); 
\filldraw[gray] (0.7,0) circle (7pt); 
\end{tikzpicture}
}\epsilon}, b^{
\scalebox{0.6}{\begin{tikzpicture}
\draw[black, thick] (0,0.5) -- (0,0);
\filldraw[blue] (0,0.5) circle (2pt); 
\end{tikzpicture}
}\epsilon}), \pi_{0,\diamond}(u^{
\scalebox{0.16}{\begin{tikzpicture}
\draw[black, thick] (-0.7,0.9) -- (0,0);
\draw[black, thick] (0.7,0.9) -- (0,0);
\draw[black, thick] (0.7,0) -- (0,-0.9);
\draw[snake=zigzag](0,0) -- (0,-0.9);
\draw[snake=zigzag](0,-0.9) -- (0,-1.8);
\filldraw[pink] (-0.7,0.9) circle (7pt); 
\filldraw[pink] (0.7,0.9) circle (7pt); 
\filldraw[pink] (0.7,0) circle (7pt); 
\end{tikzpicture}
}\epsilon},  b^{
\scalebox{0.6}{\begin{tikzpicture}
\draw[black, thick] (0,0.5) -- (0,0);
\filldraw[blue] (0,0.5) circle (2pt); 
\end{tikzpicture}
}\epsilon}), \pi_{0,\diamond} (u^{
\scalebox{0.16}{\begin{tikzpicture}
\draw[black, thick] (-0.7,0.9) -- (0,0);
\draw[black, thick] (0.7,0.9) -- (0,0);
\draw[black, thick] (0.7,0) -- (0,-0.9);
\draw[snake=zigzag](0,0) -- (0,-0.9);
\draw[snake=zigzag](0,-0.9) -- (0,-1.8);
\filldraw[gray] (-0.7,0.9) circle (7pt); 
\filldraw[gray] (0.7,0.9) circle (7pt); 
\filldraw[gray] (0.7,0) circle (7pt); 
\end{tikzpicture}
}\epsilon}, u^{\scalebox{0.6}{\begin{tikzpicture}
\draw[black, thick] (0,0.5) -- (0,0);
\filldraw[red] (0,0.5) circle (2pt); 
\end{tikzpicture}
}\epsilon}), \nonumber\\
& \pi_{0,\diamond} (\mathcal{P} D K^{u,\epsilon}, u^{\scalebox{0.6}{\begin{tikzpicture}
\draw[black, thick] (0,0.5) -- (0,0);
\filldraw[red] (0,0.5) circle (2pt); 
\end{tikzpicture}
}\epsilon}), \pi_{0,\diamond} (\mathcal{P} D K^{b,\epsilon}, u^{\scalebox{0.6}{\begin{tikzpicture}
\draw[black, thick] (0,0.5) -- (0,0);
\filldraw[red] (0,0.5) circle (2pt); 
\end{tikzpicture}
}\epsilon}), \pi_{0,\diamond} (\mathcal{P} D K^{u,\epsilon}, b^{
\scalebox{0.6}{\begin{tikzpicture}
\draw[black, thick] (0,0.5) -- (0,0);
\filldraw[blue] (0,0.5) circle (2pt); 
\end{tikzpicture}
}\epsilon}), \pi_{0,\diamond} (\mathcal{P} D K^{b,\epsilon}, b^{
\scalebox{0.6}{\begin{tikzpicture}
\draw[black, thick] (0,0.5) -- (0,0);
\filldraw[blue] (0,0.5) circle (2pt); 
\end{tikzpicture}
}\epsilon}))\nonumber\\
\in& \mathbb{X} \triangleq C([0,T]; \mathcal{C}^{-\frac{1}{2} - \frac{\delta}{2}})^{2} \times C([0,T]; \mathcal{C}^{-1- \frac{\delta}{2}})^{4} C([0,T]; \mathcal{C}^{-\frac{1}{2} - \frac{\delta}{2}})^{4} \times C([0,T]; \mathcal{C}^{-\delta})^{11},  \nonumber 
\end{align} 
equipped with product topology. Then we may show via similar arguments that for all $a > 0$, there exists $T_{0} > 0$ sufficiently small such that the mapping $(y^{\text{in}}, \mathbb{Z}(\xi^{\epsilon})) \mapsto (u^{F}, b^{F})$ is Lipschitz in a norm of  $C([0, T_{0}]; \mathcal{C}^{-z})$ on the set $\{ (y^{\text{in}}, \mathbb{Z}(\xi^{\epsilon})): \max \{ \lVert y^{\text{in}} \rVert_{\mathcal{C}^{-z}}, C_{\xi} \} \leq a \}$. This implies the following result. 
\begin{proposition}\label{Proposition 3.1}
Let $\delta_{0} \in (0, \frac{1}{2})$, $z \in (\frac{1}{2}, \frac{1}{2} + \delta_{0})$ and $(\xi^{\epsilon})_{\epsilon > 0}$ be a family of smooth functions converging to $\xi$ as $\epsilon \to 0$. Suppose that for any $\epsilon > 0, y^{\text{in}} \in \mathcal{C}^{-z}$ given, $y^{\epsilon}$ is the unique maximal solution to 
\begin{subequations}\label{112}
\begin{align}
Lu_{i} = \sum_{i_{1} = 1}^{3} \mathcal{P}_{ii_{1}} \xi_{i_{1}}^{u} - \frac{1}{2} \sum_{i_{1}, j = 1}^{3} \mathcal{P}_{ii_{1}}  \partial_{x_{j}} (u_{i} u_{j}) + \frac{1}{2} \sum_{i_{1}, j=1}^{3} \mathcal{P}_{ii_{1}}  \partial_{x_{j}}  (b_{i} b_{j}), \\
Lb_{i} = \sum_{i_{1} =1}^{3} \mathcal{P}_{ii_{1}} \xi_{i_{1}}^{b} - \frac{1}{2} \sum_{i_{1}, j=1}^{3} \mathcal{P}_{ii_{1}}  \partial_{x_{j}} (b_{i} u_{j}) + \frac{1}{2} \sum_{i_{1}, j=1}^{3} \mathcal{P}_{ii_{1}}  \partial_{x_{j}}  (u_{i} b_{j}),\\
y^{\epsilon} (\cdot, 0) = \mathcal{P} y^{\text{in}}(\cdot)
\end{align}
\end{subequations} 
such that $y^{F,\epsilon} = (u^{F,\epsilon}, b^{F, \epsilon}) \in (C((0, T_{\epsilon}); \mathcal{C}^{\frac{1}{2} - \delta_{0}}))^{2}$. Suppose that $\mathbb{Z}( \xi^{\epsilon})$ converges in $\mathbb{X}$ so that for $i, i_{1}, j, j_{1} \in \{1,2,3\}$, there exist families 
\begin{align*}
&v^{\scalebox{0.6}{

}}_{i}(t)$, we have 
\begin{equation}\label{115}
X_{t,i}^{u} = \sum_{k \neq 0} \hat{X}_{t,i}^{u}(k) e_{k}, \hspace{3mm} X_{t,i}^{b} = \sum_{k \neq 0} \hat{X}_{t,i}^{b}(k) e_{k}, \hspace{3mm} e_{k} \triangleq (2\pi)^{-\frac{3}{2}} e^{ix\cdot k} 
\end{equation} 
where $\hat{X}_{t}^{u}(0) = 0, \hat{X}_{t}^{b}(0) = 0$ due to mean-zero property of $\xi^{u}$ and $\xi^{b}$ and  
\begin{subequations}\label{116}
\begin{align}
& \mathbb{E} [ \hat{X}_{t,i}^{u} (k) \hat{X}_{s,j}^{u}(k') ] = 1_{k + k'= 0} \sum_{i_{1} =1}^{3} \frac{e^{ - \lvert k \rvert^{2} \lvert t-s \rvert}}{2 \lvert k \rvert^{2}} \hat{\mathcal{P}}_{ii_{1}}(k) \hat{\mathcal{P}}_{j i_{1}}(k),\\
& \mathbb{E} [ \hat{X}_{t,i}^{b} (k) \hat{X}_{s,j}^{b}(k') ] = 1_{k + k'= 0} \sum_{i_{1} =1}^{3} \frac{e^{ - \lvert k \rvert^{2} \lvert t-s \rvert}}{2 \lvert k \rvert^{2}}\hat{\mathcal{P}}_{ii_{1}}(k) \hat{\mathcal{P}}_{j i_{1}}(k),\\
& \mathbb{E} [ \hat{X}_{t,i}^{u} (k) \hat{X}_{s,j}^{b}(k') ] = 1_{k + k'= 0} \sum_{i_{1} =1}^{3} \frac{e^{ - \lvert k \rvert^{2} \lvert t-s \rvert}}{2 \lvert k \rvert^{2}}\hat{\mathcal{P}}_{ii_{1}}(k) \hat{\mathcal{P}}_{j i_{1}}(k),\\
& \mathbb{E} [ \hat{X}_{t,i}^{b} (k) \hat{X}_{s,j}^{u}(k') ] = 1_{k + k'= 0} \sum_{i_{1} =1}^{3} \frac{e^{ - \lvert k \rvert^{2} \lvert t-s \rvert}}{2 \lvert k \rvert^{2}}\hat{\mathcal{P}}_{ii_{1}}(k) \hat{\mathcal{P}}_{j i_{1}}(k),
\end{align}
\end{subequations} 
for $k \in \mathbb{Z}^{3} \setminus \{0 \}$ due to (\ref{16}). We regularize $\xi$ by $\xi^{\epsilon} \triangleq \sum_{k} f(\epsilon k) \hat{\xi}(k) e_{k}$ where $f$ is a smooth radial cut-off function with compact support such that $f(0) = 1$ so that 
\begin{align*}
X_{t,i}^{u, \epsilon} = \int_{-\infty}^{t} \sum_{i_{1} = 1}^{3} \mathcal{P}_{ii_{1}} P_{t-s} \sum_{k \neq 0} f(\epsilon k) \hat{\xi}^{u, \epsilon}_{i_{1}} (k,s) ds,  X_{t,i}^{b, \epsilon} = \int_{-\infty}^{t} \sum_{i_{1} = 1}^{3} \mathcal{P}_{ii_{1}} P_{t-s} \sum_{k \neq 0} f(\epsilon k) \hat{\xi}^{b, \epsilon}_{i_{1}} (k,s) ds, 
\end{align*}
and the covariance of $X_{t,i}^{u, \epsilon}, X_{t,i}^{b, \epsilon}$ follow from (\ref{116}), only multiplied by $f(\epsilon k)^{2}$. 

We now devote ourselves to convergence and renormalizations. First, the existence of $v^{\scalebox{0.6}{\begin{tikzpicture}
\draw[black, thick] (0,0.5) -- (0,0);
\filldraw[red] (0,0.5) circle (2pt); 
\end{tikzpicture}
}}_{1}, v^{
\scalebox{0.6}{\begin{tikzpicture}
\draw[black, thick] (0,0.5) -- (0,0);
\filldraw[blue] (0,0.5) circle (2pt); 
\end{tikzpicture}
}}_{2}$ such that $u^{\scalebox{0.6}{\begin{tikzpicture}
\draw[black, thick] (0,0.5) -- (0,0);
\filldraw[red] (0,0.5) circle (2pt); 
\end{tikzpicture}
}\epsilon} \to v^{\scalebox{0.6}{\begin{tikzpicture}
\draw[black, thick] (0,0.5) -- (0,0);
\filldraw[red] (0,0.5) circle (2pt); 
\end{tikzpicture}
}}_{1}, b^{
\scalebox{0.6}{\begin{tikzpicture}
\draw[black, thick] (0,0.5) -- (0,0);
\filldraw[blue] (0,0.5) circle (2pt); 
\end{tikzpicture}
}\epsilon} \to v^{
\scalebox{0.6}{\begin{tikzpicture}
\draw[black, thick] (0,0.5) -- (0,0);
\filldraw[blue] (0,0.5) circle (2pt); 
\end{tikzpicture}
}}_{2}$ in $L^{p}(\Omega; C([0, T]; \mathcal{C}^{-\frac{1}{2} - \frac{\delta}{2}}))$ for all $p \geq 1$ as $\epsilon \to 0$ is immediate from (\ref{16}). Second, the convergence issues of 
\begin{align*}
& u^{\scalebox{0.6}{\begin{tikzpicture}
\draw[black, thick] (0,0.5) -- (0,0);
\filldraw[red] (0,0.5) circle (2pt); 
\end{tikzpicture}
}\epsilon}_{i} \diamond u^{\scalebox{0.6}{\begin{tikzpicture}
\draw[black, thick] (0,0.5) -- (0,0);
\filldraw[red] (0,0.5) circle (2pt); 
\end{tikzpicture}
}\epsilon}_{j} = u^{\scalebox{0.6}{\begin{tikzpicture}
\draw[black, thick] (0,0.5) -- (0,0);
\filldraw[red] (0,0.5) circle (2pt); 
\end{tikzpicture}
}\epsilon}_{i} u^{\scalebox{0.6}{\begin{tikzpicture}
\draw[black, thick] (0,0.5) -- (0,0);
\filldraw[red] (0,0.5) circle (2pt); 
\end{tikzpicture}
}\epsilon}_{j} - C_{0,1}^{\epsilon, ij} \to v^{
\scalebox{0.6}{\begin{tikzpicture}
\draw[black, thick] (-0.2,0.5) -- (0,0);
\draw[black, thick] (0.2,0.5) -- (0,0);
\filldraw[red] (-0.2,0.5) circle (2pt); 
\filldraw[red] (0.2,0.5) circle (2pt); 
\end{tikzpicture}
}}_{3,ij}, \hspace{2mm} b^{
\scalebox{0.6}{\begin{tikzpicture}
\draw[black, thick] (0,0.5) -- (0,0);
\filldraw[blue] (0,0.5) circle (2pt); 
\end{tikzpicture}
}\epsilon}_{i} \diamond b^{
\scalebox{0.6}{\begin{tikzpicture}
\draw[black, thick] (0,0.5) -- (0,0);
\filldraw[blue] (0,0.5) circle (2pt); 
\end{tikzpicture}
}\epsilon}_{j} = b^{
\scalebox{0.6}{\begin{tikzpicture}
\draw[black, thick] (0,0.5) -- (0,0);
\filldraw[blue] (0,0.5) circle (2pt); 
\end{tikzpicture}
}\epsilon}_{i} b^{
\scalebox{0.6}{\begin{tikzpicture}
\draw[black, thick] (0,0.5) -- (0,0);
\filldraw[blue] (0,0.5) circle (2pt); 
\end{tikzpicture}
}\epsilon}_{j} - C_{0,2}^{\epsilon, ij} \to v^{
\scalebox{0.6}{\begin{tikzpicture}
\draw[black, thick] (-0.2,0.5) -- (0,0);
\draw[black, thick] (0.2,0.5) -- (0,0);
\filldraw[blue] (-0.2,0.5) circle (2pt); 
\filldraw[blue] (0.2,0.5) circle (2pt); 
\end{tikzpicture}
}}_{4,ij}, \\
&u^{\scalebox{0.6}{\begin{tikzpicture}
\draw[black, thick] (0,0.5) -- (0,0);
\filldraw[red] (0,0.5) circle (2pt); 
\end{tikzpicture}
}\epsilon}_{i} \diamond b^{
\scalebox{0.6}{\begin{tikzpicture}
\draw[black, thick] (0,0.5) -- (0,0);
\filldraw[blue] (0,0.5) circle (2pt); 
\end{tikzpicture}
}\epsilon}_{j} = u^{\scalebox{0.6}{\begin{tikzpicture}
\draw[black, thick] (0,0.5) -- (0,0);
\filldraw[red] (0,0.5) circle (2pt); 
\end{tikzpicture}
}\epsilon}_{i} b^{
\scalebox{0.6}{\begin{tikzpicture}
\draw[black, thick] (0,0.5) -- (0,0);
\filldraw[blue] (0,0.5) circle (2pt); 
\end{tikzpicture}
}\epsilon}_{j} - C_{0,3}^{\epsilon, ij} \to v^{
\scalebox{0.6}{\begin{tikzpicture}
\draw[black, thick] (-0.2,0.5) -- (0,0);
\draw[black, thick] (0.2,0.5) -- (0,0);
\filldraw[red] (-0.2,0.5) circle (2pt); 
\filldraw[blue] (0.2,0.5) circle (2pt); 
\end{tikzpicture}
}}_{5,ij}, \hspace{2mm} b^{
\scalebox{0.6}{\begin{tikzpicture}
\draw[black, thick] (0,0.5) -- (0,0);
\filldraw[blue] (0,0.5) circle (2pt); 
\end{tikzpicture}
}\epsilon}_{i} \diamond u^{\scalebox{0.6}{\begin{tikzpicture}
\draw[black, thick] (0,0.5) -- (0,0);
\filldraw[red] (0,0.5) circle (2pt); 
\end{tikzpicture}
}\epsilon}_{j} = b^{
\scalebox{0.6}{\begin{tikzpicture}
\draw[black, thick] (0,0.5) -- (0,0);
\filldraw[blue] (0,0.5) circle (2pt); 
\end{tikzpicture}
}\epsilon}_{i} u^{\scalebox{0.6}{\begin{tikzpicture}
\draw[black, thick] (0,0.5) -- (0,0);
\filldraw[red] (0,0.5) circle (2pt); 
\end{tikzpicture}
}\epsilon}_{j} - C_{0,4}^{\epsilon, ij} \to v^{
\scalebox{0.6}{\begin{tikzpicture}
\draw[black, thick] (-0.2,0.5) -- (0,0);
\draw[black, thick] (0.2,0.5) -- (0,0);
\filldraw[blue] (-0.2,0.5) circle (2pt); 
\filldraw[red] (0.2,0.5) circle (2pt); 
\end{tikzpicture}
}}_{6,ij} 
\end{align*}
by (\ref{114}) in $L^{p}(\Omega; C([0,T]; \mathcal{C}^{-1-\frac{\delta}{2}}))$ for all $p \geq 1$ as $\epsilon \to 0$ are clear because $: \xi_{1} \xi_{2} : = \xi_{1} \xi_{2} - \mathbb{E} [ \xi_{1} \xi_{2}]$ (see \cite{J97}) so that e.g.,  
\begin{equation}\label{117}
C_{0, 1}^{\epsilon, ij} = \mathbb{E} [u^{\scalebox{0.6}{\begin{tikzpicture}
\draw[black, thick] (0,0.5) -- (0,0);
\filldraw[red] (0,0.5) circle (2pt); 
\end{tikzpicture}
}\epsilon}_{i}(t) u^{\scalebox{0.6}{\begin{tikzpicture}
\draw[black, thick] (0,0.5) -- (0,0);
\filldraw[red] (0,0.5) circle (2pt); 
\end{tikzpicture}
}\epsilon}_{j}(t)] = (2\pi)^{-\frac{3}{2}} \sum_{k_{1} \neq 0} \sum_{i_{1} =1}^{3} \frac{f(\epsilon k_{1})^{2}}{2 \lvert k_{1} \rvert^{2}} \hat{\mathcal{P}}_{ii_{1}}(k_{1}) \hat{\mathcal{P}}_{ji_{1}}(k_{1})
\end{equation} 
by (\ref{115}) and (\ref{116}). It follows that $C_{0, 1}^{\epsilon, ij} \to \infty$ as $\epsilon \searrow 0$. 

We need to perform renormalizations on the following groups in (\ref{114}); 
\begin{enumerate}
\item a first group of $u^{\scalebox{0.6}{\begin{tikzpicture}
\draw[black, thick] (0,0.5) -- (0,0);
\filldraw[red] (0,0.5) circle (2pt); 
\end{tikzpicture}
}\epsilon}_{i} \diamond u^{
\scalebox{0.18}{\begin{tikzpicture}
\draw[black, thick] (-0.7,0.9) -- (0,0);
\draw[black, thick] (0.7,0.9) -- (0,0);
\draw[snake=zigzag](0,0) -- (0,-0.9);
\filldraw[green] (-0.7,0.9) circle (6pt); 
\filldraw[green] (0.7,0.9) circle (6pt); 
\end{tikzpicture}
}\epsilon}_{j}$, $b^{
\scalebox{0.6}{\begin{tikzpicture}
\draw[black, thick] (0,0.5) -- (0,0);
\filldraw[blue] (0,0.5) circle (2pt); 
\end{tikzpicture}
}\epsilon}_{i} \diamond b^{
\scalebox{0.18}{\begin{tikzpicture}
\draw[black, thick] (-0.7,0.9) -- (0,0);
\draw[black, thick] (0.7,0.9) -- (0,0);
\draw[snake=zigzag](0,0) -- (0,-0.9);
\filldraw[violet] (-0.7,0.9) circle (6pt); 
\filldraw[violet] (0.7,0.9) circle (6pt); 
\end{tikzpicture}
}\epsilon}_{j}$, $u^{\scalebox{0.6}{\begin{tikzpicture}
\draw[black, thick] (0,0.5) -- (0,0);
\filldraw[red] (0,0.5) circle (2pt); 
\end{tikzpicture}
}\epsilon}_{i} \diamond b^{
\scalebox{0.18}{\begin{tikzpicture}
\draw[black, thick] (-0.7,0.9) -- (0,0);
\draw[black, thick] (0.7,0.9) -- (0,0);
\draw[snake=zigzag](0,0) -- (0,-0.9);
\filldraw[violet] (-0.7,0.9) circle (6pt); 
\filldraw[violet] (0.7,0.9) circle (6pt); 
\end{tikzpicture}
}\epsilon}_{j}$ and $b^{
\scalebox{0.6}{\begin{tikzpicture}
\draw[black, thick] (0,0.5) -- (0,0);
\filldraw[blue] (0,0.5) circle (2pt); 
\end{tikzpicture}
}\epsilon}_{i} \diamond u^{
\scalebox{0.18}{\begin{tikzpicture}
\draw[black, thick] (-0.7,0.9) -- (0,0);
\draw[black, thick] (0.7,0.9) -- (0,0);
\draw[snake=zigzag](0,0) -- (0,-0.9);
\filldraw[green] (-0.7,0.9) circle (6pt); 
\filldraw[green] (0.7,0.9) circle (6pt); 
\end{tikzpicture}
}\epsilon}_{j}$, 
\item a second group of $u^{
\scalebox{0.18}{\begin{tikzpicture}
\draw[black, thick] (-0.7,0.9) -- (0,0);
\draw[black, thick] (0.7,0.9) -- (0,0);
\draw[snake=zigzag](0,0) -- (0,-0.9);
\filldraw[green] (-0.7,0.9) circle (6pt); 
\filldraw[green] (0.7,0.9) circle (6pt); 
\end{tikzpicture}
}\epsilon}_{i} \diamond u^{
\scalebox{0.18}{\begin{tikzpicture}
\draw[black, thick] (-0.7,0.9) -- (0,0);
\draw[black, thick] (0.7,0.9) -- (0,0);
\draw[snake=zigzag](0,0) -- (0,-0.9);
\filldraw[green] (-0.7,0.9) circle (6pt); 
\filldraw[green] (0.7,0.9) circle (6pt); 
\end{tikzpicture}
}\epsilon}_{j}$, $b^{
\scalebox{0.18}{\begin{tikzpicture}
\draw[black, thick] (-0.7,0.9) -- (0,0);
\draw[black, thick] (0.7,0.9) -- (0,0);
\draw[snake=zigzag](0,0) -- (0,-0.9);
\filldraw[violet] (-0.7,0.9) circle (6pt); 
\filldraw[violet] (0.7,0.9) circle (6pt); 
\end{tikzpicture}
}\epsilon}_{i} \diamond b^{
\scalebox{0.18}{\begin{tikzpicture}
\draw[black, thick] (-0.7,0.9) -- (0,0);
\draw[black, thick] (0.7,0.9) -- (0,0);
\draw[snake=zigzag](0,0) -- (0,-0.9);
\filldraw[violet] (-0.7,0.9) circle (6pt); 
\filldraw[violet] (0.7,0.9) circle (6pt); 
\end{tikzpicture}
}\epsilon}_{j}$ and $b^{
\scalebox{0.18}{\begin{tikzpicture}
\draw[black, thick] (-0.7,0.9) -- (0,0);
\draw[black, thick] (0.7,0.9) -- (0,0);
\draw[snake=zigzag](0,0) -- (0,-0.9);
\filldraw[violet] (-0.7,0.9) circle (6pt); 
\filldraw[violet] (0.7,0.9) circle (6pt); 
\end{tikzpicture}
}\epsilon}_{i} \diamond u^{
\scalebox{0.18}{\begin{tikzpicture}
\draw[black, thick] (-0.7,0.9) -- (0,0);
\draw[black, thick] (0.7,0.9) -- (0,0);
\draw[snake=zigzag](0,0) -- (0,-0.9);
\filldraw[green] (-0.7,0.9) circle (6pt); 
\filldraw[green] (0.7,0.9) circle (6pt); 
\end{tikzpicture}
}\epsilon}_{j}$, 
\item a third group of $\pi_{0, \diamond} (u^{
\scalebox{0.16}{\begin{tikzpicture}
\draw[black, thick] (-0.7,0.9) -- (0,0);
\draw[black, thick] (0.7,0.9) -- (0,0);
\draw[black, thick] (0.7,0) -- (0,-0.9);
\draw[snake=zigzag](0,0) -- (0,-0.9);
\draw[snake=zigzag](0,-0.9) -- (0,-1.8);
\filldraw[pink] (-0.7,0.9) circle (7pt); 
\filldraw[pink] (0.7,0.9) circle (7pt); 
\filldraw[pink] (0.7,0) circle (7pt); 
\end{tikzpicture}
}\epsilon}_{i}, u^{\scalebox{0.6}{\begin{tikzpicture}
\draw[black, thick] (0,0.5) -- (0,0);
\filldraw[red] (0,0.5) circle (2pt); 
\end{tikzpicture}
}\epsilon}_{j})$, $\pi_{0, \diamond} (u^{
\scalebox{0.16}{\begin{tikzpicture}
\draw[black, thick] (-0.7,0.9) -- (0,0);
\draw[black, thick] (0.7,0.9) -- (0,0);
\draw[black, thick] (0.7,0) -- (0,-0.9);
\draw[snake=zigzag](0,0) -- (0,-0.9);
\draw[snake=zigzag](0,-0.9) -- (0,-1.8);
\filldraw[gray] (-0.7,0.9) circle (7pt); 
\filldraw[gray] (0.7,0.9) circle (7pt); 
\filldraw[gray] (0.7,0) circle (7pt); 
\end{tikzpicture}
}\epsilon}_{i}, b^{
\scalebox{0.6}{\begin{tikzpicture}
\draw[black, thick] (0,0.5) -- (0,0);
\filldraw[blue] (0,0.5) circle (2pt); 
\end{tikzpicture}
}\epsilon}_{j})$, $\pi_{0,\diamond}(u^{
\scalebox{0.16}{\begin{tikzpicture}
\draw[black, thick] (-0.7,0.9) -- (0,0);
\draw[black, thick] (0.7,0.9) -- (0,0);
\draw[black, thick] (0.7,0) -- (0,-0.9);
\draw[snake=zigzag](0,0) -- (0,-0.9);
\draw[snake=zigzag](0,-0.9) -- (0,-1.8);
\filldraw[pink] (-0.7,0.9) circle (7pt); 
\filldraw[pink] (0.7,0.9) circle (7pt); 
\filldraw[pink] (0.7,0) circle (7pt); 
\end{tikzpicture}
}\epsilon}_{i}, b^{
\scalebox{0.6}{\begin{tikzpicture}
\draw[black, thick] (0,0.5) -- (0,0);
\filldraw[blue] (0,0.5) circle (2pt); 
\end{tikzpicture}
}\epsilon}_{j})$ and $\pi_{0, \diamond} (u^{
\scalebox{0.16}{\begin{tikzpicture}
\draw[black, thick] (-0.7,0.9) -- (0,0);
\draw[black, thick] (0.7,0.9) -- (0,0);
\draw[black, thick] (0.7,0) -- (0,-0.9);
\draw[snake=zigzag](0,0) -- (0,-0.9);
\draw[snake=zigzag](0,-0.9) -- (0,-1.8);
\filldraw[gray] (-0.7,0.9) circle (7pt); 
\filldraw[gray] (0.7,0.9) circle (7pt); 
\filldraw[gray] (0.7,0) circle (7pt); 
\end{tikzpicture}
}\epsilon}_{i}, u^{\scalebox{0.6}{\begin{tikzpicture}
\draw[black, thick] (0,0.5) -- (0,0);
\filldraw[red] (0,0.5) circle (2pt); 
\end{tikzpicture}
}\epsilon}_{j})$, 
\item a fourth group of $\pi_{0, \diamond} (\mathcal{P}_{ii_{1}} \partial_{x_{j}}  K_{j}^{u, \epsilon}, u^{\scalebox{0.6}{\begin{tikzpicture}
\draw[black, thick] (0,0.5) -- (0,0);
\filldraw[red] (0,0.5) circle (2pt); 
\end{tikzpicture}
}\epsilon}_{j_{1}})$, $\pi_{0,\diamond}(\mathcal{P}_{ii_{1}}  \partial_{x_{j}}  K_{j}^{b, \epsilon}, u^{\scalebox{0.6}{\begin{tikzpicture}
\draw[black, thick] (0,0.5) -- (0,0);
\filldraw[red] (0,0.5) circle (2pt); 
\end{tikzpicture}
}\epsilon}_{j_{1}})$, $\pi_{0, \diamond}(\mathcal{P}_{ii_{1}}  \partial_{x_{j}}  K_{j}^{u, \epsilon}, b^{
\scalebox{0.6}{\begin{tikzpicture}
\draw[black, thick] (0,0.5) -- (0,0);
\filldraw[blue] (0,0.5) circle (2pt); 
\end{tikzpicture}
}\epsilon}_{j_{1}})$ and $\pi_{0,\diamond}(\mathcal{P}_{ii_{1}}  \partial_{x_{j}}  K_{j}^{b, \epsilon}, b^{
\scalebox{0.6}{\begin{tikzpicture}
\draw[black, thick] (0,0.5) -- (0,0);
\filldraw[blue] (0,0.5) circle (2pt); 
\end{tikzpicture}
}\epsilon}_{j_{1}})$. 
\end{enumerate}

\subsection{Group 1}
Within the group 1 of (\ref{114}), specifically $u^{\scalebox{0.6}{\begin{tikzpicture}
\draw[black, thick] (0,0.5) -- (0,0);
\filldraw[red] (0,0.5) circle (2pt); 
\end{tikzpicture}
}\epsilon}_{i} \diamond u^{
\scalebox{0.18}{\begin{tikzpicture}
\draw[black, thick] (-0.7,0.9) -- (0,0);
\draw[black, thick] (0.7,0.9) -- (0,0);
\draw[snake=zigzag](0,0) -- (0,-0.9);
\filldraw[green] (-0.7,0.9) circle (6pt); 
\filldraw[green] (0.7,0.9) circle (6pt); 
\end{tikzpicture}
}\epsilon}_{j}, b^{
\scalebox{0.6}{\begin{tikzpicture}
\draw[black, thick] (0,0.5) -- (0,0);
\filldraw[blue] (0,0.5) circle (2pt); 
\end{tikzpicture}
}\epsilon}_{i} \diamond b^{
\scalebox{0.18}{\begin{tikzpicture}
\draw[black, thick] (-0.7,0.9) -- (0,0);
\draw[black, thick] (0.7,0.9) -- (0,0);
\draw[snake=zigzag](0,0) -- (0,-0.9);
\filldraw[violet] (-0.7,0.9) circle (6pt); 
\filldraw[violet] (0.7,0.9) circle (6pt); 
\end{tikzpicture}
}\epsilon}_{j}, u^{\scalebox{0.6}{\begin{tikzpicture}
\draw[black, thick] (0,0.5) -- (0,0);
\filldraw[red] (0,0.5) circle (2pt); 
\end{tikzpicture}
}\epsilon}_{i} \diamond b^{
\scalebox{0.18}{\begin{tikzpicture}
\draw[black, thick] (-0.7,0.9) -- (0,0);
\draw[black, thick] (0.7,0.9) -- (0,0);
\draw[snake=zigzag](0,0) -- (0,-0.9);
\filldraw[violet] (-0.7,0.9) circle (6pt); 
\filldraw[violet] (0.7,0.9) circle (6pt); 
\end{tikzpicture}
}\epsilon}_{j}$, and $b^{
\scalebox{0.6}{\begin{tikzpicture}
\draw[black, thick] (0,0.5) -- (0,0);
\filldraw[blue] (0,0.5) circle (2pt); 
\end{tikzpicture}
}\epsilon}_{i} \diamond u^{
\scalebox{0.18}{\begin{tikzpicture}
\draw[black, thick] (-0.7,0.9) -- (0,0);
\draw[black, thick] (0.7,0.9) -- (0,0);
\draw[snake=zigzag](0,0) -- (0,-0.9);
\filldraw[green] (-0.7,0.9) circle (6pt); 
\filldraw[green] (0.7,0.9) circle (6pt); 
\end{tikzpicture}
}\epsilon}_{j}$, we focus on $b^{
\scalebox{0.6}{\begin{tikzpicture}
\draw[black, thick] (0,0.5) -- (0,0);
\filldraw[blue] (0,0.5) circle (2pt); 
\end{tikzpicture}
}\epsilon}_{i} \diamond u^{
\scalebox{0.18}{\begin{tikzpicture}
\draw[black, thick] (-0.7,0.9) -- (0,0);
\draw[black, thick] (0.7,0.9) -- (0,0);
\draw[snake=zigzag](0,0) -- (0,-0.9);
\filldraw[green] (-0.7,0.9) circle (6pt); 
\filldraw[green] (0.7,0.9) circle (6pt); 
\end{tikzpicture}
}\epsilon}_{j}$ and prove the existence of $v^{
\scalebox{0.2}{\begin{tikzpicture}
\draw[black, thick] (-0.7,0.9) -- (0,0);
\draw[black, thick] (0.7,0.9) -- (0,0);
\draw[snake=zigzag](0,0) -- (0,-0.9);
\draw[black, thick] (-0.7,0.0) -- (0,-0.9);
\filldraw[blue] (-0.7,0.0) circle (6pt); 
\filldraw[green] (-0.7,0.9) circle (6pt); 
\filldraw[green] (0.7,0.9) circle (6pt); 
\end{tikzpicture}
}}_{9,ij} \in C([0, T]; \mathcal{C}^{- \frac{1}{2} - \frac{\delta}{2}})$ such that $b^{
\scalebox{0.6}{\begin{tikzpicture}
\draw[black, thick] (0,0.5) -- (0,0);
\filldraw[blue] (0,0.5) circle (2pt); 
\end{tikzpicture}
}\epsilon}_{i} \diamond u^{
\scalebox{0.18}{\begin{tikzpicture}
\draw[black, thick] (-0.7,0.9) -- (0,0);
\draw[black, thick] (0.7,0.9) -- (0,0);
\draw[snake=zigzag](0,0) -- (0,-0.9);
\filldraw[green] (-0.7,0.9) circle (6pt); 
\filldraw[green] (0.7,0.9) circle (6pt); 
\end{tikzpicture}
}\epsilon}_{j} \to v^{
\scalebox{0.2}{\begin{tikzpicture}
\draw[black, thick] (-0.7,0.9) -- (0,0);
\draw[black, thick] (0.7,0.9) -- (0,0);
\draw[snake=zigzag](0,0) -- (0,-0.9);
\draw[black, thick] (-0.7,0.0) -- (0,-0.9);
\filldraw[blue] (-0.7,0.0) circle (6pt); 
\filldraw[green] (-0.7,0.9) circle (6pt); 
\filldraw[green] (0.7,0.9) circle (6pt); 
\end{tikzpicture}
}}_{9,ij}$ as $\epsilon \to 0$.  For simplicity of notations we write $b^{
\scalebox{0.6}{\begin{tikzpicture}
\draw[black, thick] (0,0.5) -- (0,0);
\filldraw[blue] (0,0.5) circle (2pt); 
\end{tikzpicture}
}\epsilon}_{j} u^{
\scalebox{0.18}{\begin{tikzpicture}
\draw[black, thick] (-0.7,0.9) -- (0,0);
\draw[black, thick] (0.7,0.9) -- (0,0);
\draw[snake=zigzag](0,0) -- (0,-0.9);
\filldraw[green] (-0.7,0.9) circle (6pt); 
\filldraw[green] (0.7,0.9) circle (6pt); 
\end{tikzpicture}
}\epsilon}_{i}$. First, from (\ref{17}), (\ref{114}) and (\ref{115}), we obtain 
\begin{align}\label{118}
b^{
\scalebox{0.6}{\begin{tikzpicture}
\draw[black, thick] (0,0.5) -- (0,0);
\filldraw[blue] (0,0.5) circle (2pt); 
\end{tikzpicture}
}\epsilon}_{j}(t) u^{
\scalebox{0.18}{\begin{tikzpicture}
\draw[black, thick] (-0.7,0.9) -- (0,0);
\draw[black, thick] (0.7,0.9) -- (0,0);
\draw[snake=zigzag](0,0) -- (0,-0.9);
\filldraw[green] (-0.7,0.9) circle (6pt); 
\filldraw[green] (0.7,0.9) circle (6pt); 
\end{tikzpicture}
}\epsilon}_{i}(t) 
=& -\frac{1}{2(2\pi)^{3}} \sum_{k} \sum_{i_{1}, i_{2} = 1}^{3} \sum_{k_{1}, k_{2}, k_{3}: k_{123} = k} \int_{0}^{t} e^{- \lvert k_{12} \rvert^{2} \lvert t-s \rvert} \hat{\mathcal{P}}_{ii_{1}} (k_{12}) ik_{12}^{i_{2}} \nonumber\\
& \times [ \hat{X}_{t, j}^{b, \epsilon} (k_{3}) \hat{X}_{s,i_{1}}^{u, \epsilon}(k_{1}) \hat{X}_{s,i_{2}}^{u, \epsilon}(k_{2}) - \hat{X}_{t, j}^{b, \epsilon} (k_{3}) \hat{X}_{s,i_{1}}^{b, \epsilon}(k_{1}) \hat{X}_{s,i_{2}}^{b, \epsilon}(k_{2})] ds e_{k}.
\end{align}  
We rely on $:\xi_{1} \xi_{2} \xi_{3}: = \xi_{1} \xi_{2} \xi_{3} - \mathbb{E} [ \xi_{2} \xi_{3}] \xi_{1} - \mathbb{E} [\xi_{1} \xi_{3}] \xi_{2} - \mathbb{E} [ \xi_{1} \xi_{2}] \xi_{3}$ (see \cite{J97}) and (\ref{116}) to deduce 
\begin{align}\label{119}
& \hat{X}_{t, j}^{b, \epsilon} (k_{3}) \hat{X}_{s,i_{1}}^{u, \epsilon}(k_{1}) \hat{X}_{s,i_{2}}^{u, \epsilon}(k_{2}) - \hat{X}_{t, j}^{b, \epsilon} (k_{3}) \hat{X}_{s,i_{1}}^{b, \epsilon}(k_{1}) \hat{X}_{s,i_{2}}^{b, \epsilon}(k_{2})\nonumber\\
=& : \hat{X}_{t, j}^{b, \epsilon} (k_{3}) \hat{X}_{s,i_{1}}^{u, \epsilon}(k_{1}) \hat{X}_{s,i_{2}}^{u, \epsilon}(k_{2}): \nonumber\\
&+ 1_{k_{23} = 0, k_{2} \neq 0} \sum_{i_{3}  =1}^{3} \frac{e^{ - \lvert k_{2} \rvert^{2} \lvert t-s \rvert}}{2 \lvert k_{2} \rvert^{2}} f(\epsilon k_{2})^{2} \hat{\mathcal{P}}_{j i_{3}} (k_{2}) \hat{\mathcal{P}}_{i_{2} i_{3}}(k_{2}) \hat{X}_{s,i_{1}}^{u, \epsilon}(k_{1}) \nonumber\\
&+ 1_{k_{13} = 0, k_{1} \neq 0} \sum_{i_{3}  =1}^{3} \frac{e^{ - \lvert k_{1} \rvert^{2} \lvert t-s \rvert}}{2 \lvert k_{1} \rvert^{2}} f(\epsilon k_{1})^{2} \hat{\mathcal{P}}_{j i_{3}}(k_{1}) \hat{\mathcal{P}}_{i_{1}i_{3}}(k_{1}) \hat{X}_{s,i_{2}}^{u, \epsilon}(k_{2}) \nonumber\\
& - : \hat{X}_{t, j}^{b, \epsilon} (k_{3}) \hat{X}_{s,i_{1}}^{b, \epsilon}(k_{1}) \hat{X}_{s,i_{2}}^{b, \epsilon}(k_{2}): \nonumber\\
& - 1_{k_{23}  =0, k_{2} \neq 0} \sum_{i_{3}  =1}^{3} \frac{e^{- \lvert k_{2} \rvert^{2} \lvert t-s \rvert}}{2 \lvert k_{2} \rvert^{2}} f(\epsilon k_{2})^{2} \hat{\mathcal{P}}_{j i_{3}} (k_{2}) \hat{\mathcal{P}}_{i_{2} i_{3}} (k_{2}) \hat{X}_{s,i_{1}}^{b, \epsilon}(k_{1})\nonumber\\
& - 1_{k_{13}  =0, k_{1} \neq 0} \sum_{i_{3}  =1}^{3} \frac{e^{- \lvert k_{1} \rvert^{2} \lvert t-s \rvert}}{2 \lvert k_{1} \rvert^{2}} f(\epsilon k_{1})^{2} \hat{\mathcal{P}}_{j i_{3}}(k_{1}) \hat{\mathcal{P}}_{i_{1} i_{3}}(k_{1}) \hat{X}_{s,i_{2}}^{b, \epsilon}(k_{2}). 
\end{align} 
Applying (\ref{119}) to (\ref{118}) gives 
\begin{align}\label{120}
& b^{
\scalebox{0.6}{\begin{tikzpicture}
\draw[black, thick] (0,0.5) -- (0,0);
\filldraw[blue] (0,0.5) circle (2pt); 
\end{tikzpicture}
}\epsilon}_{j}(t) u^{
\scalebox{0.18}{\begin{tikzpicture}
\draw[black, thick] (-0.7,0.9) -- (0,0);
\draw[black, thick] (0.7,0.9) -- (0,0);
\draw[snake=zigzag](0,0) -- (0,-0.9);
\filldraw[green] (-0.7,0.9) circle (6pt); 
\filldraw[green] (0.7,0.9) circle (6pt); 
\end{tikzpicture}
}\epsilon}_{i}(t) \\
=& - \frac{1}{2(2\pi)^{3}} \sum_{k} \sum_{i_{1}, i_{2}  =1}^{3} \sum_{k_{1}, k_{2}, k_{3}: k_{123} = k}  \int_{0}^{t} e^{- \lvert k_{12} \rvert^{2} \lvert t-s \rvert}\hat{\mathcal{P}}_{ii_{1}} (k_{12}) \nonumber\\
& \hspace{20mm} \times i k_{12}^{i_{2}}: \hat{X}_{t, j}^{b, \epsilon} (k_{3}) \hat{X}_{s,i_{1}}^{u, \epsilon}(k_{1}) \hat{X}_{s,i_{2}}^{u, \epsilon}(k_{2}): ds e_{k}\nonumber \\
& - \frac{1}{2(2\pi)^{3}} \sum_{k} \sum_{i_{1}, i_{2}, i_{3} =1}^{3} \sum_{k_{1}, k_{2}, k_{3}: k_{123} = k}  \int_{0}^{t} e^{- \lvert k_{12} \rvert^{2} \lvert t-s \rvert} \hat{\mathcal{P}}_{ii_{1}} (k_{12}) \nonumber\\
& \hspace{20mm}  \times i k_{12}^{i_{2}} 1_{k_{23} = 0, k_{2} \neq 0}  \frac{e^{ - \lvert k_{2} \rvert^{2} \lvert t-s \rvert}}{2 \lvert k_{2} \rvert^{2}} f(\epsilon k_{2})^{2} \hat{\mathcal{P}}_{j i_{3}}(k_{2}) \hat{\mathcal{P}}_{i_{2} i_{3}}(k_{2}) \hat{X}_{s,i_{1}}^{u, \epsilon}(k_{1}) ds e_{k} \nonumber\\
& - \frac{1}{2(2\pi)^{3}} \sum_{k} \sum_{i_{1}, i_{2}, i_{3} =1}^{3} \sum_{k_{1}, k_{2}, k_{3}: k_{123} = k}  \int_{0}^{t} e^{- \lvert k_{12} \rvert^{2} \lvert t-s \rvert} \hat{\mathcal{P}}_{ii_{1}} (k_{12})  \nonumber\\
&\hspace{20mm}  \times i k_{12}^{i_{2}} 1_{k_{13} = 0, k_{1} \neq 0}  \frac{e^{ - \lvert k_{1} \rvert^{2} \lvert t-s \rvert}}{2 \lvert k_{1} \rvert^{2}} f(\epsilon k_{1})^{2} \hat{\mathcal{P}}_{j i_{3}}(k_{1}) \hat{\mathcal{P}}_{i_{1} i_{3}} (k_{1}) \hat{X}_{s,i_{2}}^{u, \epsilon}(k_{2}) ds e_{k} \nonumber\\
&+ \frac{1}{2(2\pi)^{3}} \sum_{k} \sum_{i_{1}, i_{2}  =1}^{3} \sum_{k_{1}, k_{2}, k_{3}: k_{123} = k}  \int_{0}^{t} e^{- \lvert k_{12} \rvert^{2} \lvert t-s \rvert}\hat{\mathcal{P}}_{ii_{1}} (k_{12}) \nonumber\\
& \hspace{20mm} \times  i k_{12}^{i_{2}}: \hat{X}_{t, j}^{b, \epsilon} (k_{3}) \hat{X}_{s,i_{1}}^{b, \epsilon}(k_{1}) \hat{X}_{s,i_{2}}^{b, \epsilon}(k_{2}): ds e_{k}\nonumber \\
&+ \frac{1}{2(2\pi)^{3}} \sum_{k} \sum_{i_{1}, i_{2}, i_{3}  =1}^{3} \sum_{k_{1}, k_{2}, k_{3}: k_{123} = k}  \int_{0}^{t} e^{- \lvert k_{12} \rvert^{2} \lvert t-s \rvert} \hat{\mathcal{P}}_{ii_{1}} (k_{12}) \nonumber\\
&\hspace{20mm}  \times  i k_{12}^{i_{2}} 1_{k_{23} = 0, k_{2} \neq 0}  \frac{e^{ - \lvert k_{2} \rvert^{2} \lvert t-s \rvert}}{2 \lvert k_{2} \rvert^{2}} f(\epsilon k_{2})^{2} \hat{\mathcal{P}}_{j i_{3}}(k_{2}) \hat{\mathcal{P}}_{i_{2} i_{3} }(k_{2}) \hat{X}_{s,i_{1}}^{b, \epsilon}(k_{1}) ds e_{k} \nonumber\\
& + \frac{1}{2(2\pi)^{3}} \sum_{k} \sum_{i_{1}, i_{2}, i_{3}=1}^{3} \sum_{k_{1}, k_{2}, k_{3}: k_{123} = k}  \int_{0}^{t} e^{- \lvert k_{12} \rvert^{2} \lvert t-s \rvert} \hat{\mathcal{P}}_{ii_{1}} (k_{12}) \nonumber\\
& \hspace{20mm} \times  i k_{12}^{i_{2}} 1_{k_{13} = 0, k_{1} \neq 0}  \frac{e^{ - \lvert k_{1} \rvert^{2} \lvert t-s \rvert}}{2 \lvert k_{1} \rvert^{2}} f(\epsilon k_{1})^{2} \hat{\mathcal{P}}_{j i_{3}}(k_{1}) \hat{\mathcal{P}}_{i_{1} i_{3} }(k_{1}) \hat{X}_{s,i_{2}}^{b, \epsilon}(k_{2}) ds e_{k} 
\triangleq \sum_{l=1}^{6} \RomanII_{t,\epsilon}^{l},  \nonumber
\end{align} 
where $\RomanII_{t,\epsilon}^{1}, \RomanII_{t,\epsilon}^{4}$ are the terms in the third chaos while $\RomanII_{t,\epsilon}^{2}, \RomanII_{t,\epsilon}^{3}, \RomanII_{t,\epsilon}^{5}, \RomanII_{t,\epsilon}^{6}$ are in the first chaos. 

\subsubsection{Terms in the first chaos}
Let us work on $\RomanII_{t, \epsilon}^{5}$ of (\ref{120}). We first rewrite 
\begin{align}\label{121}
\RomanII_{t,\epsilon}^{5} =& \frac{1}{2(2\pi)^{3}} \sum_{i_{1}, i_{2}, i_{3}  =1}^{3} \sum_{k_{1}, k_{2} \neq 0} \int_{0}^{t} e^{-\lvert k_{12} \rvert^{2} \lvert t-s \rvert} \nonumber\\
& \times ik_{12}^{i_{2}} \hat{X}_{s,i_{1}}^{b, \epsilon}(k_{1}) \frac{ e^{ - \lvert k_{2} \rvert^{2} \lvert t-s \rvert} f(\epsilon k_{2})^{2}}{2 \lvert k_{2} \rvert^{2}} \hat{\mathcal{P}}_{ii_{1}} (k_{12}) \hat{\mathcal{P}}_{i_{2} i_{3}} (k_{2}) \hat{\mathcal{P}}_{j i_{3}}(k_{2}) ds e_{k_{1}}
\end{align} 
and write 
\begin{equation}\label{122}
\RomanII_{t,\epsilon}^{5} = \RomanII_{t,\epsilon}^{5}  - \widetilde{\RomanII}_{t,\epsilon}^{5} + \widetilde{\RomanII}_{t,\epsilon}^{5} - \sum_{i_{1} = 1}^{3} X_{t,i_{1}}^{b, \epsilon} C_{t}^{\epsilon, i_{1}}
\end{equation} 
where 
\begin{subequations}\label{123}
\begin{align}
 \widetilde{\RomanII}_{t,\epsilon}^{5} \triangleq \frac{1}{2(2\pi)^{3}} & \sum_{i_{1}, i_{2}, i_{3} =1}^{3} \sum_{k_{1}, k_{2} \neq 0} \int_{0}^{t} e^{- \lvert k_{12} \rvert^{2} \lvert t-s \rvert} \nonumber\\
 & \times ik_{12}^{i_{2}} \hat{X}_{t,i_{1}}^{b, \epsilon} (k_{1}) \frac{e^{- \lvert k_{2} \rvert^{2} \lvert t-s \rvert} f(\epsilon k_{2})^{2}}{2 \lvert k_{2} \rvert^{2}} \hat{\mathcal{P}}_{ii_{1}}(k_{12}) \hat{\mathcal{P}}_{i_{2} i_{3}} (k_{2}) \hat{\mathcal{P}}_{j i_{3}} (k_{2}) ds e_{k_{1}}, \\
 C_{t}^{\epsilon, i_{1}} \triangleq  \frac{1}{2(2\pi)^{3}} & \sum_{i_{2}, i_{3} = 1}^{3} \sum_{k_{2} \neq 0} \int_{0}^{t} e^{-2 \lvert k_{2} \rvert^{2} \lvert t-s \rvert} \nonumber \\
 & \times  ik_{2}^{i_{2}} \frac{f(\epsilon k_{2})^{2}}{2\lvert k_{2} \rvert^{2}} \hat{\mathcal{P}}_{ii_{1}}(k_{2}) \hat{\mathcal{P}}_{i_{2} i_{3}}(k_{2}) \hat{\mathcal{P}}_{j i_{3}} (k_{2}) ds = 0. 
\end{align}
\end{subequations} 
We compute within (\ref{122}), 
 \begin{align}\label{124}
& \mathbb{E} [ \lvert \Delta_{q} ( \RomanII_{t,\epsilon}^{5} - \widetilde{\RomanII}_{t,\epsilon}^{5}) \rvert^{2}]\nonumber\\
\approx& \mathbb{E}[ \lvert \sum_{k_{1} \neq 0} \theta(2^{-q} k_{1}) \sum_{i_{1}, i_{2}, i_{3} =1}^{3} \sum_{k_{2} \neq 0} \int_{0}^{t} e^{- \lvert k_{12} \rvert^{2} \lvert t-s \rvert}\nonumber\\
& \times k_{12}^{i_{2}} ( \hat{X}_{s,i_{1}}^{b, \epsilon}(k_{1}) - \hat{X}_{t,i_{1}}^{b, \epsilon}(k_{1})) \frac{e^{- \lvert k_{2} \rvert^{2} \lvert t-s \rvert} f(\epsilon k_{2})^{2}}{2 \lvert k_{2} \rvert^{2}} \hat{\mathcal{P}}_{ii_{1}}(k_{12}) \hat{\mathcal{P}}_{i_{2} i_{3}}(k_{2}) \hat{\mathcal{P}}_{j i_{3}}(k_{2}) ds e_{k_{1}} \rvert^{2}]\nonumber\\
\lesssim& \mathbb{E} [ \lvert \sum_{i_{1},i_{2},i_{3} = 1}^{3} \int_{0}^{t} \sum_{k_{1} \neq 0} \theta(2^{-q} k_{1}) e_{k_{1}} \sum_{k_{2} \neq 0} e^{- \lvert k_{12} \rvert^{2} \lvert t-s \rvert} \nonumber\\
& \times k_{12}^{i_{2}} \frac{ e^{- \lvert k_{2} \rvert^{2} \lvert t-s \rvert} f(\epsilon k_{2})^{2}}{\lvert k_{2} \rvert^{2}} \hat{\mathcal{P}}_{ii_{1}}(k_{12}) \hat{\mathcal{P}}_{i_{2} i_{3}}(k_{2}) \hat{\mathcal{P}}_{j i_{3}}(k_{2}) ( \hat{X}_{s,i_{1}}^{b, \epsilon}(k_{1}) - \hat{X}_{t,i_{1}}^{b, \epsilon}(k_{1})) ds \rvert^{2} ]\nonumber\\
\lesssim& \sum_{i_{1}, i_{2}, i_{3}, i_{1}', i_{2}', i_{3}' = 1}^{3} \int_{[0,t]^{2}} \sum_{k_{1}, k_{1}' \neq 0} \theta(2^{-q} k_{1}) \theta(2^{-q} k_{1}') \lvert a_{k_{1}}^{i_{1}i_{2}i_{3}} (t-s) a_{k_{1}'}^{i_{1}'i_{2}'i_{3}'} (t- \overline{s}) \rvert \nonumber\\
& \times \mathbb{E} [ \lvert ( \hat{X}_{s,i_{1}}^{b, \epsilon}(k_{1}) - \hat{X}_{t,i_{1}}^{b, \epsilon}(k_{1})) \overline{(\hat{X}_{\overline{s}, i_{1}'}^{b, \epsilon} (k_{1}') - \hat{X}_{t,i_{1}'}^{b, \epsilon} (k_{1}'))}\rvert] ds d \overline{s} 
\end{align}  
where we denoted 
\begin{align}\label{125}
a_{k_{1}}^{i_{1}i_{2}i_{3}} (t-s) \triangleq & \sum_{k_{2} \neq 0} e^{- \lvert k_{12} \rvert^{2} \lvert t-s \rvert} k_{12}^{i_{2}} \frac{e^{-\lvert k_{2} \rvert^{2} \lvert t-s \rvert} f(\epsilon k_{2})^{2}}{\lvert k_{2} \rvert^{2}} \hat{\mathcal{P}}_{ii_{1}} (k_{12}) \hat{\mathcal{P}}_{i_{2} i_{3}}(k_{2}) \hat{\mathcal{P}}_{j i_{3}}(k_{2}).
\end{align} 
We may further estimate for $k_{1} \neq 0$, for any $\eta \in (0,1)$, 
\begin{align}\label{126}
&\mathbb{E} [ \lvert (\hat{X}_{s,i_{1}}^{b, \epsilon}(k_{1}) - \hat{X}_{t,i_{1}}^{b, \epsilon}(k_{1})) \overline{( \hat{X}_{\overline{s}, i_{1}'}^{b, \epsilon}(k_{1}') - \hat{X}_{t,i_{1}'}^{b, \epsilon} (k_{1}'))} \rvert]  \nonumber\\
\lesssim& 1_{k_{1} + k_{1} ' = 0} \frac{f(\epsilon k_{1})^{2}}{ \lvert k_{1} \rvert^{2}} \lvert k_{1} \rvert^{2\eta} \lvert t-s \rvert^{\frac{\eta}{2}} \lvert t- \overline{s} \rvert^{\frac{\eta}{2}}
\end{align} 
by H$\ddot{\mathrm{o}}$lder's inequality, (\ref{116}), (\ref{16}) and mean value theorem. Applying (\ref{126}) to (\ref{124}) gives 
\begin{align}\label{127}
\mathbb{E} &[ \lvert \Delta_{q} ( \RomanII_{t,\epsilon}^{5} - \widetilde{\RomanII}_{t,\epsilon}^{5} ) \rvert^{2} ] \lesssim \sum_{i_{1}, i_{2}, i_{3}, i_{1}', i_{2}', i_{3}' = 1}^{3} \int_{[0,t]^{2}} \sum_{k_{1} \neq 0} \theta( 2^{-q} k)^{2}\nonumber \\
& \times \lvert a_{k_{1}}^{i_{1}, i_{2}, i_{3}} (t-s) a_{k_{1}}^{i_{1}', i_{2}', i_{3}'} (t- \overline{s}) \rvert \frac{f(\epsilon k_{1})^{2}}{ \lvert k_{1} \rvert^{2}} \lvert k_{1} \rvert^{2\eta} \lvert t-s \rvert^{\frac{\eta}{2}} \lvert t- \overline{s} \rvert^{\frac{\eta}{2}}. 
\end{align}  
Moreover, 
\begin{equation*}
\lvert a_{k_{1}}^{i_{1}i_{2}i_{3}}(t-s) \rvert \lesssim  \sum_{k_{2} \neq 0} \frac{ e^{- \lvert k_{2} \rvert^{2}(t-s)}}{\lvert k_{2} \rvert^{2}} \lesssim \frac{1}{(t-s)^{1+ \frac{\epsilon}{2}}}  
\end{equation*}  
by (\ref{125}) and (\ref{14}).  This gives 
\begin{align}\label{128}
&\sum_{i_{1}, i_{2}, i_{3}, i_{1}', i_{2}', i_{3}' = 1}^{3} \int_{[0,t]^{2}} \lvert a_{k_{1}}^{i_{1}, i_{2}, i_{3}} (t-s) a_{k_{1}}^{i_{1}', i_{2}', i_{3}'} (t- \overline{s}) \rvert \lvert t-s \rvert^{\frac{\eta}{2}} \lvert t- \overline{s} \rvert^{\frac{\eta}{2}} ds d \overline{s}\nonumber\\
& \hspace{20mm} \lesssim \int_{[0,t]^{2}} (t-s)^{\frac{\eta}{2} -1 - \frac{\epsilon}{2}} (t- \overline{s})^{\frac{\eta}{2} - 1 - \frac{\epsilon}{2}} ds d \overline{s} \lesssim t^{\eta - \epsilon}.
\end{align} 
Thus, applying (\ref{128}) to (\ref{127}) gives 
\begin{equation}\label{129}
\mathbb{E} [ \lvert \Delta_{q} ( \RomanII_{t,\epsilon}^{5} - \widetilde{\RomanII}_{t, \epsilon}^{5} ) \rvert^{2} ]\lesssim \sum_{k_{1} \neq 0} \theta(2^{-q} k_{1})^{2} t^{\eta - \epsilon} \lvert k_{1} \rvert^{2 \eta -2} \approx t^{\eta - \epsilon} 2^{q( 1+ 2 \eta)}.
\end{equation} 
Next, for any $\eta \in (0,1)$, we estimate within (\ref{122}), 
\begin{align}\label{130}
& \mathbb{E} [ \lvert \Delta_{q} ( \widetilde{\RomanII}_{t,\epsilon}^{5} - \sum_{i_{1} =1}^{3} X_{t,i_{1}}^{b, \epsilon} C_{t}^{\epsilon, i_{1}}) \rvert^{2}] \nonumber\\
\lesssim& \sum_{k_{1}} \mathbb{E} [ \lvert \hat{X}_{t,i_{1}}^{b, \epsilon} (k_{1}) \rvert^{2}] \theta( 2^{-q} k_{1})^{2} [ \sum_{i_{1}, i_{2}, i_{3}= 1}^{3} \sum_{k_{2} \neq 0} \int_{0}^{t} \frac{ e^{- \lvert k_{2} \rvert^{2} (t-s)}f(\epsilon k_{2})^{2}}{ \lvert k_{2} \rvert^{2}}\nonumber\\
\times& (e ^{- \lvert k_{12} \rvert^{2} (t-s)} k_{12}^{i_{2}} \hat{\mathcal{P}}_{ii_{1}} (k_{12}) - e^{- \lvert k_{2} \rvert^{2} (t-s)} k_{2}^{i_{2}} \hat{\mathcal{P}}_{ii_{1}} (k_{2})) \hat{\mathcal{P}}_{i_{2} i_{3}}(k_{2}) \hat{\mathcal{P}}_{j i_{3}}(k_{2}) ds]^{2} \nonumber\\
\lesssim& \sum_{k_{1} \neq 0} \frac{f ( \epsilon k_{1})^{2}}{\lvert k_{1} \rvert^{2}} \theta( 2^{-q} k_{1})^{2} ( \sum_{k_{2} \neq 0} \int_{0}^{t} \frac{ e^{- \lvert k_{2} \rvert^{2} (t-s)} f(\epsilon k_{2})^{2}}{ \lvert k_{2} \rvert^{2}} \lvert k_{1} \rvert^{\eta} (t-s)^{ - \frac{(1-\eta)}{2}} ds )^{2} 
\end{align} 
by (\ref{123}), Lemma \ref{Lemma 2.8} and (\ref{116}). We furthermore estimate for $\epsilon \in (0,\eta)$, 
\begin{equation}\label{131}
\left( \sum_{k_{2} \neq 0} \int_{0}^{t} \frac{ e^{- \lvert k_{2} \rvert^{2} (t-s)} f(\epsilon k_{2})^{2}}{ \lvert k_{2} \rvert^{2}} (t-s)^{ - \frac{(1-\eta)}{2}} ds \right)^{2}  \lesssim t^{\eta - \epsilon} 
\end{equation}
by (\ref{14}). We also estimate 
\begin{align*} 
\sum_{k_{1} \neq 0} \frac{ f(\epsilon k_{1})^{2}}{ \lvert k_{1} \rvert^{2- 2 \eta}} \theta( 2 ^{-q} k_{1})^{2} \lesssim \sum_{k_{1} \neq 0} \frac{1}{\lvert k_{1} \rvert^{3}} 2^{q( 1+ 2 \eta)}\theta ( 2^{-q} k_{1}) \lesssim 2^{q(1+ 2\eta)};
\end{align*}
applying this and (\ref{131}) to (\ref{130}) leads to, together with(\ref{129}), 
\begin{equation}\label{132}
\mathbb{E} [ \lvert \Delta_{q} \RomanII_{t,\epsilon}^{5} \rvert^{2}] \lesssim t^{\eta - \epsilon} 2^{q(1+ 2\eta)}. 
\end{equation} 
Similarly we can show $\sum_{k = 2, 3, 6} \mathbb{E} [ \lvert \Delta_{q} \RomanII_{t,\epsilon}^{k} \rvert^{2}] \lesssim t^{\eta - \epsilon} 2^{q(1+ 2\eta)}$. 
 
\subsubsection{Terms in the third chaos} 
We work on $\RomanII_{t,\epsilon}^{1}$ of (\ref{120}) as follows: 
\begin{align}\label{133}
 \mathbb{E} [ \lvert \Delta_{q} \RomanII_{t,\epsilon}^{1} \rvert^{2}] \approx &  \sum_{k} \sum_{i_{1}, i_{2}, i_{1}', i_{2}' = 1}^{3} \sum_{k_{1}, k_{2}, k_{3}: k_{123} = k, k_{1}', k_{2}', k_{3}': k_{123}' = k} \theta(2^{-q} k)^{2} \\
& \times \int_{[0,t]^{2}} \mathbb{E} [ : \hat{X}_{s,i_{1}}^{u, \epsilon}(k_{1}) \hat{X}_{s,i_{2}}^{u, \epsilon}(k_{2}) \hat{X}_{t, j}^{b, \epsilon} (k_{3}): : \hat{X}_{\overline{s}, i_{1}'}^{u, \epsilon} (k_{1}) \hat{X}_{\overline{s}, i_{2}'}^{u, \epsilon} (k_{2}) \hat{X}_{t, j}^{b, \epsilon} (k_{3}):] \nonumber \\
& \hspace{60mm} \times b_{k_{12}}^{i_{1}, i_{2}}(t-s) b_{k_{12}}^{i_{1}', i_{2}'} (t- \overline{s}) ds d \overline{s}  \nonumber 
\end{align}  
due to (\ref{120}) and the fact that $:\xi_{1}\xi_{2}\xi_{3}: = :\xi_{3}\xi_{1}\xi_{2}:$ (see \cite{J97}), where we also defined 
\begin{equation*}
b_{k_{12}}^{i_{1}, i_{2}}(t-s) \triangleq e^{- \lvert k_{12} \rvert^{2}(t-s)} k_{12}^{i_{2}} \hat{\mathcal{P}}_{i i_{1}}(k_{12}). 
\end{equation*}
We can now apply Lemma \ref{Lemma 5.7} (2) with ``$Y_{1}$'' $= : \hat{X}_{s,i_{1}}^{u, \epsilon}(k_{1}) \hat{X}_{s,i_{2}}^{u, \epsilon}(k_{2}) \hat{X}_{t, j}^{b, \epsilon}(k_{3}): $ and ``$Y_{2}$'' $= : \hat{X}_{\overline{s}, i_{1}'}^{u, \epsilon}(k_{1}') \hat{X}_{\overline{s}, i_{2}'}^{u, \epsilon}(k_{2}') \hat{X}_{t, j}^{b, \epsilon}(k_{3}'):$ and explicitly compute $\mathbb{E} [Y_{1}Y_{2}] = \sum_{\gamma} v(\gamma)$ (see \cite[Example 2.2]{Y20}), where the sum consists of six terms with 
\begin{align*}
&[ 1_{k_{1} + k_{1}' = 0, k_{1} \neq 0}\sum_{l=1}^{3} \frac{ e^{- \lvert k_{1} \rvert^{2} \lvert s- \overline{s} \rvert}}{ 2 \lvert k_{1} \rvert^{2}} f( \epsilon k_{1})^{2} \hat{\mathcal{P}}_{ i_{1}l} (k_{1}) \hat{\mathcal{P}}_{ i_{1}' l} (k_{1})]\\
& \times [1_{k_{2} + k_{2}' = 0, k_{2} \neq 0} \sum_{l=1}^{3} \frac{ e^{- \lvert k_{2} \rvert^{2} \lvert s - \overline{s} \rvert}}{2 \lvert k_{2} \rvert^{2}} f(\epsilon k_{2})^{2} \hat{\mathcal{P}}_{ i_{2} l} (k_{2}) \hat{\mathcal{P}}_{ i_{2}' l} (k_{2})]  [1_{k_{3} + k_{3}' = 0, k_{3} \neq 0} \sum_{l=1}^{3} \frac{ f(\epsilon k_{3})^{2}}{2 \lvert k_{3} \rvert^{2}} \lvert \hat{\mathcal{P}}_{ jl} (k_{3}) \rvert^{2}]
\end{align*}
as one representative, and this can readily be bounded by a constant multiple of $\prod_{i=1}^{3} \frac{f(\epsilon k_{i})^{2}}{ \lvert k_{i} \rvert^{2}} e^{- ( \lvert k_{1} \rvert^{2} + \lvert k_{2} \rvert^{2}) \lvert s - \overline{s} \rvert}$. The other five terms may be computed and bounded similarly (see \cite[Section 9.2]{GP17}) so that we are led to an estimate of 
\begin{align}\label{134}
& \mathbb{E} [ \lvert \Delta_{q} \RomanII_{t,\epsilon}^{1} \rvert^{2}] \lesssim \sum_{k} \sum_{i_{1}, i_{2}, i_{1}', i_{2}' = 1}^{3} \sum_{k_{1}, k_{2},k_{3} \neq 0: k_{123} = k} \theta (2^{-q} k)^{2} \int_{[0,t]^{2}} \prod_{i=1}^{3} \frac{f(\epsilon k_{i})^{2}}{\lvert k_{i} \rvert^{2}}\\
& \hspace{5mm} \times [ e^{- ( \lvert k_{1} \rvert^{2} + \lvert k_{2} \rvert^{2}) \lvert s - \overline{s} \rvert} \lvert b_{k_{12}}^{i_{1}, i_{2}} (t-s) b_{k_{12}}^{i_{1}', i_{2}'} (t- \overline{s} ) \rvert \nonumber\\
& \hspace{5mm}  + e^{- \lvert k_{1} \rvert^{2} \lvert t-s \rvert - \lvert k_{2} \rvert^{2} \lvert s - \overline{s} \rvert - \lvert k_{3} \rvert^{2} \lvert t-  \overline{s} \rvert} \lvert b_{k_{12}}^{i_{1}, i_{2}}(t-s) b_{k_{12}}^{i_{1}', i_{2}'} (t- \overline{s}) \rvert] ds d \overline{s} \triangleq \RomanII_{t,\epsilon}^{1,1} + \RomanII_{t,\epsilon}^{1,2}. \nonumber
\end{align} 
We may further estimate for any $\eta \in (0,1)$, 
\begin{equation}\label{135}
\lvert b_{k_{12}}^{i_{1}, i_{2}}(t-s) \rvert \lesssim \frac{1}{ \lvert k_{12} \rvert^{1-\eta} (t-s)^{1- \frac{\eta}{2}}}
\end{equation} 
by (\ref{14}). Applying (\ref{135}) to (\ref{134}) shows that 
\begin{align}\label{136}
\RomanII_{t,\epsilon}^{1,1} \approx & \sum_{k} \sum_{i_{1}, i_{2}, i_{1}', i_{2}' = 1}^{3} \sum_{k_{1}, k_{2},k_{3} \neq 0: k_{123} = k} \theta (2^{-q} k)^{2} \int_{[0,t]^{2}} \prod_{i=1}^{3} \frac{f(\epsilon k_{i})^{2}}{\lvert k_{i} \rvert^{2}}\\
& \times e^{- ( \lvert k_{1} \rvert^{2} + \lvert k_{2} \rvert^{2}) \lvert s - \overline{s} \rvert} \lvert b_{k_{12}}^{i_{1}, i_{2}} (t-s) b_{k_{12}}^{i_{1}', i_{2}'} (t- \overline{s} ) \rvert ds d \overline{s} \nonumber\\
\lesssim& \sum_{k} \theta (2^{-q} k) \sum_{k_{1}, k_{2}, k_{3} \neq 0: k_{123} = k} \prod_{i=1}^{3} \frac{1}{\lvert k_{i} \rvert^{2}} \frac{ t^{\eta}}{ \lvert k_{12} \rvert^{2-2\eta}}\lesssim \sum_{k} \theta(2^{-q }k) \frac{t^{\eta}}{ \lvert k \rvert^{2- 2\eta}}  \lesssim t^{\eta} 2^{q(1+ 2 \eta)}\nonumber 
\end{align}  
where we used Lemma \ref{Lemma 2.9}. Next, 
\begin{align}\label{137}
\RomanII_{t,\epsilon}^{1,2} \approx& \sum_{k} \sum_{i_{1}, i_{2}, i_{1}', i_{2}' = 1}^{3} \sum_{k_{1}, k_{2},k_{3} \neq 0: k_{123} = k} \theta (2^{-q} k)^{2} \int_{[0,t]^{2}} \prod_{i=1}^{3} \frac{f(\epsilon k_{i})^{2}}{\lvert k_{i} \rvert^{2}}\nonumber\\
& \times e^{- \lvert k_{1} \rvert^{2} \lvert t-s \rvert - \lvert k_{2} \rvert^{2} \lvert s - \overline{s} \rvert - \lvert k_{3} \rvert^{2} \lvert t-  \overline{s} \rvert} \lvert b_{k_{12}}^{i_{1}, i_{2}}(t-s) b_{k_{12}}^{i_{1}', i_{2}'} (t- \overline{s}) \rvert ds d \overline{s}\nonumber\\
\lesssim& \sum_{k} \theta(2^{-q} k) \sum_{k_{1}, k_{2}, k_{3} \neq 0: k_{123} = k} \prod_{i=1}^{3} \frac{1}{\lvert k_{i} \rvert^{2}} \frac{1}{\lvert k_{12} \rvert^{2-2\eta}} t^{\eta}  
\end{align} 
due to (\ref{134}) and (\ref{135}). At this point, this is identical to the estimate of $\RomanII_{t,\epsilon}^{1,1}$ in (\ref{136}); thus, it may be bounded by the same bound on $\RomanII_{t,\epsilon}^{1,1}$ in (\ref{136}). Therefore, we now conclude from (\ref{134}), (\ref{132}) and (\ref{120})  that 
\begin{equation}\label{138}
\mathbb{E} [ \lvert \Delta_{q} b^{
\scalebox{0.6}{\begin{tikzpicture}
\draw[black, thick] (0,0.5) -- (0,0);
\filldraw[blue] (0,0.5) circle (2pt); 
\end{tikzpicture}
}\epsilon}_{j} (t) u^{
\scalebox{0.18}{\begin{tikzpicture}
\draw[black, thick] (-0.7,0.9) -- (0,0);
\draw[black, thick] (0.7,0.9) -- (0,0);
\draw[snake=zigzag](0,0) -- (0,-0.9);
\filldraw[green] (-0.7,0.9) circle (6pt); 
\filldraw[green] (0.7,0.9) circle (6pt); 
\end{tikzpicture}
}\epsilon}_{i} (t) \rvert^{2}] \lesssim t^{\eta - \epsilon} 2^{q(1+ 2\eta)}
\end{equation} 
for any $t \in (0,1)$. 

Let us now first assume that for $t_{1} < t_{2}$, 
\begin{align}\label{139}
& \mathbb{E} [ \lvert \Delta_{q} ( b^{
\scalebox{0.6}{\begin{tikzpicture}
\draw[black, thick] (0,0.5) -- (0,0);
\filldraw[blue] (0,0.5) circle (2pt); 
\end{tikzpicture}
}\epsilon_{1}}_{j}u^{
\scalebox{0.18}{\begin{tikzpicture}
\draw[black, thick] (-0.7,0.9) -- (0,0);
\draw[black, thick] (0.7,0.9) -- (0,0);
\draw[snake=zigzag](0,0) -- (0,-0.9);
\filldraw[green] (-0.7,0.9) circle (6pt); 
\filldraw[green] (0.7,0.9) circle (6pt); 
\end{tikzpicture}
}\epsilon_{1}}_{i} (t_{1}) - b^{
\scalebox{0.6}{\begin{tikzpicture}
\draw[black, thick] (0,0.5) -- (0,0);
\filldraw[blue] (0,0.5) circle (2pt); 
\end{tikzpicture}
}\epsilon_{1}}_{j} u^{
\scalebox{0.18}{\begin{tikzpicture}
\draw[black, thick] (-0.7,0.9) -- (0,0);
\draw[black, thick] (0.7,0.9) -- (0,0);
\draw[snake=zigzag](0,0) -- (0,-0.9);
\filldraw[green] (-0.7,0.9) circle (6pt); 
\filldraw[green] (0.7,0.9) circle (6pt); 
\end{tikzpicture}
}\epsilon_{1}}_{i} (t_{2}) - b^{
\scalebox{0.6}{\begin{tikzpicture}
\draw[black, thick] (0,0.5) -- (0,0);
\filldraw[blue] (0,0.5) circle (2pt); 
\end{tikzpicture}
}\epsilon_{2}}_{j}  u^{
\scalebox{0.18}{\begin{tikzpicture}
\draw[black, thick] (-0.7,0.9) -- (0,0);
\draw[black, thick] (0.7,0.9) -- (0,0);
\draw[snake=zigzag](0,0) -- (0,-0.9);
\filldraw[green] (-0.7,0.9) circle (6pt); 
\filldraw[green] (0.7,0.9) circle (6pt); 
\end{tikzpicture}
}\epsilon_{2}}_{i} (t_{1}) + b^{
\scalebox{0.6}{\begin{tikzpicture}
\draw[black, thick] (0,0.5) -- (0,0);
\filldraw[blue] (0,0.5) circle (2pt); 
\end{tikzpicture}
}\epsilon_{2}}_{j} u^{
\scalebox{0.18}{\begin{tikzpicture}
\draw[black, thick] (-0.7,0.9) -- (0,0);
\draw[black, thick] (0.7,0.9) -- (0,0);
\draw[snake=zigzag](0,0) -- (0,-0.9);
\filldraw[green] (-0.7,0.9) circle (6pt); 
\filldraw[green] (0.7,0.9) circle (6pt); 
\end{tikzpicture}
}\epsilon_{2}}_{i}(t_{2})) \rvert^{2}] \nonumber\\
\lesssim& (\epsilon_{1}^{2\gamma} + \epsilon_{2}^{2\gamma}) \lvert t_{1} - t_{2} \rvert^{\eta\beta_{0}} 2^{q (1+ 2 \eta(1+ \beta_{0}))} 
\end{align} 
for $\epsilon_{1}, \epsilon_{2} \in (0,\eta)$, $\gamma > 0$ and $\beta_{0} \in (0, \frac{1}{4})$ sufficiently small. Now it is clear that 
\begin{equation}\label{140}
\lVert f \rVert_{\mathcal{C}^{ - \frac{1}{2} - \eta(1+ \beta_{0}) - \epsilon - \frac{3}{p}}} \lesssim \lVert f \rVert_{B_{p,\infty}^{-\frac{1}{2} - \eta(1+ \beta_{0}) - \epsilon}} 
\lesssim \lVert f \rVert_{B_{p,p}^{- \frac{1}{2} - \eta(1+ \beta_{0}) - \epsilon}} 
\end{equation} 
by Besov embedding (e.g., \cite{BCD11}). Therefore, 
\begin{align}\label{141}
& \mathbb{E} [ \lVert (b^{
\scalebox{0.6}{\begin{tikzpicture}
\draw[black, thick] (0,0.5) -- (0,0);
\filldraw[blue] (0,0.5) circle (2pt); 
\end{tikzpicture}
}\epsilon_{1}}_{j} u^{
\scalebox{0.18}{\begin{tikzpicture}
\draw[black, thick] (-0.7,0.9) -- (0,0);
\draw[black, thick] (0.7,0.9) -- (0,0);
\draw[snake=zigzag](0,0) -- (0,-0.9);
\filldraw[green] (-0.7,0.9) circle (6pt); 
\filldraw[green] (0.7,0.9) circle (6pt); 
\end{tikzpicture}
}\epsilon_{1}}_{i} (t_{1}) - b^{
\scalebox{0.6}{\begin{tikzpicture}
\draw[black, thick] (0,0.5) -- (0,0);
\filldraw[blue] (0,0.5) circle (2pt); 
\end{tikzpicture}
}\epsilon_{1}}_{j} u^{
\scalebox{0.18}{\begin{tikzpicture}
\draw[black, thick] (-0.7,0.9) -- (0,0);
\draw[black, thick] (0.7,0.9) -- (0,0);
\draw[snake=zigzag](0,0) -- (0,-0.9);
\filldraw[green] (-0.7,0.9) circle (6pt); 
\filldraw[green] (0.7,0.9) circle (6pt); 
\end{tikzpicture}
}\epsilon_{1}}_{i}(t_{2})   - b^{
\scalebox{0.6}{\begin{tikzpicture}
\draw[black, thick] (0,0.5) -- (0,0);
\filldraw[blue] (0,0.5) circle (2pt); 
\end{tikzpicture}
}\epsilon_{2}}_{j} u^{
\scalebox{0.18}{\begin{tikzpicture}
\draw[black, thick] (-0.7,0.9) -- (0,0);
\draw[black, thick] (0.7,0.9) -- (0,0);
\draw[snake=zigzag](0,0) -- (0,-0.9);
\filldraw[green] (-0.7,0.9) circle (6pt); 
\filldraw[green] (0.7,0.9) circle (6pt); 
\end{tikzpicture}
}\epsilon_{2}}_{j} (t_{1}) + b^{
\scalebox{0.6}{\begin{tikzpicture}
\draw[black, thick] (0,0.5) -- (0,0);
\filldraw[blue] (0,0.5) circle (2pt); 
\end{tikzpicture}
}\epsilon_{2}}_{j} u^{
\scalebox{0.18}{\begin{tikzpicture}
\draw[black, thick] (-0.7,0.9) -- (0,0);
\draw[black, thick] (0.7,0.9) -- (0,0);
\draw[snake=zigzag](0,0) -- (0,-0.9);
\filldraw[green] (-0.7,0.9) circle (6pt); 
\filldraw[green] (0.7,0.9) circle (6pt); 
\end{tikzpicture}
}\epsilon_{2}}_{i} (t_{2})) \rVert_{\mathcal{C}^{-\frac{1}{2} - \eta(1+ \beta_{0}) - \epsilon - \frac{3}{p}}}^{p} ] \nonumber\\
\lesssim& \mathbb{E} [ \sum_{q \geq -1} 2^{qp(- \frac{1}{2} - \eta(1+ \beta_{0}) - \epsilon)}  \lVert \lvert \Delta_{q} ( b^{
\scalebox{0.6}{\begin{tikzpicture}
\draw[black, thick] (0,0.5) -- (0,0);
\filldraw[blue] (0,0.5) circle (2pt); 
\end{tikzpicture}
}\epsilon_{1}}_{j} u^{
\scalebox{0.18}{\begin{tikzpicture}
\draw[black, thick] (-0.7,0.9) -- (0,0);
\draw[black, thick] (0.7,0.9) -- (0,0);
\draw[snake=zigzag](0,0) -- (0,-0.9);
\filldraw[green] (-0.7,0.9) circle (6pt); 
\filldraw[green] (0.7,0.9) circle (6pt); 
\end{tikzpicture}
}\epsilon_{1}}_{i} (t_{1}) - b^{
\scalebox{0.6}{\begin{tikzpicture}
\draw[black, thick] (0,0.5) -- (0,0);
\filldraw[blue] (0,0.5) circle (2pt); 
\end{tikzpicture}
}\epsilon_{1}}_{j} u^{
\scalebox{0.18}{\begin{tikzpicture}
\draw[black, thick] (-0.7,0.9) -- (0,0);
\draw[black, thick] (0.7,0.9) -- (0,0);
\draw[snake=zigzag](0,0) -- (0,-0.9);
\filldraw[green] (-0.7,0.9) circle (6pt); 
\filldraw[green] (0.7,0.9) circle (6pt); 
\end{tikzpicture}
}\epsilon_{1}}_{i} (t_{2}) \nonumber\\
&\hspace{16mm}   - b^{
\scalebox{0.6}{\begin{tikzpicture}
\draw[black, thick] (0,0.5) -- (0,0);
\filldraw[blue] (0,0.5) circle (2pt); 
\end{tikzpicture}
}\epsilon_{2}}_{j} u^{
\scalebox{0.18}{\begin{tikzpicture}
\draw[black, thick] (-0.7,0.9) -- (0,0);
\draw[black, thick] (0.7,0.9) -- (0,0);
\draw[snake=zigzag](0,0) -- (0,-0.9);
\filldraw[green] (-0.7,0.9) circle (6pt); 
\filldraw[green] (0.7,0.9) circle (6pt); 
\end{tikzpicture}
}\epsilon_{2}}_{i} (t_{1}) + b^{
\scalebox{0.6}{\begin{tikzpicture}
\draw[black, thick] (0,0.5) -- (0,0);
\filldraw[blue] (0,0.5) circle (2pt); 
\end{tikzpicture}
}\epsilon_{2}}_{j} u^{
\scalebox{0.18}{\begin{tikzpicture}
\draw[black, thick] (-0.7,0.9) -- (0,0);
\draw[black, thick] (0.7,0.9) -- (0,0);
\draw[snake=zigzag](0,0) -- (0,-0.9);
\filldraw[green] (-0.7,0.9) circle (6pt); 
\filldraw[green] (0.7,0.9) circle (6pt); 
\end{tikzpicture}
}\epsilon_{2}}_{i} (t_{2})) \rvert^{2} \rVert_{L^{\frac{p}{2}}}^{\frac{p}{2}}] \nonumber\\
\lesssim& \sum_{q \geq -1} 2^{qp(- \frac{1}{2} - \eta(1+ \beta_{0}) - \epsilon)}   \lVert \mathbb{E} [ \lvert \Delta_{q} ( b^{
\scalebox{0.6}{\begin{tikzpicture}
\draw[black, thick] (0,0.5) -- (0,0);
\filldraw[blue] (0,0.5) circle (2pt); 
\end{tikzpicture}
}\epsilon_{1}}_{j} u^{
\scalebox{0.18}{\begin{tikzpicture}
\draw[black, thick] (-0.7,0.9) -- (0,0);
\draw[black, thick] (0.7,0.9) -- (0,0);
\draw[snake=zigzag](0,0) -- (0,-0.9);
\filldraw[green] (-0.7,0.9) circle (6pt); 
\filldraw[green] (0.7,0.9) circle (6pt); 
\end{tikzpicture}
}\epsilon_{1}}_{i} (t_{1}) - b^{
\scalebox{0.6}{\begin{tikzpicture}
\draw[black, thick] (0,0.5) -- (0,0);
\filldraw[blue] (0,0.5) circle (2pt); 
\end{tikzpicture}
}\epsilon_{1}}_{j} u^{
\scalebox{0.18}{\begin{tikzpicture}
\draw[black, thick] (-0.7,0.9) -- (0,0);
\draw[black, thick] (0.7,0.9) -- (0,0);
\draw[snake=zigzag](0,0) -- (0,-0.9);
\filldraw[green] (-0.7,0.9) circle (6pt); 
\filldraw[green] (0.7,0.9) circle (6pt); 
\end{tikzpicture}
}\epsilon_{1}}_{i} (t_{2})\nonumber\\
& \hspace{16mm}  - b^{
\scalebox{0.6}{\begin{tikzpicture}
\draw[black, thick] (0,0.5) -- (0,0);
\filldraw[blue] (0,0.5) circle (2pt); 
\end{tikzpicture}
}\epsilon_{2}}_{j} u^{
\scalebox{0.18}{\begin{tikzpicture}
\draw[black, thick] (-0.7,0.9) -- (0,0);
\draw[black, thick] (0.7,0.9) -- (0,0);
\draw[snake=zigzag](0,0) -- (0,-0.9);
\filldraw[green] (-0.7,0.9) circle (6pt); 
\filldraw[green] (0.7,0.9) circle (6pt); 
\end{tikzpicture}
}\epsilon_{2}}_{i} (t_{1}) + b^{
\scalebox{0.6}{\begin{tikzpicture}
\draw[black, thick] (0,0.5) -- (0,0);
\filldraw[blue] (0,0.5) circle (2pt); 
\end{tikzpicture}
}\epsilon_{2}}_{j} u^{
\scalebox{0.18}{\begin{tikzpicture}
\draw[black, thick] (-0.7,0.9) -- (0,0);
\draw[black, thick] (0.7,0.9) -- (0,0);
\draw[snake=zigzag](0,0) -- (0,-0.9);
\filldraw[green] (-0.7,0.9) circle (6pt); 
\filldraw[green] (0.7,0.9) circle (6pt); 
\end{tikzpicture}
}\epsilon_{2}}_{i} (t_{2})) \rvert^{2}] \rVert_{L^{\frac{p}{2}}}^{\frac{p}{2}}  \lesssim (\epsilon_{1}^{p\gamma} + \epsilon_{2}^{p\gamma}) \lvert t_{1} - t_{2} \rvert^{\frac{ p\eta\beta_{0}}{2}} 
\end{align} 
by (\ref{140}), Gaussian hypercontractivity \cite[Theorem 3.50]{J97} and (\ref{139}). Thus, for every $i, j \in \{1,2,3 \}$, there exists $v^{
\scalebox{0.2}{\begin{tikzpicture}
\draw[black, thick] (-0.7,0.9) -- (0,0);
\draw[black, thick] (0.7,0.9) -- (0,0);
\draw[snake=zigzag](0,0) -- (0,-0.9);
\draw[black, thick] (-0.7,0.0) -- (0,-0.9);
\filldraw[blue] (-0.7,0.0) circle (6pt); 
\filldraw[green] (-0.7,0.9) circle (6pt); 
\filldraw[green] (0.7,0.9) circle (6pt); 
\end{tikzpicture}
}}_{9,ij}$ such that $b^{
\scalebox{0.6}{\begin{tikzpicture}
\draw[black, thick] (0,0.5) -- (0,0);
\filldraw[blue] (0,0.5) circle (2pt); 
\end{tikzpicture}
}\epsilon}_{i} \diamond u^{
\scalebox{0.18}{\begin{tikzpicture}
\draw[black, thick] (-0.7,0.9) -- (0,0);
\draw[black, thick] (0.7,0.9) -- (0,0);
\draw[snake=zigzag](0,0) -- (0,-0.9);
\filldraw[green] (-0.7,0.9) circle (6pt); 
\filldraw[green] (0.7,0.9) circle (6pt); 
\end{tikzpicture}
}\epsilon}_{j} \to v^{
\scalebox{0.2}{\begin{tikzpicture}
\draw[black, thick] (-0.7,0.9) -- (0,0);
\draw[black, thick] (0.7,0.9) -- (0,0);
\draw[snake=zigzag](0,0) -- (0,-0.9);
\draw[black, thick] (-0.7,0.0) -- (0,-0.9);
\filldraw[blue] (-0.7,0.0) circle (6pt); 
\filldraw[green] (-0.7,0.9) circle (6pt); 
\filldraw[green] (0.7,0.9) circle (6pt); 
\end{tikzpicture}
}}_{9,ij}$ as $\epsilon \to 0$ in $C([0,T]; \mathcal{C}^{-\frac{1}{2} - \frac{\delta}{2}})$ as desired in (\ref{113}) if $\eta(1+ \beta_{0}) + \epsilon + \frac{3}{p} \leq \frac{\delta}{2}$; therefore, by taking $p$ sufficiently large and $\eta, \epsilon, \beta_{0} > 0$ sufficiently small, we may assume that $\delta > 0$ is arbitrary small. Now to prove (\ref{139}), we may use that $b^{
\scalebox{0.6}{\begin{tikzpicture}
\draw[black, thick] (0,0.5) -- (0,0);
\filldraw[blue] (0,0.5) circle (2pt); 
\end{tikzpicture}
}\epsilon}_{j} (t) u^{
\scalebox{0.18}{\begin{tikzpicture}
\draw[black, thick] (-0.7,0.9) -- (0,0);
\draw[black, thick] (0.7,0.9) -- (0,0);
\draw[snake=zigzag](0,0) -- (0,-0.9);
\filldraw[green] (-0.7,0.9) circle (6pt); 
\filldraw[green] (0.7,0.9) circle (6pt); 
\end{tikzpicture}
}\epsilon}_{i}(t) = \sum_{l=1}^{6} \RomanII_{t,\epsilon}^{l}$ from (\ref{120}) so that 
\begin{align}\label{142}
&b^{
\scalebox{0.6}{\begin{tikzpicture}
\draw[black, thick] (0,0.5) -- (0,0);
\filldraw[blue] (0,0.5) circle (2pt); 
\end{tikzpicture}
}\epsilon_{1}}_{j}u^{
\scalebox{0.18}{\begin{tikzpicture}
\draw[black, thick] (-0.7,0.9) -- (0,0);
\draw[black, thick] (0.7,0.9) -- (0,0);
\draw[snake=zigzag](0,0) -- (0,-0.9);
\filldraw[green] (-0.7,0.9) circle (6pt); 
\filldraw[green] (0.7,0.9) circle (6pt); 
\end{tikzpicture}
}\epsilon_{1}}_{i}(t_{1}) - b^{
\scalebox{0.6}{\begin{tikzpicture}
\draw[black, thick] (0,0.5) -- (0,0);
\filldraw[blue] (0,0.5) circle (2pt); 
\end{tikzpicture}
}\epsilon_{1}}_{j} u^{
\scalebox{0.18}{\begin{tikzpicture}
\draw[black, thick] (-0.7,0.9) -- (0,0);
\draw[black, thick] (0.7,0.9) -- (0,0);
\draw[snake=zigzag](0,0) -- (0,-0.9);
\filldraw[green] (-0.7,0.9) circle (6pt); 
\filldraw[green] (0.7,0.9) circle (6pt); 
\end{tikzpicture}
}\epsilon_{1}}_{i}(t_{2}) - b^{
\scalebox{0.6}{\begin{tikzpicture}
\draw[black, thick] (0,0.5) -- (0,0);
\filldraw[blue] (0,0.5) circle (2pt); 
\end{tikzpicture}
}\epsilon_{2}}_{j} u^{
\scalebox{0.18}{\begin{tikzpicture}
\draw[black, thick] (-0.7,0.9) -- (0,0);
\draw[black, thick] (0.7,0.9) -- (0,0);
\draw[snake=zigzag](0,0) -- (0,-0.9);
\filldraw[green] (-0.7,0.9) circle (6pt); 
\filldraw[green] (0.7,0.9) circle (6pt); 
\end{tikzpicture}
}\epsilon_{2}}_{j}(t_{1}) + b^{
\scalebox{0.6}{\begin{tikzpicture}
\draw[black, thick] (0,0.5) -- (0,0);
\filldraw[blue] (0,0.5) circle (2pt); 
\end{tikzpicture}
}\epsilon_{2}}_{j} u^{
\scalebox{0.18}{\begin{tikzpicture}
\draw[black, thick] (-0.7,0.9) -- (0,0);
\draw[black, thick] (0.7,0.9) -- (0,0);
\draw[snake=zigzag](0,0) -- (0,-0.9);
\filldraw[green] (-0.7,0.9) circle (6pt); 
\filldraw[green] (0.7,0.9) circle (6pt); 
\end{tikzpicture}
}\epsilon_{2}}_{i}(t_{2})\nonumber\\
=& \left( \sum_{l=1}^{6} \RomanII_{t_{1}, \epsilon_{1}}^{l} \right) - \left( \sum_{l=1}^{6} \RomanII_{t_{2}, \epsilon_{1}}^{l} \right) - \left( \sum_{l=1}^{6} \RomanII_{t_{1}, \epsilon_{2}}^{l} \right) + \left( \sum_{l=1}^{6} \RomanII_{t_{2}, \epsilon_{2}}^{l} \right). 
\end{align} 
For brevity we only consider when $l= 5$, and rewrite 
\begin{align}\label{143}
\RomanII_{t_{1}, \epsilon_{1}}^{5} - \RomanII_{t_{2},\epsilon_{1}}^{5} - \RomanII_{t_{1}, \epsilon_{2}}^{5} + \RomanII_{t_{2},\epsilon_{2}}^{5} 
=& [\RomanII_{t_{1}, \epsilon_{1}}^{5} - \widetilde{\RomanII}_{t_{1}, \epsilon_{1}}^{5} + \widetilde{\RomanII}_{t_{1}, \epsilon_{1}}^{5} - \sum_{i_{1} = 1}^{3} X_{t_{1}, i_{1}}^{b, \epsilon_{1}} C_{t_{1}}^{\epsilon_{1}, i_{1}}]  \\
& - [\RomanII_{t_{2}, \epsilon_{1}}^{5} - \widetilde{\RomanII}_{t_{2}, \epsilon_{1}}^{5} + \widetilde{\RomanII}_{t_{2}, \epsilon_{1}}^{5} - \sum_{i_{1} = 1}^{3} X_{t_{2}, i_{1}}^{b, \epsilon_{1}} C_{t_{2}}^{\epsilon_{1}, i_{1}}]\nonumber\\
-& [\RomanII_{t_{1}, \epsilon_{2}}^{5} - \widetilde{\RomanII}_{t_{1}, \epsilon_{2}}^{5} + \widetilde{\RomanII}_{t_{1}, \epsilon_{2}}^{5} - \sum_{i_{1} = 1}^{3} X_{t_{1}, i_{1}}^{b, \epsilon_{2}} C_{t_{1}}^{\epsilon_{2}, i_{1}}] \nonumber\\
&+ [\RomanII_{t_{2}, \epsilon_{2}}^{5} - \widetilde{\RomanII}_{t_{2}, \epsilon_{2}}^{5} + \widetilde{\RomanII}_{t_{2}, \epsilon_{2}}^{5} - \sum_{i_{1} = 1}^{3} X_{t_{2}, i_{1}}^{b, \epsilon_{2}} C_{t_{2}}^{\epsilon_{2}, i_{1}}] = \sum_{i=1}^{16} \RomanIV^{i} \nonumber
\end{align} 
as we did in (\ref{122}) and (\ref{123}). For brevity we only consider $\RomanIV^{3} + \RomanIV^{4} + \RomanIV^{7} + \RomanIV^{8}$; i.e. $( \tilde{I}_{t_{1}, \epsilon_{1}}^{5} - \sum_{i_{1} = 1}^{3} X_{t_{1}, i_{1}}^{b, \epsilon_{1}} C_{t_{1}}^{\epsilon_{1}, i_{1}}) - ( \tilde{I}_{t_{2}, \epsilon_{1}}^{5} - \sum_{i_{1} = 1}^{3} X_{t_{2}, i_{1}}^{b, \epsilon_{1}} C_{t_{2}}^{\epsilon_{1}, i_{1}})$. We first compute 
\begin{align}\label{144}
& \mathbb{E} [ \lvert \Delta_{q} ( \tilde{I}_{t_{1}, \epsilon_{1}}^{5} - \sum_{i_{1} =1}^{3} X_{t_{1}, i_{1}}^{b, \epsilon_{1}} C_{t_{1}}^{\epsilon_{1}, i_{1}} - \tilde{I}_{t_{2}, \epsilon_{1}}^{5} + \sum_{i_{1} = 1}^{3} X_{t_{2}, i_{1}}^{b, \epsilon_{1}} C_{t_{2}}^{\epsilon_{1}, i_{1}}) \rvert^{2} ]\nonumber\\
\lesssim& \mathbb{E} [ \lvert \sum_{i_{1}, i_{2}, i_{3} = 1}^{3} \sum_{k_{1}} \hat{X}_{t_{1}, i_{1}}^{b, \epsilon_{1}} (k_{1}) \theta( 2^{-q}k_{1}) e_{k_{1}} \nonumber\\
& \times [ \sum_{k_{2} \neq  0} \int_{0}^{t_{1}} \frac{e^{ - \lvert k_{2} \rvert^{2} \lvert t_{1} - s \rvert} f(\epsilon_{1} k_{2})^{2}}{2 \lvert k_{2} \rvert^{2}} \hat{\mathcal{P}}_{i_{2} i_{3}}(k_{2}) \hat{\mathcal{P}}_{j i_{3}}(k_{2})\nonumber\\
& \hspace{5mm} \times \left( e^{- \lvert k_{12} \rvert^{2} \lvert t_{1} -s \rvert} k_{12}^{i_{2}} \hat{\mathcal{P}}_{ ii_{1}} (k_{12})  - e^{- \lvert k_{2} \rvert^{2} \lvert t_{1} - s \rvert} k_{2}^{i_{2}} \hat{\mathcal{P}}_{ii_{1}} (k_{2})  \right) ds \nonumber\\
&  \hspace{5mm}- \sum_{k_{2} \neq 0} \int_{0}^{t_{2}} \frac{ e^{- \lvert k_{2} \rvert^{2} \lvert t_{2} - s \rvert} f( \epsilon_{1} k_{2})^{2}}{ 2 \lvert k_{2} \rvert^{2}} \hat{\mathcal{P}}_{i_{2} i_{3}}(k_{2}) \hat{\mathcal{P}}_{ j i_{3}}(k_{2})\nonumber\\
& \hspace{5mm} \times \left( e^{- \lvert k_{12} \rvert^{2} \lvert t_{2} -s \rvert} k_{12}^{i_{2}} \hat{\mathcal{P}}_{ ii_{1}} (k_{12}) - e^{- \lvert k_{2} \rvert^{2} \lvert t_{2} - s \rvert} k_{2}^{i_{2}} \hat{\mathcal{P}}_{ii_{1}} (k_{2})  \right) ds] \rvert^{2} ]\nonumber\\
&+ \mathbb{E} [ \lvert \sum_{i_{1}, i_{2}, i_{3} = 1}^{3} \sum_{k_{1}} \left( \hat{X}_{t_{1}, i_{1}}^{b, \epsilon_{1}} (k_{1}) - \hat{X}_{t_{2}, i_{1}}^{b, \epsilon_{1}}(k_{1})\right) \nonumber\\
& \times \theta (2^{-q}  k_{1}) e_{k_{1}} \hat{\mathcal{P}}_{i_{2} i_{3}}(k_{2}) \hat{\mathcal{P}}_{j i_{3}}(k_{2})  [ \sum_{k_{2} \neq 0} \int_{0}^{t_{2}} \frac{ e^{- \lvert k_{2} \rvert^{2} \lvert t_{2} - s \rvert} f(\epsilon_{1} k_{2})^{2}}{ 2 \lvert k_{2} \rvert^{2}} \nonumber\\
& \hspace{5mm} \times \left( e^{- \lvert k_{12} \rvert^{2}  \lvert t_{2} -s \rvert} k_{12}^{i_{2}} \hat{\mathcal{P}}_{ ii_{1}} (k_{12}) - e^{- \lvert k_{2} \rvert^{2} \lvert t_{2} -s \rvert } k_{2}^{i_{2}} \hat{\mathcal{P}}_{ ii_{1}} (k_{2}) \right) ds ] \rvert^{2} ]
\end{align} 
by (\ref{123}). Now we have two expectations in (\ref{144}). For the first expectation in (\ref{144}), we can simply rewrite it for $0 \leq t_{1} < t_{2} \leq T$ as 
\begin{align}\label{145}
&\mathbb{E} [ \lvert \sum_{i_{1}, i_{2}, i_{3} = 1}^{3} \sum_{k_{1}} \hat{X}_{t_{1}, i_{1}}^{b, \epsilon_{1}} (k_{1}) \theta( 2^{-q}k_{1}) e_{k_{1}}\\
& \times [ \sum_{k_{2} \neq 0} \int_{0}^{t_{1}} \frac{e^{ - \lvert k_{2} \rvert^{2} \lvert t_{1} - s \rvert} f(\epsilon_{1} k_{2})^{2}}{2 \lvert k_{2} \rvert^{2}} \hat{\mathcal{P}}_{i_{2} i_{3}}(k_{2}) \hat{\mathcal{P}}_{j i_{3}}(k_{2})\nonumber\\
& \hspace{5mm} \times \left( e^{- \lvert k_{12} \rvert^{2} \lvert t_{1} -s \rvert} k_{12}^{i_{2}} \hat{\mathcal{P}}_{ ii_{1}} (k_{12}) - e^{- \lvert k_{2} \rvert^{2} \lvert t_{1} - s \rvert} k_{2}^{i_{2}} \hat{\mathcal{P}}_{ii_{1}} (k_{2})\right) ds \nonumber\\
&  \hspace{5mm} - \sum_{k_{2} \neq 0} \int_{0}^{t_{2}} \frac{ e^{- \lvert k_{2} \rvert^{2} \lvert t_{2} - s \rvert} f( \epsilon_{1} k_{2})^{2}}{ 2 \lvert k_{2} \rvert^{2}} \hat{\mathcal{P}}_{i_{2} i_{3}}(k_{2}) \hat{\mathcal{P}}_{j i_{3}}(k_{2}) \nonumber\\
& \hspace{5mm} \times \left( e^{- \lvert k_{12} \rvert^{2} \lvert t_{2} -s \rvert} k_{12}^{i_{2}} \hat{\mathcal{P}}_{ ii_{1}} (k_{12})  - e^{- \lvert k_{2} \rvert^{2} \lvert t_{2} - s \rvert} k_{2}^{i_{2}} \hat{\mathcal{P}}_{ii_{1}} (k_{2}) \right) ds] \rvert^{2} ] \lesssim V_{t_{1}}^{1} + V_{t_{1}}^{2} + V_{t_{1}, t_{2}}^{3} \nonumber
\end{align} 
where 
\begin{subequations}
\begin{align}
V_{t_{1}}^{1} \triangleq& \sum_{k_{1} \neq 0} \sum_{i_{1}, i_{2} =1}^{3} \frac{1}{\lvert k_{1} \rvert^{2}} \theta (2^{-q} k_{1})^{2} [ \sum_{k_{2} \neq 0}  \int_{0}^{t_{1}} \frac{ e^{- \lvert k_{2} \rvert^{2}(t_{1} -s)} (1- e^{- \lvert k_{2} \rvert^{2}(t_{2} - t_{1})})}{\lvert k_{2} \rvert^{2}} \nonumber\\
& \hspace{5mm} \times \left( e^{- \lvert k_{12} \rvert^{2}(t_{1} - s)} k_{12}^{i_{2}} \hat{\mathcal{P}}_{ii_{1}}(k_{12}) - e^{- \lvert k_{2} \rvert^{2}(t_{1} -s)} k_{2}^{i_{2}} \hat{\mathcal{P}}_{ii_{1}} (k_{2}) \right) ds]^{2}, \label{146a}\\
V_{t_{1}}^{2} \triangleq& \sum_{k_{1} \neq 0} \sum_{i_{1}, i_{2} =1}^{3} \frac{1}{\lvert k_{1} \rvert^{2}} \theta (2^{-q} k_{1})^{2} [ \sum_{k_{2} \neq 0}  \int_{0}^{t_{1}} \frac{ e^{- \lvert k_{2} \rvert^{2}(t_{2} -s)}}{\lvert k_{2} \rvert^{2}} \nonumber\\
& \times ( e^{- \lvert k_{12} \rvert^{2}(t_{1} - s)} k_{12}^{i_{2}} \hat{\mathcal{P}}_{ii_{1}}(k_{12}) - e^{- \lvert k_{2} \rvert^{2}(t_{1} -s)} k_{2}^{i_{2}} \hat{\mathcal{P}}_{ii_{1}} (k_{2}) \nonumber\\
& \hspace{5mm} -e^{- \lvert k_{12} \rvert^{2}(t_{2} - s)} k_{12}^{i_{2}} \hat{\mathcal{P}}_{ii_{1}}(k_{12}) + e^{- \lvert k_{2} \rvert^{2}(t_{2} -s)} k_{2}^{i_{2}} \hat{\mathcal{P}}_{ii_{1}} (k_{2})) ds]^{2}, \label{146b}\\
V_{t_{1}, t_{2}}^{3} \triangleq& \sum_{k_{1} \neq 0} \sum_{i_{1}, i_{2} = 1}^{3} \frac{1}{\lvert k_{1} \rvert^{2}} \theta(2^{-q} k_{1})^{2}[ \sum_{k_{2} \neq 0} \int_{t_{1}}^{t_{2}} \frac{ e^{- \lvert k_{2} \rvert^{2}(t_{2} -s)} }{\lvert k_{2} \rvert^{2}} \nonumber\\
& \hspace{5mm} \times \left( e^{- \lvert k_{12} \rvert^{2}(t_{2} -s)} k_{12}^{i_{2}} \hat{\mathcal{P}}_{ii_{1}}(k_{12}) - e^{- \lvert k_{2} \rvert^{2}(t_{2} -s)} k_{2}^{i_{2}} \hat{\mathcal{P}}_{ii_{1}} (k_{2}) \right) ds]^{2}\label{146c}  
\end{align}
\end{subequations} 
due to (\ref{116}). On the other hand, the second expectation in (\ref{144}) may be bounded clearly as follows: 
\begin{align}\label{147}
&\mathbb{E} [ \lvert \sum_{i_{1}, i_{2}, i_{3} = 1}^{3} \sum_{k_{1}} \left( \hat{X}_{t_{1}, i_{1}}^{b, \epsilon_{1}} (k_{1}) - \hat{X}_{t_{2}, i_{1}}^{b, \epsilon_{1}}(k_{1})\right) \theta (2^{-q}  k_{1}) e_{k_{1}}\hat{\mathcal{P}}_{i_{2} i_{3}}(k_{2}) \hat{\mathcal{P}}_{ji_{3}}(k_{2}) \nonumber\\
& \times [ \sum_{k_{2} \neq 0} \int_{0}^{t_{2}} \frac{ e^{- \lvert k_{2} \rvert^{2} \lvert t_{2} - s \rvert} f(\epsilon_{1} k_{2})^{2}}{ 2 \lvert k_{2} \rvert^{2}} ( e^{- \lvert k_{12} \rvert^{2} \lvert t_{2} -s \rvert} k_{12}^{i_{2}} \hat{\mathcal{P}}_{ ii_{1}} (k_{12})\nonumber\\
& \hspace{20mm}  - e^{- \lvert k_{2} \rvert^{2} \lvert t_{2} -s \rvert } k_{2}^{i_{2}} \hat{\mathcal{P}}_{ ii_{1}} (k_{2}) ) ds ] \rvert^{2} ]\nonumber\\
\lesssim& \sum_{i_{1}, i_{2} = 1}^{3} \sum_{k_{1}, k_{2} \neq 0} \mathbb{E} [ \lvert ( \hat{X}_{t_{1}, i_{1}}^{b, \epsilon_{1}} (k_{1}) - \hat{X}_{t_{2}, i_{1}}^{b, \epsilon_{1}}(k_{1})) \theta (2^{-q} k_{1})  \int_{0}^{t_{2}} \frac{e^{- \lvert k_{2} \rvert^{2}(t_{2} - s)}}{ \lvert k_{2} \rvert^{2}}  \nonumber\\
& \times\left( e^{- \lvert k_{12} \rvert^{2} (t_{2} -s)} k_{12}^{i_{2}} \hat{\mathcal{P}}_{ii_{1}}(k_{12}) - e^{- \lvert k_{2} \rvert^{2}(t_{2} - s)} k_{2}^{i_{2}} \hat{\mathcal{P}}_{ ii_{}}(k_{2}) \right) ds \rvert^{2}] \triangleq V_{t_{2}}^{4}
\end{align} 
where we used that $\hat{X}_{t_{1}, i_{1}}^{b, \epsilon_{1}}(0) - \hat{X}_{t_{2}, i_{1}}^{b, \epsilon_{1}}(0) = 0$. 
Now on $V_{t_{1}}^{2}$, we may bound 
\begin{align}\label{148}
&  \lvert e^{ - \lvert k_{12} \rvert^{2} (t_{1} -s)} k_{12}^{i_{2}} \hat{\mathcal{P}}_{ii_{1}}(k_{12}) - e^{ - \lvert k_{2} \rvert^{2} (t_{1} -s)} k_{2}^{i_{2}} \hat{\mathcal{P}}_{ii_{1}}(k_{2}) \nonumber\\
& - e^{- \lvert k_{12} \rvert^{2}(t_{2} -s)} k_{12}^{i_{2}} \hat{\mathcal{P}}_{ii_{1}}(k_{12})+  e^{- \lvert k_{2} \rvert^{2} (t_{2} -s)} k_{2}^{i_{2}} \hat{\mathcal{P}}_{ ii_{1}}(k_{2}) \rvert \nonumber\\
\leq& \lvert e^{- \lvert k_{12} \rvert^{2}(t_{1} -s)} k_{12}^{i_{2}} \hat{\mathcal{P}}_{ii_{1}}(k_{12}) - e^{- \lvert k_{2} \rvert^{2}(t_{1} -s)} k_{2}^{i_{2}} \hat{\mathcal{P}}_{ii_{1}}(k_{2}) \rvert \nonumber\\
&+ \lvert e^{- \lvert k_{12} \rvert^{2}(t_{2} -s)} k_{12}^{i_{2}} \hat{\mathcal{P}}_{ ii_{1}}(k_{12}) - e^{- \lvert k_{2} \rvert^{2}(t_{2} -s)} k_{2}^{i_{2}} \hat{\mathcal{P}}_{ ii_{1}}(k_{2}) \rvert 
\end{align} 
or we may bound it instead by 
\begin{align}\label{149}
& \lvert e^{- \lvert k_{12} \rvert^{2} (t_{1} -s)} k_{12}^{i_{2}} \hat{\mathcal{P}}_{ ii_{1}} (k_{12}) - e^{- \lvert k_{12} \rvert^{2}(t_{2} -s)} k_{12}^{i_{2}} \hat{\mathcal{P}}_{ ii_{1}}(k_{12}) \rvert \nonumber\\
&+ \lvert e^{- \lvert k_{2} \rvert^{2}(t_{1} -s)} k_{2}^{i_{2}} \hat{\mathcal{P}}_{ ii_{1}}(k_{2}) - e^{ - \lvert k_{2} \rvert^{2}(t_{2} -s)} k_{2}^{i_{2}} \hat{\mathcal{P}}_{ ii_{1}}(k_{2}) \rvert.  
\end{align} 
In the first case of (\ref{148}) we may bound by 
\begin{equation}\label{150}
 \lvert k_{1} \rvert^{\eta} \lvert t_{1} -s \rvert^{ - \frac{(1-\eta)}{2}} + \lvert k_{1} \rvert^{\eta} \lvert t_{2} -s \rvert^{- \frac{(1-\eta)}{2}} \lesssim \lvert k_{1} \rvert^{\eta} \lvert t_{1} -s \rvert^{- \frac{(1-\eta)}{2}} 
\end{equation} 
for $\eta \in (0,1)$ due to Lemma \ref{Lemma 2.8}. In the second case of (\ref{149}) we may bound by
\begin{align}\label{151}
& \lvert k_{12} \rvert \lvert [ e^{- \lvert k_{12} \rvert^{2} (t_{1} -s)} - e^{- \lvert k_{12} \rvert^{2} (t_{2} -s)}]  \hat{\mathcal{P}}_{ ii_{1}}(k_{12}) \rvert \\
& \hspace{2mm} + \lvert k_{2} \rvert \lvert [ e^{- \lvert k_{2} \rvert^{2}(t_{1} -s)} - e^{- \lvert k_{2} \rvert^{2}(t_{2} -s)}] \hat{\mathcal{P}}_{ ii_{1}}(k_{2}) \rvert  
\lesssim ( \lvert k_{12} \rvert^{2\eta} + \lvert k_{2} \rvert^{2\eta}) \lvert t_{2} - t_{1} \rvert^{\frac{\eta}{2}} (t_{1} -s)^{ - (\frac{1-\eta}{2})} \nonumber
\end{align} 
due to mean value theorem and (\ref{14}). Applying (\ref{148})-(\ref{151}) to (\ref{146b}) gives for any $\beta_{0} \in (0,1)$, 
\begin{align}\label{152}
V_{t_{1}}^{2} \lesssim& \sum_{k_{1} \neq 0} \frac{ \lvert k_{1} \rvert^{2\eta (1-\beta_{0})}}{ \lvert k_{1} \rvert^{2}} \theta(2^{-q} k_{1})^{2} \lvert t_{2} - t_{1} \rvert^{\eta \beta_{0}} \nonumber\\
\times& \left( \sum_{k_{2} \neq 0} \frac{1}{\lvert k_{2} \rvert^{2}} ( \lvert k_{12} \rvert^{2\eta \beta_{0}} + \lvert k_{2} \rvert^{2\eta \beta_{0}}) \int_{0}^{t_{1}} e^{- \lvert k_{2} \rvert^{2}(t_{2} -s)} (t_{1} -s)^{- \frac{(1-\eta)}{2}} ds \right)^{2}. 
\end{align}  
Furthermore, we can compute 
\begin{equation*}
\int_{0}^{t_{1}} e^{- \lvert k_{2} \rvert^{2}(t_{2} -s)} (t_{1} -s)^{- \frac{(1-\eta)}{2}} ds \lesssim \int_{0}^{t_{1}} e^{- \lvert k_{2} \rvert^{2}(t_{1} -s)} (t_{1} -s)^{- \frac{(1-\eta)}{2}} ds  \lesssim \lvert k_{2} \rvert^{- (1+ \frac{\eta}{2})}
\end{equation*} 
by (\ref{14}). Therefore, we may estimate from (\ref{152}) 
\begin{equation}\label{153}
V_{t_{1}}^{2} \lesssim \lvert t_{2} - t_{1} \rvert^{\eta \beta_{0}} 2^{q(1+ 2 \eta(1+ \beta_{0}))} \sum_{k_{1} \neq 0} \frac{ \theta (2^{-q} k_{1})}{\lvert k_{1} \rvert^{3}} \lesssim \lvert t_{2} - t_{1} \rvert^{\eta \beta_{0}} 2^{q(1+ 2 \eta(1+ \beta_{0}))} 
\end{equation}
if we choose $\beta_{0} < \frac{1}{4}$. Similar estimates may be obtained for $V_{t_{1}}^{1}, V_{t_{1}}^{3}$ and $V_{t_{1}}^{4}$ so that applying these estimates in (\ref{144}) and (\ref{145}) lead to 
\begin{equation}\label{265}
\mathbb{E} [ \lvert \Delta_{q} ( \RomanIV^{3} + \RomanIV^{4} + \RomanIV^{7} + \RomanIV^{8}) \rvert^{2}] \lesssim \lvert t_{2} - t_{1} \rvert^{\eta \beta_{0}} 2^{q(1+ 2\eta ( 1+ \beta_{0}))}. 
\end{equation}
Through (\ref{143}) and (\ref{142}), this finally leads to (\ref{139}). 

\begin{remark}\label{new remark}
Our estimate in \eqref{139} is slightly different from the analogous bound, specifically ``$(\epsilon_{1}^{p\gamma} + \epsilon_{2}^{p\gamma}) \lvert t_{1} - t_{2} \rvert^{p( \eta - \epsilon)/2}$,'' in \cite[Equation (A.2)]{ZZ15}. Moreover, our estimate in \eqref{265} also differs from the analogous bound of ``$\lvert t_{1} - t_{2} \rvert^{\frac{n \beta_{0}}{2}} 2^{q(1+ 2 \eta (1+ \beta_{0}))}$'' on \cite[p. 4504]{ZZ15}. 
\end{remark}

\subsection{Group 2}
Within the Group 2 of (\ref{114}), specifically $u^{
\scalebox{0.18}{

}\epsilon}_{j_{0}})(t) =& - \frac{1}{4 (2\pi)^{\frac{9}{2}}} \sum_{k} \sum_{ \lvert i-j \rvert \leq 1} \sum_{k_{1}, k_{2}, k_{3}, k_{4}: k_{1234} = k} \sum_{i_{1}, i_{2}, i_{3}, j_{1} =1}^{3} \theta (2^{-i} k_{123}) \theta (2^{-j} k_{4})\nonumber\\
& \times  \int_{0}^{t} e^{- \lvert k_{123} \rvert^{2}(t-s)}  \int_{0}^{s} \hat{X}_{\sigma, i_{2}}^{u, \epsilon}(k_{1}) \hat{X}_{\sigma, i_{3}}^{b, \epsilon}(k_{2}) \hat{X}_{s, j_{1}}^{b, \epsilon}(k_{3}) \hat{X}_{t, j_{0}}^{b, \epsilon}(k_{4})\nonumber\\
& \times  e^{- \lvert k_{12} \rvert^{2} (s-\sigma)} d\sigma ds k_{12}^{i_{3}} k_{123}^{j_{1}} \hat{\mathcal{P}}_{i_{1} i_{2}}(k_{12}) \hat{\mathcal{P}}_{i_{0} i_{1}} (k_{123}) e_{k}.
\end{align} 
By using the well-known expression of $:\xi_{1}\xi_{2}\xi_{3}\xi_{4}:$ (\cite{J97} and \cite[Example 2.2]{Y20}) we can rewrite 
\begin{align}\label{184}
&\pi_{0} ( u_{8, i_{0}}^{
\scalebox{0.16}{\begin{tikzpicture}
\draw[black, thick] (-0.7,0.9) -- (0,0);
\draw[black, thick] (0.7,0.9) -- (0,0);
\draw[black, thick] (0.7,0) -- (0,-0.9);
\draw[snake=zigzag](0,0) -- (0,-0.9);
\draw[snake=zigzag](0,-0.9) -- (0,-1.8);
\filldraw[pink] (-0.7,0.9) circle (7pt); 
\filldraw[pink] (0.7,0.9) circle (7pt); 
\filldraw[pink] (0.7,0) circle (7pt); 
\end{tikzpicture}
}\epsilon}, b^{
\scalebox{0.6}{\begin{tikzpicture}
\draw[black, thick] (0,0.5) -- (0,0);
\filldraw[blue] (0,0.5) circle (2pt); 
\end{tikzpicture}
}\epsilon}_{j_{0}})(t) \\
=&  \frac{-1}{4(2\pi)^{\frac{9}{2}}} \sum_{k} \sum_{\lvert i-j \rvert \leq 1} \sum_{k_{1}, k_{2}, k_{3}, k_{4}: k_{1234} = k} \sum_{i_{1}, i_{2},  i_{3}, j_{1} =1}^{3} \theta (2^{-i} k_{123}) \theta (2^{-j} k_{4}) \int_{0}^{t} e^{- \lvert k_{123} \rvert^{2}(t-s)}  \nonumber\\
& \times  \int_{0}^{s} [: \hat{X}_{\sigma, i_{2}}^{u, \epsilon}(k_{1}) \hat{X}_{\sigma, i_{3}}^{b, \epsilon}(k_{2}) \hat{X}_{s, j_{1}}^{b, \epsilon}(k_{3}) \hat{X}_{t, j_{0}}^{b, \epsilon}(k_{4}): \nonumber\\
& + \mathbb{E} [ \hat{X}_{\sigma, i_{2}}^{u, \epsilon}(k_{1}) \hat{X}_{\sigma, i_{3}}^{b, \epsilon}(k_{2})]: \hat{X}_{s, j_{1}}^{b, \epsilon}(k_{3}) \hat{X}_{t, j_{0}}^{b, \epsilon}(k_{4}): + \mathbb{E} [ \hat{X}_{\sigma, i_{2}}^{u, \epsilon}(k_{1}) \hat{X}_{s, j_{1}}^{b, \epsilon}(k_{3})]: \hat{X}_{\sigma, i_{3}}^{b, \epsilon}(k_{2}) \hat{X}_{t, j_{0}}^{b, \epsilon}(k_{4}):  \nonumber\\
&+ \mathbb{E} [ \hat{X}_{\sigma, i_{2}}^{u, \epsilon}(k_{1}) \hat{X}_{t, j_{0}}^{b, \epsilon}(k_{4})]: \hat{X}_{\sigma, i_{3}}^{b, \epsilon}(k_{2}) \hat{X}_{s, j_{1}}^{b, \epsilon}(k_{3}): + \mathbb{E} [ \hat{X}_{\sigma, i_{3}}^{b, \epsilon}(k_{2}) \hat{X}_{s, j_{1}}^{b, \epsilon}(k_{3})]: \hat{X}_{\sigma, i_{2}}^{u, \epsilon}(k_{1}) \hat{X}_{t, j_{0}}^{b, \epsilon}(k_{4}):\nonumber\\
& + \mathbb{E} [ \hat{X}_{\sigma, i_{3}}^{b, \epsilon}(k_{2}) \hat{X}_{t, j_{0}}^{b, \epsilon}(k_{4})]: \hat{X}_{\sigma, i_{2}}^{u, \epsilon}(k_{1}) \hat{X}_{s, j_{1}}^{b, \epsilon}(k_{3}): + \mathbb{E} [ \hat{X}_{s, j_{1}}^{b, \epsilon}(k_{3}) \hat{X}_{t, j_{0}}^{b, \epsilon}(k_{4})]: \hat{X}_{\sigma, i_{2}}^{u, \epsilon}(k_{1}) \hat{X}_{\sigma, i_{3}}^{b, \epsilon}(k_{2}): \nonumber\\
&+ \mathbb{E} [ \hat{X}_{\sigma, i_{3}}^{b, \epsilon}(k_{2}) \hat{X}_{s, j_{1}}^{b, \epsilon}(k_{3})] \mathbb{E} [ \hat{X}_{\sigma, i_{2}}^{u, \epsilon}(k_{1}) \hat{X}_{t, j_{0}}^{b, \epsilon}(k_{4})]+  \mathbb{E} [ \hat{X}_{\sigma, i_{3}}^{b, \epsilon}(k_{2}) \hat{X}_{t, j_{0}}^{b, \epsilon}(k_{4})] \mathbb{E} [ \hat{X}_{\sigma, i_{2}}^{u, \epsilon}(k_{1}) \hat{X}_{s, j_{1}}^{b, \epsilon}(k_{3})]  \nonumber\\
&+  \mathbb{E} [ \hat{X}_{s, j_{1}}^{b, \epsilon}(k_{3}) \hat{X}_{t, j_{0}}^{b, \epsilon}(k_{4})] \mathbb{E} [ \hat{X}_{\sigma, i_{2}}^{u, \epsilon}(k_{1}) \hat{X}_{\sigma, i_{3}}^{b, \epsilon}(k_{2})] ] \nonumber\\
 &  \times e^{- \lvert k_{12} \rvert^{2} (s- \sigma)} d \sigma ds k_{12}^{i_{3}} k_{123}^{j_{1}} \hat{\mathcal{P}}_{ i_{1} i_{2}}(k_{12}) \hat{\mathcal{P}}_{i_{0} i_{1}}(k_{123}) e_{k} \triangleq \RomanIX_{t}^{8,1} + \sum_{j=1}^{9} \RomanVIII_{t}^{8, j}  \nonumber
\end{align} 
where $\RomanVIII_{t}^{8,1}$ and $\RomanVIII_{t}^{8,9}$ vanish due to $1_{k_{12} = 0}$ and $k_{12}^{i_{3}}$ within the integrand. Using (\ref{116}) we may compute  
\begin{align}\label{185}
\RomanVIII_{t}^{8,2} 
=& \frac{-1}{4(2\pi)^{\frac{9}{2}}}  \sum_{k} \sum_{\lvert i-j \rvert \leq 1} \sum_{k_{1}, k_{4}: k_{14} = k, k_{2} \neq 0} \sum_{i_{1}, i_{2}, i_{3}, i_{4}, j_{1} =1}^{3} \theta(2^{-i} k_{1}) \theta (2^{-j} k_{4}) \\
& \times \int_{0}^{t} e^{- \lvert k_{1} \rvert^{2}(t-s)}  \int_{0}^{s} : \hat{X}_{\sigma, i_{5}}^{b, \epsilon}(k_{1}) \hat{X}_{t, j_{0}}^{b, \epsilon}(k_{4}): \frac{ e^{- \lvert k_{2} \rvert^{2} (s- \sigma)} f(\epsilon k_{2})^{2}}{2 \lvert k_{2} \rvert^{2}}\nonumber\\
& \times \hat{\mathcal{P}}_{i_{6} i_{4}} (k_{2}) \hat{\mathcal{P}}_{j_{1} i_{4}}(k_{2}) e^{- \lvert k_{12} \rvert^{2}(s-\sigma)} d \sigma ds k_{12}^{i_{3}} k_{1}^{j_{1}} \hat{\mathcal{P}}_{i_{1} i_{2}} (k_{12}) \hat{\mathcal{P}}_{i_{0} i_{1}}(k_{1}) e_{k}1_{i_{5} = i_{3}, i_{6} = i_{2}} \triangleq \RomanIX_{t}^{8,6} \nonumber
\end{align} 
by switching variables $k_{1}$ and $k_{2}$. Next, we similarly compute using (\ref{116}),  
\begin{align}\label{186}
\RomanVIII_{t}^{8,3} =& - \frac{1}{4(2\pi)^{\frac{9}{2}}} \sum_{k} \sum_{\lvert i - j \rvert \leq 1} \sum_{k_{2}, k_{3}: k_{23} = k, k_{1}\neq 0} \sum_{i_{1}, i_{2}, i_{3}, i_{4}, j_{1} =1}^{3}\nonumber \\
& \times \theta(2^{-i} k_{123}) \theta(2^{-j} k_{1}) \int_{0}^{t} e^{- \lvert k_{123} \rvert^{2}(t-s)}  \int_{0}^{s} : \hat{X}_{\sigma, i_{5}}^{b, \epsilon} (k_{2}) \hat{X}_{s, j_{1}}^{b, \epsilon}(k_{3}): \nonumber \\
& \times \frac{ e^{- \lvert k_{1} \rvert^{2} (t-\sigma)} f(\epsilon k_{1})^{2}}{2 \lvert k_{1} \rvert^{2}} \hat{\mathcal{P}}_{i_{6} i_{4}}(k_{1}) \hat{\mathcal{P}}_{j_{0} i_{4}}(k_{1}) e^{- \lvert k_{12} \rvert^{2}(s-\sigma)} d \sigma ds\nonumber \\
& \times k_{12}^{i_{3}} k_{123}^{j_{1}} \hat{\mathcal{P}}_{i_{1} i_{2}}(k_{12}) \hat{\mathcal{P}}_{i_{0} i_{1}}(k_{123}) e_{k}1_{i_{5} = i_{3}, i_{6} = i_{2}} \triangleq \RomanIX_{t}^{8,2}, 
\end{align} 
\begin{align}\label{187}
\RomanVIII_{t}^{8,4} =& - \frac{1}{4(2\pi)^{\frac{9}{2}}} \sum_{k} \sum_{\lvert i-j \rvert \leq 1} \sum_{k_{1}, k_{4}: k_{14} = k, k_{2} \neq 0} \sum_{i_{1}, i_{2}, i_{3}, i_{4}, j_{1} =1}^{3} \theta(2^{-i} k_{1})  \nonumber \\
& \times \theta(2^{-j} k_{4})\int_{0}^{t} e^{- \lvert k_{1} \rvert^{2}(t-s)}  \int_{0}^{s} \frac{ e^{- \lvert k_{2} \rvert^{2} (s-\sigma)} f( \epsilon k_{2})^{2}}{2 \lvert k_{2} \rvert^{2}} \hat{\mathcal{P}}_{i_{6} i_{4}}(k_{2}) \nonumber \\
& \times \hat{\mathcal{P}}_{j_{1} i_{4}}(k_{2})  : \hat{X}_{\sigma, i_{5}}^{u, \epsilon}(k_{1}) \hat{X}_{t, j_{0}}^{b, \epsilon}(k_{4}):  e^{- \lvert k_{12} \rvert^{2} (s-\sigma)} d \sigma ds k_{12}^{i_{3}} k_{1}^{j_{1}} \nonumber \\
& \hspace{30mm} \times \hat{\mathcal{P}}_{i_{1} i_{2}}(k_{12}) \hat{\mathcal{P}}_{i_{0} i_{1}}(k_{1}) e_{k} 1_{i_{5} = i_{2}, i_{6} = i_{3}} \triangleq \RomanIX_{t}^{8,5},    
\end{align} 
\begin{align}\label{188}
\RomanVIII_{t}^{8,5} =& \frac{-1}{4(2\pi)^{\frac{9}{2}}} \sum_{k} \sum_{\lvert i-j \rvert \leq 1} \sum_{k_{2}, k_{3}: k_{23} = k, k_{1} \neq 0} \sum_{i_{1}, i_{2}, i_{3}, i_{4}, j_{1} =1}^{3} \theta(2^{-i} k_{123}) \theta(2^{-j} k_{1})\nonumber \\
& \times  \int_{0}^{t} e^{- \lvert k_{123} \rvert^{2}(t-s)}  \frac{ e^{- \lvert k_{1} \rvert^{2}(t-\sigma)} f( \epsilon k_{1})^{2}}{2 \lvert k_{1} \rvert^{2}} \hat{\mathcal{P}}_{i_{6} i_{4}}(k_{1}) \hat{\mathcal{P}}_{j_{0} i_{4}}(k_{1})\nonumber \\
& \times : \hat{X}_{\sigma, i_{5}}^{u, \epsilon}(k_{2}) \hat{X}_{s, j_{1}}^{b, \epsilon}(k_{3}): e^{- \lvert k_{12} \rvert^{2} (s-\sigma)} d \sigma ds\nonumber  \\
& \times k_{12}^{i_{3}} k_{123}^{j_{1}} \hat{\mathcal{P}}_{i_{1} i_{2}}(k_{12}) \hat{\mathcal{P}}_{i_{0} i_{1}}(k_{123}) e_{k}1_{i_{5} = i_{2}, i_{6} = i_{3}} \triangleq \RomanIX_{t}^{8,3},  
\end{align} 
\begin{align}\label{189}
\RomanVIII_{t}^{8,6} =&  - \frac{1}{4(2\pi)^{\frac{9}{2}}} \sum_{k} \sum_{\lvert i-j \rvert \leq 1} \sum_{k_{1}, k_{2}: k_{12} = k, k_{3} \neq 0} \sum_{i_{1}, i_{2}, i_{3}, i_{4}, j_{1} =1}^{3} \theta(2^{-i} k_{123}) \nonumber \\
& \times \theta(2^{-j} k_{3}) \int_{0}^{t} e^{- \lvert k_{123} \rvert^{2}(t-s)} \int_{0}^{s} \frac{ e^{- \lvert k_{3} \rvert^{2}(t-s)} f( \epsilon k_{3})^{2}}{2 \lvert k_{3} \rvert^{2}} \nonumber \\
& \times \hat{\mathcal{P}}_{j_{1} i_{4}}(k_{3}) \hat{\mathcal{P}}_{j_{0} i_{4}}(k_{3}) : \hat{X}_{\sigma, i_{2}}^{u, \epsilon}(k_{1}) \hat{X}_{\sigma, i_{3}}^{b, \epsilon}(k_{2}):  e^{- \lvert k_{12} \rvert^{2} (s-\sigma)} d \sigma ds \nonumber \\
& \times k_{12}^{i_{3}} k_{123}^{j_{1}} \hat{\mathcal{P}}_{i_{1} i_{2}}(k_{12}) \hat{\mathcal{P}}_{i_{0} i_{1}}(k_{123}) e_{k} \triangleq \RomanIX_{t}^{8,4},  
\end{align} 
\begin{align}\label{190}
\RomanVIII_{t}^{8,7} =& - \frac{1}{4(2\pi)^{\frac{9}{2}}} \sum_{ \lvert i-j \rvert \leq 1} \sum_{k_{1}, k_{2} \neq 0} \sum_{i_{1}, i_{2}, i_{3}, i_{4}, i_{5}, j_{1} =1}^{3} \theta(2^{-i} k_{2}) \theta (2^{-j} k_{2})   \\
& \times \int_{0}^{t} e^{- \lvert k_{2} \rvert^{2}(t-s)} \int_{0}^{s} \frac{ f( \epsilon k_{1})^{2} f(\epsilon k_{2})^{2}}{4 \lvert k_{1} \rvert^{2} \lvert k_{2} \rvert^{2}}  \hat{\mathcal{P}}_{i_{3} i_{4}}(k_{1}) \hat{\mathcal{P}}_{j_{1} i_{4}}(k_{1}) \nonumber \\
& \times \hat{\mathcal{P}}_{i_{2} i_{5}}(k_{2}) \hat{\mathcal{P}}_{j_{0} i_{5}}(k_{2}) e^{ - \lvert k_{12} \rvert^{2}(s-\sigma) - \lvert k_{1} \rvert^{2} (s-\sigma) - \lvert k_{2} \rvert^{2}(t-\sigma)} d \sigma ds k_{12}^{i_{3}} k_{2}^{j_{1}}  \hat{\mathcal{P}}_{i_{1} i_{2}}(k_{12}) \hat{\mathcal{P}}_{i_{0} i_{1}}(k_{2}),  \nonumber 
\end{align} 
and 
\begin{align}\label{191}
\RomanVIII_{t}^{8,8} =& - \frac{1}{4(2\pi)^{\frac{9}{2}}} \sum_{\lvert i-j \rvert \leq 1} \sum_{k_{1}, k_{2} \neq 0} \sum_{i_{1}, i_{2}, i_{3}, i_{4}, i_{5}, j_{1} =1}^{3} \theta( 2^{-i} k_{2}) \theta (2^{-j} k_{2})   \\
& \times \int_{0}^{t} e^{- \lvert k_{2} \rvert^{2} (t-s)} \int_{0}^{s} \frac{ e^{ - \lvert k_{2} \rvert^{2} (t-\sigma)} e^{- \lvert k_{1} \rvert^{2}(s-\sigma)} f( \epsilon k_{1})^{2} f( \epsilon k_{2})^{2}}{ 4 \lvert k_{1} \rvert^{2} \lvert k_{2} \rvert^{2}}\nonumber \\
& \times \hat{\mathcal{P}}_{i_{2} i_{5}}(k_{1}) \hat{\mathcal{P}}_{j_{1} i_{5}}(k_{1}) \hat{\mathcal{P}}_{i_{3} i_{4}}(k_{2}) \hat{\mathcal{P}}_{j_{0} i_{4}}(k_{2}) e^{ - \lvert k_{12} \rvert^{2} (s- \sigma)} d \sigma  ds k_{12}^{i_{3}} k_{2}^{j_{1}} \hat{\mathcal{P}}_{i_{1} i_{2}}(k_{12}) \hat{\mathcal{P}}_{i_{0} i_{1}}(k_{2}). \nonumber 
\end{align} 
We define the sum of right hand side of $\RomanVIII_{t}^{8,7},\RomanVIII_{t}^{8,8}$ in (\ref{190})-(\ref{191}) to be $\RomanIX_{t}^{8,7}$; i.e. 
\begin{align}\label{192}
\RomanIX_{t}^{8,7}&\triangleq - \frac{1}{4(2\pi)^{\frac{9}{2}}} \sum_{ \lvert i-j \rvert \leq 1} \sum_{k_{1}, k_{2} \neq 0} \sum_{i_{1}, i_{2}, i_{3}, i_{4}, i_{5}, j_{1} =1}^{3} \theta( 2^{-i} k_{2}) \theta (2^{-j} k_{2}) \nonumber \\
& \times \int_{0}^{t} e^{- \lvert k_{2} \rvert^{2}(t-s)}  \int_{0}^{s} \frac{ f( \epsilon k_{1})^{2} f( \epsilon k_{2})^{2}}{4 \lvert k_{1} \rvert^{2} \lvert k_{2} \rvert^{2}} e^{ - \lvert k_{12} \rvert^{2}(s-\sigma) - \lvert k_{1} \rvert^{2}(s-\sigma) - \lvert k_{2} \rvert^{2}(t-\sigma)} \nonumber \\
& \times [ \hat{\mathcal{P}}_{ i_{3} i_{4}}(k_{1}) \hat{\mathcal{P}}_{j_{1} i_{4}}(k_{1}) \hat{\mathcal{P}}_{i_{2} i_{5}}(k_{2}) \hat{\mathcal{P}}_{j_{0} i_{5}}(k_{2})\nonumber \\
&  + \hat{\mathcal{P}}_{i_{2} i_{5}}(k_{1}) \hat{\mathcal{P}}_{j_{1} i_{5}}(k_{1}) \hat{\mathcal{P}}_{i_{3} i_{4}}(k_{2}) \hat{\mathcal{P}}_{j_{0} i_{4}}(k_{2})] k_{12}^{i_{3}} k_{2}^{j_{1}} \hat{\mathcal{P}}_{i_{1} i_{2}}(k_{12}) \hat{\mathcal{P}}_{i_{0} i_{1}}(k_{2}) 
\end{align} 
and formally define
\begin{equation}\label{193}
\RomanIX_{t}^{8,7} \triangleq C_{1,3,8}^{\epsilon, i_{0}j_{0}}
\end{equation} 
where we observe that $\lim_{\epsilon \to 0} C_{1,3,8}^{\epsilon, i_{0}j_{0}} = \infty$. Due to (\ref{185})-(\ref{192}) applied to (\ref{184}), we see that 
\begin{equation}\label{194}
\pi_{0} (u_{8, i_{0}}^{
\scalebox{0.16}{\begin{tikzpicture}
\draw[black, thick] (-0.7,0.9) -- (0,0);
\draw[black, thick] (0.7,0.9) -- (0,0);
\draw[black, thick] (0.7,0) -- (0,-0.9);
\draw[snake=zigzag](0,0) -- (0,-0.9);
\draw[snake=zigzag](0,-0.9) -- (0,-1.8);
\filldraw[pink] (-0.7,0.9) circle (7pt); 
\filldraw[pink] (0.7,0.9) circle (7pt); 
\filldraw[pink] (0.7,0) circle (7pt); 
\end{tikzpicture}
}\epsilon}, b^{
\scalebox{0.6}{\begin{tikzpicture}
\draw[black, thick] (0,0.5) -- (0,0);
\filldraw[blue] (0,0.5) circle (2pt); 
\end{tikzpicture}
}\epsilon}_{j_{0}})(t) = \sum_{k=1}^{7} \RomanIX_{t}^{8,k} = \sum_{k=1}^{6} \RomanIX_{t}^{8,k} + C_{1,3,8}^{\epsilon, i_{0}j_{0}}. 
\end{equation}
Repeating similar procedure for $\pi_{0} ( u_{k, i_{0}}^{
\scalebox{0.16}{\begin{tikzpicture}
\draw[black, thick] (-0.7,0.9) -- (0,0);
\draw[black, thick] (0.7,0.9) -- (0,0);
\draw[black, thick] (0.7,0) -- (0,-0.9);
\draw[snake=zigzag](0,0) -- (0,-0.9);
\draw[snake=zigzag](0,-0.9) -- (0,-1.8);
\filldraw[pink] (-0.7,0.9) circle (7pt); 
\filldraw[pink] (0.7,0.9) circle (7pt); 
\filldraw[pink] (0.7,0) circle (7pt); 
\end{tikzpicture}
}\epsilon}, b^{
\scalebox{0.6}{\begin{tikzpicture}
\draw[black, thick] (0,0.5) -- (0,0);
\filldraw[blue] (0,0.5) circle (2pt); 
\end{tikzpicture}
}\epsilon}_{j_{0}})(t)$ for $k \in \{1, \hdots, 7\}$ within (\ref{182}), we can similarly define $C_{1,3,k}^{\epsilon, i_{0}j_{0}}$ for $k \in \{1, \hdots, 7 \}$. Thereafter we shall define 
\begin{equation}\label{195}
C_{1,3}^{\epsilon, i_{0} j_{0}} = \sum_{k=1}^{8} C_{1,3, k}^{\epsilon, i_{0} j_{0}}. 
\end{equation} 

\subsubsection{Terms in the second chaos}
Within (\ref{194}) we see that $\RomanIX_{t}^{8,1}$ is a term in the fourth chaos while $\RomanIX_{t}^{8,k}$ for $k \in \{2, \hdots, 6 \}$ are in the second chaos. Let us first work on $\RomanIX_{t}^{8,2}$ as follows: 
\begin{align}\label{196}
& \mathbb{E} [ \lvert \Delta_{q} \RomanIX_{t}^{8,2} \rvert^{2}] \nonumber \\
\approx& \lvert \sum_{k, k' } \sum_{\lvert i-j \rvert \leq 1, \lvert i' - j' \rvert \leq 1} \sum_{k_{2}, k_{3}: k_{23} = k, k_{1} \neq 0, k_{2}', k_{3}': k_{23}' = k', k_{1}' \neq 0} \sum_{i_{1}, i_{2}, i_{3}, i_{4}, j_{1}, i_{1}', i_{2}', i_{3}', i_{4}', j_{1}' = 1}^{3}\nonumber \\
& \times \theta(2^{-q} k)^{2} \theta(2^{-i} k_{123}) \theta (2^{-i'} k_{123}') \theta(2^{-j} k_{1}) \theta (2^{-j'} k_{1}')  \int_{[0,t]^{2}} e^{- \lvert k_{123} \rvert^{2}(t-s)} e^{- \lvert k_{123} ' \rvert^{2} (t- \overline{s})}\nonumber \\
& \times  \int_{0}^{s} \int_{0}^{\overline{s}} \mathbb{E} [ : \hat{X}_{\sigma, i_{3}}^{b, \epsilon}(k_{2}) \hat{X}_{s, j_{1}}^{b, \epsilon}(k_{3}): : \hat{X}_{\overline{\sigma}, i_{3}'}^{b, \epsilon} (k_{2}') \hat{X}_{\overline{s}, j_{1}'}^{b, \epsilon} (k_{3}'): ]\frac{e^{ - \lvert k_{1} \rvert^{2}(t- \sigma)} f( \epsilon k_{1})^{2}}{2 \lvert k_{1} \rvert^{2}} \frac{ e^{- \lvert k_{1}' \rvert^{2}(t - \overline{\sigma})} f( \epsilon k_{1}')^{2}}{2 \lvert k_{1}' \rvert^{2}}\nonumber \\
& \times \hat{\mathcal{P}}_{i_{2} i_{4}}(k_{1}) \hat{\mathcal{P}}_{i_{2}' i_{4}'} (k_{1}') \hat{\mathcal{P}}_{j_{0} i_{4}}(k_{1}) \hat{\mathcal{P}}_{j_{0} i_{4}'} (k_{1}') e^{- \lvert k_{12} \rvert^{2}(s- \sigma)} e^{- \lvert k_{12}' \rvert^{2} ( \overline{s} - \overline{\sigma})} d \sigma d \overline{\sigma} ds d \overline{s}\nonumber \\
& \times k_{12}^{i_{3}} (k_{12}')^{i_{3}'} k_{123}^{j_{1}} (k_{123}')^{j_{1}'} \hat{\mathcal{P}}_{i_{1} i_{2}}(k_{12}) \hat{\mathcal{P}}_{i_{1}' i_{2}'} (k_{12}') \hat{\mathcal{P}}_{i_{0} i_{1}}(k_{123}) \hat{\mathcal{P}}_{i_{0} i_{1}'} (k_{123}') e_{k} e_{k}' \rvert  
\end{align} 
due to (\ref{186}). By $\mathbb{E} [ : \xi_{11} \xi_{12}: : \xi_{21} \xi_{22}:] = \mathbb{E} [ \xi_{11} \xi_{21}] \mathbb{E} [ \xi_{12} \xi_{22}] + \mathbb{E} [ \xi_{11} \xi_{22}]\mathbb{E}[\xi_{12} \xi_{21}]$ (see \cite{J97}) we can compute $ \mathbb{E} [ : \hat{X}_{\sigma, i_{3}}^{b, \epsilon} (k_{2}) \hat{X}_{s, j_{1}}^{b, \epsilon}(k_{3}): : \hat{X}_{\overline{\sigma}, i_{3}'}^{b, \epsilon} (k_{2}') \hat{X}_{\overline{s}, j_{1}'}^{b, \epsilon} (k_{3}'): ]$ using \eqref{116}, and rely on \cite[Section 9.2]{GP17} to deduce 
\begin{align}\label{198}
&\mathbb{E} [ \lvert \Delta_{q} \RomanIX_{t}^{8, 2} \rvert^{2}] \\
\lesssim& \sum_{k} \sum_{\lvert i-j \rvert \leq 1, \lvert i' - j' \rvert \leq 1} \sum_{k_{2}, k_{3} \neq 0: k_{23} = k, k_{1}, k_{4} \neq 0} \theta(2^{-i} k_{123}) \theta (2^{-i'} k_{234}) \theta (2^{-j} k_{1})  \nonumber \\
& \times  \theta (2^{-j'} k_{4}) \theta (2^{-q} k)^{2} \prod_{i=1}^{4} \frac{ f( \epsilon k_{i})^{2}}{\lvert k_{i} \rvert^{2}} \int_{[0,t]^{2}} e^{- \lvert k_{123} \rvert^{2} (t-s) - \lvert k_{234} \rvert^{2}(t - \overline{s})} \nonumber \\
& \times  \int_{0}^{s} \int_{0}^{\overline{s}} e^{ - \lvert k_{12} \rvert^{2} (s - \sigma) - \lvert k_{24} \rvert^{2} (\overline{s} - \overline{\sigma})} \lvert k_{12} \rvert \lvert k_{24} \rvert \lvert k_{123} \rvert \lvert k_{234} \rvert e^{- \lvert k_{1} \rvert^{2} (t-\sigma) - \lvert k_{4} \rvert^{2} (t- \overline{\sigma})} d \sigma d \overline{\sigma} ds d \overline{s}. \nonumber 
\end{align}  
Within (\ref{198}), we may estimate furthermore for $k_{1}, k_{2}, k_{3}, k_{4} \neq 0$, 
\begin{align}\label{199}
& \prod_{i=1}^{4} \frac{ f( \epsilon k_{i})^{2}}{ \lvert k_{i} \rvert^{2}} \int_{[0,t]^{2}} e^{- \lvert k_{123} \rvert^{2}(t-s) - \lvert k_{234} \rvert^{2}(t - \overline{s})}\nonumber \\
& \times \int_{0}^{s} \int_{0}^{\overline{s}}  e^{- \lvert k_{12} \rvert^{2}(s- \sigma) - \lvert k_{24} \rvert^{2} ( \overline{s} - \overline{\sigma})} \lvert k_{12} \rvert \lvert k_{24} \rvert \lvert k_{123} \rvert \lvert k_{234} \rvert  e^{- \lvert k_{1} \rvert^{2} (t-\sigma) - \lvert k_{4} \rvert^{2} (t - \overline{\sigma})} d \sigma d \overline{\sigma} ds d \overline{s} \nonumber \\
\lesssim& \prod_{i=1}^{4} \frac{1}{\lvert k_{i} \rvert^{2}} e^{( - \lvert k_{123} \rvert^{2} - \lvert k_{234} \rvert^{2} - \lvert k_{1} \rvert^{2} - \lvert k_{4} \rvert^{2}) t} \int_{[0,t]^{2}} e^{ \lvert k_{123} \rvert^{2} s + \lvert k_{234} \rvert^{2} \overline{s}} \lvert k_{12} \rvert \lvert k_{123} \rvert \lvert k_{234} \rvert \nonumber \\
& \times  \lvert k_{24} \rvert  e^{ \lvert k_{1} \rvert^{2} s} \frac{ (1- e^{ - ( \lvert k_{12} \rvert^{2} + \lvert k_{1} \rvert^{2} ) s})}{\lvert k_{12} \rvert^{2} + \lvert k_{1} \rvert^{2}} e^{ \lvert k_{4} \rvert^{2} \overline{s}} \frac{ ( 1 - e^{- ( \lvert k_{24} \rvert^{2} + \lvert k_{4} \rvert^{2} ) \overline{s}})}{\lvert k_{24} \rvert^{2} + \lvert k_{4} \rvert^{2}} 1_{k_{12}, k_{24} \neq 0} ds d \overline{s} \nonumber \\
\lesssim& t^{\eta} \frac{1}{ \lvert k_{2} \rvert^{2} \lvert k_{3} \rvert^{2} \lvert k_{1} \rvert^{4-\eta} \lvert k_{4} \rvert^{4-\eta}} 
\end{align} 
by mean value theorem. Thus, applying (\ref{199}) to (\ref{198}) leads to 
\begin{align}\label{200}
&\mathbb{E} [ \lvert \Delta_{q} \RomanIX_{t}^{8,2} \rvert^{2}] \lesssim \sum_{k} \sum_{\lvert i-j \rvert \leq 1, \lvert i' - j' \rvert \leq 1} \sum_{k_{2}, k_{3} \neq 0: k_{23} = k, k_{1}, k_{4} \neq 0}  \\
& \hspace{10mm} \times \theta(2^{-i} k_{123}) \theta(2^{-j} k_{1}) \theta(2^{-i'} k_{234}) \theta(2^{-j'} k_{4}) \theta(2^{-q} k)^{2}  \frac{t^{\eta}}{ \lvert k_{2} \rvert^{2} \lvert k_{3} \rvert^{2} \lvert k_{1} \rvert^{4-\eta} \lvert k_{4} \rvert^{4-\eta}}. \nonumber 
\end{align} 
Now $2^{q} \approx \lvert k \rvert = \lvert k_{2} + k_{3} \rvert \lesssim \lvert k_{123} \rvert + \lvert k_{1} \rvert \approx 2^{i}$ as $\lvert i-j \rvert \leq 1$ so that $q \lesssim i$. Similarly $2^{q} \lesssim 2^{i'}$ as $\lvert i' - j' \rvert \leq 1$ so that $q \lesssim i'$. Thus for $\epsilon \in (0, 1-\eta)$ sufficiently small we estimate from (\ref{200}), 
\begin{align}\label{201}
 \mathbb{E} [ \lvert \Delta_{q} \RomanIX_{t}^{8,2} \rvert^{2}] 
\lesssim \sum_{k} \sum_{k_{2}, k_{3} \neq 0: k_{23} = k} \sum_{q \lesssim j, q \lesssim j'} \frac{ t^{\eta} \theta (2^{-q} k)^{2}}{ \lvert k_{2} \rvert^{2} \lvert k_{3} \rvert^{2} 2^{j (1- \eta - \frac{\epsilon}{4})} 2^{j' (1- \eta - \frac{\epsilon}{4})}}\lesssim t^{\eta} 2^{q (2 \eta + \epsilon)}  
\end{align} 
by Lemma \ref{Lemma 2.9}. The estimate of $\RomanIX_{t}^{8,3}$ may be achieved very similarly to $\RomanIX_{t}^{8,2}$.  

We now consider $\RomanIX_{t}^{8,4}$ of (\ref{194}). Let us make an important remark here. 
\begin{remark}\label{Remark 3.5}
In particular, this is the renormalization on which we must diverge from the previous study of a single equation (stochastic quantization \cite{CC18} or NSE \cite{ZZ15}) instead of a system of coupled non-linear PDEs such as the MHD system. For example, if we write 
\begin{equation}\label{202}
\RomanIX_{t}^{8,4} = \RomanIX_{t}^{8,4} - \widetilde{\RomanIX}_{t}^{8,4} + \widetilde{\RomanIX}_{t}^{8,4} - \sum_{i_{1} =1}^{3} u^{
\scalebox{0.18}{\begin{tikzpicture}
\draw[black, thick] (-0.7,0.9) -- (0,0);
\draw[black, thick] (0.7,0.9) -- (0,0);
\draw[snake=zigzag](0,0) -- (0,-0.9);
\filldraw[green] (-0.7,0.9) circle (6pt); 
\filldraw[green] (0.7,0.9) circle (6pt); 
\end{tikzpicture}
}}_{i_{1}}(t) C_{3}^{\epsilon, i_{1}}(t)
\end{equation} 
where 
\begin{align}\label{203}
& \widetilde{\RomanIX}_{t}^{8,4} \triangleq (2\pi)^{- \frac{9}{2}} \sum_{k\neq 0} \sum_{\lvert i-j \rvert \leq 1} \sum_{k_{12} = k, k_{3} \neq 0} \sum_{i_{1}, i_{2}, i_{3}, i_{4} j_{1} = 1}^{3} \theta(2^{-i} k_{123}) \theta(2^{-j} k_{3}) \nonumber \\
& \times \int_{0}^{t}: \hat{X}_{\sigma, i_{2}}^{u, \epsilon} (k_{1}) \hat{X}_{\sigma, i_{3}}^{u, \epsilon}(k_{2}): e^{- \lvert k_{12} \rvert^{2} (t-\sigma)} i k_{12}^{i_{3}} \hat{\mathcal{P}}_{i_{1} i_{2}}(k_{12}) e_{k} d\sigma \nonumber  \\
& \times \int_{0}^{t} e^{- \lvert k_{123} \rvert^{2}(t-s)} \frac{ e^{- \lvert k_{3} \rvert^{2}(t-s)} f( \epsilon k_{3})^{2}}{2 \lvert k_{3} \rvert^{2}}  \hat{\mathcal{P}}_{j_{i} i_{4}}(k_{3}) \hat{\mathcal{P}}_{ j_{0} i_{4}}(k_{3}) i k_{123}^{j_{1}} \hat{\mathcal{P}}_{i_{0} i_{1}}(k_{123}) dx
\end{align} 
and
\begin{align}\label{204}
&C_{3}^{\epsilon, i_{1}}(t) \triangleq (2\pi)^{-\frac{9}{2}} \sum_{\lvert i-j \rvert \leq 1} \sum_{k_{3}} \sum_{j_{1} = 1}^{3} \theta (2^{-i} k_{3}) \theta(2^{-j} k_{3}) \int_{0}^{t}  \frac{ e^{ -2 \lvert k_{3} \rvert^{2}(t-s)} f(\epsilon k_{3})^{2}}{\lvert k_{3} \rvert^{2}}\nonumber \\
& \times \sum_{i_{4}} \hat{\mathcal{P}}_{j_{1} i_{4}}(k_{3}) \hat{\mathcal{P}}_{j_{0} i_{4}}(k_{3}) ik_{3}^{j_{1}} \hat{\mathcal{P}}_{i_{0} i_{1}}(k_{3}) = 0 
\end{align} 
as Zhu and Zhu did for the NSE (see \cite[pg. 4489]{ZZ15}), then the necessary estimate of $\widetilde{\RomanIX}_{t}^{4} - \sum_{i_{1} =1}^{3} u^{
\scalebox{0.18}{\begin{tikzpicture}
\draw[black, thick] (-0.7,0.9) -- (0,0);
\draw[black, thick] (0.7,0.9) -- (0,0);
\draw[snake=zigzag](0,0) -- (0,-0.9);
\filldraw[green] (-0.7,0.9) circle (6pt); 
\filldraw[green] (0.7,0.9) circle (6pt); 
\end{tikzpicture}
}}_{i_{1}}(t) C_{3}^{\epsilon, i_{1}}(t)$ on \cite[pg. 4491]{ZZ15} works well because 
\begin{equation*}
L u^{
\scalebox{0.18}{\begin{tikzpicture}
\draw[black, thick] (-0.7,0.9) -- (0,0);
\draw[black, thick] (0.7,0.9) -- (0,0);
\draw[snake=zigzag](0,0) -- (0,-0.9);
\filldraw[green] (-0.7,0.9) circle (6pt); 
\filldraw[green] (0.7,0.9) circle (6pt); 
\end{tikzpicture}
}}_{i} = - \frac{1}{2} \sum_{i_{1} =1}^{2} \mathcal{P}_{ii_{1}} ( \sum_{j=1}^{3}  \partial_{x_{j}}  (u^{\scalebox{0.6}{\begin{tikzpicture}
\draw[black, thick] (0,0.5) -- (0,0);
\filldraw[red] (0,0.5) circle (2pt); 
\end{tikzpicture}
}}_{i_{1}}  \diamond u^{\scalebox{0.6}{\begin{tikzpicture}
\draw[black, thick] (0,0.5) -- (0,0);
\filldraw[red] (0,0.5) circle (2pt); 
\end{tikzpicture}
}}_{j} ))
\end{equation*} 
(see \cite[pg. 4476]{ZZ15}) in the case of the NSE. However, $Lu^{
\scalebox{0.18}{\begin{tikzpicture}
\draw[black, thick] (-0.7,0.9) -- (0,0);
\draw[black, thick] (0.7,0.9) -- (0,0);
\draw[snake=zigzag](0,0) -- (0,-0.9);
\filldraw[green] (-0.7,0.9) circle (6pt); 
\filldraw[green] (0.7,0.9) circle (6pt); 
\end{tikzpicture}
}}_{i}$ in the case of the MHD system does not work due to the additional term of $b^{
\scalebox{0.6}{\begin{tikzpicture}
\draw[black, thick] (0,0.5) -- (0,0);
\filldraw[blue] (0,0.5) circle (2pt); 
\end{tikzpicture}
}}_{i_{1}} \diamond b^{
\scalebox{0.6}{\begin{tikzpicture}
\draw[black, thick] (0,0.5) -- (0,0);
\filldraw[blue] (0,0.5) circle (2pt); 
\end{tikzpicture}
}}_{j}$ in (\ref{17}):
\begin{equation*}
Lu^{
\scalebox{0.18}{\begin{tikzpicture}
\draw[black, thick] (-0.7,0.9) -- (0,0);
\draw[black, thick] (0.7,0.9) -- (0,0);
\draw[snake=zigzag](0,0) -- (0,-0.9);
\filldraw[green] (-0.7,0.9) circle (6pt); 
\filldraw[green] (0.7,0.9) circle (6pt); 
\end{tikzpicture}
}}_{i} = -\frac{1}{2} \sum_{i_{1}, j=1}^{3} \mathcal{P}_{ i i_{1}} \partial_{x_{j}} (u^{\scalebox{0.6}{\begin{tikzpicture}
\draw[black, thick] (0,0.5) -- (0,0);
\filldraw[red] (0,0.5) circle (2pt); 
\end{tikzpicture}
}}_{i_{1}}  \diamond u^{\scalebox{0.6}{\begin{tikzpicture}
\draw[black, thick] (0,0.5) -- (0,0);
\filldraw[red] (0,0.5) circle (2pt); 
\end{tikzpicture}
}}_{j}  - b^{
\scalebox{0.6}{\begin{tikzpicture}
\draw[black, thick] (0,0.5) -- (0,0);
\filldraw[blue] (0,0.5) circle (2pt); 
\end{tikzpicture}
}}_{i_{1}} \diamond b^{
\scalebox{0.6}{\begin{tikzpicture}
\draw[black, thick] (0,0.5) -- (0,0);
\filldraw[blue] (0,0.5) circle (2pt); 
\end{tikzpicture}
}}_{j} ).
\end{equation*}
This creates a huge obstacle. 

We can actually overcome this difficulty remarkably by considering the sum of $u_{i_{0}, 8}^{
\scalebox{0.16}{\begin{tikzpicture}
\draw[black, thick] (-0.7,0.9) -- (0,0);
\draw[black, thick] (0.7,0.9) -- (0,0);
\draw[black, thick] (0.7,0) -- (0,-0.9);
\draw[snake=zigzag](0,0) -- (0,-0.9);
\draw[snake=zigzag](0,-0.9) -- (0,-1.8);
\filldraw[pink] (-0.7,0.9) circle (7pt); 
\filldraw[pink] (0.7,0.9) circle (7pt); 
\filldraw[pink] (0.7,0) circle (7pt); 
\end{tikzpicture}
}}$ with $u_{i_{0}, 7}^{
\scalebox{0.16}{\begin{tikzpicture}
\draw[black, thick] (-0.7,0.9) -- (0,0);
\draw[black, thick] (0.7,0.9) -- (0,0);
\draw[black, thick] (0.7,0) -- (0,-0.9);
\draw[snake=zigzag](0,0) -- (0,-0.9);
\draw[snake=zigzag](0,-0.9) -- (0,-1.8);
\filldraw[pink] (-0.7,0.9) circle (7pt); 
\filldraw[pink] (0.7,0.9) circle (7pt); 
\filldraw[pink] (0.7,0) circle (7pt); 
\end{tikzpicture}
}}$ in (\ref{181}). This technique of strategically coupling certain renormalizations is very reminiscent of the basic energy identity (\ref{7}) and (\ref{8}) actually. We emphasize that it must be $u_{i_{0}, 7}^{
\scalebox{0.16}{\begin{tikzpicture}
\draw[black, thick] (-0.7,0.9) -- (0,0);
\draw[black, thick] (0.7,0.9) -- (0,0);
\draw[black, thick] (0.7,0) -- (0,-0.9);
\draw[snake=zigzag](0,0) -- (0,-0.9);
\draw[snake=zigzag](0,-0.9) -- (0,-1.8);
\filldraw[pink] (-0.7,0.9) circle (7pt); 
\filldraw[pink] (0.7,0.9) circle (7pt); 
\filldraw[pink] (0.7,0) circle (7pt); 
\end{tikzpicture}
}}$ that we couple with $u_{i_{0}, 8}^{
\scalebox{0.16}{\begin{tikzpicture}
\draw[black, thick] (-0.7,0.9) -- (0,0);
\draw[black, thick] (0.7,0.9) -- (0,0);
\draw[black, thick] (0.7,0) -- (0,-0.9);
\draw[snake=zigzag](0,0) -- (0,-0.9);
\draw[snake=zigzag](0,-0.9) -- (0,-1.8);
\filldraw[pink] (-0.7,0.9) circle (7pt); 
\filldraw[pink] (0.7,0.9) circle (7pt); 
\filldraw[pink] (0.7,0) circle (7pt); 
\end{tikzpicture}
}}$, not any other $u_{i_{0}, k}^{
\scalebox{0.16}{\begin{tikzpicture}
\draw[black, thick] (-0.7,0.9) -- (0,0);
\draw[black, thick] (0.7,0.9) -- (0,0);
\draw[black, thick] (0.7,0) -- (0,-0.9);
\draw[snake=zigzag](0,0) -- (0,-0.9);
\draw[snake=zigzag](0,-0.9) -- (0,-1.8);
\filldraw[pink] (-0.7,0.9) circle (7pt); 
\filldraw[pink] (0.7,0.9) circle (7pt); 
\filldraw[pink] (0.7,0) circle (7pt); 
\end{tikzpicture}
}}$ for $k \in \{1, \hdots, 6\}$ in (\ref{181}).  
\end{remark}

Now recalling (\ref{189}), we see that the only differences between $L u_{i_{0}, 7}^{
\scalebox{0.16}{\begin{tikzpicture}
\draw[black, thick] (-0.7,0.9) -- (0,0);
\draw[black, thick] (0.7,0.9) -- (0,0);
\draw[black, thick] (0.7,0) -- (0,-0.9);
\draw[snake=zigzag](0,0) -- (0,-0.9);
\draw[snake=zigzag](0,-0.9) -- (0,-1.8);
\filldraw[pink] (-0.7,0.9) circle (7pt); 
\filldraw[pink] (0.7,0.9) circle (7pt); 
\filldraw[pink] (0.7,0) circle (7pt); 
\end{tikzpicture}
}}$ and $Lu_{i_{0}, 8}^{
\scalebox{0.16}{\begin{tikzpicture}
\draw[black, thick] (-0.7,0.9) -- (0,0);
\draw[black, thick] (0.7,0.9) -- (0,0);
\draw[black, thick] (0.7,0) -- (0,-0.9);
\draw[snake=zigzag](0,0) -- (0,-0.9);
\draw[snake=zigzag](0,-0.9) -- (0,-1.8);
\filldraw[pink] (-0.7,0.9) circle (7pt); 
\filldraw[pink] (0.7,0.9) circle (7pt); 
\filldraw[pink] (0.7,0) circle (7pt); 
\end{tikzpicture}
}}$ in (\ref{181}) consist of the sign and $b^{
\scalebox{0.6}{\begin{tikzpicture}
\draw[black, thick] (0,0.5) -- (0,0);
\filldraw[blue] (0,0.5) circle (2pt); 
\end{tikzpicture}
}}_{i_{2}}u^{\scalebox{0.6}{\begin{tikzpicture}
\draw[black, thick] (0,0.5) -- (0,0);
\filldraw[red] (0,0.5) circle (2pt); 
\end{tikzpicture}
}}_{i_{3}}$ replaced by $u^{\scalebox{0.6}{\begin{tikzpicture}
\draw[black, thick] (0,0.5) -- (0,0);
\filldraw[red] (0,0.5) circle (2pt); 
\end{tikzpicture}
}}_{i_{2}}b^{
\scalebox{0.6}{\begin{tikzpicture}
\draw[black, thick] (0,0.5) -- (0,0);
\filldraw[blue] (0,0.5) circle (2pt); 
\end{tikzpicture}
}}_{i_{3}}$ so that we have
\begin{align}\label{205}
\RomanIX_{t}^{7,4} =&  \frac{1}{4(2\pi)^{\frac{9}{2}}} \sum_{k} \sum_{\lvert i-j \rvert \leq 1} \sum_{k_{1}, k_{2}: k_{12} = k, k_{3} \neq 0} \sum_{i_{1}, i_{2}, i_{3}, i_{4}, j_{1} =1}^{3} \theta(2^{-i} k_{123}) \theta(2^{-j} k_{3})  \\
& \times \int_{0}^{t} e^{- \lvert k_{123} \rvert^{2}(t-s)}  \int_{0}^{s} \frac{ e^{- \lvert k_{3} \rvert^{2}(t-s)} f( \epsilon k_{3})^{2}}{2 \lvert k_{3} \rvert^{2}} \hat{\mathcal{P}}_{j_{1} i_{4}}(k_{3}) \hat{\mathcal{P}}_{j_{0} i_{4}}(k_{3}) \nonumber\\
& \times : \hat{X}_{\sigma, i_{2}}^{b, \epsilon}(k_{1}) \hat{X}_{\sigma, i_{3}}^{u, \epsilon}(k_{2}):  e^{- \lvert k_{12} \rvert^{2} (s-\sigma)} d \sigma ds  k_{12}^{i_{3}} k_{123}^{j_{1}} \hat{\mathcal{P}}_{i_{1} i_{2}}(k_{12}) \hat{\mathcal{P}}_{i_{0} i_{1}}(k_{123}) e_{k}. \nonumber
\end{align} 
In sum of (\ref{189}) and (\ref{205}) we obtain 
\begin{align}\label{206}
\RomanIX_{t}^{7,4} + \RomanIX_{t}^{8,4} &= \frac{1}{4(2\pi)^{\frac{9}{2}}}  \sum_{k} \sum_{\lvert i-j \rvert \leq 1} \sum_{k_{1}, k_{2}: k_{12} = k, k_{3} \neq 0} \sum_{i_{1}, i_{2}, i_{3}, i_{4}, j_{1} =1}^{3}\nonumber \\
& \times  \theta(2^{-i} k_{123}) \theta(2^{-j} k_{3})  \int_{0}^{t} e^{- \lvert k_{123} \rvert^{2} (t-s)}\int_{0}^{s} \frac{ e^{- \lvert k_{3} \rvert^{2}(t-s)} f( \epsilon k_{3})^{2}}{2 \lvert k_{3} \rvert^{2}}\nonumber \\
& \times \hat{\mathcal{P}}_{j_{1} i_{4}}(k_{3}) \hat{\mathcal{P}}_{j_{0} i_{4}}(k_{3})  [ : \hat{X}_{\sigma, i_{2}}^{b, \epsilon}(k_{1}) \hat{X}_{\sigma, i_{3}}^{u, \epsilon}(k_{2}): - : \hat{X}_{\sigma, i_{2}}^{u, \epsilon}(k_{1}) \hat{X}_{\sigma, i_{3}}^{b, \epsilon}(k_{2}):]\nonumber \\
& \times  e^{- \lvert k_{12} \rvert^{2}(s-\sigma)} d \sigma ds k_{12}^{i_{3}} k_{123}^{j_{1}} \hat{\mathcal{P}}_{i_{1} i_{2}} (k_{12}) \hat{\mathcal{P}}_{i_{0} i_{1}}(k_{123}) e_{k}. 
\end{align} 
We define now 
\begin{align}\label{207}
\widetilde{\RomanIX}_{t}^{7,8,4} \triangleq& \frac{1}{4(2\pi)^{\frac{9}{2}}} \sum_{k} \sum_{\lvert i-j \rvert \leq 1} \sum_{k_{1}, k_{2}: k_{12} = k, k_{3} \neq 0} \sum_{i_{1}, i_{2}, i_{3}, i_{4}, j_{1} =1}^{3} \nonumber \\
& \times \theta(2^{-i} k_{123}) \theta(2^{-j} k_{3})  \int_{0}^{t} [: \hat{X}_{\sigma, i_{2}}^{b, \epsilon}(k_{1}) \hat{X}_{\sigma, i_{3}}^{u, \epsilon}(k_{2}): - : \hat{X}_{\sigma, i_{2}}^{u, \epsilon}(k_{1}) \hat{X}_{\sigma, i_{3}}^{b, \epsilon}(k_{2}):] \nonumber \\
& \times e^{- \lvert k_{12} \rvert^{2}(t-\sigma)} k_{12}^{i_{3}} \hat{\mathcal{P}}_{i_{1} i_{2}}(k_{12}) \hat{\mathcal{P}}_{i_{0} i_{1}}(k_{123}) d \sigma\nonumber \\
& \times \int_{0}^{t} e^{- \lvert k_{123} \rvert^{2} (t-s)} \frac{ e^{- \lvert k_{3} \rvert^{2}(t-s)} f(\epsilon k_{3})^{2}}{2 \lvert k_{3} \rvert^{2}}  \hat{\mathcal{P}}_{j_{1} i_{4}}(k_{3}) \hat{\mathcal{P}}_{j_{0} i_{4}}(k_{3}) k_{123}^{j_{1}} ds e_{k}, 
\end{align}
and 
\begin{align}\label{208}
C_{3}^{7, 8, \epsilon, i_{1}}(t) \triangleq& \frac{1}{2(2\pi)^{3}} \sum_{\lvert i-j \rvert \leq 1} \sum_{k_{3} \neq 0} \sum_{i_{4}, j_{1} =1}^{3} \theta(2^{-i} k_{3}) \theta(2^{-j} k_{3}) \nonumber \\
& \times \int_{0}^{t} \frac{ e^{-2 \lvert k_{3} \rvert^{2}(t-s)} f(\epsilon k_{3})^{2}}{2 \lvert k_{3} \rvert^{2}} \hat{\mathcal{P}}_{j_{1} i_{4}}(k_{3}) \hat{\mathcal{P}}_{j_{0} i_{4}}(k_{3}) i k_{3}^{j_{1}} \hat{\mathcal{P}}_{i_{0} i_{1}}(k_{3}) ds
\end{align} 
where it can be readily confirmed that $C_{3}^{7, 8, \epsilon, i_{1}}(t) = 0$. Now we split 
\begin{equation}\label{209}
\RomanIX_{t}^{7,4} + \RomanIX_{t}^{8,4} = (\RomanIX_{t}^{7,4} + \RomanIX_{t}^{8,4}) - \widetilde{\RomanIX}_{t}^{7,8,4} + \widetilde{\RomanIX}_{t}^{7,8,4} - \sum_{i_{1} =1}^{3} b^{
\scalebox{0.18}{\begin{tikzpicture}
\draw[black, thick] (-0.7,0.9) -- (0,0);
\draw[black, thick] (0.7,0.9) -- (0,0);
\draw[snake=zigzag](0,0) -- (0,-0.9);
\filldraw[violet] (-0.7,0.9) circle (6pt); 
\filldraw[violet] (0.7,0.9) circle (6pt); 
\end{tikzpicture}
}\epsilon}_{i_{1}}(t) C_{3}^{7, 8, \epsilon, i_{1}}(t).
\end{equation} 
Within (\ref{209}) we first work on 
\begin{align}\label{210}
& (\RomanIX_{t}^{7,4} + \RomanIX_{t}^{8,4}) - \widetilde{\RomanIX}_{t}^{7,8,4} \nonumber \\
=& \frac{1}{4(2\pi)^{\frac{9}{2}}} \sum_{k} \sum_{ \lvert i-j \rvert \leq 1} \sum_{k_{1}, k_{2}: k_{12} =k, k_{3} \neq 0} \sum_{i_{1}, i_{2}, i_{3}, j_{1} =1}^{3} \theta(2^{-i} k_{123}) \theta(2^{-j} k_{3}) \nonumber \\
& \times \hat{\mathcal{P}}_{j_{1} i_{4}}(k_{3}) \hat{\mathcal{P}}_{j_{0} i_{4}}(k_{3}) k_{123}^{j_{1}} k_{12}^{i_{3}} \hat{\mathcal{P}}_{i_{1} i_{2}}(k_{12}) \hat{\mathcal{P}}_{i_{0}i_{1}}(k_{123}) e_{k} \int_{0}^{t} e^{- \lvert k_{123} \rvert^{2}(t-s)} \frac{ e^{- \lvert k_{3} \rvert^{2}(t-s)} f(\epsilon k_{3})^{2}}{2 \lvert k_{3} \rvert^{2}}\nonumber \\
& \times [ \int_{0}^{s} [ : \hat{X}_{\sigma, i_{2}}^{b, \epsilon}(k_{1}) \hat{X}_{\sigma, i_{3}}^{u, \epsilon}(k_{2}): - : \hat{X}_{\sigma, i_{2}}^{u, \epsilon}(k_{1}) \hat{X}_{\sigma, i_{3}}^{b, \epsilon}(k_{2}):] e^{- \lvert k_{12} \rvert^{2}(s-\sigma)} d\sigma \nonumber \\
& - \int_{0}^{t} [: \hat{X}_{\sigma, i_{2}}^{b, \epsilon}(k_{1}) \hat{X}_{\sigma, i_{3}}^{u, \epsilon}(k_{2}): - : \hat{X}_{\sigma, i_{2}}^{u, \epsilon}(k_{1}) \hat{X}_{\sigma, i_{3}}^{b, \epsilon}(k_{2}):] e^{- \lvert k_{12} \rvert^{2} (t-\sigma)} d \sigma] ds 
\end{align} 
where we relied on (\ref{206}) and (\ref{207}). Within (\ref{210}) we first focus on 
\begin{align}\label{211}
&\int_{0}^{s} [ : \hat{X}_{\sigma, i_{2}}^{b, \epsilon}(k_{1}) \hat{X}_{\sigma, i_{3}}^{u, \epsilon}(k_{2}): - : \hat{X}_{\sigma, i_{2}}^{u, \epsilon}(k_{1}) \hat{X}_{\sigma, i_{3}}^{b, \epsilon}(k_{2}):] e^{- \lvert k_{12} \rvert^{2}(s-\sigma)} d\sigma \nonumber \\
& - \int_{0}^{t} [: \hat{X}_{\sigma, i_{2}}^{b, \epsilon}(k_{1}) \hat{X}_{\sigma, i_{3}}^{u, \epsilon}(k_{2}): - : \hat{X}_{\sigma, i_{2}}^{u, \epsilon}(k_{1}) \hat{X}_{\sigma, i_{3}}^{b, \epsilon}(k_{2}):] e^{- \lvert k_{12} \rvert^{2} (t-\sigma)} d \sigma\nonumber \\
=& \int_{0}^{s} [: \hat{X}_{\sigma, i_{2}}^{b, \epsilon} (k_{1}) \hat{X}_{\sigma, i_{3}}^{u, \epsilon}(k_{2}): - : \hat{X}_{\sigma, i_{2}}^{u, \epsilon}(k_{1}) \hat{X}_{\sigma, i_{3}}^{b, \epsilon}(k_{2}):] (e^{- \lvert k_{12} \rvert^{2} (s-\sigma)} - e^{-\lvert k_{12} \rvert^{2} (t-\sigma)}) d\sigma \nonumber \\
&- \int_{s}^{t} [: \hat{X}_{\sigma, i_{2}}^{b, \epsilon} (k_{1}) \hat{X}_{\sigma, i_{3}}^{u, \epsilon}(k_{2}): - : \hat{X}_{\sigma, i_{2}}^{u, \epsilon}(k_{1}) \hat{X}_{\sigma, i_{3}}^{b, \epsilon}(k_{2}):] e^{- \lvert k_{12} \rvert^{2} (t-\sigma)} d\sigma. 
\end{align} 
We also define for $k_{3} \neq 0$, 
\begin{equation}\label{212}
C_{k_{123}, k_{3}}^{j_{1}} (t-s) \triangleq \sum_{i_{1} =1}^{3} e^{- \lvert k_{123} \rvert^{2}(t-s)} \frac{ e^{- \lvert k_{3} \rvert^{2}(t-s)} f( \epsilon k_{3})^{2}}{\lvert k_{3} \rvert^{2}} \lvert k_{123}^{j_{1}} \hat{\mathcal{P}}_{i_{0} i_{1}}(k_{123}) \rvert 
\end{equation}
so that we can now estimate 
\begin{align}\label{213}
& \mathbb{E} [ \lvert \Delta_{q} ((\RomanIX_{t}^{7,4} + \RomanIX_{t}^{8,4}) - \widetilde{\RomanIX}_{t}^{7,8,4}) \rvert^{2}]\\
\lesssim& \sum_{k, k' } \theta(2^{-q} k) \theta(2^{-q} k') \sum_{\lvert i-j\rvert \leq 1, \lvert i'-j'\rvert \leq 1} \sum_{k_{1}, k_{2}: k_{12} = k, k_{3} \neq 0, k_{1}', k_{2}': k_{12}' = k', k_{3}' \neq 0} \nonumber \\
& \times \sum_{i_{2}, i_{3}, j_{1}, i_{2}', i_{3}', j_{1} ' = 1}   \theta(2^{-i} k_{123}) \theta(2^{-i'} k_{123}') \theta(2^{-j} k_{3}) \theta(2^{-j} k_{3}') \int_{[0,t]^{2}} \nonumber \\
& \times C_{k_{123}, k_{3}}^{j_{1}}(t-s) C_{k_{123}', k_{3}'}^{j_{1}'} (t- \overline{s}) \lvert k_{12} \rvert \lvert k_{12} ' \rvert  \nonumber \\
& \times [\int_{0}^{s} \int_{0}^{\overline{s}} \mathbb{E} [ : \hat{X}_{\sigma, i_{2}}^{b, \epsilon}(k_{1}) \hat{X}_{\sigma, i_{3}}^{u, \epsilon}(k_{2}): : \hat{X}_{\overline{\sigma}, i_{2}'}^{b, \epsilon} (k_{1}') \hat{X}_{\overline{\sigma}, i_{3}'}^{u, \epsilon} (k_{2}') : ]\nonumber \\
& \times (e^{- \lvert k_{12} \rvert^{2}(s-\sigma)} - e^{-\lvert k_{12} \rvert^{2}(t-\sigma)}) (e^{- \lvert k_{12}' \rvert^{2} (\overline{s} - \overline{\sigma})} - e^{-\lvert k_{12} ' \rvert^{2}(t- \overline{\sigma})}) d \sigma d \overline{\sigma} \nonumber \\
&+ \int_{s}^{t} \int_{\overline{s}}^{t} \mathbb{E} [ : \hat{X}_{\sigma, i_{2}}^{b, \epsilon}(k_{1}) \hat{X}_{\sigma, i_{3}}^{u, \epsilon}(k_{2}): : \hat{X}_{\overline{\sigma}, i_{2}'}^{b, \epsilon} (k_{1}') \hat{X}_{\overline{\sigma}, i_{3}'}^{u, \epsilon}(k_{2}'): ]  e^{- \lvert k_{12} \rvert^{2} [(t- \sigma) + (t - \overline{\sigma})]} d \sigma d \overline{\sigma} \nonumber \\
&+ \int_{0}^{s} \int_{0}^{\overline{s}} \mathbb{E} [: \hat{X}_{\sigma, i_{2}}^{u, \epsilon}(k_{1}) \hat{X}_{\sigma, i_{3}}^{b, \epsilon}(k_{2}):: \hat{X}_{\overline{\sigma}, i_{2}'}^{u, \epsilon} (k_{1}') \hat{X}_{\overline{\sigma}, i_{3}'}^{b, \epsilon}(k_{2}'):] \nonumber \\
& \times (e^{- \lvert k_{12} \rvert^{2}(s-\sigma)} - e^{- \lvert k_{12} \rvert^{2} (t-\sigma)}) (e^{-\lvert k_{12}' \rvert^{2}(\overline{s} - \overline{\sigma})} - e^{-\lvert k_{12}' \rvert^{2}(t- \overline{\sigma})}) d\sigma d \overline{\sigma} \nonumber \\
&+ \int_{s}^{t} \int_{\overline{s}}^{t} \mathbb{E} [: \hat{X}_{\sigma, i_{2}}^{u, \epsilon}(k_{1}) \hat{X}_{\sigma, i_{3}}^{b, \epsilon}(k_{2}): \hat{X}_{\overline{\sigma}, i_{2}'}^{u, \epsilon}(k_{1}') \hat{X}_{\overline{\sigma}, i_{3}'}^{b, \epsilon}(k_{2}'):]e^{- \lvert k_{12} \rvert^{2} [(t - \sigma) + (t- \overline{\sigma})]} d \sigma d \overline{\sigma} ] ds d \overline{s} \triangleq \sum_{i=1}^{4} X^{i}\nonumber
\end{align} 
by (\ref{210})-(\ref{212}) and Young's inequality. Among the four terms on the right side of (\ref{213}), it suffices to work on the first two terms, namely $X^{1} + X^{2}$. First, due to $\mathbb{E} [ : \xi_{11} \xi_{12}: : \xi_{21} \xi_{22}:] = \mathbb{E} [ \xi_{11} \xi_{21}] \mathbb{E} [ \xi_{12} \xi_{22}] + \mathbb{E} [ \xi_{11} \xi_{22}]\mathbb{E}[\xi_{12} \xi_{21}]$ (see \cite{J97}) we can compute $\mathbb{E} [ : \hat{X}_{\sigma, i_{2}}^{b, \epsilon}(k_{1}) \hat{X}_{\sigma, i_{3}}^{u, \epsilon}(k_{2}): : \hat{X}_{\overline{\sigma}, i_{2}'}^{b, \epsilon}(k_{1}') \hat{X}_{\overline{\sigma}, i_{3}'}^{u, \epsilon}(k_{2}'): ]$ using \eqref{116} and deduce from \eqref{213} 
\begin{align}\label{215}
X^{1} + X^{2} 
\lesssim&  \sum_{k} \theta(2^{-q} k)^{2} \sum_{ \lvert i-j \rvert \leq 1, \lvert i' - j' \rvert \leq 1} \sum_{k_{1}, k_{2} \neq 0: k_{12} = k, k_{3}, k_{4} \neq 0} \sum_{j_{1}, j_{1}' = 1}^{3} \lvert k_{12} \rvert^{2} \nonumber \\
& \times \int_{[0,t]^{2}} \theta(2^{-i} k_{123}) \theta(2^{-i'} k_{124}) \theta(2^{-j} k_{3}) \theta(2^{-j'} k_{4})  \nonumber \\
& \times C_{k_{123}, k_{3}}^{j_{1}}(t-s) C_{k_{124}, k_{4}}^{j_{1}'} (t- \overline{s})  [\int_{0}^{s} \int_{0}^{\overline{s}} \frac{ e^{- (\lvert k_{1} \rvert^{2} + \lvert k_{2} \rvert^{2}) \lvert \sigma - \overline{\sigma} \rvert}}{\lvert k_{1} \rvert^{2} \lvert k_{2} \rvert^{2}} \nonumber \\
& \hspace{5mm} \times (e^{- \lvert k_{12} \rvert^{2} (s- \sigma)} - e^{- \lvert k_{12} \rvert^{2} (t- \sigma)})(e^{- \lvert k_{12} \rvert^{2} (\overline{s} - \overline{\sigma})} - e^{- \lvert k_{12} \rvert^{2} (t- \overline{\sigma})}) d \sigma d \overline{\sigma} \nonumber \\
&+ \int_{s}^{t} \int_{\overline{s}}^{t} \frac{ e^{- ( \lvert k_{1} \rvert^{2} + \lvert k_{2} \rvert^{2}) \lvert \sigma - \overline{\sigma} \rvert}}{\lvert k_{1} \rvert^{2} \lvert k_{2} \rvert^{2}} e^{- \lvert k_{12} \rvert^{2} (t- \sigma + t - \overline{\sigma})} d \sigma d \overline{\sigma}] ds d \overline{s}
\end{align} 
where we used a change of variable of $k_{3} '$ with $-k_{4}$. Within (\ref{215}) we may further estimate for $k_{12} \neq 0$,  
\begin{align}\label{216}
& \int_{0}^{s} \int_{0}^{\overline{s}}  e^{- (\lvert k_{1} \rvert^{2} + \lvert k_{2} \rvert^{2}) \lvert \sigma - \overline{\sigma} \rvert} (e^{- \lvert k_{12} \rvert^{2} (s- \sigma)} - e^{- \lvert k_{12} \rvert^{2} (t- \sigma)}) (e^{- \lvert k_{12} \rvert^{2} (\overline{s} - \overline{\sigma})} - e^{- \lvert k_{12} \rvert^{2} (t- \overline{\sigma})}) d \sigma d \overline{\sigma} \nonumber \\
&+ \int_{s}^{t} \int_{\overline{s}}^{t}  e^{- ( \lvert k_{1} \rvert^{2} + \lvert k_{2} \rvert^{2}) \lvert \sigma - \overline{\sigma} \rvert} e^{- \lvert k_{12} \rvert^{2} (t- \sigma + t - \overline{\sigma})} d \sigma d \overline{\sigma} \lesssim \frac{1}{\lvert k_{12} \rvert^{3}} \lvert t-s \rvert^{\frac{1}{4}} \lvert t- \overline{s} \rvert^{\frac{1}{4}}
\end{align} 
due to mean value theorem and (\ref{14}). Therefore, applying (\ref{216}) to (\ref{215}) gives 
\begin{align}\label{217}
X^{1} + X^{2} \lesssim&  \sum_{k \neq 0} \theta(2^{-q} k)^{2} \sum_{ \lvert i-j \rvert \leq 1, \lvert i' - j' \rvert \leq 1} \sum_{k_{1}, k_{2} \neq 0: k_{12} = k, k_{3}, k_{4} \neq 0} \sum_{j_{1}, j_{1}' = 1}^{3} \nonumber \\
& \times \theta(2^{-i} k_{123}) \theta(2^{-i'} k_{124}) \theta(2^{-j} k_{3}) \theta(2^{-j'} k_{4})  \int_{[0,t]^{2}} C_{k_{123}, k_{3}}^{j_{1}}(t-s)\nonumber \\
& \times  C_{k_{124}, k_{4}}^{j_{1}'} (t- \overline{s}) 
 \frac{1}{\lvert k_{12} \rvert \lvert k_{1} \rvert^{2} \lvert k_{2} \rvert^{2}} (t- s )^{\frac{1}{4}} (t - \overline{s})^{\frac{1}{4}} ds d \overline{s}.
\end{align} 
Moreover, for $k_{3}, k_{4} \neq 0$, 
\begin{align}\label{218}
& \int_{[0,t]^{2}} (t-s)^{\frac{1}{4}} (t- \overline{s})^{\frac{1}{4}} C_{k_{123}, k_{3}}^{j_{1}}(t-s) C_{k_{124}, k_{4}}^{j_{1}'} (t- \overline{s}) ds d \overline{s} \nonumber \\
\lesssim& \frac{ \lvert k_{123} \rvert^{\frac{1}{2}} \lvert k_{124} \rvert^{\frac{1}{2}}}{\lvert k_{3} \rvert^{2} \lvert k_{4} \rvert^{2}} [ \frac{1 - e^{- \frac{1}{2} (\lvert k_{123} \rvert^{2} + \lvert k_{3} \rvert^{2})t}}{\lvert k_{123} \rvert^{2} + \lvert k_{3} \rvert^{2}} ] [ \frac{1- e^{ -\frac{1}{2}(\lvert k_{124} \rvert^{2} + \lvert k_{4} \rvert^{2} )t}}{\lvert k_{124} \rvert^{2} + \lvert k_{4} \rvert^{2}}]\nonumber \\
\lesssim& \frac{t^{ 2(\frac{\eta}{3} + \frac{\epsilon}{6})}}{\lvert k_{3} \rvert^{2} \lvert k_{4} \rvert^{2} (\lvert k_{123} \rvert^{2} + \lvert k_{3} \rvert^{2} )^{\frac{3}{4} - (\frac{\eta}{3} + \frac{\epsilon}{6})} (\lvert k_{124} \rvert^{2} + \lvert k_{4} \rvert^{2})^{\frac{3}{4} - (\frac{\eta}{3} + \frac{\epsilon}{6})}} 
\end{align} 
by (\ref{212}) and (\ref{14}). Applying (\ref{218}) to (\ref{217}) leads to 
\begin{align}\label{219}
X^{1} + X^{2} \lesssim& \sum_{k \neq 0} \theta(2^{-q} k)^{2} \sum_{k_{1}, k_{2} \neq 0: k_{12} = k} \frac{t^{2(\frac{\eta}{3} + \frac{\epsilon}{6})}}{\lvert k_{12} \rvert \lvert k_{1} \rvert^{2} \lvert k_{2} \rvert^{2}} \sum_{q \lesssim i, q \lesssim i'} \frac{1}{2^{i ( \frac{1}{2} - 3 (\frac{\eta}{3} + \frac{\epsilon}{6}))}} \frac{1}{2^{i' ( \frac{1}{2} - 3 (\frac{\eta}{3} + \frac{\epsilon}{6}))}}\nonumber \\
\lesssim& t^{2(\frac{\eta}{3} + \frac{\epsilon}{6})} 2^{2q (3(\frac{\eta}{3} + \frac{\epsilon}{6}))} \sum_{2^{q-1} \lesssim \lvert k \rvert \lesssim 2^{q+1}} \frac{1}{\lvert k \rvert^{3}} \lesssim t^{2(\frac{\eta}{3} + \frac{\epsilon}{6})} 2^{2q ( \eta + \frac{\epsilon}{2})}  
\end{align} 
where we used that $2^{q} \lesssim 2^{i}, 2^{q} \lesssim 2^{i'}$, and Lemma \ref{Lemma 2.9}. Similar estimates may be obtained for $X^{3}$ and $X^{4}$. Therefore, we conclude by applying (\ref{219}) to (\ref{213}) that 
\begin{equation}\label{220}
\mathbb{E} [ \lvert \Delta_{q} (( \RomanIX_{t}^{7,4} + \RomanIX_{t}^{8,4}) - \widetilde{\RomanIX}_{t}^{7,8,4}) \rvert^{2} ] \lesssim t^{2( \frac{\eta}{3} + \frac{\epsilon}{6})} 2^{2q (\eta + \frac{\epsilon}{2})}. 
\end{equation}
Next, within (\ref{209}) we work on $\mathbb{E} [ \lvert \Delta_{q} ( \widetilde{\RomanIX}_{t}^{7, 8, 4} - \sum_{i_{1} = 1}^{3} b^{
\scalebox{0.18}{\begin{tikzpicture}
\draw[black, thick] (-0.7,0.9) -- (0,0);
\draw[black, thick] (0.7,0.9) -- (0,0);
\draw[snake=zigzag](0,0) -- (0,-0.9);
\filldraw[violet] (-0.7,0.9) circle (6pt); 
\filldraw[violet] (0.7,0.9) circle (6pt); 
\end{tikzpicture}
}\epsilon}_{i_{1}}(t) C_{3}^{7, 8, \epsilon, i_{1}}(t)) \rvert^{2} ]$ where we may write 
\begin{align}\label{221}
& \sum_{i_{1} =1}^{3} b^{
\scalebox{0.18}{\begin{tikzpicture}
\draw[black, thick] (-0.7,0.9) -- (0,0);
\draw[black, thick] (0.7,0.9) -- (0,0);
\draw[snake=zigzag](0,0) -- (0,-0.9);
\filldraw[violet] (-0.7,0.9) circle (6pt); 
\filldraw[violet] (0.7,0.9) circle (6pt); 
\end{tikzpicture}
}\epsilon}_{i_{1}}(t) C_{3}^{7, 8, \epsilon, i_{1}}(t)  = \frac{1}{4(2\pi)^{\frac{9}{2}}} \sum_{k \neq 0} \sum_{\lvert i-j \rvert \leq 1} \sum_{k_{1}, k_{2}: k_{12} =k, k_{3} \neq 0} \sum_{i_{1}, i_{2}, i_{3}, i_{4}, j_{1} =1}^{3}  \\
& \times \int_{0}^{t} e^{- \lvert k_{12} \rvert^{2}(t-\sigma)} \hat{\mathcal{P}}_{i_{1} i_{2}}(k_{12}) k_{12}^{i_{3}}[ \hat{X}_{\sigma, i_{2}}^{b, \epsilon}(k_{1}) \hat{X}_{\sigma, i_{3}}^{u, \epsilon}(k_{2}) - \hat{X}_{\sigma, i_{2}}^{u, \epsilon}(k_{1}) \hat{X}_{\sigma, i_{3}}^{b, \epsilon}(k_{2})] d \sigma \nonumber \\
& \hspace{10mm}  \times \theta(2^{-i} k_{3}) \theta(2^{-j} k_{3}) \int_{0}^{t} \frac{ e^{-2 \lvert k_{3} \rvert^{2}(t-s)} f(\epsilon k_{3})^{2}}{2 \lvert k_{3} \rvert^{2}}  \hat{\mathcal{P}}_{j_{1} i_{4}}(k_{3}) \hat{\mathcal{P}}_{j_{0} i_{4}}(k_{3}) k_{3}^{j_{1}} \hat{\mathcal{P}}_{i_{0} i_{1}}(k_{3}) ds e_{k} \nonumber
\end{align} 
by (\ref{17}), (\ref{24b}) and (\ref{24c}). Thus, by (\ref{207}) and (\ref{221}) we obtain 
\begin{align}\label{222}
& \widetilde{\RomanIX}_{t}^{7,8,4} - \sum_{i_{1} =1}^{3} b^{
\scalebox{0.18}{\begin{tikzpicture}
\draw[black, thick] (-0.7,0.9) -- (0,0);
\draw[black, thick] (0.7,0.9) -- (0,0);
\draw[snake=zigzag](0,0) -- (0,-0.9);
\filldraw[violet] (-0.7,0.9) circle (6pt); 
\filldraw[violet] (0.7,0.9) circle (6pt); 
\end{tikzpicture}
}\epsilon}_{i_{1}}(t) C_{3}^{7, 8, \epsilon, i_{1}}(t) \nonumber \\
=& \frac{1}{4(2\pi)^{\frac{9}{2}}} \sum_{k \neq 0} \sum_{\lvert i-j \rvert \leq 1} \sum_{k_{1}, k_{2}: k_{12} = k, k_{3} \neq 0} \sum_{i_{1}, i_{2}, i_{3}, i_{4}, j_{1} =1}^{3} \int_{[0,t]^{2}} e^{- \lvert k_{12} \rvert^{2}(t- \sigma)} k_{12}^{i_{3}} \hat{\mathcal{P}}_{i_{1} i_{2}}(k_{12}) e_{k} \theta(2^{-j} k_{3}) \nonumber \\
& \times [ ( : \hat{X}_{\sigma, i_{2}}^{b, \epsilon}(k_{1}) \hat{X}_{\sigma, i_{3}}^{u, \epsilon}(k_{2}): e^{- \lvert k_{123} \rvert^{2} (t-s)} \theta(2^{-i} k_{123}) \hat{\mathcal{P}}_{i_{0} i_{1}}(k_{123}) k_{123}^{j_{1}} \nonumber \\
& - \hat{X}_{\sigma, i_{2}}^{b, \epsilon}(k_{1}) \hat{X}_{\sigma, i_{3}}^{u, \epsilon}(k_{2}) e^{- \lvert k_{3} \rvert^{2} (t-s)} \theta(2^{-i} k_{3}) \hat{\mathcal{P}}_{i_{0} i_{1}}(k_{3}) k_{3}^{j_{1}}) \nonumber \\
& - ( : \hat{X}_{\sigma, i_{2}}^{u, \epsilon}(k_{1}) \hat{X}_{\sigma, i_{3}}^{b, \epsilon}(k_{2}): e^{- \lvert k_{123} \rvert^{2}(t-s)} \theta(2^{-i} k_{123}) \hat{\mathcal{P}}_{i_{0} i_{1}}(k_{123}) k_{123}^{j_{1}}\nonumber \\
& \hspace{3mm}  - \hat{X}_{\sigma, i_{2}}^{u, \epsilon}(k_{1}) \hat{X}_{\sigma, i_{3}}^{b, \epsilon}(k_{2}) e^{- \lvert k_{3} \rvert^{2} (t-s)} \theta(2^{-i} k_{3}) \hat{\mathcal{P}}_{i_{0} i_{1}}(k_{3}) k_{3}^{j_{1}})]\nonumber \\
& \times \frac{ e^{- \lvert k_{3} \rvert^{2}(t-s)} f(\epsilon k_{3})^{2}}{2 \lvert k_{3} \rvert^{2}} \hat{\mathcal{P}}_{j_{1} i_{4}}(k_{3}) \hat{\mathcal{P}}_{j_{0} i_{4}}(k_{3}) d\sigma ds \triangleq \sum_{i=1}^{2} \RomanXI_{t}^{i}. 
\end{align} 
Due to similarity, let us work only on $\RomanXI_{t}^{1}$, to which we use $:\xi_{1}\xi_{2}: = \xi_{1}\xi_{2} - \mathbb{E}[\xi_{1}\xi_{2}]$ (see \cite{J97}) to deduce 
\begin{align}\label{223}
\RomanXI_{t}^{1} =& \frac{1}{4(2\pi)^{\frac{9}{2}}} \sum_{k \neq 0} \sum_{\lvert i-j \rvert \leq 1} \sum_{k_{1}, k_{2}: k_{12} = k, k_{3} \neq 0} \sum_{i_{1}, i_{2}, i_{3}, i_{4}, j_{1} =1}^{3}\nonumber \\
& \times \int_{[0,t]^{2}} e^{- \lvert k_{12} \rvert^{2}(t-\sigma)} k_{12}^{i_{3}} \hat{\mathcal{P}}_{i_{1} i_{2}} (k_{12}) e_{k} \theta(2^{-j} k_{3}) \hat{X}_{\sigma, i_{1}}^{b, \epsilon}(k_{1}) \hat{X}_{\sigma, i_{3}}^{u, \epsilon}(k_{2})\nonumber \\
& \times [ e^{- \lvert k_{123} \rvert^{2}(t-s)} \theta(2^{-i} k_{123}) \hat{\mathcal{P}}_{i_{0} i_{1}}(k_{123}) k_{123}^{j_{1}}   - e^{- \lvert k_{3} \rvert^{2}(t-s)} \theta(2^{-i} k_{3}) \hat{\mathcal{P}}_{i_{0} i_{1}}(k_{3}) k_{3}^{j_{1}}]\nonumber \\
& \times  \frac{e^{ - \lvert k_{3} \rvert^{2} (t-s)} f( \epsilon k_{3})^{2}}{2 \lvert k_{3} \rvert^{2}} \hat{\mathcal{P}}_{j_{1} i_{4}}(k_{3}) \hat{\mathcal{P}}_{j_{0} i_{4}}(k_{3}) d\sigma ds. 
\end{align} 
Now upon computing 
\begin{align*}
\mathbb{E} [ \lvert \Delta_{q} ( \widetilde{\RomanIX}_{t}^{7,8,4} - \sum_{i_{1} =1}^{3} b^{
\scalebox{0.18}{\begin{tikzpicture}
\draw[black, thick] (-0.7,0.9) -- (0,0);
\draw[black, thick] (0.7,0.9) -- (0,0);
\draw[snake=zigzag](0,0) -- (0,-0.9);
\filldraw[violet] (-0.7,0.9) circle (6pt); 
\filldraw[violet] (0.7,0.9) circle (6pt); 
\end{tikzpicture}
}\epsilon}_{i_{1}}(t) C_{3}^{7, 8, \epsilon, i_{1}}(t) ) \rvert^{2}], 
\end{align*}
we need to compute $\mathbb{E} [ \lvert \Delta_{q} \RomanXI_{t}^{1} \rvert^{2}]$. In its endeavor, we rely on the identity of $\mathbb{E} [ \xi_{1} \xi_{2} \xi_{3} \xi_{4}] = \mathbb{E} [\xi_{2} \xi_{3}] \mathbb{E} [\xi_{1} \xi_{4}] + \mathbb{E} [\xi_{2} \xi_{4}] \mathbb{E} [\xi_{1} \xi_{3}] + \mathbb{E} [\xi_{3} \xi_{4}] \mathbb{E} [\xi_{1} \xi_{2}]$ (\cite{J97} and \cite[Example 2.2]{Y20}) and (\ref{116}) to compute $ \mathbb{E} [ \hat{X}_{\sigma, i_{1}}^{b, \epsilon}(k_{1}) \hat{X}_{\sigma, i_{3}}^{u, \epsilon}(k_{2}) \hat{X}_{\overline{\sigma}, i_{1}'}^{b, \epsilon}(k_{1}') \hat{X}_{\overline{\sigma}, i_{3}'}^{u, \epsilon}(k_{2}')]$ and deduce  
\begin{align*} 
& \mathbb{E} [ \lvert \Delta_{q} \RomanXI_{t}^{1} \rvert^{2} ]\lesssim \sum_{k, k' \neq 0} \sum_{k_{1}, k_{2} \neq 0: k_{12} = k, k_{1}', k_{2}':  k_{12}' = k'} 1_{k_{2} + k_{1} ' = 0, k_{1} + k_{2} ' = 0}  \\
& \hspace{8mm} \times \int_{[0,t]^{2}} e^{- \lvert k_{12} \rvert^{2} (t-\sigma) - \lvert k_{12}' \rvert^{2} ( t - \overline{\sigma})} \lvert k_{12} \rvert \lvert k_{12}' \rvert  \theta (2^{-q} k) \theta(2^{-q} k')  \frac{ e^{ - ( \lvert k_{1} \rvert^{2} + \lvert k_{2} \rvert^{2}) \lvert \sigma - \overline{\sigma} \rvert}}{ \lvert k_{1} \rvert^{2} \lvert k_{2} \rvert^{2}} d \sigma d \overline{\sigma} \\
& \hspace{8mm} \times [ \sum_{\lvert i-j \rvert \leq 1} \sum_{i_{1}, j_{1} = 1}^{3} \sum_{k_{3} \neq 0} \theta(2^{-j} k_{3}) \int_{0}^{t} \frac{ e^{ - \lvert k_{3} \rvert^{2} ( t-s)} f( \epsilon k_{3})^{2}}{\lvert k_{3} \rvert^{2}}\\
&\hspace{8mm}  \times ( e^{- \lvert k_{123} \rvert^{2} (t-s)} \theta(2^{-i} k_{123}) \hat{\mathcal{P}}_{i_{0} i_{1}}(k_{123}) k_{123}^{j_{1}} - e^{- \lvert k_{3} \rvert^{2}(t-s)} \theta (2^{-i} k_{3}) \hat{\mathcal{P}}_{i_{0} i_{1}}(k_{3}) k_{3}^{j_{1}}) ds ]^{2}, 
\end{align*} 
where we observe that $\lvert k_{12} ' \rvert = \lvert k_{12} \rvert$ due to $1_{k_{2} + k_{1} ' = 0, k_{1} + k_{2} ' = 0}$ so that we may estimate
\begin{align*} 
 \int_{[0,t]^{2}} e^{- \lvert k_{12} \rvert^{2} (t-\sigma) - \lvert k_{12}' \rvert^{2} (t- \overline{\sigma})} \lvert k_{12} \rvert \lvert k_{12} ' \rvert e^{- ( \lvert k_{1} \rvert^{2} + \lvert k_{2} \rvert^{2}) \lvert \sigma - \overline{\sigma} \rvert} d \sigma d \overline{\sigma}  \lesssim \frac{1}{\lvert k_{12} \rvert^{2}}
\end{align*}
for $k_{12} \neq 0$. Therefore, (\ref{216}) gives for any $\eta \in (0,1)$, 
\begin{align}\label{225}
&\mathbb{E} [ \lvert \Delta_{q} \RomanXI_{t}^{1} \rvert^{2}] \nonumber\\
\lesssim& \sum_{k\neq 0} \sum_{k_{1}, k_{2} \neq 0: k_{12} = k} \frac{ \theta(2^{-q} k)^{2}}{\lvert k_{1} \rvert^{2} \lvert k_{2} \rvert^{2} \lvert k_{12} \rvert^{2}} [ \sum_{\lvert i-j \rvert \leq 1} \sum_{i_{1}, j_{1} =1}^{3} \sum_{k_{3} \neq 0} \theta(2^{-j} k_{3}) \int_{0}^{t} \frac{ e^{- \lvert k_{3} \rvert^{2}(t-s)} f( \epsilon k_{3})^{2}}{\lvert k_{3} \rvert^{2}}\nonumber \\
\times& ( e^{- \lvert k_{123} \rvert^{2}(t-s)} \theta( 2^{-i} k_{123}) \hat{\mathcal{P}}_{i_{0} i_{1}}(k_{123}) k_{123}^{j_{1}}   - e^{- \lvert k_{3} \rvert^{2}(t-s)} \theta( 2^{-i} k_{3}) \hat{\mathcal{P}}_{i_{0} i_{1}}(k_{3}) k_{3}^{j_{1}}) ds ]^{2} \nonumber \\
\lesssim& \sum_{k \neq 0} \frac{ \theta(2^{-q} k)^{2}}{\lvert k \rvert^{3}} 2^{q( 2\eta)} [\sum_{k_{3} \neq 0} \frac{1}{\lvert k_{3} \rvert^{3+ \epsilon}}]^{2} t^{\eta - \epsilon}  \lesssim 2^{q(2\eta)} t^{\eta - \epsilon}
\end{align} 
due to a straight-forward extension of Lemma \ref{Lemma 2.8}, Lemma \ref{Lemma 2.9} and (\ref{14}). We obtain similar estimates for $\mathbb{E} [ \lvert \Delta_{q} \RomanXI_{t}^{2} \rvert^{2} ]$ in (\ref{222}). Together with (\ref{220}), this concludes our estimate of 
\begin{equation}\label{226}
\mathbb{E} [ \lvert \Delta_{q}(\RomanIX_{t}^{7,4} + \RomanIX_{t}^{8,4} ) \rvert^{2}] \lesssim 2^{2q(\eta + \frac{\epsilon}{2})} t^{2( \frac{\eta}{3} + \frac{\epsilon}{6})} 
\end{equation}
if we choose $\epsilon, \eta > 0$ such that $\epsilon \leq \frac{\eta}{4}$. 

For $\RomanIX_{t}^{8,k}, k \in \{1, \hdots, 6\}$, in (\ref{194}), we obtained estimates of $\RomanIX_{t}^{8,2}$ in (\ref{201}) and $\RomanIX_{t}^{7,4} + \RomanIX_{t}^{8,4}$ in (\ref{226}). Next, within (\ref{194}) let us work on 
\begin{equation}\label{227}
\RomanIX_{t}^{8,5} = \RomanIX_{t}^{8,5} - \widetilde{\RomanIX}_{t}^{8,5} + \widetilde{\RomanIX}_{t}^{8,5} -\overline{\RomanIX}_{t}^{8,5}
\end{equation}
where 
\begin{align}\label{228}
\widetilde{\RomanIX}_{t}^{8,5} 
&\triangleq -\frac{1}{4(2\pi)^{\frac{9}{2}}} \sum_{k} \sum_{\lvert i-j \rvert \leq 1} \sum_{k_{1}, k_{4}: k_{14} = k, k_{2} \neq 0} \sum_{i_{1}, i_{2}, i_{3}, i_{4}, j_{1} =1}^{3} \theta(2^{-i} k_{1}) \theta(2^{-j} k_{4})\nonumber \\
& \times \int_{0}^{t} : \hat{X}_{s, i_{2}}^{u, \epsilon}(k_{1}) \hat{X}_{t, j_{0}}^{b, \epsilon}(k_{4}): e^{- \lvert k_{1} \rvert^{2}(t-s)} k_{1}^{j_{1}} \hat{\mathcal{P}}_{i_{0} i_{1}}(k_{1})  \int_{0}^{s} e^{- \lvert k_{12} \rvert^{2}(s-\sigma)}\nonumber \\
& \times \frac{ e^{- \lvert k_{2} \rvert^{2}(s-\sigma)} f( \epsilon k_{2})^{2}}{\lvert k_{2} \rvert^{2}} k_{12}^{i_{3}} \hat{\mathcal{P}}_{i_{1} i_{2}}(k_{12}) \hat{\mathcal{P}}_{i_{3} i_{4}}(k_{2}) \hat{\mathcal{P}}_{j_{1} i_{4}}(k_{2}) d\sigma ds e_{k}
\end{align} 
and 
\begin{align}\label{229}
\overline{\RomanIX}_{t}^{8,5} &\triangleq -\frac{1}{4(2\pi)^{\frac{9}{2}}} \sum_{k} \sum_{\lvert i-j \rvert \leq 1} \sum_{k_{1}, k_{4}: k_{14} = k, k_{2} \neq 0} \sum_{i_{1}, i_{2}, i_{3}, j_{1} =1}^{3} \theta(2^{-i} k_{1}) \theta(2^{-j} k_{4})\nonumber \\
& \times \int_{0}^{t} : \hat{X}_{s, i_{2}}^{u, \epsilon}(k_{1}) \hat{X}_{t, j_{0}}^{b, \epsilon}(k_{4}): e^{- \lvert k_{1} \rvert^{2}(t-s)} k_{1}^{j_{1}} \hat{\mathcal{P}}_{i_{0} i_{1}}(k_{1})  \int_{0}^{s} e^{- 2\lvert k_{2} \rvert^{2}(s-\sigma)}\nonumber \\
& \times \frac{ f( \epsilon k_{2})^{2}}{\lvert k_{2} \rvert^{2}} k_{2}^{i_{3}} \hat{\mathcal{P}}_{i_{1} i_{2}}(k_{2}) \hat{\mathcal{P}}_{i_{3} i_{4}}(k_{2}) \hat{\mathcal{P}}_{j_{1} i_{4}}(k_{2}) d\sigma ds e_{k} 
\end{align} 
so that $\overline{\RomanIX}_{t}^{8,5} = 0$. We define for $k_{2} \neq 0$, 
\begin{equation}\label{230}
d_{k_{12}, k_{2}} (s-\sigma) \triangleq \sum_{i_{2}, i_{3} =1}^{3} e^{- \lvert k_{12} \rvert^{2}(s-\sigma)} \frac{ e^{- \lvert k_{2} \rvert^{2}(s-\sigma)} f( \epsilon k_{2})^{2}}{\lvert k_{2} \rvert^{2}} \lvert k_{12}^{i_{3}} \hat{\mathcal{P}}_{i_{1} i_{2}}(k_{12}) \rvert. 
\end{equation} 
Then we see that
\begin{align}\label{231}
& \mathbb{E} [ : \hat{X}_{s, i_{2}}^{u, \epsilon}(k_{1}) \hat{X}_{t, j_{0}}^{b, \epsilon}(k_{4}): - : \hat{X}_{\sigma, i_{2}}^{u, \epsilon}(k_{1}) \hat{X}_{t, j_{0}}^{b, \epsilon}(k_{4}): \overline{: \hat{X}_{\overline{s}, i_{2}'}^{u, \epsilon} (k_{1}') \hat{X}_{t, j_{0}}^{b, \epsilon}(k_{4}'): - : \hat{X}_{\overline{\sigma}, i_{2}'}^{u, \epsilon}(k_{1}') \hat{X}_{t, j_{0}}^{b, \epsilon}(k_{4}'): }]\nonumber \\
=& \mathbb{E} [ \hat{X}_{s, i_{2}}^{u, \epsilon}(k_{1}) \hat{X}_{\overline{s}, i_{2}'}^{u, \epsilon}(k_{1}') ] \mathbb{E} [ \hat{X}_{t, j_{0}}^{b, \epsilon}(k_{4}) \hat{X}_{t, j_{0}}^{b, \epsilon}(k_{4}')] + \mathbb{E} [ \hat{X}_{s, i_{2}}^{u, \epsilon}(k_{1}) \hat{X}_{t, j_{0}}^{b, \epsilon}(k_{4}') ] \mathbb{E} [ \hat{X}_{t, j_{0}}^{b, \epsilon}(k_{4}) \hat{X}_{u,\overline{s}}^{\epsilon, i_{2}'}(k_{1}')]\nonumber \\
 -& \mathbb{E} [ \hat{X}_{s, i_{2}}^{u, \epsilon}(k_{1}) \hat{X}_{\overline{\sigma}, i_{2}'}^{u, \epsilon}(k_{1}') ] \mathbb{E} [ \hat{X}_{t, j_{0}}^{b, \epsilon}(k_{4}) \hat{X}_{t, j_{0}}^{b, \epsilon}(k_{4}')]  - \mathbb{E} [ \hat{X}_{s, i_{2}}^{u, \epsilon}(k_{1}) \hat{X}_{t, j_{0}}^{b, \epsilon}(k_{4}') ] \mathbb{E} [ \hat{X}_{t, j_{0}}^{b, \epsilon}(k_{4}) \hat{X}_{\overline{\sigma}, i_{2}'}^{u, \epsilon}(k_{1}')]\nonumber \\
 -& \mathbb{E} [ \hat{X}_{\sigma, i_{2}}^{u, \epsilon}(k_{1}) \hat{X}_{\overline{s}, i_{2}'}^{u, \epsilon}(k_{1}') ] \mathbb{E} [ \hat{X}_{t, j_{0}}^{b, \epsilon}(k_{4}) \hat{X}_{t, j_{0}}^{b, \epsilon}(k_{4}')] - \mathbb{E} [ \hat{X}_{\sigma, i_{2}}^{u, \epsilon}(k_{1}) \hat{X}_{t, j_{0}}^{b, \epsilon}(k_{4}') ] \mathbb{E} [ \hat{X}_{t, j_{0}}^{b, \epsilon}(k_{4}) \hat{X}_{u,\overline{s}}^{\epsilon, i_{2}'}(k_{1}')]\nonumber \\
+& \mathbb{E} [ \hat{X}_{\sigma, i_{2}}^{u, \epsilon}(k_{1}) \hat{X}_{\overline{\sigma}, i_{2}'}^{u, \epsilon}(k_{1}') ] \mathbb{E} [ \hat{X}_{t, j_{0}}^{b, \epsilon}(k_{4}) \hat{X}_{t, j_{0}}^{b, \epsilon}(k_{4}')]   + \mathbb{E} [ \hat{X}_{\sigma, i_{2}}^{u, \epsilon}(k_{1}) \hat{X}_{t, j_{0}}^{b, \epsilon}(k_{4}') ] \mathbb{E} [ \hat{X}_{t, j_{0}}^{b, \epsilon}(k_{4}) \hat{X}_{\overline{\sigma}, i_{2}'}^{u, \epsilon}(k_{1}')] \nonumber\\
& \hspace{10mm}  \triangleq \sum_{i=1}^{8} \RomanXII^{i}
\end{align} 
by $\mathbb{E} [ : \xi_{11} \xi_{12}: : \xi_{21} \xi_{22}:] = \mathbb{E} [ \xi_{11} \xi_{21}] \mathbb{E} [ \xi_{12} \xi_{22}] + \mathbb{E} [ \xi_{11} \xi_{22}]\mathbb{E}[\xi_{12} \xi_{21}]$ (see \cite{J97}). By interpolation, relying on \cite[Section 9.2]{GP17}, and using (\ref{187}), (\ref{228}), (\ref{230}), and \eqref{231} we obtain 
\begin{align}\label{234}
& \mathbb{E} [ \lvert \Delta_{q} ( \RomanIX_{t}^{8,5} - \widetilde{\RomanIX}_{t}^{8,5} ) \rvert^{2}] \nonumber \\
\lesssim& \sum_{k} \theta(2^{-q} k)^{2} \sum_{\lvert i-j \rvert \leq 1, \lvert i'-j'\rvert \leq 1} \sum_{k_{1}, k_{4} \neq 0: k_{14} = k, k_{2}, k_{3} \neq 0}  \theta(2^{-i} k_{1}) \theta(2^{-i'} k_{1}) \theta(2^{-j} k_{4}) \theta(2^{-j'} k_{4})\nonumber \\
& \times \int_{[0,t]^{2}}\int_{[0,s]\times [0, \overline{s}]} e^{- \lvert k_{1} \rvert^{2} [t- s + t - \overline{s}]} \frac{ \lvert k_{1} \rvert^{2\eta}}{\lvert k_{1} \rvert^{2} \lvert k_{4} \rvert^{2}} \lvert s- \sigma \rvert^{\frac{\eta}{2}} \lvert \overline{s} - \overline{\sigma} \rvert^{\frac{\eta}{2}} \nonumber \\
& \times d_{k_{12}, k_{2}}( s-\sigma) d_{k_{13}, k_{3}} (\overline{s} - \overline{\sigma}) \lvert k_{1} \rvert^{2} d \sigma d \overline{\sigma} ds d \overline{s}e_{k} 
\end{align} 
We can estimate for $k_{1}, k_{4}\neq 0$, 
\begin{align}\label{235}
& \sum_{k_{2}, k_{3} \neq 0} \int_{[0,t]^{2}} \int_{[0,s]\times [0,\overline{s}]} e^{- \lvert k_{1} \rvert^{2}(t- s + t - \overline{s})} \frac{ \lvert k_{1} \rvert^{2\eta + 2}}{\lvert k_{1} \rvert^{2} \lvert k_{4} \rvert^{2}} \nonumber \\
& \times \lvert s-\sigma \rvert^{\frac{\eta}{2}} \lvert \overline{s} - \overline{\sigma} \rvert^{\frac{\eta}{2}} d_{k_{12}, k_{2}}(s-\sigma) d_{k_{13}, k_{3}}(\overline{s} - \overline{\sigma}) d \sigma d \overline{\sigma} ds d \overline{s}\nonumber \\
\lesssim& \frac{1}{ \lvert k_{1} \rvert^{-2\eta} \lvert k_{4} \rvert^{2}} \sum_{k_{2}, k_{3} \neq 0} \frac{ \lvert k_{12} \rvert \lvert k_{13} \rvert}{ \lvert k_{2} \rvert^{2+ \eta} \lvert k_{3} \rvert^{2+ \eta}} \int_{[0,t]^{2}} \int_{[0,s]\times [0,\overline{s}]} e^{- \lvert k_{1} \rvert^{2} t} e^{ \frac{1}{2}[ \lvert k_{1} \rvert^{2} - \lvert k_{12} \rvert^{2} - \lvert k_{2} \rvert^{2} ]s}\nonumber \\
& \times e^{ \frac{1}{2}[ \lvert k_{1} \rvert^{2} - \lvert k_{13} \rvert^{2} - \lvert k_{3} \rvert^{2}] \overline{s}} e^{ \frac{1}{2}[ \lvert k_{12} \rvert^{2}+  \lvert k_{2} \rvert^{2}] \sigma} e^{ \frac{1}{2} [ \lvert k_{13} \rvert^{2} + \lvert k_{3} \rvert^{2} ] \overline{\sigma} } d \sigma d \overline{\sigma} ds d \overline{s}\nonumber \\
\lesssim& \frac{1}{ \lvert k_{1} \rvert^{-2\eta} \lvert k_{4} \rvert^{2}} \sum_{k_{2}, k_{3}\neq 0} \frac{1}{\lvert k_{2} \rvert^{3+ \eta} \lvert k_{3} \rvert^{3+ \eta}} \frac{(1 - e^{- \lvert k_{1} \rvert^{2} t})^{2}}{ \lvert k_{1} \rvert^{4}} \lesssim \frac{t^{2(\frac{\eta}{3} + \frac{\epsilon}{6})}}{\lvert k_{1} \rvert^{4 - 4(\frac{\eta}{3} + \frac{\epsilon}{6}) - 2 \eta} \lvert k_{4} \rvert^{2}} 
\end{align} 
by (\ref{230}) and (\ref{14}). Therefore, applying (\ref{235}) to (\ref{234}) gives 
\begin{align}\label{237}
\mathbb{E} [ \lvert \Delta_{q} ( \RomanIX_{t}^{8,5} - \widetilde{\RomanIX}_{t}^{8,5} ) \rvert^{2} ] 
\lesssim& t^{2(\frac{\eta}{3} + \frac{\epsilon}{6})} \sum_{k} \theta(2^{-q} k)^{2} \sum_{k_{1}, k_{4} \neq 0: k_{14} =k}  \sum_{q \lesssim i} \frac{ 2^{-i}}{\lvert k_{1} \rvert^{3- \frac{10\eta}{3} - \frac{2\epsilon}{3}} \lvert k_{4} \rvert^{2}}  \nonumber \\
\lesssim& t^{2( \frac{\eta}{3} + \frac{\epsilon}{6})} 2^{q(\frac{10\eta}{3} + \frac{2\epsilon}{3})} \sum_{k\neq 0} \frac{1}{\lvert k \rvert^{3}} \lesssim t^{2(\frac{\eta}{3} + \frac{\epsilon}{6})} 2^{q(\frac{10\eta}{3} + \frac{2\epsilon}{3})} 
\end{align} 
where we used that $2^{q} \lesssim 2^{i}$ so that $q \lesssim i$ and Lemma \ref{Lemma 2.9}. Next, within (\ref{227}) we estimate  
\begin{align}\label{238}
\mathbb{E} [ \lvert \Delta_{q} ( \widetilde{\RomanIX}_{t}^{8,5} - \overline{\RomanIX}_{t}^{8,5} ) \rvert^{2} ] 
\approx& \mathbb{E} [ \lvert \sum_{k} \theta(2^{-q} k) \sum_{\lvert i-j \rvert \leq 1} \sum_{k_{1}, k_{4}: k_{14} = k, k_{2} \neq 0} \sum_{i_{1}, i_{2}, i_{3}, i_{4}, j_{1} =1}^{3} \theta(2^{-i} k_{1}) \theta(2^{-j} k_{4}) \nonumber \\
& \times \int_{0}^{t}: \hat{X}_{s, i_{2}}^{u, \epsilon}(k_{1}) \hat{X}_{t, j_{0}}^{b, \epsilon} (k_{4}): e^{- \lvert k_{1} \rvert^{2}(t-s)} k_{1}^{j_{1}} \hat{\mathcal{P}}_{i_{0} i_{1}}(k_{1})\nonumber \\
& \times \int_{0}^{s} [ e^{- \lvert k_{12} \rvert^{2} (s-\sigma)} k_{12}^{i_{3}} \hat{\mathcal{P}}_{i_{1} i_{2}}(k_{12}) - e^{- \lvert k_{2} \rvert^{2} (s-\sigma)} k_{2}^{i_{3}} \hat{\mathcal{P}}_{i_{1} i_{2}}(k_{2})] \nonumber \\
& \times \frac{ e^{- \lvert k_{2} \rvert^{2} (s-\sigma)} f(\epsilon k_{2})^{2}}{ \lvert k_{2} \rvert^{2}} \hat{\mathcal{P}}_{i_{3} i_{4}}(k_{2}) \hat{\mathcal{P}}_{j_{1} i_{4}}(k_{2}) e_{k} d \sigma ds \rvert^{2}]
\end{align} 
due to (\ref{228})-(\ref{229}). For $k_{1}, k_{4} \neq 0$ we can compute $\mathbb{E} [ : \hat{X}_{s, i_{2}}^{u, \epsilon}(k_{1}) \hat{X}_{t, j_{0}}^{b, \epsilon}(k_{4}): : \hat{X}_{\overline{s}, i_{2}'}^{u, \epsilon} (k_{1}') \hat{X}_{t, j_{0}}^{b, \epsilon}(k_{4}'):]$ by $\mathbb{E} [ : \xi_{11} \xi_{12}: : \xi_{21} \xi_{22}:] = \mathbb{E} [ \xi_{11} \xi_{21}] \mathbb{E} [ \xi_{12} \xi_{22}] + \mathbb{E} [ \xi_{11} \xi_{22}]\mathbb{E}[\xi_{12} \xi_{21}]$ (see \cite{J97}) and (\ref{116}) to deduce
\begin{align}\label{240}
& \mathbb{E} [ \lvert \Delta_{q}( \widetilde{\RomanIX}_{t}^{8,5} - \overline{\RomanIX}_{t}^{8,5} ) \rvert^{2} ] \nonumber \\
\lesssim& \sum_{k, k' } \theta(2^{-q} k) \theta(2^{-q} k') \sum_{\lvert i-j \rvert \leq 1, \lvert i' -j' \rvert \leq 1} \sum_{k_{1}, k_{4} \neq 0: k_{14} =k, k_{2} \neq 0, k_{1}', k_{4}': k_{14}' = k', k_{2}' \neq 0} \nonumber  \\
& \times \sum_{i_{1}, i_{2}, i_{3}, i_{1}', i_{2}', i_{3}' = 1}^{3} \theta(2^{-i} k_{1}) \theta (2^{-i'} k_{1}') \theta(2^{-j} k_{4}) \theta (2^{-j'} k_{4}') \int_{[0,t]^{2}}\nonumber \\
& \times  1_{k_{1} + k_{1}' = 0, k_{4} + k_{4} ' = 0} \frac{ e^{- \lvert k_{1} \rvert^{2} \lvert s- \overline{s} \rvert}}{ \lvert k_{1} \rvert^{2} \lvert k_{4}\rvert^{2}} e^{- \lvert k_{1} \rvert^{2} ( t-s)} e^{- \lvert k_{1}' \rvert^{2}(t- \overline{s} )} \lvert k_{1} \rvert \lvert k_{1}' \rvert \nonumber \\
& \times \left( \int_{0}^{s} e^{- \lvert k_{12} \rvert^{2} ( s- \sigma)} k_{12}^{i_{3}} \hat{\mathcal{P}}_{i_{1} i_{2}}(k_{12}) - e^{- \lvert k_{2} \rvert^{2} ( s- \sigma)} k_{2}^{i_{3}} \hat{\mathcal{P}}_{i_{2} i_{3}}(k_{2}) \right) \nonumber \\
& \times \left( \int_{0}^{\overline{s}} e^{- \lvert k_{12} ' \rvert^{2} ( \overline{s} - \overline{\sigma})} ( k_{12}')^{i_{3}} \hat{\mathcal{P}}_{i_{1}' i_{2}'} (k_{12}') - e^{- \lvert k_{2}' \rvert^{2} ( \overline{s} - \overline{\sigma})} (k_{2}')^{i_{3}} \hat{\mathcal{P}}_{i_{2}' i_{3}'} (k_{2}') \right)\nonumber \\
& \times \frac{ e^{- \lvert k_{2} \rvert^{2} ( s-\sigma)} e^{- \lvert k_{2}' \rvert^{2} (\overline{s} - \overline{\sigma})}}{\lvert k_{2} \rvert^{2} \lvert k_{2}' \rvert^{2}} e_{k} e_{k}' d \sigma d \overline{\sigma} ds d \overline{s}  \nonumber\\
\lesssim& \sum_{k} \theta(2^{-q} k)^{2} \sum_{\lvert i-j \rvert \leq 1, \lvert i' - j' \rvert \leq 1} \sum_{k_{1}, k_{4} \neq 0: k_{14} = k, k_{2}, k_{3} \neq 0}  \theta(2^{-i} k_{1}) \theta(2^{-i'} k_{1}) \nonumber\\
& \times \theta(2^{-j} k_{4}) \theta(2^{-j'} k_{4}) \int_{[0,t]^{2}} \frac{ e^{- \lvert k_{1} \rvert^{2}( \lvert s- \overline{s} \rvert + 2t - s - \overline{s})}}{\lvert k_{1} \rvert^{2}  \lvert k_{4} \rvert^{2}} \lvert k_{1} \rvert^{2}\nonumber \\
& \times \int_{[0,s]\times [0,\overline{s}]} \frac{ \lvert k_{1} \rvert^{2\eta} \lvert s- \sigma \rvert^{- \frac{(1-\eta)}{2}} \lvert \overline{s} - \overline{\sigma} \rvert^{- \frac{(1-\eta)}{2}}}{ \lvert k_{2} \rvert^{2} \lvert k_{3} \rvert^{2}}e^{- \lvert k_{2} \rvert^{2} (s-\sigma) - \lvert k_{3} \rvert^{2} ( \overline{s} - \overline{\sigma} )} d \sigma d \overline{\sigma} ds d \overline{s} 
\end{align} 
for any $\eta \in (0,1)$ due to a change of variable of $k_{2}'$ with $k_{3}$ and Lemma \ref{Lemma 2.8}. By applying H$\ddot{\mathrm{o}}$lder's inequality we can bound furthermore as 
\begin{align}\label{243}
& \mathbb{E} [ \lvert \Delta_{q}(\widetilde{\RomanIX}_{t}^{8,5} - \overline{\RomanIX}_{t}^{8,5} ) \rvert^{2}]\nonumber \\
\lesssim& \sum_{k} \theta(2^{-q} k)^{2} \sum_{\lvert i-j \rvert \leq 1, \lvert i' - j'\rvert \leq 1} \sum_{k_{1}, k_{4} \neq 0: k_{14} = k, k_{2}, k_{3} \neq 0} \theta(2^{-i} k_{1}) \theta(2^{-i'} k_{1})\nonumber \\
&\times  \theta(2^{-j} k_{4}) \theta(2^{-j'} k_{4})  \int_{[0,t]^{2}} e^{- \lvert k_{1} \rvert^{2}( \lvert s - \overline{s} \rvert + 2t - s - \overline{s})} \frac{ \lvert k_{1} \rvert^{2\eta}}{ \lvert k_{2} \rvert^{2} \lvert k_{3} \rvert^{2} \lvert k_{4} \rvert^{2}} \nonumber \\
& \times \left( \int_{0}^{s} \lvert s- \sigma \rvert^{- (1-\eta)} d \sigma \right)^{\frac{1}{2}} \left( \int_{0}^{s} e^{-2 \lvert k_{2} \rvert^{2} (s-\sigma)} d \sigma \right)^{\frac{1}{2}} \nonumber \\
& \times \left( \int_{0}^{\overline{s}} \lvert \overline{s} - \overline{\sigma} \rvert^{- (1-\eta)} d \overline{\sigma} \right)^{\frac{1}{2}} \left( \int_{0}^{\overline{s}} e^{-2 \lvert k_{3} \rvert^{2} ( \overline{s} - \overline{\sigma})} d \overline{\sigma} \right)^{\frac{1}{2}} ds d \overline{s}\nonumber \\
\lesssim& \sum_{k} \theta(2^{-q} k)^{2} \sum_{\lvert i-j \rvert \leq 1, \lvert i'- j'\rvert \leq 1} \sum_{k_{1}, k_{4} \neq 0: k_{14} = k, k_{2}, k_{3} \neq 0} \theta(2^{-i} k_{1}) \theta(2^{-i'} k_{1}) \nonumber \\
& \times \theta(2^{-j} k_{4}) \theta(2^{-j'} k_{4})  e^{-2t \lvert k_{1} \rvert^{2}} \int_{[0,t]^{2}} e^{ \lvert k_{1} \rvert^{2}( s + \overline{s})} \frac{ \lvert k_{1} \rvert^{2\eta}}{\lvert k_{2} \rvert^{2} \lvert k_{3} \rvert^{2} \lvert k_{4} \rvert^{2}} \nonumber \\
& \times \frac{ \lvert s \rvert^{\frac{\eta}{2}} (1- e^{-2 \lvert k_{2} \rvert^{2} s})^{\frac{1}{2}}}{\lvert k_{2} \rvert} \frac{ \lvert \overline{s} \rvert^{\frac{\eta}{2}} (1- e^{-2 \lvert k_{3} \rvert^{2} \overline{s} })^{\frac{1}{2}}}{\lvert k_{3} \rvert} ds d \overline{s}.
\end{align} 
Finally, we continue to bound this by 
\begin{align*} 
& \mathbb{E} [ \lvert \Delta_{q} ( \widetilde{\RomanIX}_{t}^{8,5} - \overline{\RomanIX}_{t}^{8,5} ) \rvert^{2} ]  \\
\lesssim& t^{2( \frac{\eta}{3} + \frac{\epsilon}{6})} \sum_{k \neq 0} \theta(2^{-q} k)^{2} \sum_{q \lesssim i} 2^{-i} \sum_{k_{1}, k_{4} \neq 0: k_{14} = k}  \frac{1}{\lvert k_{1} \rvert^{3- \frac{10\eta}{3} - \frac{2\epsilon}{3}}\lvert k_{4} \rvert^{2}}  \lesssim t^{2( \frac{\eta}{3} + \frac{\epsilon}{6})} 2^{q(\frac{10\eta}{3} + \frac{2\epsilon}{3})} 
\end{align*}
due to the mean value theorem, that $2^{q} \lesssim 2^{i}$ and Lemma \ref{Lemma 2.9}. Combining this with (\ref{237}) in (\ref{227}) gives 
\begin{equation}\label{244}
\mathbb{E} [ \lvert \Delta_{q} \RomanIX_{t}^{8,5} \rvert^{2}] \lesssim t^{2( \frac{\eta}{3} + \frac{\epsilon}{6})} 2^{q(\frac{10\eta}{3} + \frac{2\epsilon}{3})}. 
\end{equation}
Similar estimates for $\RomanIX_{t}^{8,6}$ may be deduced as well. 

\subsubsection{Terms in the fourth chaos} 

We finally work on $\RomanIX_{t}^{8,1}$ of (\ref{194}), specifically the first term of (\ref{184}) where 
\begin{align}\label{245}
\RomanIX_{t}^{8,1} 
=& -\frac{1}{4(2\pi)^{\frac{9}{2}}} \sum_{k} \sum_{\lvert i-j \rvert \leq 1} \sum_{k_{1}, k_{2}, k_{3}, k_{4}: k_{1234} = k} \sum_{i_{1}, i_{2}, i_{3}, j_{1} = 1}^{3}\theta(2^{-i} k_{123}) \theta(2^{-j} k_{4}) \nonumber \\
& \times \int_{0}^{t} e^{- \lvert k_{123} \rvert^{2 }(t-s)} \int_{0}^{s} : \hat{X}_{\sigma, i_{2}}^{u, \epsilon}(k_{1}) \hat{X}_{\sigma, i_{3}}^{b, \epsilon}(k_{2}) \hat{X}_{s, j_{1}}^{b, \epsilon}(k_{3}) \hat{X}_{t, j_{0}}^{b, \epsilon}(k_{4}): \nonumber \\
& \times e^{- \lvert k_{12} \rvert^{2}(s-\sigma)} d\sigma ds k_{12}^{i_{3}} k_{123}^{j_{1}} \hat{\mathcal{P}}_{i_{1} i_{2}}(k_{12}) \hat{\mathcal{P}}_{i_{0} i_{1}}(k_{123}) e_{k}. 
\end{align} 
We can apply Lemma \ref{Lemma 5.7} (2) with ``$Y_{1}$'' = $: \hat{X}_{\sigma, i_{2}}^{u, \epsilon}(k_{1}) \hat{X}_{\sigma, i_{3}}^{b, \epsilon}(k_{2}) \hat{X}_{s, j_{1}}^{b, \epsilon}(k_{3}) \hat{X}_{t, j_{0}}^{b, \epsilon}(k_{4}):$ and ``$Y_{2}$'' = $ : \hat{X}_{\overline{\sigma}, i_{2}'}^{u, \epsilon}(k_{1}') \hat{X}_{\overline{\sigma}, i_{3}'}^{b, \epsilon}(k_{2}') \hat{X}_{\overline{s}, j_{1}'}^{b, \epsilon}(k_{3}') \hat{X}_{t, j_{0}}^{b, \epsilon}(k_{4}'):$ to explicitly compute $\mathbb{E} [Y_{1}Y_{2}]=\sum_{\gamma} v(\gamma)$ which consists of 24 terms (see \cite[Example 2.2]{Y20}), with 
\begin{align*}
 \mathbb{E} [ \hat{X}_{\sigma, i_{2}}^{u, \epsilon}(k_{1}) \hat{X}_{\overline{\sigma}, i_{2}'}^{u, \epsilon}(k_{1}')] \mathbb{E} [ \hat{X}_{\sigma, i_{3}}^{b, \epsilon}(k_{2}) \hat{X}_{\overline{\sigma}, i_{3}'}^{b, \epsilon}(k_{2}')] \mathbb{E} [ \hat{X}_{s, j_{1}}^{b, \epsilon}(k_{3}) \hat{X}_{\overline{s}, j_{1}'}^{b, \epsilon}(k_{3}')] \mathbb{E} [ \hat{X}_{t, j_{0}}^{b, \epsilon}(k_{4}) \hat{X}_{t, j_{0}}^{b, \epsilon}(k_{4}')]
\end{align*}
being one representative which can be bounded by a constant multiples of 
\begin{align}
\prod_{j=1}^{4} \frac{1}{\lvert k_{j} \rvert^{2}}  1_{k_{1} + k_{1}' = 0, k_{2} + k_{2}' = 0, k_{3} + k_{3}' = 0, k_{4} + k_{4}' = 0} \label{248}
\end{align}
when $k_{1}, k_{2}, k_{3}, k_{4} \neq 0$ and hence  
\begin{align}\label{249}
\mathbb{E} [ \lvert \Delta_{q} \RomanIX_{t}^{8,1} \rvert^{2}]  \lesssim& \sum_{k} \theta(2^{-q} k)^{2} \sum_{\lvert i-j \rvert \leq 1, \lvert i'-j'\rvert \leq 1} \sum_{k_{1}, k_{2}, k_{3}, k_{4}, k_{1}', k_{2}', k_{3}', k_{4}' \neq 0: k_{1234} = k_{1234}' = k}\nonumber \\
& \times  \theta(2^{-i} k_{123}) \theta(2^{-i'} k_{123}') \theta(2^{-j} k_{4}) \theta(2^{-j'} k_{4}')  \int_{[0,t]^{2}} e^{- \lvert k_{123} \rvert^{2}(t-s)} e^{- \lvert k_{123} ' \rvert^{2}(t- \overline{s})} \nonumber  \\
& \times \int_{[0,s]\times [0,\overline{s}]}  \frac{1}{\lvert k_{1} \rvert^{2} \lvert k_{2} \rvert^{2} \lvert k_{3} \rvert^{2} \lvert k_{4} \rvert^{2}} e^{- \lvert k_{12} \rvert^{2}(s-\sigma) - \lvert k_{12}' \rvert^{2}(\overline{s} - \overline{\sigma})} d \sigma d \overline{\sigma} \nonumber \\
& \times \lvert k_{12} \rvert \lvert k_{12}'\rvert \lvert k_{123} \rvert \lvert k_{123}' \rvert ds d \overline{s} 1_{k_{1} + k_{1}' = 0, k_{2} + k_{2}' = 0, k_{3} + k_{3}' = 0, k_{4} + k_{4} ' = 0} 
\end{align} 
due to \cite[Section 9.2]{GP17}. Within the right hand side of \eqref{249} we estimate 
\begin{align*} 
& \int_{[0,t]^{2}} e^{- \lvert k_{123} \rvert^{2}(t- s + t - \overline{s})} \int_{[0,s]\times [0,\overline{s}]} e^{- \lvert k_{12} \rvert^{2} ( s- \sigma + \overline{s} - \overline{\sigma})} d \sigma d \overline{\sigma} ds d \overline{s}\\
\lesssim& \frac{ e^{-2t \lvert k_{123} \rvert^{2}}}{\lvert k_{12} \rvert^{4}} [ \frac{ e^{\lvert k_{123} \rvert^{2} t} - 1}{\lvert k_{123} \rvert^{2}}]^{2}1_{k_{12}, k_{123} \neq 0} \lesssim \frac{1}{\lvert k_{12} \rvert^{4}} \frac{ t^{\eta}}{\lvert k_{123} \rvert^{4- 2\eta}}1_{k_{12}, k_{123} \neq 0}
\end{align*}
by mean value theorem so that 
\begin{align}\label{251} 
\mathbb{E} [ \lvert \Delta_{q} \RomanIX_{t}^{8,1} \rvert^{2}] \lesssim& t^{\eta} \sum_{k} \sum_{\lvert i-j \rvert \leq 1, \lvert i' - j'\rvert \leq 1} \sum_{k_{1}, k_{2}, k_{3}, k_{4} \neq 0: k_{1234} = k, k_{12} \neq 0, k_{123} \neq 0} \theta(2^{-q} k)^{2}  \\
& \times   \theta(2^{-i} k_{123})  \theta(2^{-j} k_{4}) \theta(2^{-i'} k_{123}) \theta(2^{-j'} k_{4})  \frac{1}{ \lvert k_{12} \rvert^{2} \lvert k_{1} \rvert^{2} \lvert k_{2} \rvert^{2} \lvert k_{3} \rvert^{2} \lvert k_{4} \rvert^{2} \lvert k_{123} \rvert^{2-2\eta}} \nonumber \\
\lesssim& t^{\eta} \sum_{k} \theta(2^{-q} k)^{2} \sum_{q \lesssim i'} 2^{-i' (3- 2 \eta - \epsilon)} 
\lesssim t^{\eta} 2^{q(2\eta + \epsilon)} \sum_{k\neq 0} \theta(2^{-q} k)^{2} \frac{1}{\lvert k \rvert^{3}} 
\lesssim t^{\eta} 2^{q(2\eta + \epsilon)} \nonumber
\end{align} 
by Lemma \ref{Lemma 2.9}, and that $2^{q}  \lesssim 2^{i'}$. By applying (\ref{201}), (\ref{226}), (\ref{244}) and (\ref{251}) to (\ref{194}) we have shown 
so that 
\begin{equation*}
\mathbb{E} [ \lvert \Delta_{q} \pi_{0,\diamond} (u^{
\scalebox{0.16}{

}} + b^{F})$ as the solution to (\ref{15}) on $[0, T_{0}]$ where $T_{0}$ is independent of $\epsilon$ and 
\begin{equation}\label{264}
\sup_{t \in [0, T_{0}]} \lVert (u^{\epsilon_{k}}, b^{\epsilon_{k}}) - (u,b) \rVert_{ \mathcal{C}^{-z}} \to 0 
\end{equation} 
as $k\to\infty$ $\mathbb{P}$-a.s. due to Proposition \ref{Proposition 3.1}. 

By identical proof to the case of the NSE on \cite[p. 4497--4498]{ZZ15} (because it does not rely on the precise structure of the equations), it follows that there exist the explosion time $\tau > 0$ and the maximal solution $y$ on $[0,\tau)$ such that 
\begin{equation}
\sup_{t \in [0,\tau)} \lVert y(t) \rVert_{\mathcal{C}^{-z}} = + \infty, 
\end{equation} 
and that if we define  
\begin{equation}\label{260} 
\tau_{L} \triangleq \inf\{t: \lVert y(t) \rVert_{\mathcal{C}^{-z}} \geq L \} \wedge L, \hspace{1mm} \tau_{L}^{\epsilon} \triangleq \inf \{t: \lVert y^{\epsilon}(t) \rVert_{\mathcal{C}^{-z}}  \geq L \} \wedge L, \hspace{1mm} \rho_{L}^{\epsilon} \triangleq \inf\{t: C_{\xi}^{\epsilon} \geq  L \}
\end{equation}  
for $C_{\xi}^{\epsilon}$ in \eqref{37} and $L \geq 0$, then $\tau_{L}$ increase to $L$ as $L \nearrow + \infty$, and for all $L, L_{1}, L_{2} > 0$, 
\begin{equation}\label{265}
\sup_{t\in [0, \rho_{L_{1}}^{\epsilon} \wedge \tau_{L} \wedge \tau_{L_{2}}^{\epsilon} ]} \lVert y^{\epsilon} - y \rVert_{\mathcal{C}^{-z}} \to 0 
\end{equation} 
as $\epsilon \to 0$ $\mathbb{P}$-a.s. Finally, we can compute 
\begin{align}\label{est 266} 
& \mathbb{P} ( \{ \sup_{t \in [0, \tau_{L}]} \lVert y^{\epsilon}- y \rVert_{\mathcal{C}^{-z}} > \epsilon \}) \\
\leq& \mathbb{P} ( \{ \sup_{t \in [0, \tau_{L} \wedge \rho_{L_{1}}^{\epsilon} \wedge \tau_{L_{2}}^{\epsilon}]} \lVert y^{\epsilon}- y \rVert_{\mathcal{C}^{-z}} > \epsilon \}) + \mathbb{P} ( \{ \rho_{L_{1}}^{\epsilon} < \tau_{L} \}) + \mathbb{P} ( \{ \tau_{L_{2}}^{\epsilon} < \tau_{L} \wedge \rho_{L_{1}}^{\epsilon} \}) \nonumber  
\end{align} 
where the right hand side can be shown to vanish as $\epsilon \searrow 0$ due to \eqref{265}. This completes the proof of \eqref{267} and Theorem \ref{Theorem 1.2}.

\section{Appendix}
\subsection{Preliminaries}
The following inequality is standard and was used many times: 
\begin{equation}\label{14}
\sup_{a \in \mathbb{R}} \lvert a \rvert^{r} e^{-a^{2}} \leq c \hspace{3mm} \text{ for all } r \geq 0. 
\end{equation} 
We also list useful lemmas which were used throughout, mostly from \cite{GIP15, ZZ15} (see also \cite[Appendix A]{FG19}). 
\begin{lemma}\label{Lemma 2.4}
\rm{(\cite[Lemma 2.4]{GIP15}, \cite[Lemma 3.3]{ZZ15})} Suppose $\alpha \in (0, 1), \beta, \gamma \in \mathbb{R}$ satisfy $\alpha + \beta + \gamma > 0$ and $\beta + \gamma < 0$. Then for smooth $f, g, h$, the tri-linear operator  $C(f,g,h) \triangleq \pi_{0} ( \pi_{<} (f,g), h) - f\pi_{0} (g,h)$ satisfies 
\begin{equation*}
\lVert C(f,g,h) \rVert_{\mathcal{C}^{\alpha + \beta + \gamma}} \lesssim \lVert f \rVert_{\mathcal{C}^{\alpha}} \lVert g \rVert_{\mathcal{C}^{\beta}} \lVert h \rVert_{\mathcal{C}^{\gamma}}, 
\end{equation*} 
and thus $C$ can be uniquely extended to a bounded tri-linear operator in $L^{3} ( \mathcal{C}^{\alpha}(\mathbb{T}^{3}) \times \mathcal{C}^{\beta}(\mathbb{T}^{3}) \times \mathcal{C}^{\gamma}(\mathbb{T}^{3}), \mathcal{C}^{\alpha + \beta + \gamma}(\mathbb{T}^{3}))$. 
\end{lemma}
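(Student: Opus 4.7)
\textbf{Proof proposal for Lemma \ref{Lemma 2.4}.}

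The plan is to exploit the frequency localization built into the paraproduct $\pi_{<}$ to show that $\pi_{0}(\pi_{<}(f,g),h)$ is, up to controllable errors, just the sum $\sum_{|i-j|\le 1} S_{i-1}f\,\Delta_{i}g\,\Delta_{j}h$, which differs from $f\,\pi_{0}(g,h) = \sum_{|i-j|\le 1} f\,\Delta_{i}g\,\Delta_{j}h$ by the factor $(S_{i-1}f - f)$. Since $\alpha>0$, this factor gains a power $2^{-i\alpha}$ in $L^{\infty}$, which together with $\|\Delta_{i}g\|_{L^{\infty}}\lesssim 2^{-i\beta}\|g\|_{\mathcal{C}^{\beta}}$ and $\|\Delta_{j}h\|_{L^{\infty}}\lesssim 2^{-j\gamma}\|h\|_{\mathcal{C}^{\gamma}}$ gives the decay $2^{-i(\alpha+\beta+\gamma)}$ at each dyadic scale. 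The hypothesis $\alpha+\beta+\gamma>0$ then lets us sum, producing the claimed H\"older-Besov norm bound. The condition $\beta+\gamma<0$ does not enter the estimate itself; it only explains why the lemma is non-trivial, since Bony's resonant product $\pi_{0}(g,h)$ is not a priori well-defined and the commutator provides a renormalized meaning to $f\,\pi_{0}(g,h)$.

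Concretely, I would proceed as follows. First, insert Littlewood--Paley decompositions to write
\begin{equation*}
\pi_{0}(\pi_{<}(f,g),h) = \sum_{|i-j|\le 1}\sum_{k\ge -1} \Delta_{i}\bigl(S_{k-1}f\,\Delta_{k}g\bigr)\Delta_{j}h.
\end{equation*}
By the standard spectral support of a low-high paraproduct, the block $\Delta_{i}(S_{k-1}f\,\Delta_{k}g)$ vanishes unless $|k-i|\le N_{0}$ for a universal $N_{0}$. Pull $S_{k-1}f$ outside $\Delta_{i}$ at the cost of the smoothing commutator $[\Delta_{i},S_{k-1}f]\Delta_{k}g$, whose kernel estimate (the mean-value bound on the convolution kernel of $\Delta_{i}$) gives $\|[\Delta_{i},S_{k-1}f]\Delta_{k}g\|_{L^{\infty}}\lesssim 2^{-i}\|\nabla S_{k-1}f\|_{L^{\infty}}\|\Delta_{k}g\|_{L^{\infty}}\lesssim 2^{(k-i)}2^{-k(\alpha+\beta)}\|f\|_{\mathcal{C}^{\alpha}}\|g\|_{\mathcal{C}^{\beta}}$, which is harmless because $|k-i|\le N_{0}$. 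After this reorganization one is left with the principal piece $\sum_{|i-j|\le 1} S_{i-1}f\,\Delta_{i}g\,\Delta_{j}h$ and a finite sum of commutator-type remainders.

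Second, subtract $f\,\pi_{0}(g,h)$. The principal piece combines with $f\,\pi_{0}(g,h)$ to yield $\sum_{|i-j|\le 1}(S_{i-1}f-f)\Delta_{i}g\,\Delta_{j}h$, and I bound $\|S_{i-1}f-f\|_{L^{\infty}}\le \sum_{k\ge i-1}\|\Delta_{k}f\|_{L^{\infty}}\lesssim 2^{-i\alpha}\|f\|_{\mathcal{C}^{\alpha}}$ using $\alpha>0$. Each summand is thus $L^{\infty}$-bounded by $2^{-i(\alpha+\beta+\gamma)}\|f\|_{\mathcal{C}^{\alpha}}\|g\|_{\mathcal{C}^{\beta}}\|h\|_{\mathcal{C}^{\gamma}}$ and is spectrally supported in an annulus of scale $2^{i}$. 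To extract the $\mathcal{C}^{\alpha+\beta+\gamma}$ bound, apply $\Delta_{q}$: only $i$ with $|i-q|\le N_{1}$ contribute, giving $\|\Delta_{q}C(f,g,h)\|_{L^{\infty}}\lesssim 2^{-q(\alpha+\beta+\gamma)}\|f\|_{\mathcal{C}^{\alpha}}\|g\|_{\mathcal{C}^{\beta}}\|h\|_{\mathcal{C}^{\gamma}}$, which is exactly the desired Besov estimate.

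The main obstacle is the bookkeeping in the first step: verifying that the spectral-support argument and the commutator $[\Delta_{i},S_{k-1}f]$ really allow one to replace $\Delta_{i}(S_{k-1}f\,\Delta_{k}g)$ by $S_{i-1}f\,\Delta_{i}g$ up to errors whose Besov norm at scale $2^{q}$ still decays like $2^{-q(\alpha+\beta+\gamma)}$. One has to check, using $\alpha\in(0,1)$ so that one derivative on $f$ is admissible via Bernstein, that every error term organizes into an absolutely convergent dyadic series in $q$ with the correct exponent. Once this is done, a standard density argument using the continuous trilinear bound extends $C$ to all $(f,g,h)\in\mathcal{C}^{\alpha}\times\mathcal{C}^{\beta}\times\mathcal{C}^{\gamma}$, yielding the claimed unique bounded extension.
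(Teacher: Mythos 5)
The paper itself does not prove this lemma; it is imported verbatim from \cite[Lemma 2.4]{GIP15} (via \cite[Lemma 3.3]{ZZ15}) and listed in the appendix, so your sketch is measured against the standard paracontrolled proof, whose outline (reduce to $\sum_{|i-j|\le 1}(S_{i-1}f-f)\Delta_i g\,\Delta_j h$ plus commutators $[\Delta_i,S_{k-1}f]\Delta_k g$, with the Bernstein bound $\lVert \nabla S_{k-1}f\rVert_{L^\infty}\lesssim 2^{k(1-\alpha)}\lVert f\rVert_{\mathcal{C}^\alpha}$ using $\alpha<1$) you do follow. But there is a genuine gap at the decisive step. You claim that each summand $(S_{i-1}f-f)\Delta_i g\,\Delta_j h$ is ``spectrally supported in an annulus of scale $2^i$,'' so that only $i$ with $|i-q|\le N_1$ survive under $\Delta_q$. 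That is false: $f-S_{i-1}f=\sum_{m\ge i-1}\Delta_m f$ carries arbitrarily high frequencies, so the product is localized in no ball or annulus of size $2^i$, and for fixed $q$ every pair $(i,m)$ with $m\gtrsim q$ and $i\le m+1$ contributes. The correct treatment expands $f-S_{i-1}f$ into its blocks and estimates
\begin{equation*}
\Bigl\lVert \Delta_q \sum_{|i-j|\le 1}(S_{i-1}f-f)\Delta_i g\,\Delta_j h \Bigr\rVert_{L^{\infty}} \lesssim \sum_{m\gtrsim q} 2^{-m\alpha}\Bigl(\sum_{i\le m+1} 2^{-i(\beta+\gamma)}\Bigr)\lVert f\rVert_{\mathcal{C}^{\alpha}}\lVert g\rVert_{\mathcal{C}^{\beta}}\lVert h\rVert_{\mathcal{C}^{\gamma}},
\end{equation*}
where the inner sum is comparable to $2^{-m(\beta+\gamma)}$ \emph{precisely because} $\beta+\gamma<0$; only then does the outer sum give $2^{-q(\alpha+\beta+\gamma)}$ from $\alpha+\beta+\gamma>0$.

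This directly contradicts your assertion that ``$\beta+\gamma<0$ does not enter the estimate itself.'' In your own scheme it is exactly the hypothesis that makes the resummation produce the exponent $\alpha+\beta+\gamma$: if $\beta+\gamma$ were nonnegative, the inner sum above would be $O(m)$ or $O(1)$ and this term would only come out with regularity $\alpha$ (up to a logarithm), not $\alpha+\beta+\gamma$. A smaller correction of the same nature applies to the commutator remainders $[\Delta_i,S_{k-1}f]\Delta_k g\,\Delta_j h$: these are supported in balls, not annuli, of radius $\sim 2^i$, so all $i\gtrsim q-N_1$ contribute; your estimate still closes there because $\alpha+\beta+\gamma>0$ makes the tail $\sum_{i\gtrsim q}2^{-i(\alpha+\beta+\gamma)}$ summable, but the phrase ``only $i$ with $|i-q|\le N_1$ contribute'' should be replaced accordingly. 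With the frequency bookkeeping repaired as above and the role of $\beta+\gamma<0$ acknowledged, the rest of your argument (including the density/extension step) is sound.
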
 
\begin{lemma}\label{Lemma 2.5}
\rm{(\cite[Lemma 3.4]{ZZ15})} Let $\mathcal{P}$ be the Leray projection, $f \in \mathcal{C}^{\alpha}(\mathbb{T}^{3})$, $g \in \mathcal{C}^{\beta}(\mathbb{T}^{3})$ for $\alpha < 1$ and $\beta \in \mathbb{R}$. Then for every $k, l \in \{1,2,3\}$, 
\begin{equation*}
\lVert \mathcal{P}_{kl} \pi_{<} (f,g) - \pi_{<} (f, \mathcal{P}_{kl} g ) \rVert_{\mathcal{C}^{\alpha + \beta}} \lesssim \lVert f \rVert_{\mathcal{C}^{\alpha}} \lVert g \rVert_{\mathcal{C}^{\beta}}. 
\end{equation*} 
\end{lemma}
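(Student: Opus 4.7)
Since $\mathcal{P}^{kl}$ is a Fourier multiplier it commutes with each $\Delta_{j}$, so the commutator can be reorganized as
\begin{equation*}
\mathcal{P}^{kl}\pi_{<}(f,g)-\pi_{<}(f,\mathcal{P}^{kl}g)
=\sum_{j\ge -1}\bigl[\mathcal{P}^{kl},\,S_{j}f\bigr]\Delta_{j}g,
\end{equation*}
where $[A,h]u\triangleq A(hu)-hA(u)$. The first step is to verify that the $j$-th summand has Fourier support in an annulus of size $\sim 2^{j}$: indeed $S_{j}f\,\Delta_{j}g$ is spectrally supported in a ball of radius $\lesssim 2^{j}$ intersected with the annular constraint coming from $\Delta_{j}g$, and the same is true for $S_{j}f\,\mathcal{P}^{kl}\Delta_{j}g$ since $\mathcal{P}^{kl}$ preserves spectral support. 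Consequently, standard Littlewood--Paley characterisation of $\mathcal{C}^{\alpha+\beta}$ reduces the claim to the pointwise bound
\begin{equation*}
\bigl\lVert[\mathcal{P}^{kl},S_{j}f]\Delta_{j}g\bigr\rVert_{L^{\infty}}
\lesssim 2^{-j(\alpha+\beta)}\lVert f\rVert_{\mathcal{C}^{\alpha}}\lVert g\rVert_{\mathcal{C}^{\beta}},
\end{equation*}
summed against the $\ell^{\infty}$ structure of the Besov norm.

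To obtain this pointwise bound, I would localize the singular symbol of $\mathcal{P}^{kl}$ away from zero. Let $\phi$ be a smooth radial bump supported in an annulus and equal to one on the spectral support of $\Delta_{j}g$, and write $\phi_{j}=\phi(2^{-j}\cdot)$; then $\mathcal{P}^{kl}\Delta_{j}g=\phi_{j}(D)\mathcal{P}^{kl}\Delta_{j}g$, and the composed multiplier $\phi_{j}(D)\mathcal{P}^{kl}$ has a Schwartz kernel $K_{j}^{kl}$ obeying the Mikhlin-type bounds
\begin{equation*}
\int |K_{j}^{kl}(y)|\,dy\lesssim 1,\qquad \int |K_{j}^{kl}(y)|\,|y|\,dy\lesssim 2^{-j},
\end{equation*}
the second estimate being the key one (it reflects that one power of $|y|$ corresponds in Fourier to one derivative of a symbol that is homogeneous of degree zero, hence of order $-1$). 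In this representation the commutator becomes
\begin{equation*}
[\mathcal{P}^{kl},S_{j}f]\Delta_{j}g(x)
=\int K_{j}^{kl}(x-y)\bigl(S_{j}f(y)-S_{j}f(x)\bigr)\Delta_{j}g(y)\,dy.
\end{equation*}

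The main ingredient is now a regularity trade-off for $S_{j}f$. Using Bernstein's inequality together with (\ref{2}) one obtains, for any $\alpha<1$,
\begin{equation*}
\lVert\nabla S_{j}f\rVert_{L^{\infty}}\lesssim \sum_{i\le j-1}2^{i}\lVert\Delta_{i}f\rVert_{L^{\infty}}\lesssim \sum_{i\le j-1}2^{i(1-\alpha)}\lVert f\rVert_{\mathcal{C}^{\alpha}}\lesssim 2^{j(1-\alpha)}\lVert f\rVert_{\mathcal{C}^{\alpha}},
\end{equation*}
so that the mean value theorem yields $|S_{j}f(y)-S_{j}f(x)|\lesssim 2^{j(1-\alpha)}|x-y|\,\lVert f\rVert_{\mathcal{C}^{\alpha}}$. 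Combining this with $\lVert\Delta_{j}g\rVert_{L^{\infty}}\lesssim 2^{-j\beta}\lVert g\rVert_{\mathcal{C}^{\beta}}$ and the weighted kernel bound above gives the claimed $2^{-j(\alpha+\beta)}$ bound.

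The only delicate point to control is the cutoff/localization step, since $\mathcal{P}^{kl}$ itself is singular at the origin and has no decent Schwartz kernel globally; the whole argument hinges on working with $\phi_{j}(D)\mathcal{P}^{kl}$ rather than $\mathcal{P}^{kl}$ directly. This is purely a matter of bookkeeping and is standard for smooth zero-order Fourier multipliers, but it is the place where the hypothesis $\alpha<1$ is sharply used (otherwise the Bernstein step producing $2^{j(1-\alpha)}$ fails to be dominated by the top frequency $2^{j}$ and one loses the favourable geometric decay). The geometric summability $\sum_{j}2^{-j(\alpha+\beta)}\cdot 2^{j(\alpha+\beta)}\lesssim 1$ together with the near-orthogonality of the spectral supports then yields the advertised bound on the $\mathcal{C}^{\alpha+\beta}$ norm.
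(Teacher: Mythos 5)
The paper gives no proof of this lemma at all (it is recalled verbatim from \cite[Lemma 3.4]{ZZ15}), so there is nothing internal to compare against; your argument is the standard commutator proof that underlies the cited result, and in outline it is correct: block decomposition of $\pi_{<}$, a kernel representation of $[\mathcal{P}^{kl},S_{j}f]\Delta_{j}g$ with the weighted bound $\int \lvert y\rvert\,\lvert K_{j}^{kl}(y)\rvert\,dy\lesssim 2^{-j}$, the Lipschitz bound $\lVert \nabla S_{j}f\rVert_{L^{\infty}}\lesssim 2^{j(1-\alpha)}\lVert f\rVert_{\mathcal{C}^{\alpha}}$ (this is exactly where $\alpha<1$ is used), and reassembly through the Littlewood--Paley characterisation.

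Two bookkeeping points deserve correction, and they are not entirely cosmetic because $\beta\in\mathbb{R}$ allows $\alpha+\beta\le 0$. First, your justification of the spectral localisation is stated with the wrong mechanism: the spectrum of $S_{j}f\,\Delta_{j}g$ lies in the \emph{sumset} of a ball of radius $\sim 2^{j}$ and the annulus of $\Delta_{j}g$, not in their intersection; multiplication by the low-frequency factor can a priori push energy toward the origin. The annulus property holds only under the usual paraproduct convention in which the low-frequency part stops sufficiently far below $2^{j}$ (e.g. $S_{j-1}$ in the GIP normalisation), and it is essential here, since for $\alpha+\beta\le 0$ the reconstruction lemma for ball-supported blocks fails and you would lose the conclusion. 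Second, for the kernel representation you must choose $\phi_{j}$ identically one on a fixed annulus containing the spectrum of the whole product $S_{j}f\,\Delta_{j}g$ (not merely that of $\Delta_{j}g$); otherwise the identity $\mathcal{P}^{kl}(S_{j}f\,\Delta_{j}g)=K_{j}^{kl}\ast(S_{j}f\,\Delta_{j}g)$ used in your commutator formula is not justified. Both issues are fixed simultaneously by the correct support convention and an enlarged cutoff, after which your estimate $2^{-j}\cdot 2^{j(1-\alpha)}\cdot 2^{-j\beta}=2^{-j(\alpha+\beta)}$ and the near-orthogonality argument give the stated bound. (Minor: the bound on $\lVert\nabla S_{j}f\rVert_{L^{\infty}}$ follows from Bernstein and the definition of the $\mathcal{C}^{\alpha}$ norm rather than from (\ref{2}).)
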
 
\begin{lemma}\label{Lemma 2.6}
\rm{(\cite[Lemma A.7]{GIP15}, \cite[Lemma 3.5]{ZZ15})} Let $P_{t}$ be the heat semigroup on $\mathbb{T}^{N}$. Then for $f \in \mathcal{C}^{\alpha}(\mathbb{T}^{3}), \alpha \in \mathbb{R}$ and $\delta \geq 0$, $P_{t} f$ satisfies 
\begin{equation*}
\lVert P_{t} f \rVert_{\mathcal{C}^{\alpha + \delta}} \lesssim t^{- \frac{\delta}{2}} \lVert f \rVert_{\mathcal{C}^{\alpha}}. 
\end{equation*} 
\end{lemma}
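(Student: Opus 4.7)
The plan is to use the Littlewood-Paley characterization of the H\"older-Besov norm together with the fact that the heat semigroup commutes with every $\Delta_j$ and provides exponential decay on each frequency block of scale $2^j$. First, I would observe that $\Delta_j P_t f = P_t \Delta_j f$ because both operators are Fourier multipliers, so the problem reduces to controlling $\|P_t \Delta_j f\|_{L^\infty}$ uniformly in $j \geq -1$.

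The key intermediate step I would establish is the block-wise smoothing bound
\begin{equation*}
\lVert \Delta_j P_t f \rVert_{L^\infty(\mathbb{T}^3)} \lesssim e^{-ct 2^{2j}} \lVert \Delta_j f \rVert_{L^\infty(\mathbb{T}^3)}
\end{equation*}
for some $c>0$ independent of $j$ and $t$. To prove this, pick a bump $\tilde{\rho}$ supported on an annulus slightly thicker than the support of $\rho$ and equal to $1$ on $\mathrm{supp}\,\rho$, so that $\Delta_j P_t f = K_{j,t} \ast \Delta_j f$ on $\mathbb{T}^3$ with kernel
\begin{equation*}
K_{j,t}(x) = (2\pi)^{-3/2} \sum_{k \in \mathbb{Z}^3} \tilde{\rho}(2^{-j} k)\, e^{-t|k|^2}\, e^{i k\cdot x}.
\end{equation*}
Young's inequality gives $\lVert \Delta_j P_t f\rVert_{L^\infty} \leq \lVert K_{j,t}\rVert_{L^1} \lVert \Delta_j f\rVert_{L^\infty}$, and since $\tilde{\rho}(2^{-j}\cdot)$ is supported where $|k| \approx 2^j$ we can split $e^{-t|k|^2} = e^{-ct|k|^2} e^{-(1-c)t|k|^2}$, pull out the factor $e^{-ct 2^{2j}}$, and bound the $L^1$ norm of the remaining rescaled kernel by a constant uniformly in $t$ and $j$ by a standard Poisson-summation / change-of-variable argument comparing to the analogous kernel on $\mathbb{R}^3$.

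With the block-wise bound in hand, the main estimate follows by taking the $\sup_j$ in the definition of the $\mathcal{C}^{\alpha+\delta}$ norm:
\begin{align*}
\lVert P_t f \rVert_{\mathcal{C}^{\alpha+\delta}}
&= \sup_{j \geq -1} 2^{j(\alpha+\delta)} \lVert \Delta_j P_t f \rVert_{L^\infty}
\lesssim \sup_{j \geq -1} 2^{j\delta} e^{-ct 2^{2j}} \cdot 2^{j\alpha} \lVert \Delta_j f \rVert_{L^\infty} \\
&\leq \Bigl( \sup_{j \geq -1} (2^{2j})^{\delta/2} e^{-ct 2^{2j}} \Bigr) \lVert f \rVert_{\mathcal{C}^\alpha}.
\end{align*}
Setting $a = \sqrt{ct\, 2^{2j}}$, the prefactor becomes a constant multiple of $t^{-\delta/2} \sup_a a^{\delta} e^{-a^2}$, and the universal bound (\ref{14}) yields the claimed factor $t^{-\delta/2}$.

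The main obstacle is the uniform $L^1$ bound $\lVert K_{j,t}\rVert_{L^1} \lesssim e^{-ct 2^{2j}}$ on the torus: one needs uniformity down to $j=-1$ (where the frequency annulus contains only finitely many lattice points and the na\"ive rescaling is degenerate) and simultaneously for all $t > 0$ including small $t$ where the continuous and discrete heat kernels diverge. The cleanest resolution is to treat the low frequencies $j \leq j_0$ by a trivial $\ell^1$ bound on the finite Fourier sum (giving decay $e^{-ct}$, absorbed into $e^{-ct 2^{2j}}$ for bounded $j$), and for $j > j_0$ use the rescaling $K_{j,t}(x) = 2^{3j} \tilde K(2^j x;\, t 2^{2j})$, reducing everything to the $\mathbb{R}^3$ Schwartz estimate $\|\tilde K(\cdot;s)\|_{L^1(\mathbb{R}^3)} \lesssim e^{-cs}$ for the localized heat kernel at scale $1$.
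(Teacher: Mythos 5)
The paper does not prove this lemma at all: it is recalled in the Appendix and attributed to \cite[Lemma A.7]{GIP15} and \cite[Lemma 3.5]{ZZ15}, and your argument is essentially the standard proof found there — commute $P_t$ with the Littlewood--Paley blocks, prove the blockwise smoothing bound by a uniform $L^1$ estimate on the localized heat kernel (via rescaling/Poisson summation to compare with the $\mathbb{R}^3$ kernel), and then optimize $2^{j\delta}e^{-ct2^{2j}}$ using exactly the elementary bound (\ref{14}). For $j\geq 0$ your plan is sound and the details you sketch (splitting off $e^{-ct2^{2j}}$, bounding the rescaled kernel uniformly in $s=t2^{2j}$) go through.

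The one place your write-up is not correct as stated is the block $j=-1$. That block carries the symbol $\chi$, whose support contains $k=0$, and $P_t$ does not decay on the zero mode; so the claimed bound $\lVert \Delta_{-1}P_tf\rVert_{L^\infty}\lesssim e^{-ct/4}\lVert\Delta_{-1}f\rVert_{L^\infty}$ is false for large $t$ (take $f\equiv 1$), and your proposed low-frequency fix "the finite Fourier sum gives decay $e^{-ct}$" silently assumes the $k=0$ term is absent. Relatedly, the lemma with a $t$-independent constant and $\delta>0$ cannot hold for all $t>0$: for $f\equiv 1$ one has $P_tf=f$ while $t^{-\delta/2}\lVert f\rVert_{\mathcal{C}^{\alpha}}\to 0$. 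The intended (and, in this paper, only used) scope is $t$ in a bounded interval $[0,T]$, with the implicit constant allowed to depend on $T$; with that restriction the zero block is handled trivially by $\lVert \Delta_{-1}P_tf\rVert_{L^\infty}\leq\lVert K_{-1,t}\rVert_{L^1}\lVert\Delta_{-1}f\rVert_{L^\infty}\lesssim\lVert\Delta_{-1}f\rVert_{L^\infty}$ together with $1\lesssim_T t^{-\delta/2}$, and the rest of your argument gives the stated estimate. So: right approach, matching the cited sources, but you should either restrict to bounded times (or mean-zero data, or the semigroup of $\Delta-1$) and treat $\Delta_{-1}$ separately, rather than asserting exponential decay on that block.
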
 
\begin{lemma}\label{Lemma 2.7}
\rm{(\cite[Lemma 3.6]{ZZ15})} Let $\mathcal{P}$ be the Leray projection and $f \in \mathcal{C}^{\alpha}(\mathbb{T}^{N})$ for $\alpha \in \mathbb{R}$. Then for every $k, l \in \{1,2,3 \}$, 
\begin{equation*}
\lVert \mathcal{P}_{kl} f \rVert_{\mathcal{C}^{\alpha}} \lesssim \lVert f \rVert_{\mathcal{C}^{\alpha}}. 
\end{equation*} 
\end{lemma}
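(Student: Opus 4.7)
The operator $\mathcal{P}^{kl}$ is the Fourier multiplier with symbol
\[
m^{kl}(\xi) \triangleq \delta_{kl} - \frac{\xi^{k}\xi^{l}}{\lvert \xi \rvert^{2}}, \qquad \xi \in \mathbb{Z}^{3}\setminus\{0\},
\]
which is smooth on $\mathbb{R}^{3}\setminus\{0\}$ and homogeneous of degree $0$. The plan is to show that such a multiplier preserves the frequency-localized $L^{\infty}$ bounds that define the Hölder–Besov norm, so that the desired estimate follows directly from the definition
\[
\lVert g \rVert_{\mathcal{C}^{\alpha}} = \sup_{j \geq -1} 2^{j\alpha} \lVert \Delta_{j} g \rVert_{L^{\infty}}.
\]
Since $\mathcal{P}^{kl}$ and $\Delta_{j}$ are both Fourier multipliers on $\mathbb{T}^{3}$, they commute, so $\Delta_{j}\mathcal{P}^{kl} f = \mathcal{P}^{kl}\Delta_{j} f$, and it suffices to prove
\begin{equation}\label{eq:planLP}
\lVert \mathcal{P}^{kl} \Delta_{j} f \rVert_{L^{\infty}(\mathbb{T}^{3})} \lesssim \lVert \Delta_{j} f \rVert_{L^{\infty}(\mathbb{T}^{3})} \qquad \forall\, j \geq -1,
\end{equation}
with a constant independent of $j$; multiplying \eqref{eq:planLP} by $2^{j\alpha}$ and taking the supremum over $j$ then yields the lemma.

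To establish \eqref{eq:planLP}, the strategy is to realize $\mathcal{P}^{kl} \Delta_{j}$ as convolution with a kernel of unit $L^{1}$-norm, uniformly in $j$. Since $\Delta_{j} f$ has Fourier support in an annulus $\{\lvert \xi \rvert \sim 2^{j}\}$, one can insert a fattened cutoff $\tilde{\rho}_{j}$ with $\tilde{\rho}_{j} \rho_{j} = \rho_{j}$ and supported in a slightly larger annulus, and write
\[
\mathcal{P}^{kl} \Delta_{j} f \;=\; K_{j}^{kl} \ast \Delta_{j} f, \qquad \widehat{K_{j}^{kl}}(\xi) = \tilde{\rho}_{j}(\xi)\, m^{kl}(\xi).
\]
By the $0$-homogeneity and smoothness of $m^{kl}$ off the origin, on the annulus where $\tilde{\rho}_{j}$ lives the symbol satisfies $\lvert \partial^{\beta} m^{kl}(\xi) \rvert \lesssim_{\beta} \lvert \xi \rvert^{-\lvert \beta \rvert}$; after rescaling $\xi = 2^{j}\eta$ one sees that $\widehat{K_{j}^{kl}}(2^{j}\cdot)$ has derivatives bounded independently of $j$, so a standard Bernstein/stationary-phase argument (transferred to $\mathbb{T}^{3}$ via \cite{I99}) yields $\lVert K_{j}^{kl} \rVert_{L^{1}(\mathbb{T}^{3})} \lesssim 1$ uniformly in $j$. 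Young's convolution inequality then gives \eqref{eq:planLP}. For the low-frequency block $j = -1$ one uses the mean-zero assumption on $f$, or simply verifies that the truncated symbol remains a Schwartz multiplier there.

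The only mild subtlety is the singularity of $m^{kl}$ at the origin, which is precisely why one works with the \emph{frequency-annular} truncation $\tilde{\rho}_{j}$ for $j \geq 0$ and handles $\Delta_{-1}$ separately using the fact that $m^{kl}$ is bounded on the support of $\chi$ away from $\xi = 0$ (and that, in the applications of this paper, the relevant distributions are mean-zero). Everything else is a direct invocation of the Fourier-multiplier machinery already built into the Littlewood–Paley setup of \cite[Ch.~2]{BCD11}; indeed, the statement is a special case of the general fact that a smooth Fourier multiplier of order $0$ is bounded on every Besov space $B_{p,q}^{\alpha}$, applied here with $p = q = \infty$.
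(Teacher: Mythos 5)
Your proposal is correct and is essentially the standard argument behind the cited result: the paper itself gives no proof but refers to \cite[Lemma 3.6]{ZZ15}, whose proof is exactly this Mikhlin-type annular multiplier estimate for the $0$-homogeneous, smooth (away from the origin) symbol $\delta_{kl}-\xi^{k}\xi^{l}/\lvert\xi\rvert^{2}$, applied blockwise in the Littlewood--Paley decomposition and summed against $2^{j\alpha}$. Your handling of the $j=-1$ block is also fine on $\mathbb{T}^{3}$, since the nonzero lattice frequencies in the support of $\chi$ are finitely many and bounded away from the origin (and the relevant fields here are mean-zero), so the symbol can be smoothly modified near $\xi=0$ without changing the operator.
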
 
\begin{lemma}\label{Lemma 2.8}
\rm{(\cite[Lemma 3.11]{ZZ15})} Let $\mathcal{P}$ be the Leray projection. Then for any $\eta \in (0,1), i, j, l \in \{1,2,3\}$ and $t > 0$, 
\begin{equation*}
\lvert e^{ - \lvert k_{12} \rvert^{2} t } k_{12}^{i} \hat{\mathcal{P}}_{jl} (k_{12}) - e^{- \lvert k_{2} \rvert^{2} t} k_{2}^{i} \hat{\mathcal{P}}_{jl} (k_{2}) \rvert \lesssim \lvert k_{1} \rvert^{\eta} \lvert t \rvert^{ - \frac{ (1-\eta)}{2}}. 
\end{equation*} 
\end{lemma}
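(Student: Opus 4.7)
\medskip

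\noindent\textbf{Proof proposal.} The plan is to combine Proposition \ref{Proposition 3.1}, which provides a pathwise (deterministic) continuous-dependence theorem for the map $(y_0, \mathbb{Z}(\xi^{\epsilon})) \mapsto y^{\epsilon}$, with the probabilistic convergence of the stochastic data tuple $\mathbb{Z}(\xi^{\epsilon})$ in the space $\mathbb{X}$ of (\ref{111}) that has been established in Groups 1--4. First I would record that, by Gaussian hypercontractivity and Besov embedding as carried out in (\ref{140})--(\ref{141}), (\ref{253})--(\ref{254}) and (\ref{259}), there exists $\gamma > 0$ such that each of the twenty-one components of $\mathbb{Z}(\xi^{\epsilon})$ converges in $L^{p}(\Omega;C([0,T];\mathcal{C}^{\cdot}))$ at rate $\epsilon^{\gamma}$ for every $p \geq 1$. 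Combining these bounds yields, for a suitable $\gamma' > 0$ and every $p \geq 1$,
\begin{equation*}
\mathbb{E}\bigl[\lVert \mathbb{Z}(\xi^{\epsilon}) - \mathbb{Z}(\xi) \rVert_{\mathbb{X}}^{p}\bigr] \lesssim \epsilon^{\gamma' p},
\end{equation*}
which is the single probabilistic ingredient needed to feed into Proposition \ref{Proposition 3.1}.

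Next I would pass to the dyadic subsequence $\epsilon_{k} \triangleq 2^{-k}$ and apply Chebyshev's inequality followed by Borel--Cantelli, exactly as in (\ref{263}): for every fixed $\eta > 0$,
\begin{equation*}
\sum_{k \geq 1} \mathbb{P}\bigl( \lVert \mathbb{Z}(\xi^{\epsilon_{k}}) - \mathbb{Z}(\xi) \rVert_{\mathbb{X}} > \eta \bigr) \lesssim \sum_{k \geq 1} \eta^{-p} 2^{-k \gamma' p} < \infty
\end{equation*}
for $p$ chosen large, so $\mathbb{Z}(\xi^{\epsilon_{k}}) \to \mathbb{Z}(\xi)$ in $\mathbb{X}$ almost surely. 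In particular $\sup_{k} C_{\xi}^{\epsilon_{k}} < \infty$ almost surely, where $C_{\xi}^{\epsilon}$ is the quantity of (\ref{37}). Fix such an $\omega$ in a full-measure set. Then Proposition \ref{Proposition 3.1} applies pathwise: there is a time $T_{0} = T_{0}(y_{0}, \mathbb{Z}(\xi))(\omega) > 0$ and a limit $y \in C([0,T_{0}];\mathcal{C}^{-z})^{2}$ such that $\sup_{[0,T_{0}]} \lVert y^{\epsilon_{k}} - y \rVert_{\mathcal{C}^{-z}} \to 0$. This produces the solution $y$ on a random short interval and identifies it with the limit along the subsequence.

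To remove the subsequence and pass from almost-sure convergence to the convergence in probability asserted in the theorem, I would use the Lipschitz continuity portion of the map $(y_{0}, \mathbb{Z}(\xi^{\epsilon})) \mapsto y^{\epsilon}$ from Proposition \ref{Proposition 3.1} restricted to the set $\{\lVert y_{0} \rVert_{\mathcal{C}^{-z}} + C_{\xi} \leq a\}$: the $L^{p}$ convergence of $\mathbb{Z}(\xi^{\epsilon})$ together with uniform bounds on the Lipschitz constant on this set upgrades the dyadic a.s.~statement to convergence in probability over the full family $\epsilon \to 0$ on a localization of the event. Finally, to extend $y$ to a maximal solution with explosion time $\tau$ and to produce the stopping times $\tau_{L}$ with $\tau_{L} \nearrow \tau$ satisfying $\sup_{[0,\tau_{L}]} \lVert y^{\epsilon} - y \rVert_{\mathcal{C}^{-z}} \to 0$ in probability, I would follow the standard bootstrap/patching argument of \cite{ZZ15}: define $\tau_{L}$ as the first time the $\mathcal{C}^{-z}$-norm of $y$ exceeds $L$ (or, alternatively, as the first time the augmented norm $\lVert y(t) \rVert_{\mathcal{C}^{-z}} + C_{\xi,[0,t]}$ exceeds $L$), iterate Proposition \ref{Proposition 3.1} starting from $y(\tau_{L-1})$ on a further short interval, and let $\tau \triangleq \lim_{L\to\infty} \tau_{L}$. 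The only step requiring genuine care is the removal of the dyadic restriction, which relies crucially on the Lipschitz dependence in Proposition \ref{Proposition 3.1} and uniform integrability of $C_{\xi}^{\epsilon}$; everything else is a mechanical assembly of the ingredients already proved.
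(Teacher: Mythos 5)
Your proposal does not prove the statement at hand. Lemma \ref{Lemma 2.8} is a purely deterministic, elementary estimate on the Fourier symbol $k \mapsto e^{-\lvert k \rvert^{2} t}\, k^{i}\, \hat{\mathcal{P}}^{jl}(k)$ evaluated at the shifted frequencies $k_{12}$ and $k_{2}$; it involves no probability, no paracontrolled calculus, and no fixed-point argument. What you have written instead is an outline of the proof of Theorem \ref{Theorem 3.2}: convergence of the tuple $\mathbb{Z}(\xi^{\epsilon})$ in $\mathbb{X}$, Gaussian hypercontractivity, Chebyshev plus Borel--Cantelli along $\epsilon_{k}=2^{-k}$, and the pathwise continuity statement of Proposition \ref{Proposition 3.1}. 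None of these ingredients bears on the inequality $\lvert e^{-\lvert k_{12}\rvert^{2}t}k_{12}^{i}\hat{\mathcal{P}}^{jl}(k_{12}) - e^{-\lvert k_{2}\rvert^{2}t}k_{2}^{i}\hat{\mathcal{P}}^{jl}(k_{2})\rvert \lesssim \lvert k_{1}\rvert^{\eta} t^{-(1-\eta)/2}$, so the attempt is a mismatch rather than a gap in an otherwise relevant argument. (In the paper the lemma is quoted from \cite[Lemma 3.11]{ZZ15} rather than reproved.)

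The intended argument is short. Write $F(k) \triangleq e^{-\lvert k \rvert^{2} t}\, k^{i}\, \hat{\mathcal{P}}^{jl}(k)$ and estimate $\lvert F(k_{12}) - F(k_{2})\rvert$ at the two endpoints. First, since $\lvert \hat{\mathcal{P}}^{jl}(k) \rvert \leq 2$, the inequality (\ref{14}) gives $\lvert F(k)\rvert \lesssim \lvert k \rvert e^{-\lvert k \rvert^{2} t} \lesssim t^{-1/2}$, hence $\lvert F(k_{12}) - F(k_{2})\rvert \lesssim t^{-1/2}$, which is the $\eta \to 0$ endpoint. Second, on the lattice (where $\lvert k \rvert \geq 1$ away from the origin) one checks that $\lvert \nabla_{k} F(k)\rvert \lesssim (1 + \lvert k \rvert^{2} t)\, e^{-\lvert k \rvert^{2} t} + \lvert k \rvert e^{-\lvert k \rvert^{2} t}\,\lvert k \rvert^{-1} \lesssim 1$, using that $\hat{\mathcal{P}}^{jl}$ is bounded with gradient of size $\lvert k \rvert^{-1}$ and (\ref{14}) again; the mean value theorem along the segment from $k_{2}$ to $k_{12} = k_{1} + k_{2}$ then yields $\lvert F(k_{12}) - F(k_{2})\rvert \lesssim \lvert k_{1} \rvert$, the $\eta \to 1$ endpoint. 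Interpolating, $\lvert F(k_{12}) - F(k_{2})\rvert \lesssim \min\{\lvert k_{1}\rvert,\, t^{-1/2}\} \leq \lvert k_{1}\rvert^{\eta}\, t^{-(1-\eta)/2}$ for every $\eta \in (0,1)$, which is the claim. You should either supply this two-endpoint interpolation argument or, as the paper does, invoke \cite[Lemma 3.11]{ZZ15} directly; the machinery of Proposition \ref{Proposition 3.1} and the renormalization sections is not relevant here.
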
 
\begin{lemma}\label{Lemma 2.9}
\rm{(\cite[Lemma 3.10]{ZZ15})} For any $l, m \in (0, N)$ such that $l + m - N > 0$, 
\begin{equation*}
\sum_{k_{1}, k_{2} \in \mathbb{Z}^{N} \setminus \{0\}: k_{1} + k_{2} = k} \frac{1}{\lvert k_{1} \rvert^{l} \lvert k_{2} \rvert^{m}} \lesssim \frac{1}{\lvert k \rvert^{l + m - N}}.
\end{equation*} 
\end{lemma}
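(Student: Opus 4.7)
The plan is to reduce this to a standard dyadic decomposition / convolution-on-lattice argument. The driving heuristic is that the weights $|k_1|^{-l}|k_2|^{-m}$ concentrate their mass either where one of the arguments is small (forcing the other to be comparable to $|k|$), or far out in the tail where both are large; the two hypotheses $l,m \in (0,N)$ and $l+m > N$ are precisely what is needed to tame these two regimes.

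First I would split the sum into three disjoint pieces according to which of $\{|k_1| \leq |k|/2,\; |k_2| \leq |k|/2,\; |k_1|,|k_2| > |k|/2\}$ holds. In the first region, $k_2 = k-k_1$ satisfies $|k_2| \geq |k|/2$ by the reverse triangle inequality, so $|k_2|^{-m} \lesssim |k|^{-m}$ and one is reduced to
\[
\frac{1}{|k|^m}\sum_{0 < |k_1| \leq |k|/2} \frac{1}{|k_1|^l} \;\lesssim\; \frac{1}{|k|^m}\cdot |k|^{N-l} \;=\; \frac{1}{|k|^{l+m-N}},
\]
where the lattice-sum bound $\sum_{0<|k_1|\leq R}|k_1|^{-l} \lesssim R^{N-l}$ is the usual comparison with the integral $\int_{|x|\leq R}|x|^{-l}dx$, valid because $l < N$. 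The second region is identical by symmetry (swap $l \leftrightarrow m$).

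The third region, where both $|k_1|, |k_2| > |k|/2$, is the only genuinely nontrivial one. Here I would subdivide again into (A) $|k_1| \lesssim |k|$ and (B) $|k_1| \gtrsim |k|$, say at a dyadic threshold. In (A) one has $|k_1| \sim |k|$, so $|k_1|^{-l} \lesssim |k|^{-l}$, and the remaining sum over $|k_2| > |k|/2$ with $|k_2| \lesssim |k|$ contributes $|k|^{N-m}$, yielding the required bound. In (B), one uses that $|k - k_1| \leq |k| + |k_1| \lesssim |k_1|$, so $|k_2| \sim |k_1|$, reducing the sum to $\sum_{2^j \gtrsim |k|} 2^{-j(l+m)}\cdot 2^{jN}$ via dyadic annuli; this is a geometric series with exponent $N-l-m < 0$ (here the hypothesis $l+m > N$ is essential), summing to $|k|^{N-l-m}$.

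The only subtlety is the lattice-integral comparison used repeatedly: that for $l \in (0,N)$, $\sum_{0 < |k_1| \leq R}|k_1|^{-l} \asymp R^{N-l}$ and for $l > N$, $\sum_{|k_1| \geq R}|k_1|^{-l} \asymp R^{N-l}$. These follow by partitioning $\mathbb{Z}^N$ into unit cubes and comparing with the corresponding radial integral, with separate treatment of the origin. I expect no serious obstacle; the main point is careful bookkeeping of the three cases and ensuring that both endpoint conditions $l,m < N$ (for the low-frequency sums) and $l+m > N$ (for the tail sum in case (B)) are used exactly where they are needed.
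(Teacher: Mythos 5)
Your argument is correct and coincides with the standard proof: the paper itself states this lemma only by citation to \cite{ZZ15}, and the argument there is the same region-splitting with lattice--integral comparison (the regions $\lvert k_{1}\rvert\leq\lvert k\rvert/2$, $\lvert k_{2}\rvert\leq\lvert k\rvert/2$, and both large), with $l,m<N$ used in the low-frequency regions and $l+m>N$ in the tail, exactly as you have arranged it. No gaps.
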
 
Finally, recall from \cite[Definition 1.35]{J97} that a Feynman diagram of order $n \geq 0$ and rank $r\geq 0$ is a graph consisting of a set of $n$ vertices and a set of $r$ edges without common endpoints.  The Feynman diagram is complete if $r = \frac{n}{2}$. A Feynman diagram labelled by $n$ random variables $\xi_{1}, \hdots, \xi_{n}$ is a Feynman diagram of order $n$ with vertices $1, \hdots, n$. The value of such a labelled Feynman diagram $\gamma$ with edges $(i_{k}, j_{k}), k = 1, \hdots, r$, and unpaired vertices $\{i: i \in A \}$ is $v(\gamma) \triangleq \prod_{k=1}^{r} \mathbb{E} [ \xi_{i_{k}} \xi_{j_{k}}] \prod_{i\in A} \xi_{i}$. 
\begin{lemma}\label{Lemma 5.7}
\rm{(\cite[Lemma 3.4 and Theorem 3.12]{J97})} 
\begin{enumerate}
\item Wick products are given by 
\begin{equation*}
: \xi_{1}\hdots \xi_{n}: = \sum_{\gamma} (-1)^{r(\gamma)} v(\gamma), 
\end{equation*} 
where summation runs over all Feynman diagrams $\gamma$ labeled by $\{ \xi_{i}\}_{i=1}^{n}$. 
\item Let $Y_{i} = : \xi_{i 1} \hdots \xi_{i l_{i}}:$, where $\{ \xi_{ij}\}_{1 \leq i \leq k, 1 \leq j \leq l_{i}}$ are (real or complex) centered jointly normal variables, with $k \geq 0$ and $l_{1}, \hdots, l_{k} \geq 0$. Then 
\begin{equation*}
\mathbb{E} [ Y_{1} \hdots Y_{k}] = \sum_{\gamma} v(\gamma)
\end{equation*} 
where summation runs over all complete Feynman diagrams $\gamma$ labeled by $\{\xi_{ij}\}_{i,j}$ such that no edge joins two variables with $\xi_{i_{1} j_{1}}$ and $\xi_{i_{2} j_{2}}$ with $i_{1} = i_{2}$. 
\end{enumerate} 
\end{lemma}  

\subsection{Details of Renormalizations for Group 2}
Due to (\ref{17}), (\ref{24c}), (\ref{24b}) and relying on the representation of $u^{
\scalebox{0.18}{\begin{tikzpicture}
\draw[black, thick] (-0.7,0.9) -- (0,0);
\draw[black, thick] (0.7,0.9) -- (0,0);
\draw[snake=zigzag](0,0) -- (0,-0.9);
\filldraw[green] (-0.7,0.9) circle (6pt); 
\filldraw[green] (0.7,0.9) circle (6pt); 
\end{tikzpicture}
}\epsilon}_{j}(t)$ in (\ref{118}), we may compute 
\begin{align}\label{154}
&b^{
\scalebox{0.18}{\begin{tikzpicture}
\draw[black, thick] (-0.7,0.9) -- (0,0);
\draw[black, thick] (0.7,0.9) -- (0,0);
\draw[snake=zigzag](0,0) -- (0,-0.9);
\filldraw[violet] (-0.7,0.9) circle (6pt); 
\filldraw[violet] (0.7,0.9) circle (6pt); 
\end{tikzpicture}
}\epsilon}_{i} u^{
\scalebox{0.18}{\begin{tikzpicture}
\draw[black, thick] (-0.7,0.9) -- (0,0);
\draw[black, thick] (0.7,0.9) -- (0,0);
\draw[snake=zigzag](0,0) -- (0,-0.9);
\filldraw[green] (-0.7,0.9) circle (6pt); 
\filldraw[green] (0.7,0.9) circle (6pt); 
\end{tikzpicture}
}\epsilon}_{j} (t) = \frac{1}{4 (2\pi)^{\frac{9}{2}}} \sum_{i_{1}, i_{2}, j_{1}, j_{2} =1}^{3} \sum_{k} \sum_{k_{1}, k_{2}, k_{3}, k_{4}: k_{1234} = k}\\
& \times \int_{[0,t]^{2}} ds d \overline{s} e^{- \lvert k_{12} \rvert^{2}(t-s)} e^{- \lvert k_{34} \rvert^{2}(t- \overline{s})} \hat{\mathcal{P}}_{ ii_{1}}(k_{12}) \hat{\mathcal{P}}_{jj_{1}}(k_{34}) ik_{12}^{i_{2}} ik_{34}^{j_{2}} \nonumber\\
& \times [\hat{X}_{s,i_{1}}^{b, \epsilon}(k_{1}) \hat{X}_{s,i_{2}}^{u, \epsilon}(k_{2}) \hat{X}_{\overline{s}, j_{1}}^{u, \epsilon}(k_{3}) \hat{X}_{\overline{s}, j_{2}}^{u, \epsilon}(k_{4})  - \hat{X}_{s,i_{1}}^{u, \epsilon}(k_{1}) \hat{X}_{s,i_{2}}^{b, \epsilon}(k_{2}) \hat{X}_{\overline{s}, j_{1}}^{u, \epsilon}(k_{3}) \hat{X}_{\overline{s}, j_{2}}^{u, \epsilon}(k_{4})\nonumber\\ 
& - \hat{X}_{s,i_{1}}^{b, \epsilon}(k_{1}) \hat{X}_{s,i_{2}}^{u, \epsilon}(k_{2}) \hat{X}_{\overline{s}, j_{1}}^{b, \epsilon}(k_{3}) \hat{X}_{\overline{s}, j_{2}}^{b, \epsilon}(k_{4})+ \hat{X}_{s,i_{1}}^{u, \epsilon}(k_{1}) \hat{X}_{s,i_{2}}^{b, \epsilon}(k_{2}) \hat{X}_{\overline{s}, j_{1}}^{b, \epsilon}(k_{3}) \hat{X}_{\overline{s}, j_{2}}^{b, \epsilon}(k_{4})] e_{k}.     \nonumber      
\end{align}
We can apply Lemma \ref{Lemma 5.7} (1) with ``$\xi_{1}\xi_{2}\xi_{3}\xi_{4}:$'' $= \hat{X}_{s,i_{1}}^{b, \epsilon}(k_{1}) \hat{X}_{s,i_{2}}^{u, \epsilon}(k_{2}) \hat{X}_{\overline{s}, j_{1}}^{u, \epsilon}(k_{3}) \hat{X}_{\overline{s}, j_{2}}^{u, \epsilon}(k_{4})$ to write it as $\sum_{\gamma} (-1)^{r(\gamma)} v(\gamma)$ with the sum over all Feynman diagrams $\gamma$ labeled by $\{ \hat{X}_{s,i_{1}}^{b, \epsilon}(k_{1}), \hat{X}_{s,i_{2}}^{u, \epsilon}(k_{2}), \hat{X}_{\overline{s}, j_{1}}^{u, \epsilon}(k_{3})$, $\hat{X}_{\overline{s}, j_{2}}^{u, \epsilon}(k_{4})\}$, and split to groups of fourth, second, and zeroth Wiener chaos (see \cite[Example 2.2]{Y20}). We repeat for $ \hat{X}_{s,i_{1}}^{u, \epsilon}(k_{1}) \hat{X}_{s,i_{2}}^{b, \epsilon}(k_{2}) \hat{X}_{\overline{s}, j_{1}}^{u, \epsilon}(k_{3}) \hat{X}_{\overline{s}, j_{2}}^{u, \epsilon}(k_{4})$, $\hat{X}_{s,i_{1}}^{b, \epsilon}(k_{1}) \hat{X}_{s,i_{2}}^{u, \epsilon}(k_{2}) \hat{X}_{\overline{s}, j_{1}}^{b, \epsilon}(k_{3}) \hat{X}_{\overline{s}, j_{2}}^{b, \epsilon}(k_{4})$, and $\hat{X}_{s,i_{1}}^{u, \epsilon}(k_{1}) \hat{X}_{s,i_{2}}^{b, \epsilon}(k_{2}) \hat{X}_{\overline{s}, j_{1}}^{b, \epsilon}(k_{3}) \hat{X}_{\overline{s}, j_{2}}^{b, \epsilon}(k_{4})$ to write 
\begin{equation}\label{155}
b^{
\scalebox{0.18}{\begin{tikzpicture}
\draw[black, thick] (-0.7,0.9) -- (0,0);
\draw[black, thick] (0.7,0.9) -- (0,0);
\draw[snake=zigzag](0,0) -- (0,-0.9);
\filldraw[violet] (-0.7,0.9) circle (6pt); 
\filldraw[violet] (0.7,0.9) circle (6pt); 
\end{tikzpicture}
}\epsilon}_{i} u^{
\scalebox{0.18}{\begin{tikzpicture}
\draw[black, thick] (-0.7,0.9) -- (0,0);
\draw[black, thick] (0.7,0.9) -- (0,0);
\draw[snake=zigzag](0,0) -- (0,-0.9);
\filldraw[green] (-0.7,0.9) circle (6pt); 
\filldraw[green] (0.7,0.9) circle (6pt); 
\end{tikzpicture}
}\epsilon}_{j} (t) = \underbrace{\RomanVI_{t}^{1}}_{\text{4th chaos}} + \underbrace{\RomanVI_{t}^{2}}_{\text{2nd chaos}} + \underbrace{\RomanVI_{t}^{3}}_{\text{0th chaos}}
\end{equation} 
where 
\begin{align}\label{156}
&\RomanVI_{t}^{1} \triangleq \frac{1}{4(2\pi)^{\frac{9}{2}}} \sum_{i_{1}, i_{2}, j_{1}, j_{2} =1}^{3} \sum_{k} \sum_{k_{1}, k_{2}, k_{3}, k_{4}: k_{1234} = k} \\
& \times \int_{[0,t]^{2}} ds d \overline{s} e_{k} e^{- \lvert k_{12} \rvert^{2} ( t -s ) - \lvert k_{34} \rvert^{2}(t- \overline{s})} \hat{\mathcal{P}}_{ ii_{1}}(k_{12}) \hat{\mathcal{P}}_{ jj_{1}}(k_{34}) i k_{12}^{i_{2}} i k_{34}^{j_{2}}\nonumber\\
&\times [ : \hat{X}_{s,i_{1}}^{b, \epsilon}(k_{1}) \hat{X}_{s,i_{2}}^{u, \epsilon}(k_{2}) \hat{X}_{\overline{s}, j_{1}}^{u, \epsilon}(k_{3}) \hat{X}_{\overline{s}, j_{2}}^{u, \epsilon}(k_{4}):  -  : \hat{X}_{s,i_{1}}^{u, \epsilon}(k_{1}) \hat{X}_{s,i_{2}}^{b, \epsilon}(k_{2}) \hat{X}_{\overline{s}, j_{1}}^{u, \epsilon}(k_{3}) \hat{X}_{\overline{s}, j_{2}}^{u, \epsilon}(k_{4}):\nonumber\\
& -  : \hat{X}_{s,i_{1}}^{b, \epsilon}(k_{1}) \hat{X}_{s,i_{2}}^{u, \epsilon}(k_{2}) \hat{X}_{\overline{s}, j_{1}}^{b, \epsilon}(k_{3}) \hat{X}_{\overline{s}, j_{2}}^{b, \epsilon}(k_{4}):+  : \hat{X}_{s,i_{1}}^{u, \epsilon}(k_{1}) \hat{X}_{s,i_{2}}^{b, \epsilon}(k_{2}) \hat{X}_{\overline{s}, j_{1}}^{b, \epsilon}(k_{3}) \hat{X}_{\overline{s}, j_{2}}^{b, \epsilon}(k_{4}):], \nonumber 
\end{align} 
$\RomanVI_{t}^{2}$ consists of 16 terms with 
\begin{align}
&\RomanVI_{t}^{2,\star} \triangleq \frac{1}{4(2\pi)^{\frac{9}{2}}} \sum_{i_{1}, i_{2}, j_{1}, j_{2} = 1}^{3} \sum_{k} \sum_{k_{2}, k_{4}: k_{24} = k, k_{1} \neq 0} \int_{[0,t]^{2}} \label{266} \\
& \times e^{- \lvert k_{12} \rvert^{2} (t-s) - \lvert k_{4} - k_{1} \rvert^{2}(t- \overline{s})}\hat{\mathcal{P}}_{ii_{1}}(k_{12}) \hat{\mathcal{P}}_{jj_{1}} (k_{4} - k_{1}) i k_{12}^{i_{2}} i(k_{4}^{j_{2}} - k_{1}^{j_{2}})\nonumber\\
& \times \sum_{j_{5} =1}^{3} \frac{e^{- \lvert k_{1} \rvert^{2} \lvert s - \overline{s} \rvert} f(\epsilon k_{1})^{2}}{2 \lvert k_{1} \rvert^{2}} \hat{\mathcal{P}}_{ i_{4} j_{5}}(k_{1}) \hat{\mathcal{P}}_{j_{4} j_{5}}(k_{1}): \hat{X}_{s,i_{3}}^{b, \epsilon}(k_{2}) \hat{X}_{\overline{s}, j_{3}}^{b, \epsilon}(k_{4}): ds d \overline{s} e_{k} 1_{i_{3} = i_{2}, i_{4} = i_{1}, j_{3} = j_{2}, j_{4} = j_{1}} \nonumber 
\end{align}
being a representative, and 
\begin{align}\label{158}
\RomanVI_{t}^{3} \triangleq& \frac{1}{4(2\pi)^{\frac{9}{2}}} \sum_{i_{1}, i_{2}, j_{1}, j_{2} =1}^{3} \sum_{k} \sum_{k_{1}, k_{2} \neq 0} \int_{[0,t]^{2}} e^{- \lvert k_{12} \rvert^{2}(2t-s - \overline{s})} \hat{\mathcal{P}}_{ ii_{1}}(k_{12}) \hat{\mathcal{P}}_{jj_{1}}(k_{12}) \nonumber\\
& \times k_{12}^{i_{2}} k_{12}^{j_{2}}  \frac{ f( \epsilon k_{1})^{2} f(\epsilon k_{2})^{2} e^{- ( \lvert k_{1} \rvert^{2} + \lvert k_{2} \rvert^{2}) \lvert s - \overline{s} \rvert}}{4 \lvert k_{1} \rvert^{2} \lvert k_{2} \rvert^{2}} ds d \overline{s} \\
& \times \sum_{j_{3}, j_{4} =1}^{3} [ \hat{\mathcal{P}}_{ i_{2} j_{4}}(k_{2}) \hat{\mathcal{P}}_{ j_{1} j_{4}}(k_{2}) \hat{\mathcal{P}}_{i_{1} j_{3}}(k_{1}) \hat{\mathcal{P}}_{j_{2} j_{3}}(k_{1})  + \hat{\mathcal{P}}_{ i_{2} j_{4}}(k_{2}) \hat{\mathcal{P}}_{ j_{2} j_{4}}(k_{2}) \hat{\mathcal{P}}_{i_{1} j_{3}}(k_{1}) \hat{\mathcal{P}}_{j_{1} j_{3}}(k_{1})\nonumber\\
& - \hat{\mathcal{P}}_{ i_{2} j_{4}}(k_{2}) \hat{\mathcal{P}}_{ j_{1} j_{4}}(k_{2}) \hat{\mathcal{P}}_{i_{1} j_{3}}(k_{1}) \hat{\mathcal{P}}_{j_{2} j_{3}}(k_{1}) - \hat{\mathcal{P}}_{ i_{2} j_{4}}(k_{2}) \hat{\mathcal{P}}_{ j_{2} j_{4}}(k_{2}) \hat{\mathcal{P}}_{i_{1} j_{3}}(k_{1}) \hat{\mathcal{P}}_{j_{1} j_{3}}(k_{1})\nonumber\\
& - \hat{\mathcal{P}}_{ i_{2} j_{4}}(k_{2}) \hat{\mathcal{P}}_{ j_{1} j_{4}}(k_{2}) \hat{\mathcal{P}}_{i_{1} j_{3}}(k_{1}) \hat{\mathcal{P}}_{j_{2} j_{3}}(k_{1})  - \hat{\mathcal{P}}_{ i_{2} j_{4}}(k_{2}) \hat{\mathcal{P}}_{ j_{2} j_{4}}(k_{2}) \hat{\mathcal{P}}_{i_{1} j_{3}}(k_{1}) \hat{\mathcal{P}}_{j_{1} j_{3}}(k_{1})\nonumber\\
& + \hat{\mathcal{P}}_{ i_{2} j_{4}}(k_{2}) \hat{\mathcal{P}}_{ j_{1} j_{4}}(k_{2}) \hat{\mathcal{P}}_{i_{1} j_{3}}(k_{1}) \hat{\mathcal{P}}_{j_{2} j_{3}}(k_{1}) + \hat{\mathcal{P}}_{ i_{2} j_{4}}(k_{2}) \hat{\mathcal{P}}_{ j_{2} j_{4}}(k_{2}) \hat{\mathcal{P}}_{i_{1} j_{3}}(k_{1}) \hat{\mathcal{P}}_{j_{1} j_{3}}(k_{1})]. \nonumber        
\end{align} 
Finally, from (\ref{158}) we define 
\begin{equation}\label{162} 
\RomanVI_{t}^{3} \triangleq C_{2,3}^{\epsilon, ij}. 
\end{equation} 

\subsubsection{Terms in the second chaos} 
In order to estimate $\mathbb{E} [ \lvert \Delta_{q} \RomanVI_{t}^{2} \rvert^{2}]$, we consider only $\RomanVI_{t}^{2,\star}$ in \eqref{266} as others are similarly estimated. We use $\mathbb{E} [ : \xi_{11} \xi_{12}: : \xi_{21} \xi_{22}:]$ $=$ $\mathbb{E} [ \xi_{11} \xi_{21}] \mathbb{E} [ \xi_{12} \xi_{22}] + \mathbb{E} [ \xi_{11} \xi_{22}]\mathbb{E}[\xi_{12} \xi_{21}]$ (see \cite{J97}) to compute $ \mathbb{E} [ : \hat{X}_{s,i_{2}}^{b, \epsilon}(k_{2}) \hat{X}_{\overline{s}, j_{2}}^{b, \epsilon}(k_{4}): : \hat{X}_{\sigma, i_{2}'}^{b, \epsilon} (k_{2}') \hat{X}_{\overline{\sigma}, j_{2}'}^{b, \epsilon} (k_{4}') : ]$ and deduce 
\begin{align}\label{164}
\mathbb{E} [ \lvert \Delta_{q} \RomanVII_{t}^{15} \rvert^{2}] \lesssim& \sum_{k} \theta (2^{-q} k)^{2} \sum_{k_{2}, k_{4} \neq 0: k_{24} = k, k_{1} \neq 0, k_{2}', k_{4}': k_{24}' = k, k_{1}' \neq 0}\nonumber\\
& \times \int_{[0,t]^{4}} e^{- \lvert k_{12} \rvert^{2} (t-s) - \lvert k_{4} - k_{1} \rvert^{2} (t- \overline{s})} e^{- \lvert k_{12}' \rvert^{2}(t- \sigma) - \lvert k_{4}' - k_{1}' \rvert^{2}( t-  \overline{\sigma})} \nonumber\\
& \times \lvert k_{12} (k_{4} - k_{1}) \rvert \lvert k_{12}' (k_{4}' - k_{1}') \rvert  \frac{ e^{- \lvert k_{1} \rvert^{2} \lvert s - \overline{s} \rvert}}{\lvert k_{1} \rvert^{2}} \frac{ e^{- \lvert k_{1}' \rvert^{2} \lvert \sigma - \overline{\sigma} \rvert}}{\lvert k_{1}' \rvert^{2}} \frac{1}{\lvert k_{2} \rvert^{2} \lvert k_{4} \rvert^{2}}\nonumber\\
& \times 1_{k_{2} + k_{2}' = 0, k_{4} + k_{4}' = 0} ds d \overline{s} d \sigma d \overline{\sigma}
\end{align} 
where we denoted $k_{12}' \triangleq k_{1}' + k_{2}'$. Considering the characteristic function $1_{k_{2} + k_{2}' = 0, k_{4} + k_{4}' = 0}$, we see that it may be further estimated as 
\begin{align}\label{165}
& \sum_{k} \theta(2^{-q} k)^{2} \sum_{k_{2}, k_{4} \neq 0: k_{24} = k, k_{1}, k_{1}' \neq 0}  \int_{[0,t]^{4}} e^{- \lvert k_{12} \rvert^{2}(t-s) - \lvert k_{4} - k_{1} \rvert^{2} (t - \overline{s})} e^{- \lvert k_{1}' - k_{2} \rvert^{2} (t - \sigma) - \lvert k_{1}' + k_{4} \rvert^{2}(t- \overline{\sigma})} \nonumber\\
&\times \lvert k_{12} (k_{4} - k_{1}) \rvert \lvert (k_{1}' - k_{2})(k_{1}' + k_{4}) \rvert \frac{1}{\lvert k_{1} \rvert^{2}} \frac{1}{\lvert k_{1} ' \rvert^{2}} \frac{1}{ \lvert k_{2} \rvert^{2} \lvert k_{4} \rvert^{2}} ds d \overline{s} d \sigma d \overline{\sigma} \nonumber\\
\lesssim& t^{\epsilon} \sum_{k} \sum_{k_{2}, k_{4} \neq 0: k_{24} = k, k_{1}, k_{3}\neq 0} \theta(2^{-q} k)^{2} \prod_{j=1}^{4} \frac{1}{ \lvert k_{j} \rvert^{2}} \frac{1}{\lvert k_{4} - k_{1} \rvert^{2-\epsilon} \lvert k_{4} - k_{3} \rvert^{2-\epsilon}} \nonumber\\
\lesssim& t^{\epsilon} 2^{2q \epsilon} \sum_{k\neq 0} \theta (2^{-q} k)^{2} \frac{1}{\lvert k \rvert^{3}} \lesssim t^{\epsilon} 2^{2q \epsilon} 
\end{align} 
by a change of variable $k_{1}'$ with $-k_{3}$, mean value theorem, and Lemma \ref{Lemma 2.9}. 

\subsubsection{Terms in the fourth chaos}
We wish to estimate 
\begin{equation}\label{166}
\mathbb{E} [ \lvert \Delta_{q}  \RomanVI_{t}^{1} \rvert^{2}] = \mathbb{E} [ \lvert \sum_{k} \theta(2^{-q} k) \widehat{\RomanVI}_{t}^{1} (k) e_{k} \rvert^{2}]
\end{equation} 
where $\RomanVI_{t}^{1}$ is that of (\ref{156}) of which it suffices to estimate for example a mix term such as second and third terms multiplied; i.e. 
\begin{align}\label{167}
& \mathbb{E} [ \lvert \sum_{k} \theta(2^{-q} k) \sum_{i_{1}, i_{2}, j_{1}, j_{2} =1}^{3}  \sum_{k_{1}, k_{2}, k_{3}, k_{4}: k_{1234} = k} \int_{[0,t]^{2}} e^{- \lvert k_{12} \rvert^{2} (t-s) - \lvert k_{34} \rvert^{2} (t - \overline{s})}\\
& \times : \hat{X}_{s,i_{1}}^{u, \epsilon} (k_{1}) \hat{X}_{s,i_{2}}^{b, \epsilon}(k_{2}) \hat{X}_{\overline{s}, j_{1}}^{u, \epsilon}(k_{3}) \hat{X}_{\overline{s}, j_{2}}^{u, \epsilon} (k_{4}): ds d \overline{s} e_{k} \hat{\mathcal{P}}_{ ii_{1}} (k_{12}) \hat{\mathcal{P}}_{ jj_{1}} (k_{34}) ik_{12} ^{i_{2}} i k_{34}^{j_{2}} \rvert \nonumber\\
& \times \lvert \sum_{k'} \theta (2^{-q} k') \sum_{i_{1}', i_{2}', j_{1}', j_{2}' = 1}^{3} \sum_{k_{1}',k_{2}', k_{3}', k_{4}', k_{1234}' = k'} \int_{[0,t]^{2}} e^{- \lvert k_{12} ' \rvert^{2} (t- \sigma) - \lvert k_{34} ' \rvert^{2} (t - \overline{\sigma})}\nonumber\\
& \times : \hat{X}_{\sigma, i_{1}'}^{b, \epsilon} (k_{1}') \hat{X}_{\sigma, i_{2}'}^{u, \epsilon} (k_{2}') \hat{X}_{\overline{\sigma}, j_{1}'}^{b, \epsilon} (k_{3}') \hat{X}_{\overline{\sigma}, j_{2}'}^{b, \epsilon} (k_{4}') : d \sigma d \overline{\sigma} e_{k'}\hat{\mathcal{P}}_{ i' i_{1}'} (k_{12}') \hat{\mathcal{P}}_{j' j_{1}'} (k_{34}') i (k_{12}') ^{ i_{2}'} i (k_{34}')^{j_{2}'} \rvert ].\nonumber
\end{align} 
We can apply Lemma \ref{Lemma 5.7} (2) with ``$Y_{1}$'' $=  : \hat{X}_{s,i_{1}}^{u, \epsilon}(k_{1}) \hat{X}_{s, i_{2}}^{b, \epsilon}(k_{2}) \hat{X}_{\overline{s}, j_{1}}^{u, \epsilon}(k_{3}) \hat{X}_{\overline{s}, j_{2}}^{u, \epsilon}(k_{4}):$ and ``$Y_{2}$'' $= : \hat{X}_{\sigma, i_{1}'}^{b, \epsilon}(k_{1}') \hat{X}_{\sigma, i_{2}'}^{u, \epsilon} (k_{2}') \hat{X}_{\overline{\sigma}, j_{1}'}^{b, \epsilon} (k_{3}') \hat{X}_{\overline{\sigma}, j_{2}'}^{b, \epsilon} (k_{4}') : $ to compute $\mathbb{E}[Y_{1}Y_{2}] = \sum_{\gamma} v(\gamma)$ explicitly (see \cite[Example 2.2]{Y20} for details) and see that it consists of 24 terms, one representative being 
\begin{align}
\RomanVI_{t}^{1,1} \triangleq& 1_{k_{1} + k_{1} ' = 0, k_{2} + k_{2} ' = 0, k_{3} + k_{3} ' = 0, k_{4} + k_{4} ' = 0} \sum_{i_{3}, i_{4}, i_{5}, i_{6} = 1}^{3} 1_{k_{1}, k_{2}, k_{3}, k_{4} \neq 0} \label{169}\\
& \times  \frac{e^{ - \lvert k_{1} \rvert^{2} \lvert s- \sigma \rvert} f( \epsilon k_{1})^{2}}{2 \lvert k_{1} \rvert^{2}} \hat{\mathcal{P}}_{ i_{1} i_{3}}(k_{1}) \hat{\mathcal{P}}_{ i_{1}' i_{3}}(k_{1}) \frac{ e^{- \lvert k_{2} \rvert^{2} \lvert s- \sigma \rvert} f(\epsilon k_{2})^{2}}{2 \lvert k_{2} \rvert^{2}} \hat{\mathcal{P}}_{ i_{2} i_{4}}(k_{2}) \hat{\mathcal{P}}_{ i_{2}' i_{4}}(k_{2}) \nonumber\\
& \times \frac{ e^{ - \lvert k_{3} \rvert^{2} \lvert \overline{s} - \overline{\sigma} \rvert} f(\epsilon k_{3})^{2}}{2 \lvert k_{3} \rvert^{2}} \hat{\mathcal{P}}_{ j_{1} i_{5}}(k_{3}) \hat{\mathcal{P}}_{ j_{1}' i_{5}}(k_{3}) \frac{ e^{- \lvert k_{4} \rvert^{2} \lvert \overline{s} - \overline{\sigma} \rvert} f( \epsilon k_{4})^{2}}{2 \lvert k_{4} \rvert^{2}} \hat{\mathcal{P}}_{ j_{2} i_{6}}(k_{4}) \hat{\mathcal{P}}_{ j_{2}' i_{6}}(k_{4}) \nonumber 
\end{align}
where $k = k_{1234} =  -k_{1234}' = -k'$ so that we can bound it by 
\begin{align*}
 \sum_{k} \theta (2^{-q} k)^{2} \sum_{k_{1}, k_{2}, k_{3}, k_{4} \neq 0: k_{1234} = k} \int_{[0,t]^{4}} & e^{- \lvert k_{12} \rvert^{2} ( 2t - s - \sigma) - \lvert k_{34} \rvert^{2} ( 2t - \overline{s} - \overline{\sigma})} \\
 & \hspace{20mm} \times \frac{ \lvert k_{12} \rvert^{2} \lvert k_{34} \rvert^{2}}{ \lvert k_{1} \rvert^{2} \lvert k_{2} \rvert^{2} \lvert k_{3} \rvert^{2} \lvert k_{4} \rvert^{2}} ds d \overline{s} d \sigma d \overline{\sigma}. 
\end{align*} 
By relying on \cite[Section 9.2]{GP17}, this estimate leads us to 
\begin{align}\label{175}
\mathbb{E} [ \lvert \Delta_{q} \RomanVI_{t}^{1} &\rvert^{2}] \lesssim \sum_{k} \theta (2^{-q} k)^{2} \sum_{k_{1}, k_{2}, k_{3}, k_{4} \neq 0: k_{1234} = k}\\
& \times \int_{[0,t]^{4}} [ e^{ - \lvert k_{12} \rvert^{2} ( 2t - s - \sigma) - \lvert k_{34} \rvert^{2} (2t - \overline{s} - \overline{\sigma})}\frac{ \lvert k_{12} \rvert^{2} \lvert k_{34} \rvert^{2} }{ \lvert k_{1} \rvert^{2} \lvert k_{2} \rvert^{2} \lvert k_{3} \rvert^{2} \lvert k_{4} \rvert^{2}} \nonumber\\
& \hspace{8mm}+ e^{- \lvert k_{12} \rvert^{2} (t-s) - \lvert k_{34} \rvert^{2} (t- \overline{s}) - \lvert k_{14} \rvert^{2} ( t - \overline{\sigma}) - \lvert k_{23} \rvert^{2} ( t- \sigma)} \frac{ \lvert k_{12} \rvert \lvert k_{34} \rvert \lvert k_{14} \rvert \lvert k_{23} \rvert}{ \lvert k_{1} \rvert^{2} \lvert k_{2} \rvert^{2} \lvert k_{3} \rvert^{2} \lvert k_{4} \rvert^{2}}] ds  d \overline{s}  d \sigma d \overline{\sigma}.\nonumber
\end{align} 
Within (\ref{175}) we may further estimate for $k_{1}, k_{2}, k_{3}, k_{4}\neq 0$, 
\begin{align}\label{176}
& \int_{[0,t]^{4}} e^{ - \lvert k_{12} \rvert^{2}(2t - s - \sigma) - \lvert k_{34} \rvert^{2} (2t - \overline{s} - \overline{\sigma})}\frac{  \lvert k_{12} \rvert^{2} \lvert k_{34} \rvert^{2}  }{ \lvert k_{1} \rvert^{2} \lvert k_{2} \rvert^{2} \lvert k_{3} \rvert^{2} \lvert k_{4} \rvert^{2}} ds d \overline{s} d \sigma d \overline{\sigma} \nonumber\\
\lesssim& 1_{k_{12}, k_{34} \neq 0}\frac{ t^{\epsilon}}{ \lvert k_{1} \rvert^{2} \lvert k_{2} \rvert^{2} \lvert k_{3} \rvert^{2} \lvert k_{4} \rvert^{2} \lvert k_{12} \rvert^{2- \epsilon} \lvert k_{34} \rvert^{2- \epsilon}}
\end{align} 
where we used mean value theorem, while for $k_{1}, k_{2}, k_{3}, k_{4}\neq 0$, 
\begin{align}\label{177}
& \int_{[0,t]^{4}} e^{ - \lvert k_{12} \rvert^{2} (t-s) - \lvert k_{34} \rvert^{2}(t -\overline{s}) - \lvert k_{14} \rvert^{2}(t - \overline{\sigma}) - \lvert k_{23} \rvert^{2}(t - \sigma)}  \left( \frac{ \lvert k_{12} \rvert \lvert k_{34} \rvert \lvert k_{14} \rvert \lvert k_{23} \rvert}{ \lvert k_{1} \rvert^{2} \lvert k_{2} \rvert^{2} \lvert k_{3} \rvert^{2} \lvert k_{4} \rvert^{2}} \right) ds d \overline{s} d \sigma d \overline{\sigma}\nonumber\\
\lesssim& 1_{k_{12}, k_{34}, k_{14}, k_{23} \neq 0} \frac{t^{\epsilon}}{ \lvert k_{1} \rvert^{2} \lvert k_{2} \rvert^{2} \lvert k_{3} \rvert^{2} \lvert k_{4} \rvert^{2}} \frac{1}{ \lvert k_{12} \rvert^{1- \frac{\epsilon}{2}} \lvert k_{34} \rvert^{1- \frac{\epsilon}{2}} \lvert k_{14} \rvert^{1- \frac{\epsilon}{2}} \lvert k_{23} \rvert^{1- \frac{\epsilon}{2}}}
\end{align} 
by mean value theorem. Therefore, applying (\ref{176}) and (\ref{177}) to (\ref{175}) gives 
\begin{equation}\label{178}
\mathbb{E} [ \lvert \Delta_{q} \RomanVI_{t}^{1} \rvert^{2}] 
\lesssim t^{\epsilon} \sum_{k} \theta (2^{-q} k)^{2} [ \RomanVII^{1} + \sqrt{ \RomanVII^{1}} \sqrt{\RomanVII^{2}}]
\end{equation} 
where 
\begin{align*} 
& \RomanVII^{1} \triangleq \sum_{k_{1}, k_{2}, k_{3}, k_{4} \neq 0: k_{1234} = k} \frac{1_{k_{12}, k_{34} \neq 0}}{ \prod_{j=1}^{4} \lvert k_{j} \rvert^{2} \lvert k_{12} \rvert^{2-\epsilon} \lvert k_{34} \rvert^{2-\epsilon} },\\
& \RomanVII^{2} \triangleq \sum_{k_{1}, k_{2}, k_{3}, k_{4} \neq 0: k_{1234} = k} \frac{1_{k_{14} \neq 0, k_{23} \neq 0}}{ \prod_{j=1}^{4} \lvert k_{j} \rvert^{2} \lvert k_{14} \rvert^{2-\epsilon} \lvert k_{23} \rvert^{2-\epsilon}},
\end{align*}
due to H$\ddot{\mathrm{o}}$lder's inequality. We may estimate 
\begin{align*}
t^{\epsilon} \sum_{k} \theta(2^{-q} k)^{2} \sqrt{\RomanVII^{1}}\sqrt{\RomanVII^{2}} 
\lesssim 2^{2q\epsilon} t^{\epsilon} \sum_{k} \theta(2^{-q} k)^{2} \frac{1}{\lvert k \rvert^{2\epsilon}} \left( \frac{1}{\lvert k \rvert^{12 - 2 \epsilon - 9}}\right)^{\frac{1}{2}}\left( \frac{1}{\lvert k \rvert^{12 - 2 \epsilon - 9}}\right)^{\frac{1}{2}} \lesssim  2^{2q\epsilon} t^{\epsilon} 
\end{align*} 
by Lemma \ref{Lemma 2.9}. We may apply identical estimates to $ \sum_{k} \theta (2^{-q} k)^{2} \RomanVII^{1}$ in (\ref{178}) to deduce 
\begin{equation}\label{180}
\mathbb{E} [ \lvert \Delta_{q} \RomanVI_{t}^{1} \rvert^{2}] \lesssim t^{\epsilon} 2^{2q \epsilon}.
\end{equation} 
Similarly to how we deduced (\ref{139}) from (\ref{138}), we can obtain an analogous Lipschitz bound on  
\begin{align*} 
 \mathbb{E} [ \lvert \Delta_{q} ( b^{
\scalebox{0.18}{\begin{tikzpicture}
\draw[black, thick] (-0.7,0.9) -- (0,0);
\draw[black, thick] (0.7,0.9) -- (0,0);
\draw[snake=zigzag](0,0) -- (0,-0.9);
\filldraw[violet] (-0.7,0.9) circle (6pt); 
\filldraw[violet] (0.7,0.9) circle (6pt); 
\end{tikzpicture}
}\epsilon_{1}}_{i} \diamond u^{
\scalebox{0.18}{\begin{tikzpicture}
\draw[black, thick] (-0.7,0.9) -- (0,0);
\draw[black, thick] (0.7,0.9) -- (0,0);
\draw[snake=zigzag](0,0) -- (0,-0.9);
\filldraw[green] (-0.7,0.9) circle (6pt); 
\filldraw[green] (0.7,0.9) circle (6pt); 
\end{tikzpicture}
}\epsilon}_{j} (t_{1}) - b^{
\scalebox{0.18}{\begin{tikzpicture}
\draw[black, thick] (-0.7,0.9) -- (0,0);
\draw[black, thick] (0.7,0.9) -- (0,0);
\draw[snake=zigzag](0,0) -- (0,-0.9);
\filldraw[violet] (-0.7,0.9) circle (6pt); 
\filldraw[violet] (0.7,0.9) circle (6pt); 
\end{tikzpicture}
}\epsilon_{1}}_{i} \diamond u^{
\scalebox{0.18}{\begin{tikzpicture}
\draw[black, thick] (-0.7,0.9) -- (0,0);
\draw[black, thick] (0.7,0.9) -- (0,0);
\draw[snake=zigzag](0,0) -- (0,-0.9);
\filldraw[green] (-0.7,0.9) circle (6pt); 
\filldraw[green] (0.7,0.9) circle (6pt); 
\end{tikzpicture}
}\epsilon_{1}}_{j} (t_{2}) - b^{
\scalebox{0.18}{\begin{tikzpicture}
\draw[black, thick] (-0.7,0.9) -- (0,0);
\draw[black, thick] (0.7,0.9) -- (0,0);
\draw[snake=zigzag](0,0) -- (0,-0.9);
\filldraw[violet] (-0.7,0.9) circle (6pt); 
\filldraw[violet] (0.7,0.9) circle (6pt); 
\end{tikzpicture}
}\epsilon_{2}}_{i} \diamond u^{
\scalebox{0.18}{\begin{tikzpicture}
\draw[black, thick] (-0.7,0.9) -- (0,0);
\draw[black, thick] (0.7,0.9) -- (0,0);
\draw[snake=zigzag](0,0) -- (0,-0.9);
\filldraw[green] (-0.7,0.9) circle (6pt); 
\filldraw[green] (0.7,0.9) circle (6pt); 
\end{tikzpicture}
}\epsilon_{2}}_{j} (t_{1}) + b^{
\scalebox{0.18}{\begin{tikzpicture}
\draw[black, thick] (-0.7,0.9) -- (0,0);
\draw[black, thick] (0.7,0.9) -- (0,0);
\draw[snake=zigzag](0,0) -- (0,-0.9);
\filldraw[violet] (-0.7,0.9) circle (6pt); 
\filldraw[violet] (0.7,0.9) circle (6pt); 
\end{tikzpicture}
}\epsilon_{2}}_{i} \diamond u^{
\scalebox{0.18}{\begin{tikzpicture}
\draw[black, thick] (-0.7,0.9) -- (0,0);
\draw[black, thick] (0.7,0.9) -- (0,0);
\draw[snake=zigzag](0,0) -- (0,-0.9);
\filldraw[green] (-0.7,0.9) circle (6pt); 
\filldraw[green] (0.7,0.9) circle (6pt); 
\end{tikzpicture}
}\epsilon_{2}}_{j} (t_{2})) \rvert^{2}], 
\end{align*}
with which similar arguments using Besov embedding, Gaussian hypercontractivity \cite[Theorem 3.50]{J97}, as we did in (\ref{139})-(\ref{141}), imply that there exists $v^{
\scalebox{0.25}{\begin{tikzpicture}
\draw[black, thick] (-1.05,1.4) -- (-0.84,0.84);
\draw[black, thick] (-0.35,1.4) -- (-0.84,0.84);
\draw[black, thick] (0.35,1.4) -- (0.84,0.84);
\draw[black, thick] (1.05,1.4) -- (0.84,0.84);
\draw[snake=zigzag](0,0) -- (-0.84,0.84);
\draw[snake=zigzag](0,0) -- (0.84,0.84);
\filldraw[violet] (-1.05,1.4) circle (4pt); 
\filldraw[violet] (-0.35,1.4) circle (4pt); 
\filldraw[green] (0.35,1.4) circle (4pt); 
\filldraw[green] (1.05,1.4) circle (4pt); 
\end{tikzpicture}
}}_{13,ij} \in C([0,T]; \mathcal{C}^{-\gamma})$ for $i,j \in \{1,2,3 \}$ such that for all $p \in (1,\infty)$, $b^{
\scalebox{0.18}{\begin{tikzpicture}
\draw[black, thick] (-0.7,0.9) -- (0,0);
\draw[black, thick] (0.7,0.9) -- (0,0);
\draw[snake=zigzag](0,0) -- (0,-0.9);
\filldraw[violet] (-0.7,0.9) circle (6pt); 
\filldraw[violet] (0.7,0.9) circle (6pt); 
\end{tikzpicture}
}\epsilon}_{i} \diamond u^{
\scalebox{0.18}{\begin{tikzpicture}
\draw[black, thick] (-0.7,0.9) -- (0,0);
\draw[black, thick] (0.7,0.9) -- (0,0);
\draw[snake=zigzag](0,0) -- (0,-0.9);
\filldraw[green] (-0.7,0.9) circle (6pt); 
\filldraw[green] (0.7,0.9) circle (6pt); 
\end{tikzpicture}
}\epsilon}_{j} \to v^{
\scalebox{0.25}{\begin{tikzpicture}
\draw[black, thick] (-1.05,1.4) -- (-0.84,0.84);
\draw[black, thick] (-0.35,1.4) -- (-0.84,0.84);
\draw[black, thick] (0.35,1.4) -- (0.84,0.84);
\draw[black, thick] (1.05,1.4) -- (0.84,0.84);
\draw[snake=zigzag](0,0) -- (-0.84,0.84);
\draw[snake=zigzag](0,0) -- (0.84,0.84);
\filldraw[violet] (-1.05,1.4) circle (4pt); 
\filldraw[violet] (-0.35,1.4) circle (4pt); 
\filldraw[green] (0.35,1.4) circle (4pt); 
\filldraw[green] (1.05,1.4) circle (4pt); 
\end{tikzpicture}
}}_{13,ij}$ in $L^{p} (\Omega; C([0,T]; \mathcal{C}^{-\delta}))$ as desired in (\ref{113}).

\subsection{Details of Renormalizations for Group 4}
Due to (\ref{26}) we can write down 
\begin{align}\label{255}
& \pi_{0} (\mathcal{P}_{i_{1} i_{2}}  \partial_{x_{j_{0}}}  K_{j_{0}}^{u, \epsilon},  b^{
\scalebox{0.6}{\begin{tikzpicture}
\draw[black, thick] (0,0.5) -- (0,0);
\filldraw[blue] (0,0.5) circle (2pt); 
\end{tikzpicture}
}\epsilon}_{j_{1}})(t) \nonumber \\
=&  \frac{1}{(2\pi)^{\frac{3}{2}}}  \sum_{k}  \sum_{\lvert i-j \rvert \leq 1} \sum_{k_{1}, k_{2}: k_{12} = k} \theta(2^{-i} k_{1}) \theta(2^{-j} k_{2}) \int_{0}^{t} e^{- \lvert k_{1} \rvert^{2}(t-s)} ik_{1}^{j_{0}} \nonumber \\
& \hspace{20mm} \times : \hat{X}_{s, j_{0}}^{u, \epsilon}(k_{1}) \hat{X}_{t, j_{1}}^{b, \epsilon}(k_{2}): ds e_{k} \hat{\mathcal{P}}_{i_{1} i_{2}}(k_{1}) \nonumber \\
&+  \frac{1}{(2\pi)^{\frac{3}{2}}}  \sum_{k} \sum_{\lvert i-j \rvert \leq 1} \sum_{k_{1} \neq 0, k_{2}: k_{12} =k} \theta(2^{-i} k_{1}) \hat{\mathcal{P}}_{i_{1} i_{2}}(k_{1}) \int_{0}^{t} e^{- \lvert k_{1} \rvert^{2}(t-s)} ik_{1}^{j_{0}}  \nonumber \\
&  \hspace{20mm} \times 1_{k_{12} = 0} \sum_{j_{2} =1}^{3} \frac{ e^{- \lvert k_{1} \rvert^{2} (t-s)} f(\epsilon k_{1})^{2}}{2 \lvert k_{1} \rvert^{2}} \hat{\mathcal{P}}_{j_{0} j_{2}}(k_{1}) \hat{\mathcal{P}}_{j_{1} j_{2}}(k_{1}) ds \theta(2^{-j} k_{2}) e_{k} 
\end{align} 
by $:\xi_{1}\xi_{2}: = \xi_{1}\xi_{2} - \mathbb{E}[\xi_{1}\xi_{2}]$ (see \cite{J97}) where the second term can be shown to be actually zero. Thus, 
\begin{align}\label{256}
& \mathbb{E} [ \lvert \Delta_{q}  \pi_{0} ( \mathcal{P}_{i_{1} i_{2}}  \partial_{x_{j_{0}}}  K_{j_{0}}^{u, \epsilon}, b^{
\scalebox{0.6}{\begin{tikzpicture}
\draw[black, thick] (0,0.5) -- (0,0);
\filldraw[blue] (0,0.5) circle (2pt); 
\end{tikzpicture}
}\epsilon}_{j_{1}})(t) \rvert^{2}]\nonumber \\
\approx& \sum_{k, k'} \sum_{\lvert i-j \rvert \leq 1, \lvert i' - j' \rvert \leq 1} \sum_{k_{1}, k_{2}: k_{12} = k, k_{1}', k_{2}': k_{12}' = k'} \theta(2^{-i} k_{1}) \theta(2^{-i'} k_{1}') \theta(2^{-j} k_{2})\nonumber \\
& \times \theta(2^{-j'} k_{2}') \theta(2^{-q} k)^{2} \int_{[0,t]^{2}} e^{- \lvert k_{1} \rvert^{2} (t-s) - \lvert k_{1}' \rvert^{2} (t- \overline{s})} \lvert k_{1} \rvert \lvert k_{1}' \rvert \nonumber \\
& \times \mathbb{E} [ : \hat{X}_{s, j_{0}}^{u, \epsilon}(k_{1}) \hat{X}_{t, j_{1}}^{b, \epsilon}(k_{2}): : \hat{X}_{\overline{s}, j_{0}}^{u, \epsilon}(k_{1}') \hat{X}_{t, j_{1}'}^{b, \epsilon} (k_{2}'): ] e_{k} e_{k}' \hat{\mathcal{P}}_{i_{1} i_{2}}(k_{1})\hat{\mathcal{P}}_{i_{1}' i_{2}'} (k_{1}'). 
\end{align} 
We may compute $\mathbb{E} [ : \hat{X}_{s, j_{0}}^{u, \epsilon}(k_{1}) \hat{X}_{t, j_{1}}^{b, \epsilon}(k_{2}): : \hat{X}_{\overline{s}, j_{0}}^{u, \epsilon}(k_{1}') \hat{X}_{t, j_{1}'}^{b, \epsilon} (k_{2}'):]$ for $k_{1}, k_{2} \neq 0$ using the identity $\mathbb{E} [ : \xi_{11} \xi_{12}: : \xi_{21} \xi_{22}:] = \mathbb{E} [ \xi_{11} \xi_{21}] \mathbb{E} [ \xi_{12} \xi_{22}] + \mathbb{E} [ \xi_{11} \xi_{22}]\mathbb{E}[\xi_{12} \xi_{21}]$ (see \cite{J97}) and (\ref{116}) to deduce from \eqref{256} 
\begin{align}\label{258}
&\mathbb{E} [ \lvert \Delta_{q} \pi_{0} (\mathcal{P}_{i_{1} i_{2}}  \partial_{x_{j_{0}}}  K_{j_{0}}^{u, \epsilon}, b^{
\scalebox{0.6}{\begin{tikzpicture}
\draw[black, thick] (0,0.5) -- (0,0);
\filldraw[blue] (0,0.5) circle (2pt); 
\end{tikzpicture}
}\epsilon}_{j_{1}})(t) \rvert^{2}]  \nonumber \\
\lesssim& \sum_{k} \sum_{\lvert i-j \rvert \leq 1, \lvert i'-j'\rvert \leq 1} \sum_{k_{1}, k_{2} \neq 0: k_{12} = k} \nonumber  \\
& \times  \theta(2^{-i} k_{1}) \theta(2^{-i'} k_{1}) \theta(2^{-j} k_{2}) \theta(2^{-j'} k_{2}) \theta(2^{-q} k)^{2} \int_{[0,t]^{2}} \frac{e^{- \lvert k_{1} \rvert^{2}(2t - s - \overline{s} + \lvert s - \overline{s} \rvert)}}{\lvert k_{2} \rvert^{2}} ds d \overline{s} \nonumber  \\
\lesssim& t^{\eta} 2^{2q\eta} \sum_{k \neq 0} \theta(2^{-q} k)^{2} \frac{1}{\lvert k \rvert^{3}}\lesssim t^{\eta} 2^{2q\eta} 
\end{align} 
where we used mean value theorem, Lemma \ref{Lemma 2.9} and that $2^{q}  \lesssim 2^{i}$. Similarly to how we deduced (\ref{139}) from (\ref{138}) we can also show 
\begin{align}\label{259}
& \mathbb{E} [ \lvert \Delta_{q} ( \pi_{0,\diamond} (\mathcal{P}_{i_{1} i_{2}}  \partial_{x_{j_{0}}}  K_{j_{0}}^{u, \epsilon_{1}}, b^{
\scalebox{0.6}{\begin{tikzpicture}
\draw[black, thick] (0,0.5) -- (0,0);
\filldraw[blue] (0,0.5) circle (2pt); 
\end{tikzpicture}
}\epsilon_{1}}_{j_{1}})(t_{1}) - \pi_{0,\diamond} (\mathcal{P}_{i_{1} i_{2}}  \partial_{x_{j_{0}}}  K_{j_{0}}^{u, \epsilon_{1}}, b^{
\scalebox{0.6}{\begin{tikzpicture}
\draw[black, thick] (0,0.5) -- (0,0);
\filldraw[blue] (0,0.5) circle (2pt); 
\end{tikzpicture}
}\epsilon_{1}}_{j_{1}})(t_{2})\nonumber \\
& \hspace{5mm} - \pi_{0,\diamond} (\mathcal{P}_{i_{1} i_{2}} \partial_{x_{j_{0}}}  K_{j_{0}}^{u, \epsilon_{2}}, b^{
\scalebox{0.6}{\begin{tikzpicture}
\draw[black, thick] (0,0.5) -- (0,0);
\filldraw[blue] (0,0.5) circle (2pt); 
\end{tikzpicture}
}\epsilon_{2}}_{j_{1}}(t_{1}) + \pi_{0,\diamond} (\mathcal{P}_{i_{1} i_{2}} \partial_{x_{j_{0}}}  K_{j_{0}}^{u, \epsilon_{2}}, b^{
\scalebox{0.6}{\begin{tikzpicture}
\draw[black, thick] (0,0.5) -- (0,0);
\filldraw[blue] (0,0.5) circle (2pt); 
\end{tikzpicture}
}\epsilon_{2}}_{j_{1}})(t_{2}) ) \rvert^{2}]\nonumber \\
\lesssim& (\epsilon_{1}^{2\gamma} + \epsilon_{2}^{2\gamma}) \lvert t_{1} - t_{2} \rvert^{\eta} 2^{q (\epsilon + 2 \eta)}  
\end{align}  
so that applications of Besov embedding and Gaussian hypercontractivity theorem \cite[Theorem 3.50]{J97} as we did in (\ref{139})-(\ref{141}) implies that there exists $v_{20}^{i_{1}i_{2}, j_{0}j_{1}}\in C([0,T]; \mathcal{C}^{-\delta})$ for $i_{1}, i_{2}, j_{0}, j_{1} \in \{1,2,3\}$ such that for all $p \in [1,\infty)$, we have $
\pi_{0,\diamond} (\mathcal{P}_{i_{1}i_{2}}  \partial_{x_{j_{0}}}  K_{j_{0}}^{u, \epsilon}, b^{
\scalebox{0.6}{\begin{tikzpicture}
\draw[black, thick] (0,0.5) -- (0,0);
\filldraw[blue] (0,0.5) circle (2pt); 
\end{tikzpicture}
}\epsilon}_{j_{1}}) \to v_{20}^{i_{1}i_{2}, j_{0}j_{1}}$  as $\epsilon \to 0$ in $L^{p}(\Omega; C([0,T]; \mathcal{C}^{-\delta}))$.

\textbf{Acknowledgments}
The author expresses deep gratitude to Prof. Carl Mueller and Prof. Marco Romito for valuable discussions. He also thanks Prof. Jared Whitehead for suggesting references \cite{ACHS81, GP75, HS92, SH77} on the Boussinesq system. Moreover, the author expresses deep gratitude to the anonymous referee and the editor for the valuable suggestions and comments that have improved this manuscript significantly. This work was supported by a grant from the Simons Foundation (962572, KY).


\end{document}